\numberwithin{equation}{section}
\def\fillandplacepagenumber{%
 \par\pagestyle{empty}%
 \vbox to 0pt{\vss}\vfill
 \vbox to 0pt{\baselineskip0pt
   \hbox to\linewidth{\hss}%
   \baselineskip\footskip
   \hbox to\linewidth{%
     \hfil\thepage\hfil}\vss}}
\newcommand{\cc}{\mathbb C}
\newcommand{\zz}{\mathbb Z}
\newcommand{\qq}{\mathbb Q}
\newcommand{\rr}{\mathbb R}
\newcommand{\A}{\mathbb A}
\newcommand{\la}{\langle}
\newcommand{\ra}{\rangle}
\newcommand{\lra}{\longrightarrow}
\newcommand{\hra}{\hookrightarrow}
\newcommand{\al}{\alpha}
\newcommand{\be}{\beta}
\newcommand{\ga}{\gamma}
\newcommand{\de}{\delta}
\newcommand{\De}{\Delta}
\newcommand{\ep}{\epsilon}
\newcommand{\lam}{\lambda}
\newcommand{\Lam}{\Lambda}
\newcommand{\vp}{\varpi}
\newcommand{\sig}{\sigma}
\newcommand{\ka}{\kappa}
\newcommand{\quash}[1]{}
\newcommand{\W}{\mathbf{W}}
\DeclareMathOperator{\Gal}{Gal}
\DeclareMathOperator{\Tr}{Tr}
\DeclareMathOperator{\End}{End}
\DeclareMathOperator{\G}{G}
\DeclareMathOperator{\GL}{GL}
\DeclareMathOperator{\Hom}{Hom}
\DeclareMathOperator{\Ind}{Ind}
\DeclareMathOperator{\U}{U}
\DeclareMathOperator{\SO}{SO}
\DeclareMathOperator{\RO}{RO}
\DeclareMathOperator{\SRO}{SRO}
\DeclareMathOperator{\SL}{SL}
\DeclareMathOperator{\Lie}{Lie}
\DeclareMathOperator{\sspan}{span}
\DeclareMathOperator{\diag}{diag}
\DeclareMathOperator{\Ad}{Ad}
\DeclareMathOperator{\Res}{Res}
\DeclareMathOperator{\Orb}{Orb}
\DeclareMathOperator{\inv}{inv}
\DeclareMathOperator{\supp}{supp}
\DeclareMathOperator{\vol}{vol}
\newcommand{\Xv}{\mathrm{X}_n(F_v)}
\newcommand{\fu}{\mathfrak u}
\newcommand{\fg}{\mathfrak g}
\newcommand{\fl}{\mathfrak l}
\newcommand{\fp}{\mathfrak p}
\newcommand{\fgl}{\fg\fl}
\newcommand{\calf}{\mathcal{F}}
\newcommand{\calw}{\mathcal{W}}
\newcommand{\cala}{\mathcal{A}}
\newcommand{\frakc}{\mathfrak{C}}
\newcommand{\calb}{\mathcal{B}}
\newcommand{\calp}{\mathcal{P}}
\newcommand{\calg}{\mathcal{GR}}
\newcommand{\calh}{\mathcal{H}}
\newcommand{\cale}{\mathcal{E}}
\newcommand{\calq}{\mathcal{Q}}
\newcommand{\calo}{\mathcal{O}}
\newcommand{\calv}{\mathcal{V}}
\newcommand{\cald}{\mathcal{D}}
\newcommand{\call}{\mathcal L}
\newcommand{\Herm}{\mathcal{H}erm}
\newcommand{\Nm}{\mathrm{Nm}}
\newcommand{\iso}{\xrightarrow{\sim}}
\newcommand{\Fbar}{\overline{F}}
\newcommand{\bfun}{\mathbf{1}}
\def\Ddots{\mathinner{\mkern1mu\raise\p@
\vbox{\kern7\p@\hbox{.}}\mkern2mu
\raise4\p@\hbox{.}\mkern2mu\raise7\p@\hbox{.}\mkern1mu}}
\newtheorem{Thm}{Theorem}[section]
\newtheorem{Prop}[Thm]{Proposition}
\newtheorem{Lem}[Thm]{Lemma}
\newtheorem{Cor}[Thm]{Corollary}
\newtheorem{Conj}[Thm]{Conjecture}
\theoremstyle{definition}
\newtheorem{Def}[Thm]{Definition}
\theoremstyle{remark}
\newtheorem{Rem}[Thm]{Remark}
\theoremstyle{definition}
\title[Relative endoscopic fundamental lemma]{The endoscopic fundamental lemma for unitary Friedberg--Jacquet periods}
\author{Spencer Leslie}
\date\today
\address{Department of Mathematics, Boston College, Chestnut Hill, MA, USA}
\email{spencer.leslie@bc.edu}
\subjclass[2010]{Primary 11F70; Secondary 11F55, 11F85}
\keywords{Fundamental lemma, endoscopy, spherical varieties, relative trace formulas, periods of automorphic forms, Weil Representation, nilpotent orbital integrals}
\begin{document}

\begin{abstract}
We prove the endoscopic fundamental lemma for the Lie algebra of the symmetric variety $U(2n)/U(n)\times U(n)$, where $U(n)$ denotes a unitary group of rank $n$. This is the first major step in the stabilization of the relative trace formula associated to the $U(n)\times U(n)$-periods of automorphic forms on $U(2n)$.
\end{abstract}

\maketitle
\setcounter{tocdepth}{1}
\tableofcontents

\section{Introduction}
In this paper, we prove the endoscopic fundamental lemma for the Lie algebra of the symmetric variety $U(2n)/U(n)\times U(n)$, stated below as Theorem \ref{Thm: main result intro}. Conjectured in \cite{Leslieendoscopy}, this is the first example of such a fundamental lemma and is the first major step in the stabilization of the relative trace formula associated to the $U(n)\times U(n)$-periods of automorphic forms on $U(2n)$. Let us now explain the context and motivation.
\subsection{Global motivation}
Let $E/F$ be a quadratic extension of number fields, $\A_E$ and $\A_F$ the associated rings of adeles. Let $W_1$ and $W_2$ be two $n$ dimensional Hermitian spaces over $E$. The direct sum $W_1\oplus W_2$ is also a Hermitian space and we have the embedding of unitary groups $$U(W_1)\times U(W_2)\hra U(W_1\oplus W_2).$$
Let $\pi$ be an irreducible cuspidal automorphic representation of $U(W_1\oplus W_2)(\A_F)$. Then $\pi$ is said to  be \emph{distinguished} by the subgroup $U(W_1)(\A_F)\times U(W_2)(\A_F)$ if the \emph{period integral}
\begin{equation}\label{period of interest}
\displaystyle\int_{[U(W_1)\times U(W_2)]}\varphi(h)dh
\end{equation}
is not equal to zero for some vector $\varphi$ in the $\pi$-isotypic subspace of automorphic forms on $U(W_1\oplus W_2)(\A_F)$. Here, $[H]=H(F)\backslash H(\A_F)$ for any $F$-group $H$. The integral \eqref{period of interest} converges by cuspidality.  These periods are a unitary version of the ``linear periods'' first studied by Friedberg and Jacquet \cite{FriedbergJacquet}, who showed that a cuspidal automorphic representation $\Pi$ of $\GL_{2n}(\A_F)$ is distinguished by $\GL_n(\A_F)\times \GL_n(\A_F)$ if and only if the central $L$-value $L(\frac{1}{2},\Pi)$ is non-zero and the exterior square $L$-function $L(s,\Pi,\wedge^2)$ has a pole at $s=1$. While the literature has stuck with the name linear periods for integrals over the subgroup $$\GL_n(\A_F)\times \GL_n(\A_F)\hra \GL_{2n}(\A_F),$$ the name ``unitary linear periods'' for the integrals (\ref{period of interest}) is clearly problematic. As a result, we refer to these periods as \textbf{unitary Friedberg--Jacquet periods}. 

Recently, these periods have appeared in the literature in several ways (for example, \cite{IchinoPrasanna}, \cite{PollackWanZydor}, \cite{GrahamShah}, and indirectly in \cite{LiZhang}) with interesting applications to arithmetic and relative functoriality. As a simple example, we have the following conjecture, which is a special case of conjectures of Getz and Wambach \cite{GetzWambach}.
\begin{Conj}\label{Conj 1}
Let $U(W_1\oplus W_2)(\A_F)$ be quasi-split and let $\pi$ be a generic cuspidal automorphic representation. Let $\Pi=BC(\pi)$ be the base change of $\pi$ to $\GL_{2n}(\A_E)$. The following are equivalent:
\begin{enumerate}
    \item the exterior square $L$-function $L(s,\Pi,\wedge^2)$ has a pole at $s=1$ and the central $L$-value $L(\frac{1}{2},\Pi)$ is non-zero,
    \item there exist $n$-dimensional Hermitian spaces $W_1'$ and $W'_2$ and a cuspidal automorphic representation $\pi'$ of on $U(W_1'\oplus W_2')(\A_F)$ nearly equivalent to $\pi$ such that $\pi'$ is distinguished by $U(W_1')(\A_F)\times U(W_2')(\A_F).$
    \end{enumerate}
\end{Conj}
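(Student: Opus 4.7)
The plan is to prove the conjecture by comparing two relative trace formulas (RTFs): a Jacquet--Rallis type RTF for $\GL_{2n}(\A_E)$ pairing the Friedberg--Jacquet period $\GL_n(\A_E)\times \GL_n(\A_E)\hra \GL_{2n}(\A_E)$ against a period (a Flicker--Rallis type integral) detecting base change from a quasi-split unitary group, and a bilinear RTF for $U(W_1\oplus W_2)(\A_F)$ with two copies of the subgroup $U(W_1)\times U(W_2)$. By the Friedberg--Jacquet theorem recalled in the introduction, the spectral contribution of a cuspidal representation $\Pi$ of $\GL_{2n}(\A_E)$ to the first RTF is nonzero precisely when $L(\tfrac{1}{2},\Pi)\neq 0$ and $L(s,\Pi,\wedge^2)$ has a pole at $s=1$; the spectral contribution of a cuspidal $\pi'$ on the unitary side is nonzero precisely when $\pi'$ is distinguished by $U(W_1)\times U(W_2)$.

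Matching the geometric sides of these two RTFs requires three ingredients: a combinatorial orbit correspondence, with transfer factors, between regular semisimple orbits on the symmetric spaces $\GL_{2n}/(\GL_n\times \GL_n)$ over $E$ and $U(2n)/(U(n)\times U(n))$ over $F$; local smooth transfer of test functions at all places; and the fundamental lemma for the characteristic function of a hyperspecial maximal compact at almost every place. The present paper supplies this last ingredient at the Lie algebra level, from which the group statement can be recovered by Harish-Chandra descent, in the style of the comparison used for the Gan--Gross--Prasad conjecture.

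The subtlety that forces the conclusion to involve the Arthur packet of $\pi$ rather than $\pi$ itself is that the naive unitary RTF is not stable: its geometric side decomposes, according to elliptic endoscopic data for the symmetric space, with contributions from products $U(2a)/(U(a)\times U(a))\times U(2b)/(U(b)\times U(b))$ of the same type of symmetric space on smaller unitary groups with $a+b=n$. One therefore stabilizes both sides, and the resulting identity relates $\Pi$ to a stable distinguished contribution on $U(2n)$ rather than to the single representation $\pi$. This stabilization is precisely what the endoscopic fundamental lemma of this paper is designed to enable; combined with the unramified matching, the spectral identity then yields distinction by some $\pi'$ in the Arthur packet of $\pi$.

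The hardest step in carrying this program out is not the fundamental lemma itself but the combination of local smooth transfer at ramified and archimedean places with the full analytic stabilization of the RTF, including the control of singular and non-elliptic geometric contributions and the spectral isolation needed to access the entire Arthur packet. Granting these (substantial but expected) inputs, together with the fundamental lemma established in this paper, the conjecture should follow by applying the stabilized spectral identity to carefully chosen global test functions separating the generic Arthur packet of $\pi$.
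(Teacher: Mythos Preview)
The statement you are addressing is labeled \textbf{Conjecture} in the paper, and the paper does \emph{not} prove it. Immediately after stating it, the author notes that one direction is known under a local hypothesis by Pollack--Wan--Zydor, and that Wei Zhang has conjectured a relative trace formula comparison which would yield the full statement; the main theorem of the paper (the endoscopic fundamental lemma) is advertised as a first step toward the stabilization needed for that comparison. There is therefore no ``paper's own proof'' to compare against.

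Your proposal is not a proof either, and you say so yourself: it is a program, conditional on smooth transfer at ramified and archimedean places, on the full analytic stabilization of the relative trace formula (including non-elliptic and singular terms), and on a fine-enough spectral separation to see Arthur packets. These are exactly the ``substantial but expected inputs'' the paper flags as outstanding; granting them is the whole difficulty. So the honest assessment is that your write-up is a correct sketch of the intended strategy --- essentially the same one the paper points to via Getz--Wambach and Wei Zhang --- but it does not close any of the gaps that keep this a conjecture. In particular, the step ``from which the group statement can be recovered by Harish-Chandra descent'' and the phrase ``full analytic stabilization'' each hide work that is not yet in the literature for this symmetric space; the paper's own remarks (``current work-in-progress of the author should ultimately reduce the full stabilization of the elliptic part\ldots'') confirm that even the elliptic stabilization is not complete.

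One small correction to your description of the linear-side RTF: the period detecting base change from the quasi-split unitary group is the $\GL_{2n}(\A_F)$-period (or a twisted variant thereof) on $\GL_{2n}(\A_E)$, not a Flicker--Rallis integral in the usual sense (which pairs against a Whittaker-type functional). Getting this pairing right matters for the orbit matching and for which transfer factors appear.
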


Theorem 1.5 of \cite{PollackWanZydor} proves one direction of this conjecture under the assumption that $\pi$ is discrete series at a split place of $F$. In ongoing joint work with Jingwei Xiao and Wei Zhang, we formulate an extension of the above conjecture and propose a comparison of relative trace formulas (partially motivated by the more general setting of twisted base change in \cite{GetzWambach}) which enables us to prove cases of these conjectures. 

The crucial observation is that, unlike other relative trace formulas in the literature, the relative trace formula associated to the unitary Friedberg--Jacquet periods on $U(W_1\oplus W_2)(\A_F)$ is not \emph{stable}: when we consider the action of $U(W_1)\times U(W_2)$ on the {symmetric variety} $U(W_1\oplus W_2)/U(W_1)\times U(W_2)$, invariant polynomials distinguish only \emph{geometric orbits}. Appropriately, stability issues also arise in the local spectral theory of these periods \cite{WanBPfuture}. We must therefore ``stabilize" the geometric side of the relative trace formula to use the comparison of trace formulae with Xiao and Zhang to prove global results like Conjecture \ref{Conj 1}. 

\subsection{Local theory of endoscopy and the main result}
Now suppose that $E/F$ is a quadratic extension of non-archimedean local fields of characteristic zero and set $W=W_1\oplus W_2$. In \cite{Leslieendoscopy}, we initiated a program to stabilize the relative trace formula associated to these periods by developing the local theory of endoscopy for the ``Lie algebra'' of the symmetric variety 
\[
\calq=U(W)/U(W_1)\times U(W_2).
\]
Using \cite{Lesliedescent}, the full stabilization of the elliptic part of the relative trace formula ultimately reduces to this infinitesimal case.
Let us recall the basic notions.%; Section \ref{Section: relative endoscopy} reviews this theory in more detail.

\begin{Rem}
    Since the appearance of this article, we have formulated a general theory of endoscopy for symmetric varieties in \cite{LeslieEndodata}, and show that the notions introduced here and \cite{Leslieendoscopy} are compatible with the general notion of endoscopic symmetric variety. 
\end{Rem}
The  $2n$-dimensional Hermitian space $W=W_1\oplus W_2$ is naturally equipped with an involutive linear map: $\ep(w_1+w_2) = w_1-w_2$ for $w_i\in W_i$. This induces an involution on the unitary group $U(W)$ with the fixed-point subgroup $U(W)^\ep=U(W_1)\times U(W_2)$. Letting $\fu(W)$ denote the Lie algebra of $U(W)$, the differential of $\ep$ induces a $\zz/2\zz$-grading
\[
\fu(W)=\fu(W)_0\oplus \fu(W)_1,
\]
where $\fu(W)_i$ is the $(-1)^i$-eigenspace of $\ep$. Then $\fu(W)_1$ is  is the tangent space to the symmetric variety ${\calq}$ at the distinguished $U(W_1)\times U(W_2)$- fixed point and the subgroup $U(W_1)\times U(W_2)$ acts on $\fu(W)_1$ via restriction of the adjoint action. 

Section \ref{Section: relative endoscopy} reviews the notions of relative endoscopic data, endoscopic symmetric varieties, orbital integrals, and transfer. We postpone the details until then and content ourselves with the following special case: suppose that the extension $E/F$ is unramified and that $W_1=W_2=V_n$ is a split Hermitian space, so that there is a lattice $\Lam_n\subset V_n$ that is self-dual with respect to the Hermitian form. There is a natural identification in this case
\[
\fu(W)_1=\End(V_n),
\]
where the $U(V_n)\times U(V_n)$-action is given by pre- and post-composition. An elliptic endoscopic datum $\Xi$ determines positive integers $a,b$ such that $n=a+b$. To such a datum, we associate the \emph{endoscopic symmetric variety}
\[
\End(V_a)\oplus\End(V_b),
\]
where $V_a$ denotes a split Hermitian space of dimension $a$ and similarly with $V_b.$ Let $\Lam_a\subset V_a$ and $\Lam_b\subset V_b$ be self-dual lattices.

For a regular semi-simple element $\de\in \End(V_n)$, the endoscopic datum determines a character $\ka$, with respect to which we define the relative $\ka$-orbital integral
\[
\RO^\ka(f,\de)=\sum_{\de'\sim_{st}\de}\ka(\de')\RO(f,\de'),
\]
where $\de'$ runs over rational $U(V_n)\times U(V_n)$-orbits that lie in the same stable orbit of $\de.$ We show that there is a good notion of the matching of regular semi-simple elements
\[
\de\in\End(V_n)^{rss}\:\:\text{and}\:\:(\de_a,\de_b)\in\left(\End(V_a)\oplus\End(V_b)\right)^{rss},
\]
and transfer factors 
\[
\De_{rel}:\left(\End(V_a)\oplus\End(V_b)\right)^{rss}\times \End(V_n)^{rss}\to \cc.
\]
With these definitions, we say that $$f\in C_c^\infty(\End(V_n))\:\text{ and }\:f_{a,b}\in C_c^\infty(\End(V_a)\oplus\End(V_b))$$ are smooth transfers (or match) if 
\[
\SRO(f_{a,b},(\de_a,\de_b))=\De_{rel}((\de_a,\de_b),\de)\RO^\ka(f,\de)
\]
whenever $(\de_a,\de_b)$ and $\de$ match. Here $\SRO=\RO^\ka$ when $\ka=1$ is the trivial character. Our main result establishes the following matching of test functions. 
\begin{Thm}\label{Thm: main result intro}
Let $\End(\Lam_n)\subset \End(V_n)$ be the compact-open subring of endomorphisms of the lattice $\Lam_n,$ and let $\End(\Lam_a)\oplus\End(\Lam_b)$ be the analogous subring of $\End(V_a)\oplus \End(V_b)$.

The characteristic functions $\bfun_{\End(\Lam_n)}$ and $\bfun_{\End(\Lam_a)}\otimes \bfun_{\End(\Lam_b)}$ are smooth transfers of each other.
\end{Thm}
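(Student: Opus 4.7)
My plan is to follow the standard architecture of a fundamental lemma adapted to the relative setting: induct on $n$, use Harish-Chandra-style semi-simple descent to reduce to a matching of germs at the origin, then use Fourier analysis (mediated by the Weil representation) to reduce further to an identity of relative stable nilpotent orbital integrals, and finally verify this by an explicit lattice-theoretic calculation. The base case $n=1$ is tautological: one of $a,b$ is zero, the character $\ka$ is trivial, and the two matching functions coincide.

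\textbf{Semi-simple descent and induction.} For a regular semi-simple $\de \in \End(V_n)$ whose topologically semi-simple part $\de_s$ is nontrivial, relative Harish-Chandra descent rewrites $\RO^\ka(\bfun_{\End(\Lam_n)},\de)$ as an orbital integral on the centralizer of $\de_s$, which is a proper symmetric subpair of the same type $\End(V_{n'})$ with $n' < n$. The same reorganization applies to the endoscopic side, and the inductive hypothesis yields the identity for all such $\de$. This leaves only the matching of germs at the origin of $\End(V_n)$, i.e.\ the comparison of the Shalika-type expansions of $\RO^\ka(\bfun_{\End(\Lam_n)},\cdot)$ and $\SRO(\bfun_{\End(\Lam_a)}\otimes \bfun_{\End(\Lam_b)},\cdot)$ at $0$.

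\textbf{Fourier symmetry and reduction to nilpotents.} The Fourier transform with respect to the trace pairing on $\End(V_n)$ preserves $\bfun_{\End(\Lam_n)}$ up to a normalizing constant, since $\Lam_n$ is self-dual; the same holds on the endoscopic side. I would next establish a compatibility of the relative transfer with this Fourier transform, which I expect to extract by realizing both $\End(V_n)$ and $\End(V_a)\oplus \End(V_b)$ inside the Weil representation of the dual pair $(U(V_n),U(V_n))$ (resp.\ its endoscopic analog) and identifying the intertwiners explicitly. Combined with the previous reduction, this converts the germ identity at $0$ into a matching of relative stable nilpotent orbital integrals between the two spaces.

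\textbf{Nilpotent matching and main obstacle.} The last step parametrizes the relevant nilpotent orbits by combinatorial data (signed partitions attached to the $\sig$-graded Jordan structure), determines the limiting values of the transfer factor $\De_{rel}$ at each nilpotent orbit, and checks the resulting $\ka$-weighted lattice-count identity. This is where I expect the main difficulty to lie: unlike Langlands--Shelstad endoscopy, no Hitchin-fibration geometric input is yet available for relative endoscopy to organize the nilpotent comparison, and the nilpotent transfer factors are only implicitly defined as limits along the regular locus. Pinning these factors down in closed form and matching the resulting counts orbit by orbit is the technical heart of the proof; I anticipate that the Weil-representation framework introduced for the Fourier-transform compatibility will also be the organizing tool for this combinatorial identity.
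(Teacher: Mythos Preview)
Your proposal follows the classical local-harmonic-analysis template (induction, semi-simple descent, Fourier/homogeneity, nilpotent matching) that underlies Waldspurger's and Ng\^{o}'s work on the Langlands--Shelstad fundamental lemma. The paper's proof takes a completely different route and never touches semi-simple descent, induction on $n$, or a direct nilpotent comparison on $\End(V_n)$.

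Instead, the paper exploits the \emph{contraction map} $r_n:\End(V_n)\to\Herm(V_n)$ to rewrite the relative $\ka$-orbital integral of $\bfun_{\End(\Lam_n)}$ as a classical $\ka$-orbital integral on the twisted Lie algebra of a sum of Hecke-algebra functions on the symmetric space $X_n$ (Lemma~\ref{Lem: orbital reduction}, Corollary~\ref{Cor: initial restatement}). This converts the problem into an explicit endoscopic fundamental lemma for the Hecke module $\calh_{K_{n,E}}(\GL_n(E))\ast\bfun_0\subset C_c^\infty(\Herm(V_n))$ (Theorem~\ref{Thm: endoscopic base change}). The second reduction uses Xiao's germ expansion (Theorem~\ref{Thm: Xiao's result}), which expresses endoscopic transfer on $\Herm(V_n)$ as a limit of Jacquet--Rallis transfer, to translate the statement into an explicit Jacquet--Rallis matching (Proposition~\ref{Prop: first reduction}). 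A third reduction uses the Beuzart-Plessis $\SL_2(F)$-Weil representation on orbital-integral spaces---not a dual-pair Weil representation on $\End(V_n)$ as you suggest---to strip away the auxiliary lattice indicators (Proposition~\ref{Prop: second reduction}). The endpoint (Theorem~\ref{Thm: JR fundamental lemma for algebra}) is then proved \emph{globally} in Part~\ref{Part 2} by a new comparison of relative trace formulae whose spectral side is controlled by the Feigon--Lapid--Offen factorization of unitary periods.

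The genuine gap in your outline is exactly where you flag it: the nilpotent matching. In the classical case this is the content of Ng\^{o}'s theorem, and no analogous geometric input exists for relative endoscopy. Your hope that ``the Weil-representation framework \ldots\ will also be the organizing tool for this combinatorial identity'' is not substantiated: the Weil representation you invoke (attached to a dual pair inside $\Sp$) gives Fourier-transform compatibility but does not by itself compute nilpotent germs or limiting transfer factors, and there is no known mechanism by which it would. A secondary issue is your descent step: the centralizer of a non-central semi-simple $\de_s$ in $\End(V_n)$ under $U(V_n)\times U(V_n)$ is in general a product over the eigenspace decomposition of $r(\de_s)$ and need not be ``of the same type $\End(V_{n'})$''; the bookkeeping required to match this with endoscopic descent on $\End(V_a)\oplus\End(V_b)$ is nontrivial and not addressed. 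The paper's chain of reductions was designed precisely to sidestep both obstacles by exporting the problem to the Jacquet--Rallis setting, where smooth transfer and the unit fundamental lemma are already known, and then finishing with a global spectral argument.
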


This is the \emph{endoscopic fundamental lemma} referred to in the title. It was conjectured in \cite{Leslieendoscopy}, where we proved the special case $n=2$ and $a=b=1$ via explicit computation.

\begin{Rem} We expect to show that the entire stabilization of the elliptic part of the relative trace formula follows from this result. Indeed, in the subsequent article \cite{Lesliedescent} we succeeded in developing tools to deduce the ``group-version'' of the fundamental lemma from Theorem \ref{Thm: main result intro}, and work-in-progress deals with the full transfer conjecture.

This expectation is entirely analogous to the Arthur-Selberg trace formula: work of Waldspurger \cite{Waldspurgerlocal,Waldstransfert} and Hales \cite{halesfundamental} reduced both the smooth transfer and fundamental lemma for the entire Hecke algebra to the fundamental lemma for the Lie algebra. This final statement was further reduced to the case of positive characteristic local fields in \cite{WaldsCharacteristic}. Famously, Ng\^{o} utilized the geometry of the Hitchin fibration to prove this last form in \cite{NgoFL}.
\end{Rem}

Our proof is firmly planted in characteristic-zero harmonic analysis. Drawing from several recent developments in a novel way, we show that this result follows from a \emph{new fundamental lemma for an entire modules of a Hecke algebra} for certain symmetric varieties.  This is already an example of ``relative endoscopy,'' but of a simpler sort.  This fundamental lemma in turn is reduced to an explicit transfer of orbital integrals in the context of the Lie algebra version of Jacquet--Rallis transfer from \cite{ZhangFourier}. We then introduce a new comparison of relative trace formulas to prove this fundamental lemma via global techniques. 

\subsection{Outline of the proof} The first part of our proof is a series of reductions, each one replacing an explicit statement of matching of orbital integrals for another. In each of these reductions, the varieties and groups involved in the orbital integrals change: the argument deals with no less than $6$ different types of orbital integrals! The goal is to obtain a statement to which global methods may be applied; this is the case for Theorem \ref{Thm: third reduction intro} below.

We outline these reductions in Figure \ref{datfvo} below, which indicates the relevant sections for each component of the argument. Beginning in the lower left-hand corner, we are in the context for Theorem \ref{Thm: main result intro}. We recall the \emph{contraction map} $r_n:\End(V_n)\to \Herm(V_n)$ introduced in \cite{Leslieendoscopy}, where
\[
\Herm(V_n)=\{y\in \End(V_n): \la yv,w\ra=\la v,yw\ra\text{ for any }v,w\in V_n\}
\]
is the \emph{twisted Lie algebra} for the quasi-split unitary group $U(V_n)$. The terminology ``twisted'' Lie algebra refers to the fact that $$\Lie(U(V_n)) = \Herm(V_n)\cdot \varepsilon,$$ where $\varepsilon\in E=F(\varepsilon)$ is a generator such that $\overline{\varepsilon} = -\varepsilon.$ In Section \ref{Section: Initial reduction}, we consider the Hermitian symmetric variety 
\[
X_n=\left(\Res_{E/F}\GL_n/\U(V_n)\right)(F)=\{y\in \Herm(V_n):\det(y)\neq0\}.
\]
The contraction map translates Theorem \ref{Thm: main result intro} into a matching of orbital integrals for \emph{non-standard test functions} on $X_n$ that are not compactly supported. These functions possess additional symmetries due to invariance properties of the endomorphism ring $\End(\Lam_n)$, allowing us to study them in terms of the \emph{spherical Hecke algebra} of the symmetric variety  $\calh_{K_{n,E}}(X_n)$ (see Section \ref{Section: spherical hecke} for details). Here $K_{n,E}=\GL_n(\calo_E)$ is a maximal compact subgroup and 
\[
\calh_{K_{n,E}}(X_n) := C_c^\infty(X_n)^{K_{n,E}}.
\]
A theorem of Hironaka \cite{hironaka1999spherical} shows that this ring is a free $\calh_{K_{n,E}}(\GL_n(E))$-module of rank $2^n$; in particular, there is a distinguished rank $1$ sub-module given by the embedding (see Section \ref{Section: spherical hecke} for the notation)
\[
-\ast \bfun_0:\calh_{K_{n,E}}(\GL_n(E))\lra \calh_{K_{n,E}}(X_n).
\]
Extension-by-zero gives an embedding of $\calh_{K_{n,E}}(X_n)\hra C_c^\infty(\Herm(V_n))$. Our first reduction relies on a morphism of Hecke algebras related to a \emph{non-tempered} version of parabolic induction (see Section \ref{Section: morphism Satake}) to show that Theorem \ref{Thm: main result intro} follows from the following result.
\begin{Prop}\label{Thm: first reduction intro}
There is a morphism of Hecke algebras
\[
\xi_{(a,b)}:\calh_{K_{n,E}}(\GL_n(E))\to \calh_{K_{a,E}}(\GL_a(E))\otimes \calh_{K_{b,E}}(\GL_b(E))
\]
 such that for any $\varphi\in \calh_{K_{n,E}}(\GL_n(E))$, the functions
 \[
 \varphi\ast \bfun_0\:\text{ and }\: \xi_{(a,b)}(\varphi)\ast \bfun_0
 \]
 are smooth transfers with respect to endoscopic transfer for the twisted Lie algebra. Here, $\xi_{(a,b)}(\varphi)\ast \bfun_0$ denotes the image of $\xi_{(a,b)}(\varphi)$ in $\calh_{K_{a,E}}(X_a)\otimes \calh_{K_{b,E}}(X_b)$ under the analogous embedding. 
\end{Prop}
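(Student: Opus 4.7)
The plan is to attack Proposition~\ref{Thm: first reduction intro} in three stages corresponding to its three assertions. For the construction of $\xi_{(a,b)}$, I would work on the dual side via the Satake isomorphism, identifying $\calh_{K_{n,E}}(\GL_n(E))$ with the representation ring of $\GL_n(\cc)$. The natural candidate is restriction along the Levi embedding $\GL_a(\cc) \times \GL_b(\cc) \hookrightarrow \GL_n(\cc)$ composed with a specific unramified character twist of the form $|\det|^{s}$; the shift $s$ is dictated by the non-tempered parabolic induction referenced in the outline and encodes the distinguished Langlands parameter attached to representations carrying unitary Friedberg--Jacquet periods. Checking that this recipe yields a well-defined algebra morphism into the tensor product is a formal computation on the dual side.

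The heart of the proof will be the transfer identity for arbitrary $\varphi$. The fundamental lemma for the unit element, for endoscopic transfer on the twisted Lie algebra $\Herm(V_n)$, should be accessible from prior Jacquet--Rallis-style work and handles the case $\varphi = \bfun_{K_{n,E}}$. To extend this matching to the entire Hecke module generated by $\bfun_0$, I would deploy the novel RTF comparison announced in the outline: globalize the local data, set up a simple relative trace formula on the adelic $X_n$ whose test functions have the form $\varphi_v \ast \bfun_0$ at the relevant place, set up the parallel RTF built from $\xi_{(a,b)}(\varphi_v) \ast \bfun_0$ on $X_a \times X_b$, and compare their spectral sides --- the precise Satake twist built into $\xi_{(a,b)}$ is exactly what forces the two spectral decompositions to coincide on unramified parameters. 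A density/linear-independence argument at auxiliary split places then converts the resulting global spectral identity into a local, orbit-by-orbit matching of orbital integrals. This is the principal obstacle: engineering a comparison simultaneously rigid enough to pin down $\xi_{(a,b)}$, fine enough to recover individual orbital integrals, and robust against the non-temperedness intrinsic to the symmetric variety.

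Finally, to deduce Theorem~\ref{Thm: main result intro}, I would apply the contraction map $r_n$ to $\bfun_{\End(\Lam_n)}$. Since the subring $\End(\Lam_n)$ is stable under both sides of the $\GL_n(\calo_E) \times \GL_n(\calo_E)$-action, the push-forward $(r_n)_\ast \bfun_{\End(\Lam_n)}$ is $K_{n,E}$-invariant on $X_n$; invoking Hironaka's rank-$2^n$ decomposition together with the support properties imposed by the lattice, this push-forward should lie in the distinguished rank-one sub-module $\calh_{K_{n,E}}(\GL_n(E)) \ast \bfun_0$ and equal $\varphi_n^\circ \ast \bfun_0$ for an explicit $\varphi_n^\circ$. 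The analogous identification on the endoscopic side yields $(\varphi_a^\circ \otimes \varphi_b^\circ) \ast \bfun_0$; applying the transfer identity of the previous stage to $\varphi_n^\circ$ and matching it against this endoscopic function then produces exactly the orbital-integral equality demanded by Theorem~\ref{Thm: main result intro}, modulo the already-known compatibility of the contraction map with orbital integrals.
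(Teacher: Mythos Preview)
Your first and third stages are broadly aligned with the paper. The construction of $\xi_{(a,b)}$ via the Satake isomorphism and a shifted Levi restriction is exactly what the paper does (Lemmas~\ref{Lem: parabolic descent} and~\ref{Lem: please be true}); the deduction of Theorem~\ref{Thm: main result intro} proceeds by showing $(r_n)_!\bfun_{\End(\Lam_n)}$ breaks into pieces $\bfun_d\ast\bfun_0$ and then verifying the combinatorial identity $\xi_{(a,b)}(\bfun_d)=\sum_{d_a+d_b=d}\bfun_{d_a}\otimes\bfun_{d_b}$ by explicit Satake computation (Proposition~\ref{Prop: initial reduction}). You should be aware that Hironaka's rank-$2^n$ result is not actually needed here: the elementary identity $r_!(\phi)=\phi\ast\bfun_0$ (Lemma~\ref{Lem: elementary lemma}) does the job directly.

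The second stage, however, has a genuine gap. You propose a direct RTF comparison between the adelic $X_n$ and $X_a\times X_b$, but this is \emph{not} the comparison the paper uses, and it is unclear how to make yours work: the spectral side of such a comparison would require precisely the relative-endoscopic classification that the paper is building toward, so you would be assuming what you need to prove. The paper's route is substantially more indirect. It first uses Xiao's theory of generalized nilpotent orbital integrals (Theorem~\ref{Thm: Xiao's result}) to realize endoscopic transfer on $\Herm(V_n)$ as the composite of Jacquet--Rallis transfer, Lie-algebraic parabolic descent on the linear side, and Jacquet--Rallis transfer back; this converts the problem into showing that $\{(\varphi\ast\bfun_0)\bfun_{\Lam_n},0\}$ and $BC(\varphi)\bfun_{\call_n}$ are Jacquet--Rallis transfers (Proposition~\ref{Prop: first reduction}). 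A Weil-representation argument \`a la Beuzart-Plessis then strips the auxiliary lattice functions (Proposition~\ref{Prop: second reduction}). Only \emph{then} is a global RTF invoked, and it is the \emph{twisted Jacquet--Rallis} comparison between $\GL_{n-1}(E)\times\GL_n(E)$ and $\GL_{n-1}(F)\times\GL_n(F)$---a setting where the complete spectral theory of unitary periods due to Feigon--Lapid--Offen supplies the factorization needed to isolate local spherical characters. Your proposal skips the Xiao and Weil-representation reductions entirely, and without them there is no bridge from the endoscopic statement on $X_n$ to a comparison with accessible spectral content.
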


This result  implies Theorem \ref{Thm: main result intro} (see Proposition \ref{Prop: initial reduction}) and gives new explicit endoscopic transfers of test functions on the twisted Lie algebra, generalizing the fundamental lemma of Laumon and Ng\^{o} \cite{LaumonNgo}. Moreover, it plays the role of the fundamental lemma for the Hecke algebra for the relative trace formula associated to the Galois symmetric pair $(\Res_{E/F}(\GL_n),\U_n)$; see \cite[Section 10]{LeslieEndodata}.

In order to establish Proposition \ref{Thm: first reduction intro}, we utilize a recent alternative proof of the existence of smooth transfer for the twisted Lie algebra due to Xiao \cite{Xiao}. This argument is indicated by the rectangle in the lower right of Figure \ref{datfvo}.  The arrows denote the following relationships:
\begin{itemize}
\item \underline{$ev_0$}: this arrow indicates the evaluation-at-$0$ map $ev_0(F)(-) =F(-,0)$;
\item \underline{JR}: this arrow indicates the Jacquet--Rallis transfer between the spaces
\[
\Herm(V_n)\times V_n\:\:\text{ and }\:\: \fgl_n(F)\times F^n\times F_n,
\]where $F_n=(F^n)^\ast$ is the space of $1\times n$ row vectors;
\item\underline{PD}: this arrow indicates Lie-algebraic parabolic descent of relative orbital integrals.
\end{itemize}

Roughly speaking, the matching of orbital integrals comprising the endoscopic transfer between $\Herm(V_n)$ and $\Herm(V_a)\oplus \Herm(V_b)$ may be obtained from parabolic descent of orbital integrals from
$\fgl_{n}(F)\times F^n\times F_n${ to the Levi factor }$\prod_{i=a,b}\fgl_{i}(F)\times F^i\times F_i$ by applying the Jacquet--Rallis transfer of
%%%%%%%%%%%%%%%%%%%%%%%%%%%%%%%%%%%%%%%%%%%%%%%%%%%%%%%%%%%%%%%%%%%%%%%%%%%%%%%%%%%%%%%%%%%%%%%%%%%%%%%%%%%%%%%%%%%%%%%%%%%%%%%%%%%%%%%%%%%%%%%%%%%%%%%%%%%%%%%%%%%%%%%%%%%%%%%%%%%%%%%%%%%%%%%%%%%%%%%%%%%%%%%%%%%%%%%%%%%%%%%%%%%%%%%%%%%%%%%%%%%%%%%%%%%%%%%%%%%%%%%%%%%%%%%%%%%%%%%%%%%%%%%%%%%%%%%%%%%%%%%%%%%%%%%%%%%%%%%%%%%%%%%%%%%%%%%%%%%%%%%%%%%%%%%%%%%%%%%%%%%%%%%%%%%%%%%%%%%%%%%%%%%%%%%%%%%%%%%%%%%%%%%%%%%%%%%%%%%%%%%%%%%%%%%%%%%%%%%%%%%%%%%%%%%%%%%%%%%%%%%%%%%%%%%%%%%%%%%%%%%%%%%%%%%%%%%%%%%%%%%%%%%%%%%%%%%%%%%%%%%%%%%%%%%%%%%%%%%%%%%%%%%%%%%%%%%%%%%%%%%%%%%%%%%%%%%%%%%%%%%%%%%%%%%%%%%%%%%%%%%%%%%%%%%%%%%%%%%%%%%%%%%%%%%%%%%%%%%%%%%%%%%%%%%%%%%%%%%%%%%%%%%%%%%%%%%%%%%%%%%%%%%%%%%%%%%%%%%%%%%%%%%%%%%%%%%%%%%%%%%%%%%%%%%%%%%%%%%%%%%%%%%%%%%%%%%%%%%%%%%%%%%%%%%%%%%%%%%%%%%%%%%%%%%%%%%%%%%%%%%%%%%%%%%%%%%%%%%%%%%%%%%%%%%%%%%%%%%%%%%%%%%%%%%%%%%%%%%%%%%%%%%%%%%v%%%%%%%%%%%%%
\newpage
\begin{landscape}
\begin{figure}  
\begin{tikzcd}
&&&&\text{\framebox{Part \ref{Part 2}}}&\\
&&&\GL_{n-1}(E)\times \GL_n(E)\ar[rr,"BC\text{ (Thm \ref{Thm: base change Twisted Jacquet--Rallis})}"]\ar[d]&&\GL_{n-1}(F)\times\GL_n(F)\ar[dd]\\
&&&\GL_n(E)\ar[d,"-\ast \bfun_0"]&&\\
&&\text{\framebox{Section \ref{Section: Initial reduction}}}&X_n\ar[d]&\text{\framebox{Section \ref{Section: Weil rep}}}&\GL_n(F)\ar[d]\\
\End(V_n)\ar[rr,"r_n"]\ar[dd,dotted,"\text{Thm \ref{Thm: full fundamental lemma}}"]&&\Herm(V_n)\ar[dd,dotted,"\text{Thm \ref{Thm: endoscopic base change}}"]&\Herm(V_n)\times V_n\ar[rr,"JR"]\ar[l,"ev_0"]&&\fgl_{n}(F)\times F^n\times F_n\ar[dd,"PD"]\ar[ll]\\
&&&&\text{\framebox{Section \ref{Section: nilpotent Weil}}}&\\
\End(V_a)\oplus \End(V_b)\ar[rr,"r_a\oplus r_b"] &&\Herm(V_a)\oplus\Herm(V_b)&\prod_{i=a,b}\Herm(V_i)\times V_i\ar[l,"ev_0"]\ar[rr,"JR"]&&\prod_{i=a,b}\fgl_{i}(F)\times F^i\times F_i\ar[ll]\\
\end{tikzcd}
\caption{Various spaces and the relations between their orbital integrals. While the notations on the two lower rows are the same, the bottom row deals with \emph{stable} orbital integrals, while the middle row deals with $\ka$-orbital integrals.}
\label{datfvo}
\end{figure}
 \fillandplacepagenumber
\end{landscape}
%%%%%%%%%%%%%%%%%%%%%%%%%%%%%%%%%%%%%%%%%%%%%%%%%%%%%%%%%%%%%%%%%%%%%%%%%%%%%%%%%%%%%%%%%%%%%%%%%%%%%%%%%%%%%%%%%%%%%%%%%%%%%%%%%%%%%%%%%%%%%%%%%%%%%%%%%%%%%%%%%%%%%%%%%%%%%%%%%%%%%%%%%%%%%%%%%%%%%%%%%%%%%%%%%%%%%%%%%%%%%%%%%%%%%%%%%%%%%%%%%%%%%%%%%%%%%%%%%%%%%%%%%%%%%%%%%%%%%%%%%%%%%%%%%%%%%%%%%%%%%%%%%%%%%%%%%%%%%%%%%%%%%%%%%%%%%%%%%%%%%%%%%%%%%%%%%%%%%%%%%%%%%%%%%%%%%%%%%%%%%%%%%%%%%%%%%%%%%%%%%%%%%%%%%%%%%%%%%%%%%%%%%%%%%%%%%%%%%%%%%%%%%%%%%%%%%%%%%%%%%%%%%%%%%%%%%%%%%%%%%%%%%%%%%%%%%%%%%%%%%%%%%%%%%%%%%%%%%%%%%%%%%%%%%%%%%%%%%%%%%%%%%%%%%%%%%%%%%%%%%%%%%%%%%%%%%%%%%%%%%%%%%%%%%%%%%%%%%%%%%%%%%%%%%%%%%%%%%%%%%%%%%%%%%%%%%%%%%%%%%%%%%%%%%%%%%%%%%%%%%%%%%%%%%%%%%%%%%%%%%%%%%%%%%%%%%%%%%%%%%%%%%%%%%%%%%%%%%%%%%%%%%%%%%%%%%%%%%%%%%%%%%%%%%%%%%%%%%%%%%%%%%%%%%%%%%%%%%%%%%%%%%%%%%%%%%%%%%%%%%%%%%%%%%%%%%%%%%%%%%%%%%%%%%%%%%%%%%%%%%%%%%%%%%%%%%%%%%%%%%%%%%%%%%%%%%%%%%%%%%%%%v%%%%%%%%%%%%%
\newpage
\noindent
\cite{ZhangFourier} and then taking a limit to certain non-regular orbits. We outline this argument in greater detail in Section \ref{Section: first reduction}. The new tool for this proof is Xiao's analysis of certain \emph{generalized nilpotent orbital integrals} in the context of the Jacquet--Rallis transfer. We review these notions in Sections \ref{Section: JR theory} and \ref{Section: Xiao}. 

The upshot is that Proposition \ref{Thm: first reduction intro} follows if we can construct sufficiently many explicit pairs of functions that are smooth transfers of each other with respect to the Jacquet--Rallis transfer. To this end, we prove the following new fundamental lemma in the context of Jacquet--Rallis transfer generalizing the Jacquet--Rallis fundamental lemma of Yun \cite{YunJR}. 
\begin{Prop}\label{Thm: second reduction intro}
Let $\Lam_n\subset V_n$ be our self-dual lattice and set $\call_n=\calo_F^n\times {\calo_F}_n$. Let
\[
BC:\calh_{K_{n,E}}(\GL_n(E))\lra \calh_{K_{n,F}}(\GL_n(F))
\]
be the base change homomorphism of Hecke algebras. Then for any $\varphi\in \calh_{K_{n,E}}(\GL_n(E))$, the functions
\[
\{(\varphi\ast \bfun_0)\otimes\bfun_{\Lam_n},0\} \:\text{ and }\:BC(\varphi)\otimes\bfun_{\call_n}
\]
are smooth transfers of each other with respect to the Jacquet--Rallis transfer (\ref{eqn: JR matching}). 
\end{Prop}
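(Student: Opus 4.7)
The plan is to deduce this Hecke-algebra-level Jacquet--Rallis fundamental lemma from Yun's scalar fundamental lemma \cite{YunJR} (which is the $\varphi = \bfun_{K_{n,E}}$ case) by a global spectral comparison of relative trace formulas; this is the ``new comparison of RTFs'' announced in the introduction. The strategy parallels how Waldspurger \cite{Waldspurgerlocal, Waldstransfert} and Hales \cite{halesfundamental} promoted the Lie-algebra FL to the full Hecke algebra in the classical Arthur--Selberg setting, now adapted to a relative context.

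First I would globalize: pick a quadratic extension of number fields $E/F$ and a finite place $v_0$ realizing the given local datum, place the test functions of the statement at $v_0$, place matching pairs of highly ramified test functions of controlled support at a finite auxiliary set $S$ of places (existence of which is guaranteed by Zhang's smooth transfer in \cite{ZhangFourier} together with Xiao's Fourier-theoretic refinements \cite{Xiao}), and place spherical units at all remaining finite places. The notation $\{(\varphi\ast\bfun_0)\otimes\bfun_{\Lam_n},0\}$ is understood as supporting the function on the split Hermitian form with the non-split form contributing zero, and the global data must be arranged coherently so this split/non-split dichotomy is compatible with the selected global Hermitian space.

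Next, apply the two infinitesimal relative trace formulas: the unitary RTF for $U(V)\times U(V)$ acting on $\Herm(V)\times V$, and the linear RTF for $\GL_n\times\GL_n\times\GL_n$ acting on $\fgl_n\times F^n\times F_n$. The key spectral input is that stable base change $\pi\mapsto \Pi = BC(\pi)$ from $\GL_n(\A_F)$ to $\GL_n(\A_E)$ transports representations distinguished by the linear period to representations distinguished by the unitary period, and that under the Satake isomorphism and the unramified local Langlands correspondence, the morphism $BC\colon\calh_{K_{n,E}}(\GL_n(E))\to\calh_{K_{n,F}}(\GL_n(F))$ is precisely the one implementing this spectral correspondence at unramified places. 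Combining this spectral identity with Yun's fundamental lemma at all finite places outside $v_0\cup S$ leaves an identity of geometric distributions at $v_0$ tested against a family of test functions at $S$; a density argument in the choices at $S$ then isolates the pointwise equality of orbital integrals at $v_0$ for arbitrary $\varphi$, not merely on average.

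The chief obstacle I foresee is the spectral decoupling step: one must globalize enough distinguished cuspidal representations with controlled ramification so that the spherical Hecke algebra action at $v_0$ can be separated on both sides. This requires a nontrivial existence/density input of the sort produced in \cite{PollackWanZydor}, together with careful control of the non-regular and ``generalized nilpotent'' orbital contributions arising from the lower-dimensional support of the $\Herm$-side test function---precisely the analysis for which Xiao's framework \cite{Xiao} is engineered. Once these two inputs are in place, the reduction to Yun's scalar fundamental lemma becomes a clean spectral bootstrap.
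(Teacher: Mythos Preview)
Your proposal has a genuine gap: you propose to run a spectral comparison of ``infinitesimal relative trace formulas'' directly on the vector spaces $\Herm(V)\times V$ and $\fgl_n(F)\times F^n\times F_n$, but these spaces carry no spectral theory---there are no automorphic representations, no periods, no cuspidal spectrum to isolate. The paper itself flags this, noting that the matching in question is ``a statement in the \emph{Lie algebra version} of Jacquet--Rallis transfer with ostensibly no spectral content.'' Your global argument therefore has nowhere to land: the ``unitary RTF for $U(V)\times U(V)$ acting on $\Herm(V)\times V$'' and the ``linear RTF for $\GL_n\times\GL_n\times\GL_n$ acting on $\fgl_n\times F^n\times F_n$'' that you invoke do not exist in the form you need (note also that only a single copy of $U(V)$, resp.\ $\GL_n(F)$, acts on these spaces). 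The density and spectral-decoupling steps you sketch presuppose exactly the structure that is missing.

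The paper resolves this in two moves you have skipped. First, a purely local argument using the Weil representation of $\SL_2(F)$ on $C_c^\infty(\Herm(V)\times V)$ and $C_c^\infty(\fgl_n\times F^n\times F_n)$ (following Beuzart-Plessis) strips off the factors $\bfun_{\Lam_n}$ and $\bfun_{\call_n}$: one shows the difference of orbital integrals is $\SL_2(F)$-invariant and hence vanishes, reducing Proposition~\ref{Thm: second reduction intro} to the matching of $\{\varphi\ast\bfun_0,0\}$ and $BC(\varphi)$ on $\Herm(V_n)$ and $\fgl_n(F)$ (Theorem~\ref{Thm: third reduction intro}). Second---and this is the crucial reinterpretation---because $\varphi\ast\bfun_0$ is supported on the invertible locus $X_n\subset\Herm(V_n)$ and $BC(\varphi)$ on $\GL_n(F)\subset\fgl_n(F)$, the remaining statement can be lifted to a \emph{group-level} comparison of RTFs on $\GL_{n-1}(E)\times\GL_n(E)$ versus $\GL_{n-1}(F)\times\GL_n(F)$ (the ``twisted Jacquet--Rallis'' comparison), where genuine spectral content---Rankin--Selberg periods and the unitary periods of Feigon--Lapid--Offen---is available. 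Only after this reinterpretation does a global spectral argument of the kind you outline become possible.
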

This proposition implies Proposition \ref{Thm: first reduction intro}, hence Theorem \ref{Thm: main result intro} (see Proposition \ref{Prop: first reduction}).

From a spectral perspective, the presence of the characteristic functions $\bfun_{\Lam_n}$ and $\bfun_{\call_n}$ in the above comparison is artificial and ought to be remedied if we hope to apply global techniques to prove the result. Strikingly, the recently-explicated Weil representation (\cite{beuzart2019new}; see also \cite{ZhangAFL}) of $\SL_2(F)$ on the function spaces
\[
C_c^\infty(\Herm(V)\times V)\:\:\text{ and }\:\:C_c^\infty(\fgl_n(F)\times F^n\times F_n)
\]
%%%%%%%%%%%%%%%%%%%%%%%%%%%%%%%%%%%%%%%%%%%%%%%%%%%%%%%%%%%%%%%%%%%%%%%%%%%%%%%%%%%%%%%%%%%%%%%%%%%%%%%%%%%%%%%%%%%%%%%%%%%%%%%%%%%%%%%%%%%%%%%%%%%%%%%%%%%%%%%%%%%%%%%%%%%%%%%%%%%%%%%%%%%%%%%%%%%%%%%%%%%%%%%%%%%%%%%%%%%%%%%%%%%%%%%%%%%%%%%%%%%%%%%%%%%%%%%%%%%%%%%%%%%%%%%%%%%%%%%%%%%%%%%%%%%%%%%%%%%%%%%%%%%%%%%%%%%%%%%%%%%%%%%%%%%%%%%%%%%%%%%%%%%%%%%%%%%%%%%%%%%%%%%%%%%%%%%%%%%%%%%%%%%%%%%%
allows us to do this. We recall the details of these representations in Section \ref{Section: Weil rep}. Beuzart-Plessis recently used this structure to give a new proof of the Jacquet--Rallis fundamental lemma \emph{for any residual characteristic}. We carry out a similar computation to reduce Proposition \ref{Thm: second reduction intro} to the final form of the fundamental lemma.
\begin{Thm}\label{Thm: third reduction intro}
Consider the Jacquet--Rallis transfer between the spaces
\[
C_c^\infty(\Herm(V))\:\:\text{ and }\:\:C_c^\infty(\fgl_n(F));
\]
see Section \ref{Section: JR theory} for details. Then for any $\varphi\in \calh_{K_{n,E}}(\GL_n(E))$, the functions
\[
\{(\varphi\ast \bfun_0),0\} \:\text{ and }\:BC(\varphi)
\]
are transfers of each other with respect to the matching (\ref{eqn: regular matching final}).
\end{Thm}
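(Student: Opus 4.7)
The plan is to prove this Hecke-algebra-level fundamental lemma by a new global comparison of relative trace formulas combined with base change on the spectral side, in the spirit of the Hecke-algebra-level comparison carried out by Xiao \cite{Xiaothesis}. First I would globalize: choose a quadratic extension of number fields with a distinguished place $v_0$ whose local completions recover the given local extension $E/F$, together with global Hermitian spaces extending $V_n$ and $V_n'$. Place the test function $\varphi\ast\bfun_0$ (resp.\ $BC(\varphi)$) at $v_0$; at almost all other non-archimedean places use the characteristic function of a self-dual lattice on the unitary side and of the integral lattice on the general linear side, which match by Yun's fundamental lemma \cite{YunJR}; at a small number of auxiliary places insert freely varying smooth test functions designed to simplify the spectral expansion.

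Next I would set up two Lie-algebra relative trace formulas: on the unitary side the $X_n$-period RTF coming from the global action of $\GL_n$ on the Hermitian matrices over the adeles, and on the GL side the conjugation RTF on $\fgl_n$. Their spectral decompositions match by Flicker's theory of base change for $\GL_n$: $X_n$-distinguished automorphic representations on the unitary side are precisely the base changes of cuspidal representations on the GL side, and the Satake transforms of $\varphi\ast\bfun_0$ and $BC(\varphi)$ agree on these representations by the very definition of the base change homomorphism $BC$. This forces the spectral sides of the two RTFs to coincide on our global inputs.

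On the geometric side, Yun's fundamental lemma at the unramified places together with the existence of smooth Jacquet--Rallis transfer at the remaining places \cite{ZhangFourier} matches all geometric contributions away from $v_0$. The global spectral identity then forces a corresponding identity of weighted sums of regular semi-simple relative orbital integrals at $v_0$, and by varying the auxiliary test functions one may separate orbits and extract the claimed pointwise matching via standard density arguments for regular semi-simple orbital integrals. The main obstacle I expect is spectral-side bookkeeping: the function $\varphi\ast\bfun_0$ is \emph{not} compactly supported on $\Herm(V_n)$, so convergence of the unitary RTF for this input requires delicate justification (along the lines of Beuzart-Plessis's handling of non-compactly supported Schwartz inputs via the Weil representation); moreover, one must check that Eisenstein and other non-cuspidal contributions on both sides cancel or match precisely, so that the global spectral identity localizes cleanly at $v_0$ without contamination by residual terms.
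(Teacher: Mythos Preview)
Your high-level strategy (globalize, compare relative trace formulas, use base change spectrally, separate orbits) is the same as the paper's, but the specific RTF you propose does not work, and the main obstacle you flag is not the real one.

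The orbital integrals in the statement are for the \emph{conjugation action of $U(V_{n-1})$ on $X_n\subset\Herm(V_n)$} and of $\GL_{n-1}(F)$ on $\GL_n(F)\subset\fgl_n(F)$. Your proposed ``$X_n$-period RTF coming from the global action of $\GL_n$ on Hermitian matrices'' and ``conjugation RTF on $\fgl_n$'' do not have a spectral side in any usable sense: Lie algebras carry no automorphic representations, and the $\GL_n$-twisted-conjugation action on $X_n$ is transitive, so there are no interesting relative orbits either. There is no Flicker-style spectral decomposition available for such objects, so the sentence ``their spectral decompositions match by Flicker's theory of base change'' has no content as written.

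The paper's key move, which your proposal misses, is to lift the Lie-algebra statement to a genuine \emph{group-theoretic} RTF comparison with spectral content. Concretely, one observes that the $U(V_{n-1})$-orbital integral of $\varphi\ast\bfun_0$ on $X_n$ is exactly a relative orbital integral on the group $G'=\GL_{n-1}(E)\times\GL_n(E)$ for the pair of subgroups $H'=\GL_{n-1}(E)$ (diagonal) and $H'_{x,y}=U(V_x)\times U(V_y)$; likewise the $\GL_{n-1}(F)$-orbital integral of $BC(\varphi)$ on $\GL_n(F)$ is a relative orbital integral on $G=\GL_{n-1}(F)\times\GL_n(F)$ for the two diagonal copies of $\GL_{n-1}(F)$. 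This is the ``twisted Jacquet--Rallis'' comparison: it swaps the roles of the unitary group and the rational linear group in the usual Jacquet--Rallis setup. Now there \emph{is} a spectral side, and the relevant periods are (i) the Rankin--Selberg period on the diagonal $\GL_{n-1}$ and (ii) the \emph{unitary period} $\calp_x(\phi)=\int_{[U(V_x)]}\phi$ on $\GL_n(E)$. The crucial external input is the factorization of global unitary periods due to Feigon--Lapid--Offen and Jacquet, which makes the spectral comparison tractable and allows one to isolate the local identity at the place $v_0$.

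Two smaller points. First, $\varphi\ast\bfun_0$ \emph{is} compactly supported on $X_n$ (hence on $\Herm(V_n)$ after extension by zero): it lies in $\calh_{K_{n,E}}(X_n)\subset C_c^\infty(X_n)$. The non-compactly-supported function you may be thinking of is $r_!(\bfun_{\End(\Lambda_n)})$, which appears earlier in the reduction chain but not here. Second, the Eisenstein/residual difficulties you anticipate are avoided in the paper by working with a \emph{simple} relative trace formula: one inserts an essential supercuspidal matrix coefficient at one split place and a $Z$-regular-supported function at another, which kills all non-cuspidal contributions outright.
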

This result implies Proposition \ref{Thm: second reduction intro}, hence Theorem \ref{Thm: main result intro} (see Proposition \ref{Prop: second reduction}). This is our final reduction of Theorem \ref{Thm: main result intro}. Its proof is global, relying on a new comparison of trace formulas. We refer to these trace formulas as the \emph{twisted Jacquet--Rallis relative trace formulas} as they arise by ``switching the roles'' of the unitary group $U(V_n)$ and the linear group $\GL_n(F)$ in the original Jacquet--Rallis comparison. This switching is explained in terms of orbits at the beginning of Part \ref{Part 2}, and we refer the curious reader there. While several spectral consequences of this comparison are known by work of Feigon, Lapid, and Offen \cite{FLO} and Jacquet \cite{JacquetQuasisplit} on unitary periods of cusp forms, the resulting geometric comparison allows us to translate Theorem \ref{Thm: third reduction intro} into a spectral problem, despite being a statement in the \emph{Lie algebra version} of Jacquet--Rallis transfer with ostensibly no spectral content.

This argument is the content of Part \ref{Part 2}, which we have written to be essentially self-contained. To avoid making this introduction overlong, we refer the reader to the beginning of Part \ref{Part 2} for more details as the ideas and techniques used are rather different. We simply remark that the final piece is Theorem \ref{Thm: base change Twisted Jacquet--Rallis}, which establishes the fundamental lemma for the Hecke algebra for our comparison. This is the $BC$ arrow in Figure \ref{datfvo}, indicating that base change is the functoriality underlying this comparison.

Below we introduce notations and conventions which are in force throughout both Part \ref{Part 1} and Part \ref{Part 2}. We caution the reader that notations adopted within the two parts differ from one another in certain important aspects; we indicate these changes at the start of the second part. 
\subsection{Acknowledgements} First and foremost, we thank Jayce Getz for his mentorship, patience, and for asking the questions which led the author to consider relative notions of endoscopy.  We also want to thank Wei Zhang and Yiannis Sakellaridis for several illuminating conversations and for encouragement regarding this work. We also thank Rapha\"{e}l Beuzart-Plessis, Sol Friedberg, Ben Howard, Aaron Pollack, Ari Shnidman, Chen Wan, Jingwei Xiao, and Michal Zydor for helpful conversations. Finally, we thank the referees for several important comments which led to a better version of the paper.

This work was partially supported by an AMS-Simons Travel Award and NSF grants DMS-1902865 and DMS-2200852.

\subsection{Preliminaries}\label{Section: Prelim}

\subsubsection{Invariant theory}
For any field $F$ and any non-singular algebraic variety $\mathrm{Y}$ over $F$ with $\mathrm{G}$ an algebraic group over $F$ acting algebraically on $\mathrm{Y}$, we set $Y:=\mathrm{Y}(F)$. Let $\mathrm{Y}^{rss}$ denote the invariant-theoretic regular semi-simple locus. That is, $x\in Y^{rss}:=\mathrm{Y}^{rss}(F)$ if and only if its $\mathrm{G}$-orbit is of maximal possible dimension and is closed as a subset of $Y.$

For $x,x'\in Y^{rss}$, we say that $x'$ is in the \emph{rational $G$-orbit} of $x$ if there exists $g\in \mathrm{G}(F)$ such that
\[
g\cdot x= x'.
\]
Fixing an algebraic closure $\Fbar$, $x$ and $x'$ are said to lie in the same \emph{stable orbit} if $g\cdot x=x'$ for some $g\in \mathrm{G}(\Fbar)$ and such that the cocycle
\[
(\sig\mapsto g^{-1}g^\sig)\in Z^1(F,G)
\]
lies in $Z^1(F,G^0_x)$, where $G_x^0\subset G_x$ is the connected component of the identity of the stabilizer of $x$ in $G.$ A standard computation (see \cite[Lemma 2.1.5]{KalethaCharacter}) shows that the set $\calo_{st}(x)$ of rational orbits in the stable orbit of $x$ are in natural bijection with
\[
\cald(G^0_x/F):=\ker(H^1(F,G_x^0)\to H^1(F,G)).
\]
Here we ignore the dependence on $G$ in the notation. There is a natural abelianization of this pointed set 
\[
\frakc(G^0_x/F):=\ker(H^1_{ab}(F,G_x^0)\to H^1_{ab}(F,G)),
\]
where $H^1_{ab}$ is abelianized cohomology in the sense of \cite{Borovoi}. If $G_x^0$ is abelian (as will be the case for us), then 
\[
H^1(F,G_x^0)\cong H^1_{ab}(F,G_x^0),
\]
and there is an injective map
\[
\cald(G^0_x/F)\hra \frakc(G^0_x/F).
\] 
Finally, if $F$ is non-archimedean, this injection is a bijection and $\calo_{st}(x)$ is naturally a torsor over the abelian group $\frakc(G^0_x/F)$.
\subsubsection{Local fields} When $F$ is a non-archimedean field, we set $|\cdot|_F$ to be the normalized valuation so that if $\vp$ is a uniformizer, then
\[
|\vp|^{-1}_F= \#(\calo_F/\fp) = :q
\]
is the cardinality of the residue field. Here $\fp$ denotes the unique maximal ideal of $\calo_F.$ 

For any quadratic \'{e}tale algebra $E$ of a local field $F$, we set $\eta_{E/F}: F^\times \to \cc^\times$ for the character associated to the extension by local class field theory. In particular, if $E$ is not a field, then $\eta_{E/F}$ is the trivial character.

Throughout the article, all tensor products are over $\cc$ unless otherwise indicated.

\subsubsection{Groups and Hermitian spaces}
For a field $F$ and for $n\geq1$, we consider the algebraic group $\GL_n$ of invertible $n\times n$ matrices. Suppose that $E/F$ is a quadratic \'{e}tale algebra and consider the restriction of scalars $\Res_{E/F}(\GL_n)$. For any $F$-algebra $R$ and $g\in \Res_{E/F}(\GL_n)(R)$, we set \[
g\mapsto \overline{g}
\]
to be the Galois involution associated to the extension $E/F$; we also denote this involution by $\sig$. We denote by ${T}_n\subset \GL_n$ the diagonal maximal split torus, ${B_n}={T_nN_n}$ the Borel subgroup of upper triangular matrices with unipotent radical ${N_n}$. Set
\[
X_n:=\mathrm{X}_n(F)=\{x\in \GL_n(E): {}^t\overline{x}=x\}.
\]
Note that $\GL_n(E)$ acts on $X_n$ via
\[
g\ast x=gx{}^t\overline{g},\quad x\in X_n,\: g\in \GL_n(E),
\]
where ${}^tg$ denotes the transpose. We let $\calv_n$ be a fixed set of orbit representatives. For any $x\in X_n,$ set $\la\cdot,\cdot\ra_x$ to be the Hermitian form on $E^n$ associated to $x$. Denote by $V_x$ the associated Hermitian space and $\U(V_x)$ the corresponding unitary group. Note that if $g\ast x=x'$ then 
\[
V_x\xrightarrow{{}^t\overline{g}}V_{x'}
\]
is an isomorphism of Hermitian spaces. Thus, $\calv_n$ gives a set of representatives $\{V_x: x\in \calv_n\}$ of the equivalence classes of Hermitian vector space of dimension $n$ over $E$. When convenient, we will abuse notation and identify this set with $\calv_n$. If we are working with a fixed but arbitrary Hermitian space, we often drop the subscript. For any Hermitian space, we set
\[
U(V)=\U(V)(F).
\]

\subsubsection{Measures and centralizers}\label{measures} Suppose now that $E/F$ is an extension of local fields and fix an additive character $\psi: F\to \cc^\times$. By composing with the trace $\Tr_{E/F}$, we also obtain an additive character for $E$. We fix here our choice of Haar measures on the groups involved, choosing to follow \cite{FLO} closely. This is primarily to aid in Part \ref{Part 2} of the paper; the main point for Part \ref{Part 1} is that our choices are normalized to give the appropriate maximal compact subgroup volume $1$ in the unramified setting.

For any non-singular algebraic variety $\mathbf{Y}$ over $F$ of dimension $d$ and gauge form $\boldsymbol{\omega}_\mathbf{Y},$ the Tamagawa measure $dy_{Tam}$ of $Y=\mathbf{Y}(F)$ is defined by transferring the standard Haar measure on $F^d$ to $Y$ by $\boldsymbol{\omega}_\mathbf{Y}$.

For the varieties we consider, we set our measure to be of the form $dy=c(\psi)^{d/2}\boldsymbol{\lam}_\mathbf{Y}dy_{Tam}$, where 
\[
c(\psi) = \begin{cases}q^m&:\text{$F$ non-archimedean and } \mathrm{cond}(\psi)=\vp^m\calo_F,\\|a|_F&: \text{$F$ archimedean and }\psi(x) = e^{2\pi i\Tr_{E/\rr}(ax)}. \end{cases}
\]
For the other terms, we impose the choice that for any $\mathbf{Y},$
\[
\boldsymbol{\omega}_{\Res_{E/F}\mathbf{Y}}=p^\ast(\boldsymbol{\omega}_\mathbf{Y}),
\]
where $p^\ast$ is given in \cite[pg. 22]{Weiladeles}. We now fix $\boldsymbol{\omega}_\mathbf{Y}$:
\begin{itemize}
\item
For $\mathbf{Y}=\GL_n$, we take $\boldsymbol{\omega}_{\GL_n}=\frac{\prod_{i,j}dg_{i,j}}{\det(g)^n}$ and take $\boldsymbol{\lam}_{\GL_n} = \prod_{i=1}^nL(i,\bfun_{F^\times})$, where for any character $\chi:F^\times \to \cc^\times,$ $L(s,\chi)$ is the local Tate $L$-factor. We also set $\boldsymbol{\lam}_{\Res_{E/F}(\GL_n)} = \prod_{i=1}^nL(i,\bfun_{E^\times})$.
\item 
For $\mathbf{Y}=\mathrm{N}$ for any unipotent subgroup of $\GL_n,$ we set $\boldsymbol{\omega}_{\mathrm{N}}= \prod_idx_i$, where the product ranges over the non-constant coordinate functions on $N.$ We set $\boldsymbol{\lam}_{\mathrm{N}}=1.$
\item
For $\mathbf{Y}=\mathrm{X}_n$, set $\boldsymbol{\omega}_{\mathrm{X}_n}=\frac{\prod_{i\leq j}dx_{i,j}}{\det(x)^n}$, and take $\boldsymbol{\lam}_{\mathbf{X}_n} = \prod_{i=1}^nL(i,\eta^{i+1})$, where $\eta=\eta_{E/F}$ is the quadratic character associated to $E/F.$
\item
For $\mathbf{Y}=\U(V)$, we take $\boldsymbol{\omega}_{\U(V)}$ to be compatible with $\boldsymbol{\omega}_{\Res_{E/F}(\GL_n)}$ and $\boldsymbol{\omega}_{\mathrm{X}_n}.$ Finally, we take $\boldsymbol{\lam}_{\U(V)} = \prod_{i=1}^nL(i,\eta^i)$. In particular, the isomorphism
\[
X_n \cong \bigsqcup_{x\in \calv_n}\GL_n(E)/U(V_x)
\]
is compatible with these measures.
\end{itemize}
When $F$ is $p$-adic and $\psi$ of conductor $\calo_F,$ our choice of measure gives $K_n:=\GL_n(\calo_F)$ volume $1.$ When $E/F$ is also unramified, the same holds for the maximal compact subgroups $K_{n,E}:=\GL_n(\calo_E)\subset\GL_n(E)$ as well as $X_n(\calo_F):=\GL_n(\calo_E)\ast I_n$.

Finally, we consider the measures on regular semi-simple centralizers. Fix a Hermitian form $x$ and consider $U(V)=U(V_x)$. We will be interested in the \emph{twisted Lie algebra}
\[
\Herm(V)=\{\de\in \End(V): \la \de v, u\ra=\la v,\de u\ra\}.
\]
The group $U(V)$ acts on this space by the adjoint action, and an element $\de$ is regular semi-simple if its centralizer is a maximal torus $T_\de\subset U(V)$. To construct $T_\de$ note that there is a natural decomposition
$$F[\de]:=F[X]/(char_{\de}(X))=\prod_{i=1}^mF_i,$$ where $F_i/F$ is a field extension and $char_{\de}(X)$ denotes the characteristic polynomial of $\de$.
Setting $E_i=E\otimes_F F_i$, we have
\[
E[\de]=\prod_i E_i=\prod_{i\in S_1}E_i\times\prod_{i\in S_2}F_i\oplus F_i,
\]
where $S_1=\{i: F_i\nsupseteq E\}$.
\begin{Lem}\label{Lem: centralizers}
Let $\de\in \Herm(V)$ be regular semi-simple, let $T_\de$ denote the centralizer of $\de$ in $U(W)$. Then 
\[
T_\de\cong  Z_{\U(V)}(F)E[\de]^\times/F[\de]^\times,
\]
where $Z_{U(V)}(F)$ denotes the center of $U(V)$. Moreover, $H^1(F,T_\de)=\prod_{S_1}\zz/2\zz$ and 
\[
\frakc(T_\de/F)=\ker\left(H^1(F,T_\de)\to H^1_{ab}(F,U(V))\right)=\ker\left(\prod_{S_1}\zz/2\zz\to \zz/2\zz\right),
\]
where the map on cohomology is the summation of the factors.
\end{Lem}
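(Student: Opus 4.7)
The plan is to analyze the centralizer, translate everything through the factorization $E[\de] = \prod_i E_i$, and then apply Hilbert 90 and local class field theory.

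\textbf{Step 1: Structure of the adjoint involution on $E[\de]$.} Since $\de$ is regular semisimple, the centralizer of $\de$ in $\End_E(V)$ is the commutative $E$-algebra $E[\de]$. The Hermitian form on $V$ defines an $F$-linear involution $\ast$ on $\End_E(V)$ satisfying $\ast|_E = \sig$ and $\de^\ast = \de$ (the latter because $\de \in \Herm(V)$). It follows that $E[\de]$ is stable under $\ast$, and $\ast$ restricts to the unique $F$-algebra involution of $E[\de]$ extending $\sig$ and fixing $\de$. Under the decomposition $E[\de] = \prod_{i \in S_1} E_i \times \prod_{i \in S_2}(F_i \oplus F_i)$, the involution acts as the nontrivial Galois involution of $E_i/F_i$ for $i \in S_1$ and as the swap on $F_i \oplus F_i$ for $i \in S_2$. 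In every case the fixed subalgebra is $F[\de] = \prod_i F_i$.

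\textbf{Step 2: Construction of the isomorphism.} Define $\phi\colon E[\de]^\times \to T_\de$ by $\phi(z) = z(z^\ast)^{-1}$; a direct check gives $\phi(z)\phi(z)^\ast = 1$, so $\phi$ lands in $T_\de$. The kernel is $\{z : z = z^\ast\} = F[\de]^\times$. Surjectivity is checked factor by factor: for $i \in S_2$, a straightforward calculation identifies the $T_\de$-component with $F_i^\times$ and shows $\phi$ surjects onto it; for $i \in S_1$, the $T_\de$-component is the norm-one torus $U^1(E_i/F_i)$ and surjectivity of $z \mapsto z/\bar z$ is Hilbert~90. Hence $\phi$ induces an isomorphism $E[\de]^\times/F[\de]^\times \iso T_\de$, and one notes that $Z_{U(V)}(F) = E^\times/F^\times$ sits inside this image through the scalar inclusion $E \hookrightarrow E[\de]$, yielding the asserted presentation.

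\textbf{Step 3: Galois cohomology of $T_\de$.} From Step 2, as an algebraic torus,
\[
T_\de \cong \prod_{i \in S_1}\Res_{F_i/F} U^1(E_i/F_i) \times \prod_{i \in S_2}\Res_{F_i/F}\Gm.
\]
Shapiro's lemma reduces the computation to cohomology over the $F_i$. The factors over $S_2$ contribute zero by Hilbert~90. For $i \in S_1$, the norm sequence $1 \to U^1(E_i/F_i) \to \Res_{E_i/F_i}\Gm \to \Gm \to 1$ together with Hilbert~90 gives $H^1(F_i, U^1(E_i/F_i)) = F_i^\times/N_{E_i/F_i}(E_i^\times) \cong \zz/2\zz$ by local class field theory. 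Therefore $H^1(F, T_\de) \cong \prod_{S_1}\zz/2\zz$.

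\textbf{Step 4: The map to $H^1_{ab}(F,U(V))$.} The determinant identifies $U(V)^{\mathrm{ab}} = U^1(E/F)$ and, for a local field, $H^1_{ab}(F,U(V)) \cong H^1(F,U^1(E/F)) \cong \zz/2\zz$. One tracks the restriction of $\det$ to each torus factor: on the $S_2$ factors the source cohomology vanishes, while on the $U^1(E_i/F_i)$ factor the composite $U^1(E_i/F_i) \to U^1(E/F)$ is the norm $N_{E_i/E}$. The induced map on $H^1$ is the corestriction $\mathrm{Br}(E_i/F_i) \to \mathrm{Br}(E/F)$, which preserves local Hasse invariants and so is the nontrivial map $\zz/2\zz \to \zz/2\zz$. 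Summing over $i \in S_1$ yields that the total map is $(x_i)_{i \in S_1} \mapsto \sum_i x_i$, whose kernel is the claimed $\mathfrak{c}(T_\de/F)$.

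The genuinely delicate step is Step~4: identifying the determinant restricted to each $U^1(E_i/F_i)$ with $N_{E_i/E}$ and verifying that the induced corestriction on Brauer groups is nontrivial. The other steps are standard once the involution on $E[\de]$ is in place.
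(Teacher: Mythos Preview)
Your proof is correct and self-contained; the paper itself simply cites Rogawski's book \cite[3.4]{RogawskiBook} without writing anything out, so there is no ``paper's approach'' to compare against beyond that reference. Your Steps~1--3 are the standard argument and match what one finds in Rogawski.

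One small remark on Step~4: your identification of the induced map with corestriction on relative Brauer groups is correct, but the nontriviality can be seen a bit more directly. Under the isomorphisms $H^1(F_i,U^1(E_i/F_i))\cong F_i^\times/N_{E_i/F_i}(E_i^\times)$ and $H^1(F,U^1(E/F))\cong F^\times/N_{E/F}(E^\times)$, the map in question is $N_{F_i/F}$. Since $E_i=E\cdot F_i$, local class field theory gives $\eta_{E_i/F_i}=\eta_{E/F}\circ N_{F_i/F}$, so any $a\in F_i^\times$ with $\eta_{E_i/F_i}(a)=-1$ has $\eta_{E/F}(N_{F_i/F}(a))=-1$ as well; hence each factor map $\zz/2\zz\to\zz/2\zz$ is the identity and the total map is summation. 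This is equivalent to your Brauer-group argument but avoids having to unwind the Shapiro/corestriction compatibility. Also, your aside ``$Z_{U(V)}(F)=E^\times/F^\times$'' should strictly read $Z_{U(V)}(F)=U^1(E/F)\cong E^\times/F^\times$ (via Hilbert~90), though this does not affect the argument.
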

\begin{proof}
This is proved, for example, in \cite[3.4]{RogawskiBook}.
\end{proof}

Set $T_{S_1}\cong Z_{\U(V)}(F)\prod_{i\in S_1}E_i^\times/F_i^\times\times \prod_{i\in S_2}\calo_{F_i}^\times$ for the unique maximal compact subgroup of $T_\de$. We choose the measure $dt$ on $T_\de$ giving this subgroup volume $1$. We will study orbital integrals over regular semi-simple orbits on several different varieties. We will always use the measures introduced here to define invariant measures on these orbits. By a slight abuse of notation, we will not acknowledge this in our notation.

\part{Endoscopic theory and reduction}\label{Part 1}
In this first part, we recall the basic theory of endoscopy for the infinitesimal symmetric variety from \cite{Leslieendoscopy}. We then state our main result in Theorem \ref{Thm: full fundamental lemma}. In Section \ref{Section: Initial reduction}, we show that the main theorem follows from a fundamental lemma for an entire Hecke algebra on the symmetric variety $X_n$. In Section \ref{Section: nilpotent Weil}, we use recent results relating endoscopic transfer for unitary Lie algebras and Jacquet--Rallis transfer to translate the problem into a statement about Jacquet--Rallis transfer. Finally, we use the Weil representation on certain spaces of orbital integrals to reduce the statement to its final form in Theorem \ref{Thm: JR fundamental lemma for algebra}. The proof of this final reformulation is the content of Part \ref{Part 2}.

For the entirety of this part, $F$ is a non-archimedean local field and $E/F$ is a quadratic \'{e}tale $F$-algebra. For the identity form $I_n\in X_n$, we set $V_n:=V_{I_n}$ and note that when $E/F$ is unramified, then $V_n$ is a split Hermitian space and $U(V_n)$ is the quasi-split unitary group.

\section{The relative endoscopic fundamental lemma}

In this section, we recall the basics of the theory of endoscopy for the infinitesimal symmetric variety; our reference  is \cite{Leslieendoscopy}. We then state our main result in Theorem \ref{Thm: full fundamental lemma}.
\subsection{The Lie algebra of the symmetric variety} \label{Section: pushforward}
 Recall that $W_1$ and $W_2$ denote two Hermitian spaces of dimension $n$ over $E$. Setting $W=W_1\oplus W_2$, we consider the Lie algebra $\fu(W)$ of the rank $2n$ unitary group $U(W)$. As mentioned in the introduction, this Lie algebra possesses a natural $\zz/2\zz$-grading
\[
\fu(W)= \fu(W)_0\oplus \fu(W)_1,
\]
where we have the natural identifications 
\[
\fu(W)_0=\fu(W_1)\oplus \fu(W_2),\text{  and   }\fu(W)_1=\Hom_{E}(W_2,W_1).
\]
Here $U(W_1)\times U(W_2)$ acts on $\fu(W)_1$ by the restriction of the adjoint action. In terms of $W_1$ and $W_2$, the action is given by $(g,h)\cdot \varphi = g\circ \varphi \circ h^{-1}$. 

In particular, any element $\de\in \fu(W)_1$ may be uniquely written
\[
\de=\de(X)= \left(\begin{array}{cc}&X\\-X^\tau&\end{array}\right),
\]
where $X\in \Hom_{E}(W_2,W_1)$ and where for any $w_i\in W_i$
\[
\la Xw_2,w_1\ra_1=\la w_2,X^\tau w_1\ra_2.
\]
For any such $\de$, we denote by $$H_\de=\{(h,g)\in U(W_1)\times U(W_2): h^{-1}\de g=\de\}\subset U(W_1)\times U(W_2)$$ the stabilizer of $\de$.

Define the regular semi-simple locus $\fu(W)_1^{rss}$ to be the set of $\de\in \fu(W)_1$ whose orbit under $U(W_1)\times U(W_2)$ is closed and of maximal dimension. In our present case, we have 
\[\fu(W)_1^{rss}=\fu(W)_1\cap \fu(W)^{rss},\] where $\fu(W)^{rss}$ is the classical regular semi-simple locus of the Lie algebra. This is due to the fact that the symmetric pair $(U(W),U(W_1)\times U(W_2))$ is geometrically quasi-split. See \cite[Section 1.2]{Lesliespringer} for more details on quasi-split symmetric varieties. In particular, if $\de\in \fu(W)_1^{rss},$ then $H_\de$ is a torus of rank $n$.

 There are natural \emph{contraction maps} $r_i:\fu(W)_1\to \Herm(W_i)$ given by 
\begin{equation}\label{eqn: choice of contraction}
r_i(\de(X))= \begin{cases} -XX^\tau :\quad i=1\\ -X^\tau X:\quad i=2.\end{cases}
\end{equation}

\begin{Prop}\label{Lem: cat quotient Lie algebra 1}
The map $r:= r_1$ intertwines the $U(W_1)$ action on $\fu(W)_1$ and the adjoint action on $\Herm(W_1)$. Moreover, the pair $(\Herm(W_1), r)$ is a categorical quotient for the $U(W_2)$-action on $\fu(W)_1$.\qed
\end{Prop}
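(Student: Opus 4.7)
The plan is to verify the equivariance of $r := r_1$ by a direct matrix computation, and then to establish the categorical quotient property by passing to the algebraic closure and applying the classical first fundamental theorem of invariant theory for $\GL_n$.

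For the equivariance, Lemma~\ref{make life easy} gives the $U(W_1) \times U(W_2)$-action on $\Hom_E(W_2, W_1)$ as $(g, h) \cdot X = g X h^{-1}$. The identities $g^\tau = g^{-1}$ for $g \in U(W_1)$ and $h^\tau = h^{-1}$ for $h \in U(W_2)$ follow immediately from the unitary conditions together with the definition of $\tau$ via the Hermitian forms. Combined with the contravariance $(MN)^\tau = N^\tau M^\tau$, these yield
\[
(gXh^{-1})(gXh^{-1})^\tau = g X h^{-1} \cdot h X^\tau g^{-1} = g (X X^\tau) g^{-1}.
\]
Setting $h = 1$ gives $r((g,1)\cdot \de(X)) = \Ad(g)\, r(\de(X))$, the $U(W_1)$-intertwining; setting $g = 1$ gives the $U(W_2)$-invariance $r((1,h)\cdot \de(X)) = r(\de(X))$.

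For the categorical quotient property, I would reduce to classical invariant theory over the algebraic closure $\overline{F}$ and then invoke Galois descent. The splitting $E \otimes_F \overline{F} \cong \overline{F} \times \overline{F}$ induces decompositions $W_i \otimes_F \overline{F} = W_i^+ \oplus W_i^-$, with $W_i^- \cong (W_i^+)^*$ via the Hermitian pairing, and consequently $U(W_2)_{\overline{F}} \cong \GL(W_2^+) \cong \GL_n$. Choosing bases, one obtains identifications
\[
\Hom_E(W_2, W_1) \otimes_F \overline{F} \cong M_n \oplus M_n, \qquad X \longmapsto (A, B) := (X^+, (X^-)^t),
\]
under which $\GL_n$ acts by $g \cdot (A, B) = (A g^{-1}, g B)$, and $\Herm(W_1) \otimes_F \overline{F} \cong M_n$ via $Y \mapsto Y^+$. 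A direct matrix computation, using the formulas $(X^\tau)^+ = (X^-)^t$ and $(X^\tau)^- = (X^+)^t$ obtained by unwinding the definition of $\tau$ over $\overline{F}$, shows that the base-changed contraction $r$ becomes the matrix product $(A, B) \mapsto AB$. The first fundamental theorem of invariant theory for $\GL_n$ acting on $n$ copies of the standard representation together with $n$ copies of its dual then states that $\overline{F}[M_n \oplus M_n]^{\GL_n}$ is generated precisely by the matrix entries of $AB$. Hence $r^*$ is an isomorphism after base change, and Galois descent delivers the desired isomorphism $r^*: F[\Herm(W_1)] \xrightarrow{\sim} F[\fu(W)_1]^{U(W_2)}$ over $F$.

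The main obstacle is the bookkeeping in the base-change step: one must verify carefully that the involution $\tau$, the Hermitian pairings between $W_i^+$ and $W_i^-$, and the induced $\GL_n$-action on $M_n \oplus M_n$ interact precisely so that $r$ corresponds to the matrix product $(A, B) \mapsto AB$. Once these compatibilities are in place, both the equivariance and the invariant-theoretic content reduce to classical, well-known results.
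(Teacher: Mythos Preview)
Your proposal is correct. The paper itself gives no proof here: the proposition ends with a bare \qed\ and the surrounding text refers the reader to \cite{Leslieendoscopy} for details, so there is nothing to compare against directly.

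Your argument is the standard one and would be accepted as a proof. Two small points worth tightening. First, the first fundamental theorem for $\GL_n$ only tells you that the entries of $AB$ \emph{generate} the invariant ring; to conclude that $r^*$ is an isomorphism you also need that there are no relations among them. This follows either from the second fundamental theorem (the relations are the $(n{+}1)\times(n{+}1)$ minors of the pairing matrix, of which there are none when you have exactly $n$ vectors and $n$ covectors), or more directly from the observation that $(A,B)\mapsto AB$ is surjective onto $M_n$ (take $B=I$), so the induced map on coordinate rings is injective. Second, the Galois-descent step for the invariant ring is immediate since taking $\Gal(\overline{F}/F)$-invariants commutes with taking $U(W_2)_{\overline{F}}$-invariants (both are just fixed points of commuting group actions on the coordinate ring), but it is worth saying this explicitly rather than leaving it as a single word.
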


\begin{proof} The equivariance statement is obvious. 
As the categorical quotient assertion is geometric, we may assume without loss that $F=\Fbar$. The action we consider is following action of $\GL_n\times \GL_n$ on $\fgl_n\times \fgl_n$:
\[
(g,h)\cdot (X,Y) = (gXh^{-1},hYg^{-1}).
\]
The map $r$ becomes the product map
\begin{align*}
\fgl_n\times \fgl_n&\to \fgl_n\\
					(X,Y)&\mapsto  XY.
\end{align*}We make use of Igusa's criterion \cite[Section 3]{ZhangFourier}: let a reductive group $H$ act on an irreducible affine variety $X$. Let $Q$ be a normal irreducible variety, and let $\pi:X\to Q$ be a morphism that is constant on $H$ orbits such that
\begin{enumerate}
\item $Q-\pi(X)$ has codimension at least two,
\item there exists a nonempty open subset $Q'\subset Q$ such that the fiber $\pi^{-1}(q)$ of $q\in Q'$ contains exactly one orbit.
\end{enumerate}
Then $(Q,\pi)$ is a categorical quotient of $(H,X)$.  Note that it is clear that $r$ is surjective as $X\to (X,I_n)$ provides a section, so that the first criterion is satisfied. For the second criterion, we note that the open set $Q'=\GL_n(F)$ works.
\end{proof}

Note that a similar argument gives the following lemma for the quotient by both unitary actions.

\begin{Lem}\label{Lem: cat quotient Lie algebra}
Let $\A^n$ denote a $n$-dimensional affine space and let $\pi: \fu(W)_1\to \A^n$ be the morphism sending $\de(X)$ to the coefficients of the characteristic polynomial of $r(\de(X))=-XX^\tau$. Then the pair $(\A^n,\pi)$ is a categorical quotient for the $U(W_1)\times U(W_2)$ action on $\fu(W)_1$.
\end{Lem}

 Let 
\begin{equation*}%\label{regular locus}
\fu(W)_1^{iso}\cong\mathrm{Iso}_E(W_2,W_1)
\end{equation*} be the open subvariety of elements $\de(X)$ where $X:W_2\to W_1$ is a linear isomorphism; we refer to this open subvariety as the \emph{non-singular locus}. The next lemma shows that the contraction map $r$ preserves centralizers over the non-singular locus. 
\begin{Lem}\label{Lem: centralizer contraction}
The restriction of $r$ to $\fu(W)_1^{iso}$ gives an (algebraic) $\U(W_2)$-torsor. Moreover, for $\de\in \fu(W)^{iso}_1$, we have an isomorphism
\[
\phi_\de:H_\de\iso T_{r(\de)}
\]
given by $(h_1,h_2)\mapsto h_1$, where $T_{r(\de)}\subset \U(W_1)$ is the centralizer of $r(\de)$. Finally, $\phi_\de$ induces an isomorphism between
\begin{equation}\label{eqn: iso kernels}
\cald(H_\de/F)\iso \cald(T_{r(\de)}/F)
\end{equation}
where $$\cald(H_\de/F)=\ker\left(H^1(F,H_\de)\to H^1(F, U(W_1)\times U(W_2))\right)$$ and \begin{equation*}\cald(T_{r(\de)}/F)=\ker\left(H^1(F,T_{r(\de)})\to H^1(F,U(W_1))\right).\end{equation*}
\end{Lem}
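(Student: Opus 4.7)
The plan is to verify the three assertions directly using the parametrization $\delta=\delta(X)$ for an isomorphism $X:W_2\to W_1$, exploiting invertibility of $X$ throughout.

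For the torsor property, I would first check freeness: the $U(W_2)$-action sends $X\mapsto Xg^{-1}$, so $Xg^{-1}=X$ forces $g=1$ when $X$ is invertible. For transitivity, given two isomorphisms $X,X'$ with $XX^\tau=X'(X')^\tau$, the candidate element $g:=(X')^{-1}X\in\GL_E(W_2)$ satisfies
\[
g^\tau g = X^\tau((X')^\tau)^{-1}(X')^{-1}X = X^\tau(X'(X')^\tau)^{-1}X = X^\tau(XX^\tau)^{-1}X = \mathrm{id},
\]
hence lies in $U(W_2)$ and carries $\delta(X)$ to $\delta(X')$.

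For the centralizer identification, I would observe that $(h,g)\in H_\delta$ forces $g=X^{-1}hX$, so the map is determined by $h$. The unitary condition $g^\tau g=\mathrm{id}$ then reduces via an analogous computation to requiring that $h$ commute with $r(\delta)=XX^\tau$, i.e.\ $h\in T_{r(\delta)}$. Conversely, any such $h$ produces a valid pair $(h,X^{-1}hX)$, so $\phi_\delta:(h,g)\mapsto h$ is a bijective group homomorphism $H_\delta\iso T_{r(\delta)}$.

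For (\ref{eqn: iso kernels}), I would decompose the embedding $H_\delta\hookrightarrow U(W_1)\times U(W_2)$ into its two projections. Under $\phi_\delta$, the first projection is the tautological inclusion $T_{r(\delta)}\hookrightarrow U(W_1)$, while the second is the composition $T_{r(\delta)}\xrightarrow{\Ad(X^{-1})}T_{r_2(\delta)}\hookrightarrow U(W_2)$, well-defined because $X^{-1}$ conjugates $r(\delta)=XX^\tau$ to $r_2(\delta)=X^\tau X$ and lands in $U(W_2)$ by the previous step. Since $r(\delta)$ and $r_2(\delta)$ are conjugate as linear operators and hence share a characteristic polynomial, Lemma \ref{Lem: centralizers} furnishes the same intrinsic description
\[
\ker\bigl(H^1(F,T_{r(\delta)})\to H^1(F,U(W_1))\bigr) = \frakc(T_{r(\delta)}/F) = \ker\bigl(H^1(F,T_{r_2(\delta)})\to H^1(F,U(W_2))\bigr)
\]
for both kernels. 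Consequently, a class in $H^1(F,H_\delta)\cong H^1(F,T_{r(\delta)})$ dies in $H^1(F,U(W_1)\times U(W_2))=H^1(F,U(W_1))\times H^1(F,U(W_2))$ if and only if it dies in $H^1(F,U(W_1))$ alone, yielding the claimed bijection $\cald(H_\delta/F)\iso\cald(T_{r(\delta)}/F)$.

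The main difficulty is bookkeeping: the various $\tau$-adjoints across the two Hermitian spaces and the sign/order conventions in the action of Lemma \ref{make life easy} must be tracked carefully to ensure the stabilizer formula $h^{-1}Xg=X$ corresponds to the computations above. The cohomological step, by contrast, is essentially formal once Lemma \ref{Lem: centralizers} is invoked, since its description of $\frakc(T/F)$ depends only on the $F$-algebra $F[\delta]$ and is insensitive to the ambient unitary group.
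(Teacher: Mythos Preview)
Your argument is correct and supplies details that the paper does not: the paper's proof simply cites \cite[Lemma 3.9]{Leslieendoscopy} and remarks that in the non-archimedean setting one may identify $\cald(H_\delta/F)\cong\frakc(H_\delta/F)$. Your torsor computation and stabilizer identification are accurate, and your bookkeeping with the $\tau$-adjoints is sound.

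One small point on the cohomological step: Lemma~\ref{Lem: centralizers} is stated only for regular semi-simple elements, so your appeal to it formally covers just that case, whereas Lemma~\ref{Lem: centralizer contraction} is asserted for all $\delta\in\fu(W)_1^{iso}$. This is harmless in the paper, since the isomorphism~(\ref{eqn: iso kernels}) is only invoked for regular semi-simple $\delta$ via Lemma~\ref{Lem: regular semi-simple locus}. If you want a uniform argument, observe that both composites $T_{r(\delta)}\to U(W_i)\xrightarrow{\det}\U_1$ coincide (since $\det(X^{-1}hX)=\det h$), and over a non-archimedean field the determinant induces a bijection $H^1(F,U(W_i))\iso H^1(F,\U_1)$ because $H^1(F,\SU_n)$ vanishes; hence the two kernels in $H^1(F,T_{r(\delta)})$ agree without any regularity hypothesis.
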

\begin{proof}
This is proved as Lemma 3.9 of \cite{Leslieendoscopy} for a general local field. As we are currently restricting to the non-archimedean setting and identifying
\[
\cald(H_\de/F)\cong\frakc(H_\de/F),
\]
a more direct argument is given in \cite[Lemma 5.12]{Lesliedescent}.
\end{proof}

The isomorphism (\ref{eqn: iso kernels}) implies that there is a bijection of rational orbits $\calo_{st}(\de)$ of $U(W_1)\times U(W_2)$  inside the stable orbit of $\de$ and rational conjugacy classes of $\Herm(W_1)$ inside the stable conjugacy class of $r(\de)$.

For $f\in C_c^\infty(\fu(W)_1)$, and $\de\in \fu(W)_1$ a semi-simple element, we define the \textbf{relative orbital integral} of $f$ by
\begin{equation}\label{eqn: orbital int def}
\RO(f,\de) = \displaystyle\iint_{H_\de\backslash U(W_1)\times U(W_2)}f(h_1^{-1}\de h_2) {d{h}_1d{h}_2}.
\end{equation} 
Our primary tool for studying relative orbital integrals is to relate them via the contraction map to orbital integrals of \emph{non-standard test functions} on the twisted Lie algebra $\Herm(W_1)$. The next lemma explains why this is effective for regular semi-simple orbits.

\begin{Lem}\label{Lem: regular semi-simple locus} There is an inclusion $\fu(W)_1^{rss}\subset\fu(W)_1^{iso}$.
\end{Lem}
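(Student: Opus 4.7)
My plan is to leverage the equality $\fu(W)_1^{rss} = \fu(W)_1 \cap \fu(W)^{rss}$ already cited in the text (which follows from the geometric quasi-splitness of the symmetric pair $(U(W), U(W_1) \times U(W_2))$). It therefore suffices to show: if $\delta = \delta(X) \in \fu(W)_1$ is regular semi-simple in the classical Lie-algebraic sense, then $X : W_2 \to W_1$ is a linear isomorphism. I will prove this by a direct rank-and-multiplicity count.

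First I would compute $\ker \delta$ as an endomorphism of $W = W_1 \oplus W_2$. From the block form
\[
\delta = \begin{pmatrix} 0 & X \\ -X^{\tau} & 0 \end{pmatrix},
\]
a vector $(v_1,v_2)$ lies in $\ker\delta$ if and only if $X v_2 = 0$ and $X^\tau v_1 = 0$, so $\ker\delta = \ker X^\tau \oplus \ker X$. The adjoint $X^\tau$ is defined via the non-degenerate Hermitian forms $\langle\cdot,\cdot\rangle_i$ on $W_i$, so after identifying each $W_i$ with its Hermitian dual over $\Fbar$ it becomes the conjugate-transpose of $X$ and in particular has the same rank as $X$. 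Writing $r = \mathrm{rank}_{\Fbar}(X)$, this gives
\[
\dim_{\Fbar} \ker \delta \;=\; 2(n - r).
\]

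Next I would invoke the structure of classical regular semi-simple elements. Since $\delta$ is semisimple with centralizer a Cartan subalgebra of $\fu(W)$ (of rank $2n$), its characteristic polynomial on $W$ has $2n$ distinct roots over $\Fbar$, so every eigenspace is one-dimensional. In particular the $0$-eigenspace has dimension at most $1$, which combined with the previous display forces $2(n-r) \leq 1$. Since $n - r$ is a non-negative integer, this forces $r = n$, i.e.\ $X$ is an isomorphism and $\delta \in \fu(W)_1^{iso}$.

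The argument is little more than a bookkeeping calculation and I do not anticipate a serious obstacle. The only point that warrants any care is confirming that $X^\tau$ genuinely has the same rank as $X$ after base change to $\Fbar$, which is immediate from the non-degeneracy of the two Hermitian forms; and confirming that the classical regular-semi-simple hypothesis on $\delta$ really does impose simplicity of the $0$-eigenvalue, which is just the standard characterisation of regular semisimple elements in a reductive Lie algebra.
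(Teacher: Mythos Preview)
Your argument is correct. The key computation $\ker\delta = \ker X^\tau \oplus \ker X$, combined with $\mathrm{rank}(X^\tau)=\mathrm{rank}(X)$ (from non-degeneracy of the Hermitian forms), gives $\dim\ker\delta = 2(n-r)$; and the classical regular semi-simple hypothesis forces the $0$-eigenspace of $\delta$ on the $2n$-dimensional space $W$ to be at most one-dimensional, so $r=n$.

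Note that the paper itself does not supply a proof here: it merely cites \cite[Lemma~3.4]{Leslieendoscopy}. Your short direct argument is a perfectly good self-contained verification, and it uses exactly the identification $\fu(W)_1^{rss}=\fu(W)_1\cap\fu(W)^{rss}$ already invoked in the text. One cosmetic remark: the eigenvalues live over $\overline{E}$ rather than $\overline{F}$, though of course these coincide since $E/F$ is finite; you may wish to phrase it that way for clarity.
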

\begin{proof}As in the proof of Proposition \ref{Lem: cat quotient Lie algebra 1}, we again pass to the algebraic closure $F=\Fbar$ and consider the action of $\GL_n\times \GL_n$ on $\fgl_n\times \fgl_n$. The invariant of this action is $\pi(X,Y)(t)=\det(tI-XY)$ as in Lemma \ref{Lem: cat quotient Lie algebra}. 

Recalling that the infinitesimal symmetric variety $\fgl_n\times \fgl_n$ is quasi-split, the element $(X,Y)$ is regular semi-simple if and only if the element
\[
Z=\left(\begin{array}{cc}&X\\Y&\end{array}\right)\in \fgl_{2n}(F)
\] is regular semi-simple. But $Z$ is regular semi-simple if and only if $\det(tI_{2n}-Z)$ has distinct roots. Now a simple exercise in linear algebra shows that
\[
\pi(X,Y)(t^2)=\det(tI_{2n}-Z).
\]
Thus, $Z\in \fgl_{2n}(F)^{rss}$ is possible only if $0$ is not a root of $\pi(X,Y)$, implying the lemma.
\end{proof}

This inclusion allows us to express relative orbital integrals at regular semi-simple points in terms of classical orbital integrals. Setting $\Omega:=r(\fu(W)_1^{iso})$, we see that $r$ gives a submersion from $\fu(W)_1^{iso}$ to $\Omega$. As in \cite[Section 1]{shalikagerms}, this implies that for $f\in C_c^\infty(\fu(W)^{iso}_1)$ and for $x\in \fu(W)_1$ regular semi-simple, the integral
\begin{equation}\label{eqn: pushforward operation}
    r_!(f)(r(x)) := \displaystyle\int_{U(W_2)}f(xu)du,
\end{equation}
converges and gives an element of $C^\infty_c(\Omega)$, and the induced map $C_c^\infty(\fu(W)_1^{iso})\to C^\infty_c(\Omega)$ is surjective. By Lemma \ref{Lem: regular semi-simple locus}, if we set $\Omega^{reg}=r(\fu(W)_1^{rss}),$ we get an induced (surjective) operator $C_c^\infty(\fu(W)_1^{rss})\to C^\infty_c(\Omega^{reg})$.

While the functions we will consider are not supported in $\fu(W)^{rss}_1$, each regular semi-simple orbit gives a closed subset of $\fu(W)^{rss}_1$, so that we may truncate any such function for the purpose of computing a particular orbital integral. Analyzing the behavior of the contraction of a particular function not supported in $\fu(W)^{rss}_1$ occupies Section \ref{Section: initial red}.
\begin{Lem}\label{Lem: orbital reduction} Suppose that $f\in C^\infty_c(\fu(W)_1)$ and $x\in \fu(W)_1^{rss}$. Then the relative orbital integral \eqref{eqn: orbital int def} converges, and we have the equality $$\RO(f,x) =\int_{T_{r(x)}\backslash U(W_1)}r_!(f)(g^{-1}r(x)g)d{g}=: \Orb(r_!(f),r(x)).$$
\end{Lem}
\begin{proof}
If $x$ is a regular semi-simple element, then everything is clearly absolutely convergent. By Lemma \ref{Lem: regular semi-simple locus}, we know that $x\in \fu(W)_1^{iso}$, so that replacing $f$ by $f\cdot \bfun_{U}$ for a open neighborhood $x\in U\subset \fu(W)_1^{rss}$ containing the $U(W_1)\times U(W_2)$-orbit of $x$, we see that $r_!(f)$ is well-defined on $U(W_1)\cdot r(x)$.  Lemma \ref{Lem: centralizer contraction} now implies that
\[%begin{equation*}\label{eqn: reduction 1}
\RO(f,x) = \displaystyle\int_{T_{r(x)}\backslash U(W_1)}r_!(f)(g^{-1}r(x)g)d{g}.\qedhere
\]%end{equation*}
\end{proof}
\subsection{Endoscopy for the twisted Lie algebra}
Lemmas \ref{Lem: centralizer contraction} and \ref{Lem: orbital reduction} allow us to utilize the contraction map to define endoscopic symmetric varieties for $\fu(W)_1$ and the associated transfer factors in terms of those for the twisted Lie algebra $\Herm(W_1).$ We briefly recall the necessary facts from this theory.  We refer the reader to \cite[Chapter 3]{RogawskiBook} or \cite{Xiao} for proofs of these facts.
\subsubsection{Matching}\label{Section: matching}
An elliptic endoscopic datum for $\Herm(W_1)$ is the same as a datum for the group $U(W_1),$ namely a triple $(U(V_a)\times U(V_b),s,\xi)$  where $a+b=n$, with $s\in \hat{U}(W_1)$ a semi-simple element of the Langlands dual group of $U(W_1)$, and an embedding $$\xi:\hat{U}(V_a)\times\hat{U}(V_b)\hra \hat{U}(W_1)$$ identifying $\hat{U}(V_a)\times\hat{U}(V_b)$ with the neutral component of the centralizer of $s$.

Fixing such a datum, we consider the endoscopic Lie algebra $\Herm(V_a)\oplus \Herm(V_b)$. Let $y\in \Herm(W_1)$ and $(y_a,y_b)\in\Herm(V_a)\oplus \Herm(V_b)$ be regular semi-simple. We recall the notion of matching orbits. For this, we first recall the notion of Jacquet--Langlands transfer between two non-isomorphic Hermitian spaces $W$ and $W'$. If we identify the underlying vector spaces (but not necessarily the Hermitian structures)
\begin{equation*}%\label{eqn: iso not Herm}
  W\cong E^n\cong W',  
\end{equation*}
we have embeddings 
\[
\Herm(W),\:\Herm(W')\hra\fgl_n(E).
\]
Then $\de\in \Herm(W)$ and $\de'\in \Herm(W')$ are said to be \textbf{Jacquet--Langlands transfers} if they are $\GL_n(E)$-conjugate in $\fgl_n(E)$.  This is well defined since the above embeddings are determined up to $\GL_n(E)$-conjugacy. Note that if $\de$ and $\de'$ are Jacquet--Langlands transfers, then
\[
\de'=\Ad(g)(\de)
\]
for some $g\in \GL(W)$ and we obtain a well-defined cohomology class
\[
\inv(\de,\de')=[\sig\in \Gal(\Fbar/F)\mapsto g^{-1}\sig(g)]\in H^1(F,T_\de)
\]
extending the invariant map on $\cald(T_\de/F)$.

\begin{Def}\label{Def: endoscopic matching}
In the case that $W'=W_{a,b}:=V_a\oplus V_b$, we have an embedding
 $$\phi_{a,b}:\Herm(V_a)\oplus\Herm(V_b)\hra \Herm(W_{a,b}),$$ well defined up to conjugation by $U(W_{a,b})$. We say that $\de$ and $(\de_a,\de_b)$ are \textbf{transfers (or are said to match)} if $\de$ and $\phi_{a,b}(\de_a,\de_b)$ are Jacquet--Langlands transfers in the above sense.
\end{Def}

For later purposes, if $W\cong W_{a,b}$, we say that a matching pair $y$ and $(\de_a,\de_b)$ are a \textbf{\emph{nice matching pair}} if we may choose $\phi_{a,b}$ so that
 \[
 \phi_{a,b}(\de_a,\de_b) = \de.
 \]

\subsubsection{Orbital integrals}\label{Section:endoscopy character}
For $y\in \Herm(W_1)^{rss}$ and $f\in C_c^\infty(\Herm(W_1))$, we define the orbital integral
\[
\Orb(f,y)=\int_{T_y\backslash U(W_1)}f(g^{-1}y g)d{g},
\]
To an elliptic endoscopic datum $(U(V_a)\times U(V_b),s,\xi)$ and regular semi-simple element $y\in \Herm(W_1)$, there is a natural character (see \cite[Chapt. 3]{RogawskiBook}, for example)
\[
\ka:\cald(T_y/F)\to \cc^\times.
\]
Since we are in the non-archimedean setting, the set of rational conjugacy classes $\calo_{st}(y)$ in the stable conjugacy class of $y$ form a $\cald(T_y/F)$-torsor, and we have a map
\begin{equation}\label{invariant cohom}
\inv(y,-):\calo_{st}(y)\iso \cald(T_y/F)
\end{equation}trivializing the torsor by fixing the base point $y$. We then form the $\ka$-orbital integral
\[
\Orb^\ka(f,y) = \sum_{y'\sim_{st}y}\ka(\inv(y,y'))\Orb(f,y').
\]
 When $\ka=1$ is trivial, write $\SO=\Orb^\ka.$

In our case, the character $\ka$ is easy to describe. For matching elements $y$ and $(y_a,y_b)$,
\begin{equation}\label{eqn: cohom decomp}
H^1(F,T_y)=\prod_{S_1}\zz/2\zz=\prod_{S_1(a)}\zz/2\zz\times\prod_{S_1(b)}\zz/2\zz=H^1(F,T_{y_a}\times T_{y_b}),
\end{equation}
where the notation indicates which elements of $S_1$ arise from the torus $T_{y_a}$ or $T_{y_b}$.
\begin{Lem}
 Consider the character $\tilde{\ka}: H^1(F,T_y)\to \cc^\times$ such that on each $\zz/2\zz$ factor arising from $S_1(a)$, $\tilde{\ka}$ is the trivial map, while it is the unique nontrivial map on each $\zz/2\zz$-factor arising from $S_1(b)$. Then $$\ka=\tilde{\ka}|_{\cald(T_y/F)}.$$ 
\end{Lem}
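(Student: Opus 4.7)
The goal is to identify the endoscopic character $\kappa$, originally defined via the dual group/Langlands–Shelstad recipe, with the explicit product character $\tilde\kappa$ on the cohomological decomposition \eqref{eqn: cohom decomp}. The plan is to unwind the definition of $\kappa$ in terms of the semisimple element $s$ and the admissible embedding $T_{y_a}\times T_{y_b}\hookrightarrow U(W_1)$ coming from the matching, and to verify that it agrees factor-by-factor with $\tilde\kappa$.

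First, I would pin down a concrete representative of the endoscopic datum. Since $\hat U(W_1)\cong \GL_n(\cc)$ with the standard involution, and $\hat U(V_a)\times \hat U(V_b)$ embeds as the neutral component of $Z_{\hat U(W_1)}(s)$, we may take $s$ to be (the image in $\hat U(W_1)/Z(\hat U(W_1))$ of) the diagonal element $\mathrm{diag}(\underbrace{1,\dots,1}_{a},\underbrace{-1,\dots,-1}_{b})$. The endoscopic character attached to $(U(V_a)\times U(V_b),s,\eta)$ is, by the Langlands–Shelstad recipe, the character of $H^1(F,T_y)$ obtained by transporting $s$ through an admissible isomorphism $\hat T_{y_a}\times \hat T_{y_b}\cong \hat T_y$ and applying Tate–Nakayama.

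Next, I would make the admissible embedding explicit using the matching. By \S\ref{Section: matching}, matching means that $y$ and $(y_a,y_b)$ have the same characteristic polynomial, so the $F$-algebra decompositions coincide:
\[
F[y]\;=\;F[y_a]\times F[y_b]\;=\;\prod_{i\in S_1(a)\cup S_2(a)}F_i\;\times\;\prod_{i\in S_1(b)\cup S_2(b)}F_i .
\]
Applying the description of centralizers in Lemma \ref{Lem: centralizers}, this gives a canonical identification $T_y\cong T_{y_a}\times T_{y_b}/Z$ (for an appropriate center) and hence on cohomology
\[
H^1(F,T_y)\;=\;\prod_{i\in S_1(a)}\zz/2\zz\;\times\;\prod_{i\in S_1(b)}\zz/2\zz\;=\;H^1(F,T_{y_a})\times H^1(F,T_{y_b}),
\]
which is exactly \eqref{eqn: cohom decomp}. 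The dual torus $\hat T_y$ decomposes compatibly, and under this decomposition $s$ acts trivially on the $\hat T_{y_a}$-part and by $-1$ on the $\hat T_{y_b}$-part.

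Finally, I would compute the Tate–Nakayama pairing factor-by-factor. For $i\in S_1$, the corresponding $\zz/2\zz$-factor of $H^1(F,T_y)$ is the kernel/cokernel of the norm $\Nm_{E_i/F_i}$, and the pairing of $s$ with this factor reduces to evaluating the image of $s$ in the corresponding component of $\hat T_y$. By the previous step, this image is $+1$ when $i\in S_1(a)$ (giving the trivial character) and $-1$ when $i\in S_1(b)$ (giving the unique nontrivial character of $\zz/2\zz$). Restricting to $\cald(T_y/F)\subset H^1(F,T_y)$ then yields $\kappa=\tilde\kappa|_{\cald(T_y/F)}$.

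The main obstacle is not any single computation but rather the careful bookkeeping: one must verify that the admissible embedding $\hat T_{y_a}\times \hat T_{y_b}\hookrightarrow \hat T_y$ induced by the $\GL_n(E)$-conjugacy of Section \ref{Section: matching} is the one implicit in the endoscopic datum (i.e.\ that no extra sign or Weyl-element twist is introduced), and that the Tate–Nakayama pairing is normalized consistently with the cohomological identifications used in \eqref{eqn: cohom decomp}. Once this is done, the factorwise computation above is immediate and the identification $\kappa=\tilde\kappa|_{\cald(T_y/F)}$ follows.
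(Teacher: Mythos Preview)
The paper does not actually prove this lemma; it is stated without proof, relying on the reference to \cite[Chapter 3]{RogawskiBook} given just before. Your approach --- taking $s=\mathrm{diag}(1,\dots,1,-1,\dots,-1)$, transporting it along the admissible embedding of tori induced by the matching, and reading off the factor-by-factor pairing via Tate--Nakayama --- is exactly the standard computation one finds in Rogawski's treatment and is correct. Your caveat about checking normalizations is well placed, but once the choice of $s$ is fixed there is no ambiguity, since any two admissible embeddings differ by the Weyl group action and the resulting character on $\cald(T_y/F)$ is unchanged.
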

\subsubsection{Smooth transfer}\label{Section: transfer factor}
The final notion is the \emph{transfer factor} of Langlands--Shelstad and Kottwitz. This is a function
\[
\De:[\Herm(V_a)\oplus \Herm(V_b)]^{rss}\times  \Herm(W_1)^{rss}\to \cc.
\]
The two important properties are
\begin{enumerate}
    \item $\De((\de_a,\de_b),\de) = 0$ if $\de$ does not match $(\de_a,\de_b),$ and
    \item if $\de$ is stably conjugate to $\de'$, then 
    \[
    \De((\de_a,\de_b),\de)\Orb^\ka(\de,f) = \De((\de_a,\de_b),\de')\Orb^\ka(\de',f).
    \]
\end{enumerate}
While the general definition, given in \cite{LanglandsShelstad1} for the group case and \cite{kottwitztransfer} in the quasi-split Lie algebra setting, is subtle, our present setting enjoys the following simplified formulation (cf. \cite[Appendix A]{Lesliedescent}). While our analysis of orbital integrals depend only on the formal properties above and Theorem \ref{Thm: smooth transfer lie algebra} below, we include this formulation for the convenience of the reader.

When $\de\in \Herm(W)$ and $(\de_a,\de_b)\in \Herm(V_a)\oplus \Herm(V_b)$ do not match, we set
\[
\De((\de_a,\de_b),\de)=0.
\]
Now suppose that $\de$ and $(\de_a,\de_b)$ match. We define the relative discriminant
\[
D(\de)=\prod_{x_a,x_b}(x_a-x_b),
\]
where $x_a$ (resp. $x_b$) ranges over the eigenvalues of $\de_a$ (resp. $\de_b$) in $\Fbar$. 
\begin{Rem}
This is precisely the quotient of the standard Weyl discriminants that occurs in the factor $\De_{IV}$ in \cite{LanglandsShelstad1}.
\end{Rem}
Recall our notation $W_{a,b}=V_a\oplus V_b$ and first assume that $W\cong W_{a,b}$ and that $\de$ and $(\de_a,\de_b)$ are a nice matching pair. In this case, the transfer factor is then given by
\begin{equation}\label{nice matching}
\De((\de_a,\de_b),\de):=\eta_{E/F}(D(\de))|D(\de)|_F,
\end{equation}
where $\eta_{E/F}$ is the quadratic character associated to $E/F$.

Now for any matching pair $\de$ and $(\de_a,\de_b)$, let
\[
\de'=\phi_{a,b}(\de_a,\de_b)\in \Herm(W_{a,b}).
\]
As discussed in Section \ref{Section: matching}, $\de$ and $\de'$ are Jacquet--Langlands transfers of each other and we set
\[
\De((\de_a,\de_b),\de) = \kappa(\inv(\de,\de'))\eta_{E/F}(D(\de))|D(\de)|_F,
\]
where $\ka:H^1(F,T_\de)\to \cc^\times$ is the character arising from the datum $(U(V_a)\times U(V_b),s,\eta)$ and $\inv$ is the extension of the invariant map discussed in Section \ref{Section: matching}. 

A pair of functions 
\[f\in C_c^\infty(\Herm(W_1))\:\text{ and }\:f_{a,b}\in C^\infty_c(\Herm(V_a)\oplus \Herm(V_b))
\]
are said to be smooth transfers (or matching functions) if the following conditions are satisfied:
\begin{enumerate}
\item for any matching regular semi-simple elements $y$ and $(y_a,y_b)$,
\begin{equation*}
\SO(f_{a,b},(y_a,y_b))= \Delta((y_a,y_b),y)\Orb^\kappa(f,y);
\end{equation*}
\item if there does not exist $y$ matching $(y_a,y_b)$, then
\begin{equation*}
\SO(f_{a,b},(y_a,y_b))= 0.
\end{equation*}
\end{enumerate}
The following theorem was first shown by combining \cite{LaumonNgo}, \cite{WaldsCharacteristic}, and \cite{Waldstransfert}; we will outline an alternative proof due to \cite{Xiao} in Section \ref{Section: nilpotent Weil}.
\begin{Thm}\label{Thm: smooth transfer lie algebra}
For any $f\in C_c^\infty(\Herm(W_1))$, there exists a smooth transfer $f_{a,b}\in C^\infty_c(\Herm(V_a)\oplus \Herm(V_b))$.
\end{Thm}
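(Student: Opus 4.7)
The plan is to follow Xiao's approach, which bypasses both Laumon--Ng\^o and Waldspurger and reduces smooth transfer for $\Herm(W_1)$ to the Jacquet--Rallis transfer combined with parabolic descent on $\fgl_n(F)$. The idea is that endoscopic matching on the unitary side (Section \ref{Section: matching}) is organized by $\GL_n(E)$-conjugacy of characteristic polynomials, and this invariant is precisely what the Jacquet--Rallis transfer transports to the general linear Lie algebra, where the endoscopic picture collapses into the elementary operation of block-decomposition.

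Concretely, given $f \in C_c^\infty(\Herm(W_1))$, I would first produce $\tilde f \in C_c^\infty(\fgl_n(F))$ whose orbital integrals match those of $f$ under the Lie-algebra version of the Jacquet--Rallis transfer. Since an endoscopic datum for $\Herm(W_1)$ with parameter $(a,b)$ corresponds on the linear side to block-diagonal loci $\fgl_a(F) \oplus \fgl_b(F) \subset \fgl_n(F)$, I would next apply parabolic descent along the parabolic with Levi $\GL_a \times \GL_b$ to obtain $\tilde f_{a,b} \in C_c^\infty(\fgl_a(F) \oplus \fgl_b(F))$. Finally, a factor-wise application of Jacquet--Rallis in the reverse direction yields the candidate transfer $f_{a,b} \in C_c^\infty(\Herm(V_a) \oplus \Herm(V_b))$.

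The verification that this $f_{a,b}$ is a genuine endoscopic transfer of $f$ reduces to the identity
\[
\SO(f_{a,b}, (y_a, y_b)) = \Delta((y_a, y_b), y) \, \Orb^{\ka}(f, y)
\]
for matching regular semisimple pairs. This splits into two pieces: the cancellation of the two Jacquet--Rallis transfer factors against the unitary transfer factor $\Delta$, and a combinatorial identity matching the $\ka$-sum on the unitary side against the product of stable orbital integrals on the two Levi factors. The latter is controlled by the cohomology description of Lemma \ref{Lem: centralizers}, which decomposes $H^1(F, T_y) = \prod_{S_1(a)} \zz/2\zz \times \prod_{S_1(b)} \zz/2\zz$ with $\ka$ acting trivially on the first factor and by signs on the second, exactly mirroring the parabolic-descent combinatorics on the linear side.

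The main obstacle, and where Xiao's genuinely new input is required, is promoting an identity of regular semisimple orbital integrals to an honest element of $C_c^\infty$ with the correct matching everywhere. Parabolic descent and the two Jacquet--Rallis transfers are defined through orbital integrals and only determine $f_{a,b}$ modulo functions whose regular semisimple orbital integrals vanish; extracting a compactly supported smooth function whose matching identities persist across the boundary of the regular semisimple locus requires a systematic analysis of generalized nilpotent orbital integrals in the Jacquet--Rallis context. This is precisely the technical content I would import from Xiao's work, and it is what turns the formal three-step recipe above into a construction producing a genuine smooth transfer.
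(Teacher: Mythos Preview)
Your outline has the right shape---Jacquet--Rallis transfer, parabolic descent on the linear side, JR transfer back---and that is indeed Xiao's route as sketched in Section~\ref{Section: first reduction}. But there is a structural gap in the recipe as you state it. You propose to match $f \in C_c^\infty(\Herm(W_1))$ directly with some $\tilde f \in C_c^\infty(\fgl_n(F))$ via Jacquet--Rallis. No such transfer exists: the JR transfer (in either form of Section~\ref{Section: JR theory}) compares $U(V)$-orbital integrals on $\Herm(V)\times V$ with $\GL_n(F)$-orbital integrals on $\fgl_n(F)\times F^n\times F_n$; the auxiliary vector is essential. Endoscopic orbital integrals on $\Herm(W_1)$ are $U(W_1)$-integrals with no auxiliary vector, so the group actions do not line up. Xiao's actual construction first lifts $f$ to $F\in C_c^\infty(\Herm(W_1)\times W_1)$ with $ev_0(F)=f$, applies JR transfer and parabolic descent \emph{on the extended spaces}, and only then returns via $ev_0$. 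The passage to $w=0$ lands on a \emph{non-regular} orbit of the extended space, and this is exactly why the generalized nilpotent orbital integrals enter: Theorem~\ref{Thm: Xiao's result} is the germ expansion identifying the limit of JR orbital integrals as the vector degenerates with a $\ka$-orbital integral on $\Herm(V)$. So the role of the nilpotent analysis is not, as you suggest, to extract a $C_c^\infty$ representative with good boundary behavior---it is to relate the $ev_0$ degeneration on the extended space to the $\ka$-sum on $\Herm(W_1)$.

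One further omission: the JR transfer back on the Levi factors produces \emph{four} functions $f^{a,b}_{\al,\be}$ indexed by pairs of Hermitian forms $(\al,\be)\in\calv_a\times\calv_b$; to assemble a single $f_{a,b}$ on $\Herm(V_a)\oplus\Herm(V_b)$ one applies Jacquet--Langlands transfer to the split forms and sums with signs $(-1)^{k(\al,\be)}$ (see the proof of Proposition~\ref{Prop: first reduction}). Correspondingly, the $\ka$-orbital integral identity is verified not through $\cald(T_y/F)$ alone but through the extended torsor of Proposition~\ref{Prop: extended torsor}, which incorporates orbits on both Hermitian spaces and matches the full $H^1(F,T_y)$ against the $\Sigma$-indexing of Xiao's germ expansion.
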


\subsection{Relative endoscopy for $(U(W),U(W_1)\times U(W_2))$}\label{Section: relative endoscopy}
 Recall that $\calv_n$ denotes our fixed set of representatives of the $\GL_n(E)$-orbits on $X_n$. Since we only consider the non-archimedean setting, $|\calv_n|=2$ for any $n$; we always assume that $I_n\in \calv_n$.

In \cite{Leslieendoscopy}, we defined a \emph{relative elliptic endoscopic datum} of $\fu(W)_1$ to be a quintuple
\[
\Xi=(U(V_a)\times U(V_b), s, \xi,\al,\be),
\]
where $(U(V_a)\times U(V_b), s, \xi)$ is an elliptic endoscopic datum for $U(W_1)$ and $\al\in\calv_a$ and $\be\in\calv_b$ are Hermitian forms on $E^a$ and $E^b$ respectively. We denote $V_\al = (E^a,\al)$ and $V_\be=(E^b,\be)$. For such a datum, we consider the Lie algebras
\[
\fu(V_a\oplus V_\al)\text{   and    }\fu(V_b\oplus V_\be),
\] and associated symmetric pairs
\[
\left(U(V_a)\times U(V_\al),\fu(V_a\oplus V_\al)_1\right) \text{ and }\left(U(V_b)\times U(V_\be),\fu(V_b\oplus V_\be)_1\right).
\]
  The direct sum of these symmetric pairs gives an \textbf{endoscopic (infinitesimal) symmetric pair} associated to the datum. This space comes equipped with the contraction map
\begin{align*}
r_{\al,\be}:\fu(V_a\oplus V_\al)_1\oplus \fu(V_b\oplus V_\be)_1&\lra \Herm(V_a)\oplus \Herm(V_b)\\
(\de_a,\de_b)&\longmapsto (r(\de_a),r(\de_b))
\end{align*}

We say that a regular semi-simple element $\de\in \fu(W)_1^{rss}$  \textbf{matches} the pair $$(\de_a,\de_b)\in [\fu(V_a\oplus V_\al)_1\oplus\fu(V_b\oplus V_\be)_1]^{rss}$$ if $r(\de)\in \Herm(W_1)$ and $r_{\al,\be}(\de_a,\de_b)\in \Herm(V_a)\oplus\Herm(V_b)$ match in the sense of Section \ref{Section: matching}.

 For matching elements $(\de_a,\de_b)$ and $\de$, we define the transfer factor
\begin{equation*}%\label{eqn: transfer factor}
\De_{rel}((\de_a,\de_b),\de):=\De(r_{\al,\be}(\de_a,\de_b), r(\de)),
\end{equation*}
where the right-hand side is the Langlands--Shelstad--Kottwitz transfer factor for the twisted Lie algebra from Section \ref{Section: transfer factor}. 
\subsubsection{Smooth transfer}

 Fix $\de\in \fu(W)_1^{rss}$ and let $\Xi$ be a relative endoscopic datum. Combining Lemma \ref{Lem: centralizer contraction} with the construction of Section \ref{Section:endoscopy character} gives a character $$\kappa:\cald(H_\de/F)\to \cc^\times,$$ with which we define the relative $\kappa$-orbital integral to be
\begin{equation*}\label{eqn: kappa orbital integral}
\RO^\kappa(f,\de) := \sum_{\de'\sim_{st}\de}\kappa(\inv(\de,\de'))\RO(f,\de'),
\end{equation*}
where $\de'$ runs over the set of rational orbits in $\fu(W)_1$ in the stable orbit of $\de$ and $$\inv(\de,\de'):=\inv(r(\de),r(\de')).$$ Here, $\inv(r(\de),-)$ is the invariant map (\ref{invariant cohom}). When $\ka=1,$ this is called the stable relative orbital integral and denoted by $\SRO=\RO^1$.

\begin{Def}\label{Def: transfer}
We say that $f\in C_c^\infty(\fu(W)_1)$ and $f_{\al,\be}\in C^\infty_c(\fu(V_a\oplus V_\al)_1\oplus \fu(V_b\oplus V_\be)_1)$ match (or are smooth transfers) if the following conditions are satisfied:
\begin{enumerate}
\item For any matching orbits $\de\in \fu(W)_1^{rss}$ and $(\de_a,\de_b)\in [\fu(V_a\oplus V_\al)_1\oplus \fu(V_b\oplus V_\be)_1]^{rss}$, we have an identify
\begin{equation}\label{relative transfer}
\SRO(f_{\al,\be},(\de_a,\de_b))= \Delta_{rel}((\de_a,\de_b),\de)\RO^\kappa(f,\de).
\end{equation}
\item If there does not exist  $\de$ matching $(\de_a,\de_b)$, then 
$
\SRO(f_{\al,\be},(\de_a,\de_b))= 0.
$
\end{enumerate}
\end{Def}

We conjectured that smooth transfers always exist in \cite[Conjecture 4.4]{Leslieendoscopy}, and showed that transfers exist for many test functions.  
\begin{Rem}
Recall that $\fu(W)_1$ has two natural contraction maps (\ref{eqn: choice of contraction}). For the reader concerned with canonicity, we remark that it is straightforward to show using the properties of the endoscopic transfer, Jacquet--Rallis transfer, and the Langlands--Shelstad--Kottwitz transfer factors  that these definitions are independent of our choice of contraction $r=r_1.$
\end{Rem}

%%%%%%%%%%%%%%%%%%%%%%%%%%%%%%
\subsection{The endoscopic fundamental lemma}
We now assume that $E/F$ is an unramified extension of non-archimedean local fields of characteristic zero. Suppose that $V_n=W_1=W_2$ is split, and let $\Lam_n\subset V_n$ be a self-dual lattice. In this case,
\[
\fu(W)_1= \Hom_E(V_n,V_n) =\End(V_n)
\] 
and the ring of endomorphisms $\End(\Lam_n)\subset \End(V_n)$ of the lattice $\Lam_n$ is a compact open subset. Let $\bfun_{\End(\Lam_n)}$ denote the indicator function for this subset. This also induces a hyperspecial maximal compact subgroup $U(\Lam_n)\subset U(V_n)$.

 Now suppose that $\Xi$ is an elliptic relative endoscopic datum. Under our assumptions, we have $V_n\cong V_a\oplus V_b$ and we fix an isomorphism by imposing $\Lam_n=\Lam_a\oplus \Lam_b$ for fixed self-dual lattices $\Lam_a\subset V_a$ and $\Lam_b\subset  V_b$. Our measures conventions in Section \ref{measures} ensure that the given hyperspecial maximal subgroups of $U(V_n)\times U(V_n)$ and 
 \[
 (U(V_a)\times U(V_a))\times (U(V_b)\times U(V_b))
 \]each have volume $1$.

The following was conjectured in \cite{Leslieendoscopy}, and is the main result of this paper.
\begin{Thm}\label{Thm: full fundamental lemma}(Relative fundamental lemma)
 If $(\al,\be) = (I_a,I_b)$, the functions $\bfun_{\End(\Lam_n)}$ and $\bfun_{\End(\Lam_a)}\otimes\bfun_{\End(\Lam_b)}$ match. Otherwise, $\bfun_{\End(\Lam_n)}$ matches $0$.
\end{Thm}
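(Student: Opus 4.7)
The plan is to follow the chain of reductions summarized in Figure \ref{Fig 1}, translating a statement about non-standard test functions on $\End(V_n)$ into a statement in the Lie-algebra Jacquet--Rallis framework which can be attacked by global trace-formula methods. First I would apply the contraction map $r_n:\End(V_n)\to \Herm(V_n)$ and Lemmas \ref{Lem: centralizer contraction} and \ref{Lem: orbital reduction} to rewrite the relative $\kappa$-orbital integrals of $\bfun_{\End(\Lam_n)}$ as ordinary endoscopic $\kappa$-orbital integrals on the twisted Lie algebra $\Herm(V_n)$ of the test function $r_{n,!}(\bfun_{\End(\Lam_n)})$, and similarly on the endoscopic side. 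A direct computation, using that $\End(\Lam_n)$ is $U(\Lam_n)\times U(\Lam_n)$-invariant and admits the right-multiplication action of $\GL_n(\calo_E)$ on isomorphisms, shows that $r_{n,!}(\bfun_{\End(\Lam_n)})$ is (up to normalization) the image of the unit element under the embedding $-\ast \bfun_0:\calh_{K_{n,E}}(\GL_n(E))\hra \calh_{K_{n,E}}(X_n)\hra C_c^\infty(\Herm(V_n))$. Thus Theorem \ref{Thm: full fundamental lemma} becomes a special case of Proposition \ref{Thm: first reduction intro} applied to $\varphi=\bfun_{K_{n,E}}$, and it suffices to construct the Hecke-algebra morphism $\xi_{(a,b)}$ and prove the matching for all of $\calh_{K_{n,E}}(\GL_n(E))$.

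The construction of $\xi_{(a,b)}$ should come from a non-tempered parabolic-induction functor at the level of Satake transforms; concretely, one uses the decomposition of the dual group of the symmetric space and the combinatorics of the isomorphism in Hironaka's theorem to define $\xi_{(a,b)}$ as the morphism that intertwines restriction of Satake parameters along $\hat{\GL}_a\times \hat{\GL}_b\hra \hat{\GL}_n$ with the appropriate twist. With $\xi_{(a,b)}$ in hand, the next reduction is Proposition \ref{Thm: second reduction intro}: after tensoring with $\bfun_{\Lam_n}$ on $\Herm(V_n)\times V_n$ and $\bfun_{\call_n}$ on $\fgl_n(F)\times F^n\times F_n$, one applies Jacquet--Rallis transfer. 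Here I would invoke Xiao's analysis \cite{Xiao} of generalized nilpotent orbital integrals: parabolic descent of the matched orbital integrals on $\fgl_n(F)\times F^n\times F_n$ to the Levi $\prod_{i=a,b}\fgl_i(F)\times F^i\times F_i$, followed by taking a limit to the nilpotent orbits corresponding to $ev_0$, recovers exactly the endoscopic matching for $\Herm(V_n)\to \Herm(V_a)\oplus\Herm(V_b)$. The key input is that the Jacquet--Rallis transfer commutes with parabolic descent on the general linear side and with this limit procedure, matching the evaluation-at-zero operation on the unitary side.

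Next, to remove the artificial characteristic functions $\bfun_{\Lam_n}$ and $\bfun_{\call_n}$, I would invoke the $\SL_2(F)$-action via the Weil representation on $C_c^\infty(\Herm(V_n)\times V_n)$ and $C_c^\infty(\fgl_n(F)\times F^n\times F_n)$, as used by Beuzart-Plessis \cite{beuzart2019new} and Zhang \cite{zhang2019weil}. Applying an appropriate Weyl-element of $\SL_2(F)$ converts a Schwartz function on the extra vector factor to its Fourier transform, and since $\bfun_{\Lam_n}$ and $\bfun_{\call_n}$ are self-dual (they are characteristic functions of self-dual lattices), the local Weil-representation intertwining identities reduce the comparison with the vector factor to the comparison without it. This yields Theorem \ref{Thm: third reduction intro}, in which the test functions $(\varphi\ast \bfun_0)$ on $\Herm(V_n)$ and $BC(\varphi)$ on $\fgl_n(F)$ are asserted to transfer for the Lie-algebra Jacquet--Rallis matching.

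The main obstacle, and where genuinely new ideas are required, is the proof of Theorem \ref{Thm: third reduction intro}. Here the Lie-algebra statement has no obvious spectral content, so one cannot directly invoke the spectral identities of Feigon--Lapid--Offen or Jacquet. The plan, carried out in Part \ref{Part 2}, is to set up a new pair of global trace formulas, the \emph{twisted Jacquet--Rallis relative trace formulas}, in which the roles of $U(V_n)$ and $\GL_n$ are exchanged relative to the usual Jacquet--Rallis comparison; the underlying global functoriality is quadratic base change, which dictates the appearance of the base-change homomorphism $BC$. After establishing the geometric comparison of these two trace formulas modulo the fundamental lemma for the full spherical Hecke algebra at unramified places (Theorem \ref{Thm: base change Twisted Jacquet--Rallis}), one matches the spectral sides using the known results of \cite{FLO,JacquetQuasisplit} on unitary periods and base change; comparing orbital integrals at a place where the test functions are chosen to isolate the orbits of interest then yields Theorem \ref{Thm: third reduction intro}, and hence Theorem \ref{Thm: full fundamental lemma}. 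The hardest technical point will be proving the Hecke-algebra fundamental lemma for this twisted Jacquet--Rallis comparison and controlling the non-elliptic contributions to the relative trace formulas sufficiently well to extract pointwise matching of orbital integrals from the global spectral identity.
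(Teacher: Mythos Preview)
Your overall strategy matches the paper's approach closely, but there is a genuine gap in your first reduction step. You claim that $r_{n,!}(\bfun_{\End(\Lam_n)})$ is, up to normalization, the image of the unit element $\bfun_{K_{n,E}}$ under $-\ast\bfun_0$. This is false: the pushforward $r_{n,!}(\bfun_{\End(\Lam_n)})$ is not even compactly supported on $X_n$, because $\End(\Lam_n)$ contains invertible elements of arbitrarily large determinant valuation (e.g.\ $\vp^k I_n$). What is true is that $r_{n,!}(\bfun_{\End(\Lam_n)})$ is \emph{almost} compactly supported: its restriction $\Phi^n_{2d}$ to each level set $\{|\det|=q^{-2d}\}$ lies in $\calh_{K_{n,E}}(X_n)$, and one computes (via Lemma~\ref{Lem: elementary lemma}) that $\Phi^n_{2d}=\bfun_d\ast\bfun_0$ where $\bfun_d=\sum_{\lam\in\mathrm{P}^+_{n,d}}\bfun_{K_{n,E}\vp^\lam K_{n,E}}$. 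So Theorem~\ref{Thm: full fundamental lemma} is not the special case $\varphi=\bfun_{K_{n,E}}$ of Proposition~\ref{Thm: first reduction intro}; it is the countable family of cases $\varphi=\bfun_d$ for all $d\geq 0$.

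This matters because, having proved Proposition~\ref{Thm: first reduction intro} for all $\varphi$, you still need to verify the combinatorial identity
\[
\xi_{(a,b)}(\bfun_d)=\sum_{d_a+d_b=d}\bfun_{d_a}\otimes\bfun_{d_b}
\]
in $\calh_{K_{a,E}}(\GL_a(E))\otimes\calh_{K_{b,E}}(\GL_b(E))$. This is a nontrivial Satake computation (carried out in Proposition~\ref{Prop: initial reduction} using Lemma~\ref{Lem: please be true} and an explicit formula for $Sat(\bfun_d)$), and it is precisely here that the specific non-tempered twist in the definition of $\xi_{(a,b)}$ earns its keep. Your proposal omits this step entirely. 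The remaining reductions (Xiao's argument, the Weil-representation trick, and the twisted Jacquet--Rallis comparison in Part~\ref{Part 2}) are described accurately and follow the paper.
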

 The proof of this statement follows a series of reductions, each of which changes the orbital integrals involved and the comparison needed. These reductions take up the rest of Part \ref{Part 1}, culminating in Theorem \ref{Thm: JR fundamental lemma for algebra}. %

\section{A relative fundamental lemma for the Hecke algebra}\label{Section: Initial reduction}
The goal of this section is to reduce the proof of Theorem \ref{Thm: full fundamental lemma} to Theorem \ref{Thm: endoscopic base change}, which states an explicit endoscopic transfers for certain modules of spherical Hecke algebras.

More precisely, note that Lemma \ref{Lem: regular semi-simple locus} implies that for any $x\in \fu(W)_1^{rss}$, 
\[
r(x)\in X_n=\{y\in \Herm(V_n): \det(y)\neq0\};
\]
in fact, using the notation from \eqref{eqn: pushforward operation}, $\Omega=r(\GL_n(E))\subset X_n$.
This motivates the study of orbital integrals of special functions on the Hermitian symmetric variety $X_n.$ We make a detailed analysis of the relevant part of the \emph{module of spherical functions of the symmetric variety} $X_n$ studied by Hironaka \cite{hironaka1999spherical}, expressing $r_!(\bfun_{\End(\Lam_n)})$ as an infinite sum of elements of this module (see Lemma \ref{Lem: densities}).

On the other hand, we may view functions on $X_n$ as elements of $C_c^\infty(\Herm(V_n))$ via extension-by-zero, where we can formulate a statement on endoscopic transfer for spherical functions on $X_n$. The precise statement is Theorem \ref{Thm: endoscopic base change}, the proof of which is takes up the rest of the paper. That this implies  Theorem \ref{Thm: full fundamental lemma} is Proposition {Prop: initial reduction}.

\subsection{A morphism of Hecke algebras}\label{Section: morphism Satake} Assume for the remainder of the section that $E/F$ is unramified. Recall that $q_E=q^2$ is the cardinality of the residue field of $E$.

We now construct the map of Hecke algebras which arises in the fundamental lemma of Hecke algebras stated below.
Let $\calh_{K_{n,E}}(\GL_n(E))$ denote the spherical Hecke algebra of $\GL_n(E)$. For any $(s_1,\ldots,s_n)\in \cc^n$, we recall the Satake transform
\begin{equation}\label{eqn: satake transform}
Sat(f)(s_1,\ldots,s_n) = \int_{\GL_n(E)}f(g)\prod_{i=1}^n|a_i|_E^{s_i-\frac{1}{2}(n+1-2i)}dg,
\end{equation}
where $g=nak$ is the Iwasawa decomposition of $g$, $dg$ is our chosen measure from Section \ref{measures}, and 
\[
a=\left(\begin{array}{ccc}
a_1 &    &  \\
&\ddots     & \\
&&a_n
\end{array}\right)\in T_n(E).
\]
This gives an algebra isomorphism 
\begin{equation*}
    Sat:\calh_{K_{n,E}}(\GL_n(E))\iso \cc[q^{\pm 2s_1},\ldots,q^{\pm 2s_n}]^{S_n}.
\end{equation*}
 Setting $t_i=q^{-2s_i}$, $ t=\diag(t_1,\ldots,t_n)\in \hat{T}_n\subset \GL_n(\cc)$ is an element of the diagonal split torus in the dual group of $\GL_n(E)$, and $$\cc[q^{\pm 2s_1},\ldots,q^{\pm 2s_n}]\cong\cc[\hat{T}_n]\cong \cc[Z_1^{\pm1},\ldots,Z_n^{\pm1}],$$ where 
\begin{equation}\label{eqn: weird normalization}
Z_i\left(\begin{array}{ccc}t_1&&\\&\ddots&\\&&t_n\end{array}\right)=t_i.
\end{equation}

Suppose now that $n=a+b$. Let $P_{(a,b)}=M_{(a,b)}N_{(a,b)}\subset \GL_n$ be the standard parabolic subgroup of $\GL_n$ such that $M_{(a,b)}\cong \GL_a\times \GL_b$ is realized as block diagonal matrices with $\GL_a$ appearing as the upper-left block. On the dual group side, consider the embedding
\begin{align*}
    \GL_a(\cc)\times \GL_b(\cc)&\lra \GL_n(\cc)\\ (m_1,m_2)&\longmapsto\left(\begin{array}{cc}
    \mu_b(\vp)m_1&\\&\mu_a(\vp)m_2
    \end{array}\right),
\end{align*}
where $\mu_s(t)=|t|_E^{s/2}$ for any $t\in E^\times$ and $s\in \cc.$
If $\pi_1\boxtimes \pi_2$ is a smooth irreducible representation of $M_{(a,b)}(E)$, this map of dual groups corresponds to the parabolic induction
\[
\pi_1\boxtimes \pi_2\mapsto \Ind_{P_{(a,b)}(E)}^{\GL_n(E)}(\pi_1(\mu_b\circ\det)\boxtimes \pi_2(\mu_a\circ\det))
\]
where $\Ind_{P_{(a,b)}(E)}^{\GL_n(E)}$ is normalized induction. Note that this induction functor does not send tempered representations to tempered representations. 

Restricting to unramified representations, this induces a dual map on Hecke algebras 
\begin{equation*}
\xi_{(a,b)}:\calh_{K_{n,E}}(\GL_n(E))\lra\calh_{K_{a,E}}(\GL_a(E))\otimes \calh_{K_{b,E}}(\GL_b(E)).
\end{equation*}
The following lemma makes this map explicit.
\begin{Lem}\label{Lem: parabolic descent}
 Define the parabolic descent $f^{P_{(a,b)}}\in C_c^\infty(M_{(a,b)}(E))$ to be
\[
f^{P_{(a,b)}}(m_1,m_2)=\de^{1/2}_{P_{(a,b)}}(m_1,m_2)\int_{N_{(a,b)}(E)}\int_{K_{n,E}}f\left(k\left(\begin{array}{cc}m_1&\\&m_2\end{array}\right)nk^{-1}\right)dk dn,
\]
where the measures are normalized as in Section \ref{measures} and 
\[
\de_{P_{(a,b)}}(m_1,m_2) = |\det(m_1)|_F^b|\det(m_2)|_F^{-a}.
\]
is the modular character of ${P_{(a,b)}}(E)$. 

Then the morphism $\xi_{(a,b)}$ of spherical Hecke algebras is given as follows: let $f\in \calh_{K_E}(\GL_n(E))$
\begin{align*}
\xi_{(a,b)}(f)(m_1,m_2)&= \mu_b(\det(m_1))\mu_a(\det(m_2))f^{P_{(a,b)}}(m_1,m_2).
\end{align*}
\end{Lem}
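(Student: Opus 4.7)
The strategy is to compute the Satake transform of both sides and verify they agree. The map $\xi_{(a,b)}$ is defined abstractly via the embedding of dual groups, so the task is purely one of unwinding the normalizations. The argument splits naturally into two steps: first isolate the ``untwisted'' parabolic descent, then account for the shift by $\mu_b(\vp)$ and $\mu_a(\vp)$ on the dual side via the explicit unramified character twist.

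\textbf{Step 1: Untwisted constant term.} First, I would recall the classical fact (essentially Satake's original computation, see e.g.\ Macdonald or \cite[\S 4]{Cartier}) that for the maximal parabolic $P=P_{(a,b)}$, the normalized constant term
\[
f \longmapsto f^{P_{(a,b)}}(m_1,m_2) = \de^{1/2}_{P}(m_1,m_2)\int_{N(E)}\int_{K_{n,E}}f\bigl(k\diag(m_1,m_2)nk^{-1}\bigr)\,dk\,dn
\]
defines a morphism $\calh_{K_{n,E}}(\GL_n(E)) \to \calh_{K_{a,E}}(\GL_a(E))\otimes \calh_{K_{b,E}}(\GL_b(E))$. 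Under Satake it corresponds to the restriction $\cc[\hat{T}_n]^{S_n} \to \cc[\hat{T}_a \times \hat{T}_b]^{S_a \times S_b}$ dual to the standard, \emph{untwisted} block-diagonal embedding $\hat{T}_a\times\hat{T}_b \hookrightarrow \hat{T}_n$, i.e.\ $(m_1,m_2)\mapsto \diag(m_1,m_2)$. This boils down to the Iwasawa decomposition together with the change of variables $n\mapsto mnm^{-1}$ inside the defining integral (\ref{eqn: satake transform}) of the Satake transform.

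\textbf{Step 2: Accounting for the twist.} Next, for any unramified character $\chi:E^\times\to\cc^\times$ and any $\varphi \in \calh_{K_{n',E}}(\GL_{n'}(E))$, the multiplication $\varphi\mapsto (\chi\circ\det)\cdot\varphi$ is an automorphism of the spherical Hecke algebra whose Satake-dual is the shift $(Z_1,\dots,Z_{n'})\mapsto(\chi(\vp)Z_1,\dots,\chi(\vp)Z_{n'})$, as one checks directly from (\ref{eqn: satake transform}) and (\ref{eqn: weird normalization}). Applied to $\chi=\mu_b$ on the $\GL_a$-factor and $\chi=\mu_a$ on the $\GL_b$-factor, this exactly matches the prescribed embedding
\[
(m_1,m_2)\longmapsto\diag(\mu_b(\vp)m_1,\mu_a(\vp)m_2)
\]
on the dual-group side: the first $a$ Satake parameters of $f$ are shifted by $\mu_b(\vp)$ and the last $b$ are shifted by $\mu_a(\vp)$. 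Composing with Step 1 gives the claimed formula.

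\textbf{Main obstacle.} The only real difficulty is bookkeeping: one must verify the compatibility of three normalization conventions simultaneously --- the half-power of the modular character $\de_P^{1/2}$ (a shift by $(b/2,\ldots,b/2,-a/2,\ldots,-a/2)$ in parameters), the explicit exponents defining $\mu_s(t)=|t|_E^{s/2}$, and the choice of Haar measures fixed in Section \ref{measures}. Once it is observed that $\mu_b$ and $\mu_a$ cancel precisely the ``wrong half'' of $\de_P^{1/2}$ needed to convert the tempered (unitary) normalization of parabolic induction into the non-tempered induction $\Ind_{P(E)}^{\GL_n(E)}(\pi_1(\mu_b\circ\det)\boxtimes\pi_2(\mu_a\circ\det))$ appearing in the definition of $\xi_{(a,b)}$, the identification is forced.
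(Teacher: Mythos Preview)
Your proposal is correct and takes essentially the same approach as the paper, which simply records that the formula ``is a direct consequence of the Satake isomorphism'' with a reference to \cite{MinguezUnramified}. Your two-step decomposition (untwisted constant term corresponds to restriction along the block-diagonal embedding; the character twist by $\mu_b,\mu_a$ corresponds to the shift in Satake parameters) is exactly the content behind that citation, and is in fact spelled out in the paper during the proof of the subsequent Lemma~\ref{Lem: please be true}.
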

\begin{proof}
This expression is a direct consequence of the Satake isomorphism (see \cite{MinguezUnramified}, for example). 
\end{proof}

Using the Satake transform, this morphism gives a morphism
\[
\hat{\xi}_{(a,b)}:\cc[\hat{T}_n]^{S_n}\lra \cc[\hat{T}_{a}]^{S_a}\otimes\cc[\hat{T}_{b}]^{S_b}
\]that fits into a commutative diagram
\begin{equation}\label{eqn: satake diagram}
\begin{tikzcd}
\calh_{K_{n,E}}(\GL_n(E))\ar[d,"\xi_{(a,b)}"]\ar[r,"\mathrm{Sat}"]&\cc[\hat{T}_n]^{S_n}\ar[d,"\hat{\xi}_{(a,b)}"]\\
\calh_{K_{a,E}}(\GL_a(E))\otimes \calh_{K_{b,E}}(\GL_b(E))\ar[r,"\mathrm{Sat}"]&\cc[\hat{T}_{a}]^{S_a}\otimes\cc[\hat{T}_{b}]^{S_b}.
\end{tikzcd}
\end{equation}
We choose variables $\{X_i\}$ and $\{Y_j\}$ normalized analogously to (\ref{eqn: weird normalization}) so that $$\cc[\hat{T}_{a}]^{S_a}\otimes\cc[\hat{T}_{b}]^{S_b}\cong\cc[X_1^{\pm1},\ldots X_a^{\pm1}]^{S_a}\otimes \cc[Y_1^{\pm1},\ldots,Y_b^{\pm1}]^{S_b}.$$

\begin{Lem}\label{Lem: please be true}
The morphism $\hat{\xi}_{(a,b)}$ is the restriction to symmetric polynomials of the morphism 
\begin{align*}
     \cc[Z_1^{\pm1},\ldots,Z_n^{\pm1}]&\lra \cc[X_1^{\pm1},\ldots X_a^{\pm1}]\otimes \cc[Y_1^{\pm1},\ldots,Y_b^{\pm1}]\\
                        Z_i\qquad&\longmapsto\qquad \begin{cases}q^{-b}X_i &:i\leq a\\ q^{-a}Y_{i-a}&: i\geq a+1.
\end{cases}
\end{align*}
\end{Lem}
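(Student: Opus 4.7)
\smallskip

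\noindent\textbf{Proof plan for Lemma \ref{Lem: please be true}.}

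The plan is to trace through the definitions and use the standard functoriality of the Satake transform under unramified parabolic induction. The morphism $\xi_{(a,b)}$ was defined at the level of Langlands parameters by the shifted Levi embedding
\[
\iota_{(a,b)}:\hat{T}_a\times\hat{T}_b\lra \hat{T}_n,\qquad (\diag(x_i),\diag(y_j))\longmapsto\diag\bigl(\mu_b(\vp)x_1,\ldots,\mu_b(\vp)x_a,\mu_a(\vp)y_1,\ldots,\mu_a(\vp)y_b\bigr),
\]
corresponding to the (non-unitary) parabolic induction $\pi_1\boxtimes\pi_2\mapsto \Ind^{\GL_n(E)}_{P_{(a,b)}(E)}\bigl(\pi_1(\mu_b\circ\det)\boxtimes\pi_2(\mu_a\circ\det)\bigr)$. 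By the Satake isomorphism and its compatibility with normalized parabolic induction, the dual morphism $\hat{\xi}_{(a,b)}:\cc[\hat{T}_n]^{S_n}\to\cc[\hat{T}_a]^{S_a}\otimes\cc[\hat{T}_b]^{S_b}$ is simply the restriction of the pullback $\iota_{(a,b)}^\ast:\cc[\hat{T}_n]\to\cc[\hat{T}_a]\otimes\cc[\hat{T}_b]$ to $S_n$-invariants. Thus the proof reduces to evaluating $\iota_{(a,b)}^\ast$ on the coordinate functions $Z_i$.

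For the coordinate $Z_i$ of (\ref{eqn: weird normalization}), the definition of $\iota_{(a,b)}$ gives
\[
\iota_{(a,b)}^\ast(Z_i)=\begin{cases}\mu_b(\vp)\, X_i,&1\le i\le a,\\[2pt]\mu_a(\vp)\, Y_{i-a},&a+1\le i\le n.\end{cases}
\]
It remains only to identify the scalars $\mu_s(\vp)$. Since $E/F$ is unramified, the chosen uniformizer $\vp$ of $F$ is also a uniformizer of $E$, and the residue field of $E$ has cardinality $q_E=q^2$, so $|\vp|_E=q^{-2}$. By the definition $\mu_s(t)=|t|_E^{s/2}$, this gives $\mu_b(\vp)=q^{-b}$ and $\mu_a(\vp)=q^{-a}$, yielding the formulas claimed in the lemma.

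I do not expect any real obstacle here: the content is the functoriality of the Satake transform with respect to the (non-unitary) normalization of parabolic induction, which can be read off directly from the explicit formula for $\xi_{(a,b)}(f)$ given in Lemma \ref{Lem: parabolic descent} (the half-sum twist $\mu_b(\det(m_1))\mu_a(\det(m_2))$ is precisely what turns the dual Levi inclusion into the shifted embedding $\iota_{(a,b)}$). The only thing requiring care is bookkeeping with the convention $|\vp|_E=q^{-2}$, which distinguishes our normalization from the one over $F$ and is the source of the exponents $-b$ and $-a$ (rather than $-b/2$ and $-a/2$) appearing in the statement.
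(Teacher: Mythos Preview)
Your proposal is correct. It takes a more direct route than the paper's proof: whereas the paper proceeds via the explicit parabolic-descent formula of Lemma~\ref{Lem: parabolic descent}, expands in the Hall--Littlewood basis, cites \cite{MinguezUnramified} for the dual-side description of the constant-term map, and then uses a bihomogeneity argument to track the effect of the character twist $\mu_b(\det m_1)\mu_a(\det m_2)$, you go straight to the definition of $\xi_{(a,b)}$ as the Hecke-algebra map dual to the twisted parabolic induction, identify $\hat{\xi}_{(a,b)}$ with the pullback $\iota_{(a,b)}^\ast$ along the shifted Levi embedding, and evaluate on the coordinate functions $Z_i$. Both arguments rest on the same underlying fact---that normalized parabolic induction corresponds to the inclusion of the dual Levi on Satake parameters---but you handle the character twist at the level of the torus (translation by the scalars $\mu_s(\vp)$), while the paper handles it at the level of symmetric polynomials (scaling each $(d_a,d_b)$-homogeneous piece by $q^{-d_ab-d_ba}$). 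Your route is shorter and more conceptual; the paper's is more explicit in that it verifies the formula against a concrete basis, which has the mild advantage of not asking the reader to take the Satake functoriality as a black box.
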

In order to prove the lemma, we introduce the some notation for partitions. For any $n\in \zz_{\geq0}$ we set
\[\mathrm{P}_n=\{\lam=(\lam_1,\lam_2,\ldots,\lam_n)\in \zz^n: \lam_1\geq\cdots\geq\lam_n\}.
\] 
and for any pair $n,d\in \zz_{\geq0}$ we set
\begin{equation}\label{eqn: partitions}
    \mathrm{P}^+_{n,d}=\left\{\lam=(\lam_1,\ldots,\lam_n)\in \mathrm{P}_n: \lam_i\geq0,\:\text{and}\: \sum_i\lam_i=d\right\}.
\end{equation}
Finally, for each $\lam\in \mathrm{P}_n$,
\begin{equation}\label{eqn: diagonal guy}
\varpi^{\lam}=\left(\begin{array}{ccc}\varpi^{\lam_1}&&\\
&\ddots&\\&&\varpi^{\lam_n}\end{array}\right)\in T_n(E).
\end{equation}
\begin{proof}
Let $\lam$ be a dominant coweight of $T_n(E)\subset \GL_n(E)$ and recall (see \cite[pg. 299]{macdonald}) that 
\[
Sat(\bfun_{K_{n,E}\vp^\lam K_{n,E}}) = q^{\la\lam,2\rho\ra}P_\lam(Z_1,\ldots,Z_n;q^{-2}),
\]
where we remind the reader that $q$ is the cardinality of the residue field of $F$. Here,
\[
P_\lam(x_1,\ldots,x_n;t)= V(t) \sum_{\sigma\in S_n}\sig\left(x_1^{\lam_1}\cdots x_n^{\lam_n}\prod_{\lam_i>\lam_j}\frac{x_i-tx_j}{x_i-x_j}\right)
\]is the $\lam$-th Hall--Littlewood polynomial, where $V(t)$ is an explicit rational function in $t$ \cite[pg. 208]{macdonald}. It is well known that as $\lam$ ranges over $\mathrm{P}_{n,d}^+$ for all $d\geq0$, these polynomials give a $\zz$-basis for $\zz[t][x_1,\ldots,x_n]^{S_n}$, so it suffices to compute $\hat{\xi}_{(a,b)}$ on these polynomials. A key point is that $P_\lam$ is homogeneous of degree $|\lam|$.

By \cite[Proposition 4.6 (2)]{MinguezUnramified}, parabolic descent on the spherical Hecke algebra $\calh_{K_{n,E}}(\GL_n(E))$ is dual to restriction to the Levi subgroup 
\[
(m_1,m_2)\longmapsto\left(\begin{array}{cc}
    m_1 &  \\
     & m_2
\end{array}\right).
\] We see that the parabolic descent $f\mapsto f^{P_{(a,b)}}$ in Lemma \ref{Lem: parabolic descent} corresponds under the Satake transform to
\[
P_\lam(Z_1,\ldots,Z_n;q^{-2})\longmapsto P_\lam(X_1,\ldots X_a,Y_1,\ldots,Y_b;q^{-2}).
\] This latter polynomial lies in the span of the products 
\[
P_\al(X_1,\ldots X_a;q^{-2})\cdot P_\be(Y_1,\ldots,Y_b;q^{-2}),
\]
where $\al$ is a partition of length $a$, $\be$ is of length $b$, and $|\lam|=|\al|+|\be|$. The coefficients of this expansion are well known (these may be derived from \cite[III (5.5)]{macdonald}, for example). To simplify notation for the moment, we will write $$P_\al(X_i;q^{-2}):=P_\al(X_1,\ldots X_a;q^{-2})$$ and similarly for other polynomials.

For $\lam\in P_{n,d}^+$, we may write $P_\lam(X_i,Y_j;q^{-2})$ as a sum
\begin{equation}\label{eqn: graded decomp}
    \sum_{d_a+d_b=d}P_{d_a,d_b,\lam}(X_i,Y_j;q^{-2})
\end{equation}

where
\[
P_{d_a,d_b,\lam}(X_i,Y_j;q^{-2})=\sum_{\al\in \mathrm{P}^+_{a,d_a}}\sum_{\be\in \mathrm{P}^+_{b,d_b}}c_{\al,\be}(\lam)P_\al(X_i;q^{-2})\cdot P_\be(Y_j;q^{-2}),
\]
for certain coefficients $c_{\al,\be}(\lam)\in \cc$, is the $(d_a,d_b)$-homogeneous part of $P_\lam(X_i,Y_j;q^{-2})$. The inverse Satake transform takes this decomposition to an expression
\[
\bfun_{K_{n,E}\vp^\lam K_{n,E}}^{P_{(a,b)}}=\sum_{d_a+d_b=d}f_{d_a,d_b}
\]
for some $f_{d_a,d_b}\in \calh_{K_{a,E}}(\GL_a(E))\otimes\calh_{K_{b,E}}(\GL_b(E))$. In particular, for any pair $(\al,\be)$, we have
\[
\bfun_{K_{n,E}\vp^\lam K_{n,E}}^{P_{(a,b)}}(\vp^\al,\vp^\be) = f_{|\al|,|\be|}(\vp^\al,\vp^\be).
\]
By Lemma \ref{Lem: parabolic descent}, it follows that 
\[
\xi_{(a,b)}(\bfun_{K_{n,E}\vp^\lam K_{n,E}})(\vp^\al,\vp^\be)=q^{-|\al|b-|\be|a}f_{|\al|,|\be|}(\vp^\al,\vp^\be).
\]
The commutativity of  (\ref{eqn: satake diagram}) thus implies
\begin{align*}
\hat{\xi}_{(a,b)}(P_\lam(Z_i;q^{-2}))&= \sum_{d_a+d_b=d}q^{-d_ab}q^{-d_ba}P_{d_a,d_b,\lam}(X_i,Y_j;q^{-2})\\
                                    &= \sum_{d_a+d_b=d}P_{d_a,d_b,\lam}(q^{-b}X_i,q^{-a}Y_j;q^{-2})
\end{align*}
Comparing with (\ref{eqn: graded decomp}), this proves the claim.
\end{proof}
\subsection{The spherical Hecke algebra for $X_n$}\label{Section: spherical hecke}
Set 
\[
X_n^{rss}=X_n\cap \Herm(V_n)^{rss};
\]this agrees with the invariant-theoretic notion of regular semi-simple locus of $X_n$ as a $U(V_n)$-variety.

Fix an elliptic endoscopic datum $(U(V_a)\times U(V_b), s, \xi)$ for $\Herm(V_n)$ and let $y\in X_n^{rss}$. Note that any element $(y_a,y_b)\in \Herm(V_a)\times \Herm(V_b)$ matching $y$ necessarily lies in $X_a\times X_b$. 

\begin{Rem}
    It is reasonable to view $X_a\times X_b$ as an \emph{endoscopic symmetric variety} for $X_n$.  In this way, an elliptic endoscopic datum of the symmetric variety $X_n$ is just an elliptic endoscopic datum $(U(V_a)\times U(V_b), s, \xi)$ for $\Herm(V_n)$. This is compatible with the theory of \cite{LeslieEndodata}, and Theorem \ref{Thm: endoscopic base change} plays the role of a fundamental lemma for the relative trace formula for the Galois pair $(\GL_n(E),U(V_n))$.
\end{Rem}

 In addition to the $U(V_n)$-action, the group $\GL_n(E)$ acts on $X_n$ via twisted conjugation: for any $g\in \GL_n(E)$ and $y\in X_n$
\[
g\ast y= gy{}^t\overline{g}.
\]
It follows from \cite{jacobowitz1962hermitian} that the $K_{n,E}$-orbits on $X_n$ are
\begin{equation}\label{eqn: orbits everywhere}
X_n=\bigsqcup_{\lam\in\mathrm{P}_{n}}K_{n,E}\ast\vp^\lam,
\end{equation}
where $\vp^\lam$ is defined in \eqref{eqn: diagonal guy}. The $\GL_n(E)$-action on $X_n$ induces an action of $\GL_n(E)$ on $C_c^\infty(X_n)$ given by 
\[
g\ast f(y) = f(g^{-1}\ast y), \quad \text{for any}\quad f\in C_c^\infty(X_n), \: g\in \GL_n(E)\;\text{ and }\; y\in X_n.
\] Set $\calh_{K_{n,E}}(X_n):=C^\infty_c(X_n)^{K_{n,E}}$ to be the vector space of $K_{n,E}$-invariant functions. This is known as the spherical Hecke algebra of the symmetric variety $X_n$.  Set $\bfun_\lam$ to be the indicator function of the orbit $K_{n,E}\ast \vp^\lam.$ The above orbit decomposition implies that $\{\bfun_\lam\}_{\lam\in \mathrm{P}_n}$ is a $\cc$-basis for $\calh_{K_{n,E}}(X_n)$. Note that with this notation 
 \[
 \bfun_0=\bfun_{X_n(\calo_F)}.
 \]
 The spherical Hecke algebra $\calh_{K_{n,E}}(\GL_n(E))$ acts on this space by
\[
f\ast  \phi(y)= \int_{\GL_n(E)}f(g^{-1})\phi(g\ast y)dg.
\]
The induced $\calh_{K_{n,E}}(\GL_n(E))$-module structure of $\calh_{K_{n,E}}(X_n)$ is well understood thanks to the work of Hironaka.% \cite{hironaka1999spherical}. 

\begin{Prop}\cite[Theorem 2]{hironaka1999spherical}\label{Prop: Hironaka freeness}
As an $\calh_{K_{n,E}}(\GL_n(E))$-module, the spherical Hecke algebra $\calh_{K_{n,E}}(X_n)$ is free of rank $2^n$.
\end{Prop}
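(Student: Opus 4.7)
The plan is to follow the general strategy of Hironaka, constructing an explicit Satake-type transform for the symmetric variety $X_n$ and identifying its image as a free rank-$2^n$ module over the image of the ordinary Satake transform of $\calh_{K_{n,E}}(\GL_n(E))$.

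First, I would use the Cartan decomposition (\ref{eqn: orbits everywhere}) together with the analogous decomposition of $\GL_n(E)$ to understand everything in terms of torus data. Writing the twisted action explicitly, a diagonal element $\vp^\mu \in T_n(E)$ satisfies $\vp^\mu \ast \vp^\lam = \vp^{\lam + 2\mu}$ (since $\vp \in F$ forces $\overline{\vp}=\vp$); in particular, the action of $T_n(E)$ on the torus part of $X_n$ factors through the sublattice $2\zz^n \subset \zz^n$. This parity obstruction is the conceptual source of the factor $2^n$.

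Second, I would define a Satake-type transform
\[
\cals_X: \calh_{K_{n,E}}(X_n) \lra \cc[Z_1^{\pm 1},\ldots, Z_n^{\pm 1}]^{S_n}
\]
by integrating against a family of unramified characters of $T_n(F)$ via an Iwasawa-type factorization $X_n = N_n(E) \cdot T_n(F) \cdot K_{n,E}$ (a Hermitian analog of Iwasawa, obtainable from diagonalization of invertible Hermitian matrices over $\calo_E$). A direct manipulation of this integral against the Satake integral (\ref{eqn: satake transform}) shows that $\cals_X$ is $\calh_{K_{n,E}}(\GL_n(E))$-equivariant, with the module structure on the target coming from multiplication after restriction along $Z_i \mapsto Z_i^2$ (again reflecting the $\vp^\mu \ast \vp^\lam = \vp^{\lam + 2\mu}$ calculation).

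Third, I would identify the image of $\cals_X$. The image lies inside $\cc[Z_1^{\pm 1},\ldots, Z_n^{\pm 1}]^{S_n}$ (by the $K_{n,E}$-invariance and the Weyl group invariance of unramified spherical data), and a triangularity argument in the $\bfun_\lam$-basis indexed by $\lam \in \mathrm{P}_n$ shows injectivity and identifies the image with all of $\cc[Z_i^{\pm 1}]^{S_n}$. The module structure is then the natural one of $\cc[Z_i^{\pm 1}]^{S_n}$ viewed as a module over its subalgebra $\cc[Z_i^{\pm 2}]^{S_n}$. The final step is the purely algebraic verification that $\cc[Z_i^{\pm 1}]^{S_n}$ is free of rank $2^n$ over $\cc[Z_i^{\pm 2}]^{S_n}$, which can be done by exhibiting an explicit basis indexed by parity vectors $\epsilon \in \{0,1\}^n$ (symmetrized according to stabilizers under $S_n$, and then summed to give rank $2^n$ over the $S_n$-invariants).

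The main obstacle is the precise identification of the image of $\cals_X$ and the compatibility of the two Satake transforms: one must be careful about normalizations (the appearance of $|\cdot|_E^{s-(n+1-2i)/2}$ in (\ref{eqn: satake transform}) versus a Hermitian analog), about the convergence of the Iwasawa integral on $X_n$, and about checking $S_n$-invariance of the image rather than invariance under some smaller group. The freeness over the subalgebra $\cc[Z_i^{\pm 2}]^{S_n}$, while conceptually clean, requires a careful orbit-by-orbit analysis of the $S_n$-action on $\{0,1\}^n$ to write down an explicit set of $2^n$ module generators.
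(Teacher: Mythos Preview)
The paper does not give its own proof of this proposition; it is simply cited as \cite[Theorem 2]{hironaka1999spherical}. Your outline is essentially a sketch of Hironaka's own argument (the ``spherical Fourier transform'' mentioned in the Remark immediately following the proposition), so there is nothing to compare against here.

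That said, one point in your sketch deserves care. You assert at the end that $\cc[Z_i^{\pm 1}]^{S_n}$ is free of rank $2^n$ over $\cc[Z_i^{\pm 2}]^{S_n}$, and propose to exhibit a basis indexed by $\{0,1\}^n$. The rank is correct (the fraction field extension has degree $2^n$, since the $(\zz/2\zz)^n$-action by sign changes commutes with $S_n$), but writing down an explicit \emph{free} basis over the $S_n$-invariants is not as immediate as symmetrizing monomials $Z^\epsilon$: the $S_n$-orbits on $\{0,1\}^n$ have varying stabilizers, and the naive symmetrizations do not obviously form a basis. In practice, the cleanest route is the one the paper's Remark indicates: Hironaka's spherical Fourier transform identifies $\calh_{K_{n,E}}(X_n)$ with $\calh_{K_n}(\GL_n(F))$, and under the Satake isomorphism for $\GL_n(F)$ the module structure becomes that of $\cc[q^{\pm s_i}]^{S_n}$ over $\cc[q^{\pm 2s_i}]^{S_n}$ via the base change map $BC$. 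Both of these are Laurent-polynomial rings in the elementary symmetric functions, and freeness then follows from the fact that the larger ring is finite and Cohen--Macaulay over the smaller (a regular ring), hence free.
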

In particular, we have a distinguished rank $1$ sub-$\calh_{K_{n,E}}(\GL_n(E))$-module given by the embedding 
\begin{align*}
-\ast \bfun_0:\calh_{K_{n,E}}(\GL_n(E))&\varlonghookrightarrow \calh_{K_{n,E}}(X_n)\\f\qquad\qquad&\longmapsto\quad f\ast  \bfun_0.
\end{align*}

Suppose now that $(U(V_a)\times U(V_b),s,\xi)$ is an elliptic endoscopic datum of $X_n$. By a slight abuse of notation, we also denote the map
\begin{align*}
    \calh_{K_{a,E}}(\GL_a(E))\otimes \calh_{K_{b,E}}(\GL_b(E))&\lra \calh_{K_{a,E}}(X_a)\otimes \calh_{K_{b,E}}(X_b)\\
    f_a\otimes f_b\qquad\qquad\quad\:\:&\longmapsto\quad (f_a\ast \bfun_0)\otimes (f_b\ast \bfun_0)
\end{align*}
by $-\ast \bfun_0.$ Much of this paper consists of the proof of the following Theorem.

\begin{Thm}\label{Thm: endoscopic base change}
For any $\varphi\in \calh_{K_{n,E}}(\GL_n(E))$, the functions $\varphi\ast \bfun_0$ and $\xi_{(a,b)}(\varphi)\ast \bfun_0$ are smooth transfers of each other in the sense of Theorem \ref{Thm: smooth transfer lie algebra}.
\end{Thm}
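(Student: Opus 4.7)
The plan follows the cascade of reductions outlined in Figure \ref{Fig 1}: reduce endoscopic matching on the twisted Lie algebra to an explicit Jacquet--Rallis matching, clean the latter using the Weil representation, and prove the cleaned statement by a global comparison of twisted Jacquet--Rallis relative trace formulas.

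\emph{Step 1: Reduction via Xiao's parabolic descent.} I would invoke Xiao's recent alternative proof of smooth transfer for the twisted Lie algebra. Its structural input is that endoscopic matching between $\Herm(V_n)$ and $\Herm(V_a)\oplus\Herm(V_b)$ is generated from Jacquet--Rallis matching between $\Herm(V)\times V$ and $\fgl_n(F)\times F^n\times F_n$ by combining parabolic descent on the linear side to the standard Levi of type $(a,b)$ with evaluation at $0$ in the vector factor. Applying this recipe reduces Theorem \ref{Thm: endoscopic base change} to the claim that, for every $\varphi\in \calh_{K_{n,E}}(\GL_n(E))$, the pair $\{(\varphi\ast\bfun_0)\otimes\bfun_{\Lam_n},\,0\}$ and $BC(\varphi)\otimes\bfun_{\call_n}$ match under Jacquet--Rallis transfer (Proposition \ref{Thm: second reduction intro}). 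The combinatorial engine of this reduction is Lemma \ref{Lem: please be true}: it identifies $\xi_{(a,b)}$ with parabolic descent on the Levi at the Satake-transform level (up to the twist by $\mu_a,\mu_b$), so that the image of $BC(\varphi)\otimes\bfun_{\call_n}$ after parabolic descent and evaluation at $0$ in the vector factor lands precisely in (a constant multiple of) $\xi_{(a,b)}(\varphi)\ast\bfun_0$ on the unitary side.

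\emph{Step 2: Stripping the auxiliary lattices via the Weil representation.} The characteristic functions $\bfun_{\Lam_n}$ and $\bfun_{\call_n}$ in Step 1 obstruct the use of global arguments because they are not well-adapted to the Fourier transforms that enter the spectral side. I would remove them using the Weil representation of $\SL_2(F)$ on $C_c^\infty(\Herm(V)\times V)$ and $C_c^\infty(\fgl_n(F)\times F^n\times F_n)$, which intertwines Jacquet--Rallis transfer. Following the approach of Beuzart-Plessis, a direct computation on these representations reduces Proposition \ref{Thm: second reduction intro} to its clean Lie-algebra form (Theorem \ref{Thm: third reduction intro}): the pair $\{(\varphi\ast\bfun_0),\,0\}$ and $BC(\varphi)$ match under Jacquet--Rallis transfer between $C_c^\infty(\Herm(V))$ and $C_c^\infty(\fgl_n(F))$.

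\emph{Step 3: Global proof and main obstacle.} The remaining statement is global in nature. I would prove it by introducing the twisted Jacquet--Rallis comparison of relative trace formulas, obtained by swapping the roles of the unitary group and $\GL_n$ in the classical Jacquet--Rallis setup. Spectrally this comparison is governed by base change, and its spectral content is accessible via the Feigon--Lapid--Offen and Jacquet theory of unitary periods of cusp forms; the geometric comparison then translates Theorem \ref{Thm: third reduction intro} into a spectral identity that follows from the base-change fundamental lemma for the full spherical Hecke algebra of this comparison (Theorem \ref{Thm: base change Twisted Jacquet--Rallis}). The main obstacle lies precisely here: rigorously building the twisted Jacquet--Rallis trace formulas, establishing the necessary convergence and geometric expansions in the absence of stabilization, and extracting the full Hecke-algebra base-change matching that drives the comparison. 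The earlier reductions are essentially rearrangements of existing machinery (Xiao's generalized nilpotent orbital integrals and the Weil-representation technique of Beuzart-Plessis); the bulk of the difficulty is concentrated in this final step, which is why Part \ref{Part 2} is written essentially self-contained.
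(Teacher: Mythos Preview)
Your proposal is correct and follows exactly the paper's three-step reduction: Proposition \ref{Prop: first reduction} (Xiao's parabolic descent argument), Proposition \ref{Prop: second reduction} (the Weil-representation trick of Beuzart-Plessis), and then Part \ref{Part 2} culminating in Theorem \ref{Thm: base change Twisted Jacquet--Rallis}. One small mislabeling: the combinatorial engine in Step 1 is Lemma \ref{Lem: parabolic descent} (the explicit parabolic-descent formula for $\xi_{(a,b)}$), not Lemma \ref{Lem: please be true}, which is instead used in the reverse implication (Proposition \ref{Prop: initial reduction}) deducing Theorem \ref{Thm: full fundamental lemma} from Theorem \ref{Thm: endoscopic base change}.
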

\begin{Rem}
In proving Proposition \ref{Prop: Hironaka freeness}, Hironaka introduces the \emph{spherical Fourier transform} which she uses to give an isomorphism of $\calh_{K_{n,E}}(\GL_n(E))$-modules, 
\[
 H:\calh_{K_{n,E}}(X_n)\iso \calh_{K_{n}}(\GL_n(F)),
\]
where the module structure on the right is induced by the (injective) base change homomorphism 
\[
BC: \calh_{K_{n,E}}(\GL_n(E))\lra \calh_{K_{n}}(\GL_n(F)).
\]
The algebra structure on $ \calh_{K_{n,E}}(X_n)$ is given via transfer of the algebra structure of $ \calh_{K_{n}}(\GL_n(F))$ via $H$.
In particular, we have a commutative diagram of $\calh_{K_{n,E}}(\GL_n(E))$-modules, 
\begin{equation*}
\begin{tikzcd}
&\calh_{K_{n,E}}(\GL_n(E))\ar[dl,swap,"-\ast \bfun_0"]\ar[dr,"BC"]&\\
\calh_{K_{n,E}}(X_n)\ar[rr,"H"]&&\calh_{K_n}(\GL_n(F)).
\end{tikzcd}
\end{equation*}
\end{Rem}
\begin{Rem}
It is tempting to extend the statement of Theorem \ref{Thm: endoscopic base change} to the entire Hecke algebra $\calh_{K_{n,E}}(X_n).$ Indeed, using the spherical Fourier transform of Hironaka, we may extend the morphism $\xi_{(a,b)}$ to a module homomorphism
\begin{equation*}
    \xi_{(a,b)}:\calh_{K_{n,E}}(X_n)\lra \calh_{K_{a,E}}(X_a)\otimes \calh_{K_{b,E}}(X_b),
\end{equation*}
and conjecture that for any $\varphi\in\calh_{K_{n,E}}(X_n)$, $ \xi_{(a,b)}(\varphi)$ is a smooth transfer in the sense of Theorem \ref{Thm: smooth transfer lie algebra}. This should play the role of the full fundamental lemma for the relative trace formula for the Galois pair $(\GL_n(E),U(V_n))$.

To make this precise, we would need to deal with several complications not germane to our current discussion. For example, preliminary calculations suggest augmenting the Langlands--Shelstad--Kottwitz transfer factors in a precise way for such a generalization to hold. We plan to return to this in a future paper.
\end{Rem}
\subsection{The initial reduction}\label{Section: initial red}
We now show that Theorem \ref{Thm: endoscopic base change} implies the relative endoscopic fundamental lemma. Let $\bfun_{\End(\Lam_n)}$ and $\bfun_{\End(\Lam_a)}\otimes \bfun_{\End(\Lam_b)}$ be as in the statement of Theorem \ref{Thm: full fundamental lemma}.

Recalling the operator $r_!$ defined by \eqref{eqn: pushforward operation}, set $\Phi^n:=r_!\bfun_{\End(\Lam_n)}$. Using the notation from Section \ref{Section: pushforward}, this gives a function on $\Omega^{reg}=r(\fu(W)_1^{rss})\subset X_n$, which we view as a locally constant function on $X_n$ by extending-by-zero over the compliment of $\Omega^{reg}$. The resulting function is not compactly supported. It is \emph{almost-compactly supported} in the sense that if we decompose of $X_n$ into disjoint closed (in the Hausdorff topology) subsets
\[
X_n = \bigsqcup_{d\in \zz}X_{n,d},\quad X_{n,d} = \{h\in X_n: |\det(h)|_F = q^{-d}\}
\]
and set $\Phi^n_d = \Phi^n\cdot\bfun_{X_{n,d}}$, then $\Phi^n_d\in C_c^\infty(X_n)$ for all $d\in\zz$.

We now give a formula for $\Phi^n_d$. Suppose that $T\in X_n$ and set 
\begin{equation}\label{eqn: local rep density}
\mathfrak{m}_0(T)=\begin{cases}\displaystyle\int_{U(V_n)}1_{\End(\Lam_{n})}(xh)\,dh& \text{if } T=xx^\tau , x\in \GL_n(E),\\
0& \text{otherwise}. 
\end{cases}
\end{equation}
This is the (normalized) local representation density, denoted as $\mathrm{Den}(L)$ in \cite{LiZhang} when $L=x^\tau\Lam_n$ is the associated lattice (one may use formula \cite[(3.6.1.1)]{LiZhang}, for example). Note that $\mathfrak{m}_0(T)=0$ unless $T$ is integral, which holds if and only if $L\subset \Lam_n \subset L^\vee = x^{-1}\Lam_n$. 

{To see this, recall the formula \cite[(3.6.1.1)]{LiZhang}
\[
\mathrm{Den}(L) =\#\{{L\subset L'\subset L^\vee}:{L'\text{ self-dual}}\}.
\]
Now if $T=xx^\ast$, it is easy to see that
\[
\mathfrak{m}_0(T) = \#\{{{[h]\in U(V_n)/U(\Lam_n)}:{xh\in \End(\Lam_n)}}\}.
\]
The fact that any two self-dual lattices are conjugate by $U(V_n)$ implies that these two index sets are in bijection. Indeed, for such an $[h]$, 
\[
h^{-1}x^\tau\Lam_n\subset \Lam_n\subset h^{-1}x^{-1}\Lam_n\Leftrightarrow L\subset L':=h\Lam_n\subset L^\vee.
\]
On the other hand, each self-dual lattice $L'$ may be written $L' = h\Lam_n$ for some $[h]\in U(V_n)/U(\Lam_n)$ and the constraint that $L\subset L'\subset L^\vee$ is then equivalent to $xh\in \End(\Lam_n)$.}
\begin{Lem}\label{Lem: densities}
We have $\Phi^n _d\equiv 0$ if $d$ is odd or $d<0$. Moreover,
$$\Phi_{2d}^n = \sum_{\lam\in \mathrm{P}^+_{n,2d}}\mathfrak{m}_0(\vp^\lam) \bfun_{\lam},$$
 where $\mathrm{P}^+_{n,2d}$ is defined in \eqref{eqn: partitions} and $\bfun_\lam$ is the indicator function of the orbit $K_{n,E}\ast \vp^\lam.$
\end{Lem}
\begin{proof}
Since $\supp(\Phi^n )\subset r(\End(\Lam_n)),$ if $x\in \supp(\Phi^n )$, then $\det(x)\in \Nm_{E/F}(\calo_E)$. Our assumption that $E/F$ is unramified now implies the vanishing statement. Now for any $g\in K_{n,E}$,
\[
\Phi^n(gr(x){}^t\overline{g}) = \int_{U(V_n)} \bfun_{\End(\Lam_n)}(gxh) dh=\int_{U(V_n)}\bfun_{\End(\Lam_n)}(xh) dh=\Phi^n (r(x)).
\]
Thus, $\Phi^n$ is constant on $K_{n,E}$-orbits of $X_n$, with the value given by the formula in the statement. The lemma now follows from the $K_{n,E}$-orbit decomposition (\ref{eqn: orbits everywhere}).
\end{proof}
\begin{Rem}
While it is a striking fact that the coefficients in $\Phi$ are given by representation densities, we will not make use of this fact in this paper. It does play a role in the arithmetic aspects of a comparison of trace formulae designed to attack Conjecture \ref{Conj 1}, which relies on the results of this work.
\end{Rem}A corollary of this and Lemma \ref{Lem: orbital reduction} is the following restatement of Theorem \ref{Thm: full fundamental lemma}.
\begin{Cor}\label{Cor: initial restatement}
 Theorem \ref{Thm: full fundamental lemma} holds if and only if for every $d\in \zz_{\geq0}$, the functions
\[
\Phi^n_{2d} \text{ and }  \sum_{d_a+d_b=d}\Phi^a_{2d_a}\otimes\Phi^b_{2d_b}
\]  
match in the sense of Theorem \ref{Thm: smooth transfer lie algebra}.
\end{Cor}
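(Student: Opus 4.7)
The plan is to combine the orbital-integral identity of Lemma \ref{Lem: orbital reduction} with the cohomological identification $\cald(H_\de/F)\iso\cald(T_{r(\de)}/F)$ from Lemma \ref{Lem: centralizer contraction} to translate the statement of Theorem \ref{Thm: full fundamental lemma} into a matching statement on the twisted Lie algebras, and then exploit the grading by $|\det|_F$ to split it across $d\in\zz_{\geq 0}$.

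First, I would apply Lemma \ref{Lem: orbital reduction}. For any $\de\in\fu(W)_1^{rss}$, it gives
\[
\RO(\bfun_{\End(\Lam_n)},\de)=\Orb(r_!\bfun_{\End(\Lam_n)},r(\de))=\Orb(\Phi^n,r(\de)),
\]
and analogously $\RO((\bfun_{\End(\Lam_a)}\otimes\bfun_{\End(\Lam_b)}),(\de_a,\de_b))=\Orb(\Phi^a\otimes\Phi^b,(r(\de_a),r(\de_b)))$. The isomorphism $\phi_\de$ of Lemma \ref{Lem: centralizer contraction} identifies the pointed sets $\calo_{st}(\de)\cong\calo_{st}(r(\de))$ compatibly with the invariant map, and by construction of the relative $\kappa$ in Section \ref{Section: relative endoscopy} this identifies the $\kappa$-characters. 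Therefore $\RO^\kappa$ and $\SRO$ transport to $\Orb^\kappa$ and $\SO$ respectively. Since by definition $\De_{rel}((\de_a,\de_b),\de)=\De(r_{\al,\be}(\de_a,\de_b),r(\de))$, the matching identity (\ref{relative transfer}) for the pair $(\bfun_{\End(\Lam_n)},\bfun_{\End(\Lam_a)}\otimes\bfun_{\End(\Lam_b)})$ is equivalent to the matching identity (in the sense of Theorem \ref{Thm: smooth transfer lie algebra}) for the pair $(\Phi^n,\Phi^a\otimes\Phi^b)$.

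Next I would dispose of the case $(\al,\be)\neq(I_a,I_b)$. By Lemma \ref{Lem: regular semi-simple locus}, any regular semi-simple element of $\fu(V_a\oplus V_\al)_1$ lies in the isomorphism locus, forcing $V_a\cong V_\al$ as Hermitian spaces; combined with $\al\in\calv_a$, this forces $\al=I_a$, and similarly for $\be$. Hence when $(\al,\be)\neq(I_a,I_b)$ the regular semi-simple locus of the endoscopic Lie algebra is empty and the matching condition of Definition \ref{Def: transfer} is vacuously satisfied by the function $0$; this accounts for the second clause of Theorem \ref{Thm: full fundamental lemma}.

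Finally, I would exploit the almost-compact decomposition $\Phi^n=\sum_{d\geq 0}\Phi^n_{2d}$ established in the preceding lemma. For any $y\in X_n^{rss}$, the orbit of $y$ lies entirely in a single stratum $X_{n,d_y}$, where $d_y=-v_F(\det y)\in\zz_{\geq 0}$ is necessarily even, say $d_y=2d$; thus $\Orb(\Phi^n,y)=\Orb(\Phi^n_{2d},y)$. Moreover, matching in the sense of Section \ref{Section: matching} means $y$ and $(y_a,y_b)$ are $\GL_n(E)$-conjugate in $\fgl_n(E)$ and hence share a characteristic polynomial, so $\det(y)=\det(y_a)\det(y_b)$ and consequently $d_y=d_{y_a}+d_{y_b}$. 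Therefore the matching identity for $(\Phi^n,\Phi^a\otimes\Phi^b)$ at a regular semi-simple $y$ with $d_y=2d$ only involves $\Phi^n_{2d}$ on one side and $\sum_{d_a+d_b=d}\Phi^a_{2d_a}\otimes\Phi^b_{2d_b}$ on the other, yielding the claimed equivalence. There is no genuine obstacle here: the proof is a bookkeeping argument chaining previously established compatibilities, and serves purely to set up the subsequent reduction to a Hecke-algebra fundamental lemma on $X_n$.
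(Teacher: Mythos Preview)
Your Steps 1, 2, and 4 are correct and constitute exactly the expansion of the paper's one-line proof (``This follows in a straightforward fashion from our previous discussion and Lemma~\ref{Lem: orbital reduction}'').

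There is, however, a genuine error in Step 3. Lemma~\ref{Lem: regular semi-simple locus} places any regular semi-simple $\de_a$ in $\fu(V_a\oplus V_\al)_1^{iso}=\mathrm{Iso}_E(V_\al,V_a)$, but this is the locus of \emph{$E$-linear} isomorphisms, not Hermitian isometries. Since $\dim_E V_\al=\dim_E V_a=a$, such a linear isomorphism always exists regardless of the Hermitian structures, so the regular semi-simple locus is never empty and your vacuity argument fails.

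The correct route to the second clause of Theorem~\ref{Thm: full fundamental lemma} uses the parity statement in the preceding lemma. Writing $\de_a=\de_a(X)$ with $X\in\mathrm{Iso}_E(V_\al,V_a)$, one computes $X^\tau=\al^{-1}\,{}^t\overline{X}$ and hence
\[
\det r(\de_a)=\Nm_{E/F}(\det X)\cdot\det(\al)^{-1},\qquad \eta\bigl(\det r(\de_a)\bigr)=\eta(\det\al).
\]
Thus when $\al\neq I_a$ the valuation $v_F(\det r(\de_a))$ is odd, so $\Phi^a$ vanishes identically on the stable orbit of $r(\de_a)$ and $\SO(\Phi^a\otimes\Phi^b,(r(\de_a),r(\de_b)))=0$. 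Assuming the matching in the Corollary's hypothesis (and noting $\De\neq 0$ on matching pairs), this forces $\Orb^\ka(\Phi^n,r(\de))=0$, whence $\RO^\ka(\bfun_{\End(\Lam_n)},\de)=0$ by your Steps 1--2, which is precisely the assertion that $\bfun_{\End(\Lam_n)}$ matches $0$ for this $(\al,\be)$. The same argument applies symmetrically if $\be\neq I_b$.
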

\begin{proof}
This follows in a straightforward fashion from our previous discussion and Lemma \ref{Lem: orbital reduction}.
\end{proof}
To relate this corollary to Theorem \ref{Thm: endoscopic base change}, we record the following elementary lemma.
\begin{Lem}\label{Lem: elementary lemma}
For $\phi\in \calh_{K_{n,E}}(\GL_n(E))$, one has
\begin{equation*}
r_!(\phi)=\phi\ast \bfun_0.
\end{equation*}
%where $dg$ is normalized so that $\vol(K_n)=1$.
\end{Lem}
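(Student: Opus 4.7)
The plan is a direct computation: evaluate both sides at an arbitrary point $y = r(x) \in X_n$ with $x \in \GL_n(E)$, using the defining formulas together with the explicit description of $X_n(\calo_F)$ as the $K_{n,E}$-orbit of the identity form. I view $\phi \in \calh_{K_{n,E}}(\GL_n(E))$ as a function on $\End(V_n)$ by extension by zero; since $\GL_n(E) \subset \End(V_n)$ is a group, the support of $\phi$ remains in $\GL_n(E)$ and both $r_!(\phi)$ and $\phi \ast \bfun_0$ are naturally supported in $X_n \subset \Herm(V_n)$.

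First, I would unfold the convolution. Applying the change of variable $g \mapsto g^{-1}$, valid since $\GL_n(E)$ is unimodular, to the defining formula of Section \ref{Section: spherical hecke} gives
\[
(\phi \ast \bfun_0)(r(x)) \;=\; \int_{\GL_n(E)} \phi(g)\, \bfun_0(g^{-1} \ast r(x))\, dg.
\]
The elementary equivariance $g^{-1} \ast r(x) = g^{-1}(x x^\tau)(g^{-1})^\tau = r(g^{-1}x)$ lets me rewrite the characteristic function as $\bfun_0(r(g^{-1}x))$. Now $X_n(\calo_F) = K_{n,E} \ast I_n = \{kk^\tau : k \in K_{n,E}\}$, and $r$ is invariant under right multiplication by $U(V_n)$ since $r(zu) = zuu^\tau z^\tau = r(z)$. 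Consequently $r(z) \in X_n(\calo_F)$ if and only if $z \in K_{n,E} U(V_n)$. Applied to $z = g^{-1}x$, this identifies the support condition as $g \in x\, U(V_n)\, K_{n,E}$.

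The last step is measure bookkeeping. The product map $U(V_n) \times K_{n,E} \to U(V_n) K_{n,E}$ has fibers isomorphic to the compact subgroup $U(V_n) \cap K_{n,E} = U(\Lam_n)$, which has volume $1$ under the normalizations of Section \ref{measures}; similarly $\vol(K_{n,E}) = 1$. Combined with the bi-$K_{n,E}$-invariance of $\phi$, these normalizations yield
\[
\int_{x U(V_n) K_{n,E}} \phi(g)\, dg \;=\; \int_{U(V_n)} \int_{K_{n,E}} \phi(xuk)\, dk\, du \;=\; \int_{U(V_n)} \phi(xu)\, du,
\]
which is $r_!(\phi)(r(x))$ by the definition in Lemma \ref{Lem: orbital reduction}.

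No real obstacle arises: the argument is a bookkeeping exercise combining the equivariance of the contraction map with the description of $X_n(\calo_F)$ as a single $K_{n,E}$-orbit. The only substantive input beyond definitions is the verification that the Haar measures on $\GL_n(E)$, $U(V_n)$, $K_{n,E}$, and $U(\Lam_n)$ fit together compatibly in the quotient $r \colon \GL_n(E)/U(V_n) \iso X_n$, but this is precisely what the conventions of Section \ref{measures} guarantee.
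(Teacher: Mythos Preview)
Your proof is correct and takes a genuinely different route from the paper's. The paper proceeds in two steps: it first verifies the identity for the unit $\phi=\bfun_{K_{n,E}}$ directly, and then proves the module compatibility $r_!(\phi\ast\phi_1)=\phi\ast r_!(\phi_1)$ for all $\phi,\phi_1\in\calh_{K_{n,E}}(\GL_n(E))$, from which the general case follows by taking $\phi_1=\bfun_{K_{n,E}}$. Your argument instead unwinds both sides at once for arbitrary $\phi$, identifying the support condition $\bfun_0(r(g^{-1}x))\neq 0\iff g\in xU(V_n)K_{n,E}$ and then reducing the integral over this set to $\int_{U(V_n)}\phi(xu)\,du$ via the measure compatibilities.

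Both approaches ultimately rest on the same measure normalizations from Section~\ref{measures}, but they package them differently. The paper's version is slightly slicker in that it isolates the Hecke-module compatibility of $r_!$ as a separate structural fact, which is conceptually useful elsewhere. Your version is more elementary and self-contained, at the cost of needing to justify carefully that the Haar measure on $\GL_n(E)$ restricted to the open set $U(V_n)K_{n,E}$ agrees with the pushforward of $du\,dk$ from $U(V_n)\times K_{n,E}$ (divided by $\vol(U(\Lam_n))=1$). You note this as the one nontrivial input; it holds because both measures are invariant under the transitive $U(V_n)\times K_{n,E}$-action on $U(V_n)K_{n,E}$, hence proportional, and the constant is fixed by evaluating on $\bfun_{K_{n,E}}$.
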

\begin{proof}
First we prove the claim for $\phi=\bfun_{K_{n,E}}$. In this special case, it is immediate that $\bfun_{K_{n,E}}\ast \bfun_0=\bfun_0$. On the other hand, for any $x\in \GL_n(E)$
\[
r_!(\bfun_{K_{n,E}})(xx^\tau)=\int_{U(V_n)}\bfun_{K_{n,E}}(xu)du.
\]
The right-hand side is only non-zero if there exists $u\in U(V_n)$ such that $xu\in K_{n,E}$. This implies that the left-hand side is non-zero only if $xx^\tau\in K_{n,E}\ast I_n$. Since $r_!(\bfun_{K_{n,E}})\in \calh_{K_{n,E}}(X_n)$, we must have $r_!(\bfun_{K_{n,E}})=c\bfun_0$ for some constant $c\in \cc$. Since our measure conventions give $U(\Lam_n)=U(V_n)\cap K_{n,E}$ volume $1$, we check that
\[
c= r_!(\bfun_{K_{n,E}})(1) = \int_{U(V_n)}\bfun_{K_{n,E}}(u)du = \vol(U(\Lam_n)) =1,
\]
proving the claim for $\phi=\bfun_{K_{n,E}}$.

In general, if $\phi\in\calh_{K_{n,E}}(\GL_n(E))$, then for any other $\phi_1\in \calh_{K_{n,E}}(\GL_n(E))$,
\begin{align*}
\phi\ast  r_!(\phi_1)(xx^\tau) &= \int_{\GL_n(E)}\phi(g^{-1}) r_!(\phi_1)(gxx^\tau{}^t\overline{g})dg\\	
			&=\int_{\GL_n(E)}\int_{U(V_n)}\phi(g^{-1})\phi_1(gxu)dudg\\
			&=\int_{U(V_n)}(\phi\ast \phi_1)(xu)du=r_!(\phi\ast\phi_1)(xx^\tau),
\end{align*}
where $\phi\ast\phi_1$ denotes the convolution product.

Setting $\phi_1=\bfun_{K_{n,E}},$ and using the $\calh_{K_{n,E}}(\GL_n(E))$-module structure, we find that
\[
    r_!(\phi)=r_!(\phi\ast\bfun_{K_{n,E}})= \phi\ast r_!(\bfun_{K_{n,E}})=\phi\ast \bfun_0. 
 \qedhere%\phi\ast (\bfun_{K_{n,E}}\ast \bfun_0)=(\phi\ast\bfun_{K_{n,E}})\ast \bfun_0
\]
\end{proof}

\begin{Prop}\label{Prop: initial reduction}
Theorem \ref{Thm: endoscopic base change} implies Theorem \ref{Thm: full fundamental lemma}.
\end{Prop}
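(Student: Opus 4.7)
The plan is to use Corollary \ref{Cor: initial restatement} to reduce Theorem \ref{Thm: full fundamental lemma} to matching the graded pieces $\Phi^n_{2d}$ and $\sum_{d_a+d_b=d}\Phi^a_{2d_a}\otimes\Phi^b_{2d_b}$ for each $d\geq 0$, to exhibit each graded piece as $\phi\ast \bfun_0$ for an explicit element of the spherical Hecke algebra of $\GL_n(E)$, and then to verify directly that the morphism $\xi_{(a,b)}$ of Section \ref{Section: morphism Satake} carries the $n$-side element to the corresponding sum on the $(a,b)$ side.

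First I would define
\[
\phi^n_d := \bfun_{\{x\in \Mat_n(\calo_E)\cap \GL_n(E)\,:\,v_E(\det x)=d\}} \in \calh_{K_{n,E}}(\GL_n(E)),
\]
noting that $K_{n,E}$-bi-invariance is immediate from Smith normal form. Since $E/F$ is unramified, $v_F(\det(xx^\tau))=2v_E(\det x)$ for any $x\in \GL_n(E)$, so the restriction of $\bfun_{\End(\Lam_n)}$ to the locus of endomorphisms whose contraction lies in $X_{n,2d}$ agrees with $\phi^n_d$. Lemma \ref{Lem: elementary lemma} then yields $\Phi^n_{2d}=\phi^n_d\ast \bfun_0$, and analogously $\Phi^a_{2d_a}\otimes \Phi^b_{2d_b}=(\phi^a_{d_a}\otimes\phi^b_{d_b})\ast \bfun_0$. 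Applying Theorem \ref{Thm: endoscopic base change} to $\phi^n_d$ therefore reduces the proposition to the Hecke-algebra identity
\begin{equation}\label{eqn: key identity plan}
\xi_{(a,b)}(\phi^n_d)=\sum_{d_a+d_b=d}\phi^a_{d_a}\otimes \phi^b_{d_b}.
\end{equation}

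I would prove \eqref{eqn: key identity plan} by direct computation via Lemma \ref{Lem: parabolic descent}. For $(m_1,m_2)\in M_{(a,b)}(E)$ and $n=\bigl(\begin{smallmatrix}I_a&X\\0&I_b\end{smallmatrix}\bigr)\in N_{(a,b)}(E)$, the matrix $\bigl(\begin{smallmatrix}m_1&m_1X\\0&m_2\end{smallmatrix}\bigr)$ lies in $\Mat_n(\calo_E)$ iff $m_i\in \Mat_i(\calo_E)$ and $X\in m_1^{-1}\Mat_{a\times b}(\calo_E)$. The $K_{n,E}$-bi-invariance of $\phi^n_d$ collapses the outer $K$-integral, and with the measure conventions of Section \ref{measures} the $N_{(a,b)}$-integral evaluates to $|\det m_1|_E^{-b}$ on the locus $v_E(\det m_1)+v_E(\det m_2)=d$. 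Under the identification $|x|_F=|\Nm_{E/F}(x)|_F=|x|_E$ on $E^\times$ (valid since $E/F$ is unramified), the prefactors $\mu_b(\det m_1)\mu_a(\det m_2)=|\det m_1|_E^{b/2}|\det m_2|_E^{a/2}$ and $\delta^{1/2}_{P_{(a,b)}}(m_1,m_2)=|\det m_1|_E^{b/2}|\det m_2|_E^{-a/2}$ exactly cancel this factor, yielding
\[
\xi_{(a,b)}(\phi^n_d)(m_1,m_2)=\bfun_{\Mat_a(\calo_E)}(m_1)\,\bfun_{\Mat_b(\calo_E)}(m_2)\,\bfun_{\{v_E(\det m_1)+v_E(\det m_2)=d\}},
\]
which is visibly the right-hand side of \eqref{eqn: key identity plan}.

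The only real obstacle is bookkeeping: one must track the various absolute-value and measure normalizations carefully so that the powers of $q$ cancel on the nose. As a measure-independent sanity check, \eqref{eqn: key identity plan} may alternatively be verified on the Satake side using Lemma \ref{Lem: please be true}: the shifts $Z_i\mapsto q^{-b}X_i$ and $Z_i\mapsto q^{-a}Y_{i-a}$ there absorb exactly the unramified twists $\mu_b,\mu_a$, and both sides of \eqref{eqn: key identity plan} correspond to the $(d_a,d_b)$-homogeneous decomposition of the Hall--Littlewood generating function for partitions of total weight $d$ into blocks of sizes $a$ and $b$.
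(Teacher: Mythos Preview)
Your proof is correct and follows the same overall strategy as the paper: use Corollary \ref{Cor: initial restatement} and Lemma \ref{Lem: elementary lemma} to reduce Theorem \ref{Thm: full fundamental lemma} to the Hecke-algebra identity $\xi_{(a,b)}(\phi^n_d)=\sum_{d_a+d_b=d}\phi^a_{d_a}\otimes\phi^b_{d_b}$ (your $\phi^n_d$ is the paper's $\bfun_d$).

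Where you diverge is in the verification of this identity. The paper passes through the Satake transform: it computes $Sat(\bfun_d)=q^{d(n-1)}\sum_{\mathbf{m}\in\zz^n_{d,+}}Z^{\mathbf{m}}$ explicitly via an Iwasawa integral, then applies Lemma \ref{Lem: please be true} together with the bijection $\zz^n_{d,+}\cong\bigsqcup_{d_a+d_b=d}\zz^a_{d_a,+}\times\zz^b_{d_b,+}$ to match the two Satake images. You instead compute $\xi_{(a,b)}(\phi^n_d)$ directly from the integral formula in Lemma \ref{Lem: parabolic descent}, observing that the unipotent integral $\vol\{X:m_1X\in\Mat_{a\times b}(\calo_E)\}=|\det m_1|_E^{-b}$ is exactly cancelled by $\mu_b(\det m_1)\mu_a(\det m_2)\,\de_{P_{(a,b)}}^{1/2}(m_1,m_2)$. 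Your route is more elementary---it avoids the Hall--Littlewood machinery and the explicit Satake computation entirely---while the paper's route makes clearer why the particular twist $\mu_b\otimes\mu_a$ is the right one (it is forced by Lemma \ref{Lem: please be true}). One small remark: your parenthetical ``$|x|_F=|\Nm_{E/F}(x)|_F=|x|_E$'' is there to reconcile the $|\cdot|_F$ appearing in the paper's modular-character formula with the $|\cdot|_E$ your computation naturally produces; this identity holds for any finite extension of non-archimedean local fields, not only unramified ones, so the parenthetical justification is unnecessary.
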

\begin{proof}
Set $\bfun_{d} = \sum_{\lam\in \mathrm{P}^+_{n,d}}\bfun_{K_{n,E}\vp^\lam K_{n,E}}$. Combining the definition of $\Phi^n_{2d}$ with Lemma \ref{Lem: elementary lemma}, 
\[
\Phi^n_{2d}=r_!(\bfun_{d})=\bfun_{d}\ast \bfun_0.
\]
If we assume the statement of Theorem \ref{Thm: endoscopic base change}, Corollary \ref{Cor: initial restatement} implies that it suffices to show that 
\begin{equation}\label{eqn: local goal}
\xi_{(a,b)}(\bfun_{d})=\sum_{d_a+d_b=d}\bfun_{d_a}\otimes\bfun_{d_b}.
\end{equation}
We claim this follows if we can show that
\begin{equation}\label{eqn: sft special}
Sat(\bfun_{d})(Z_i) = q^{d(n-1)}\sum_{{\bf m}\in \zz_{d,+}^n}Z^{{\bf m}},
\end{equation}
where $ \zz_{d,+}^n =\{{\bf m}\in \zz^n_{\geq0}: \sum_i m_i=d\}$ and
\[
Z^{{\bf m}}=\prod_iZ_i^{m_i}.
\]

Indeed, (\ref{eqn: sft special}) implies that for each ${d_a+d_b=d}$,
\[
Sat\left(\bfun_{d_a}\otimes\bfun_{d_b}\right)(X_i,Y_j) =
q^{d_a(a-1)+d_b(b-1)}\sum_{{\bf a}\in \zz_{d_a,+}^a}\sum_{{\bf b}\in \zz_{d_b,+}^b}X^{{\bf a}}Y^{{\bf b}}.
\]

There is a bijection
\begin{align*}
\bigoplus_{d_a+d_b=d}\left(\zz_{d_a,+}^a\oplus\zz_{d_b,+}^b\right)&\iso  \zz_{d,+}^n\\
({\bf a},{\bf b})\quad\qquad&\mapsto {\bf a}\cup{\bf b},
\end{align*}
where $\cup$ denotes concatenation. Applying this and Lemma \ref{Lem: please be true} to 
\[
Sat(\bfun_d)\in \sspan_{\cc}\left\{P_\lam(Z_i;q^{-2}): \lam\in P_{n,d}^+\right\},
\]
the equality (\ref{eqn: sft special}) implies that
\begin{align*}
\hat{\xi}_{(a,b)}(Sat(\bfun_d))(X_i,Y_j)&=q^{d(n-1)}\sum_{d_a+d_b=d}q^{-d_ab}q^{-d_ba}\sum_{{\bf a}\in \zz_{d_a,+}^a}\sum_{{\bf b}\in \zz_{d_b,+}^b}X^{{\bf a}}Y^{{\bf b}}\\
&=\sum_{d_a+d_b=d} q^{d_a(a-1)+d_b(b-1)}\sum_{{\bf a}\in \zz_{d_a,+}^a}\sum_{{\bf b}\in \zz_{d_b,+}^b}X^{{\bf a}}Y^{{\bf b}}\\ &=\sum_{d_a+d_b=d}Sat\left(\bfun_{d_a}\otimes\bfun_{d_b}\right)(X_i,Y_j).
\end{align*}
By the commutativity of (\ref{eqn: satake diagram}), this is equivalent to (\ref{eqn: local goal}). 

To prove (\ref{eqn: sft special}), we use (\ref{eqn: satake transform}) and the Iwasawa decomposition on $\GL_n(E)$ to compute that
\[
Sat(\bfun_{d})(Z_1,\ldots,Z_n)=\sum_{\lam\in \zz^n}q^{\la\lam,2\rho\ra}Z^{\lam}\int_{N_n(E)}\bfun_d(u\vp^\lam)du,
\]
where $N_n(E)$ is the $E$-points of the unipotent radical of the Borel subgroup $B_n(E)\subset \GL_n(E)$ of upper triangular matrices.
A standard computation (see \cite[Section 6]{OffenJacquet}) shows that
\[
\int_{N_n(E)}\bfun_d(u\vp^\lam)du=\begin{cases}q^{\sum_{i}(i-1)2\lam_i}&: \lam\in \zz_{d,+}^n,\\0&:\text{otherwise}.\end{cases}
\]
 Therefore, since $\la\lam,2\rho\ra+\sum_{i}(i-1)2\lam_i=2d(n-1)/2=d(n-1)$, we obtain \eqref{eqn: sft special}.
\end{proof}

\section{Nilpotent orbital integrals and the second reduction}\label{Section: nilpotent Weil}
In this section, we reduce Theorem \ref{Thm: endoscopic base change} to a statement of explicit transfers in the context of the Lie algebra version of the Jacquet--Rallis transfer. This relies on recent results of Xiao relating endoscopic transfer for the twisted Lie algebra to \emph{germ expansion of orbital integrals} in the context of the Jacquet--Rallis transfer. We recall the fundamental notions and results in the next section, review the main result of Xiao in Section \ref{Section: Xiao}, and execute the reduction in Section \ref{Section: first reduction}.

\subsection{Jacquet--Rallis transfer and fundamental lemma}\label{Section: JR theory}
 Let $E/F$ be a quadratic extension of non-archimedean local fields and let $V$ be an arbitrary $n$-dimensional Hermitian space. The linear side of the Lie algebra version of the Jacquet--Rallis comparison is
\[
\fgl_n(F)\times F^n\times F_n,
\]
where $F_n=(F^n)^\ast$ is the vector space of $1\times n$ row vectors. We consider the diagonal action of $\GL_n(F)$ on this space. The unitary side of Jacquet--Rallis transfer considers the space
\[
\Herm(V)\times V,
\]
with the diagonal action of $U(V)$.
\subsubsection{Linear side}
We define the invariants of $(x,v,v^\ast)\in \fgl_n(F)\times F^n\times F_n$ to be $\chi(x,v,v^\ast) = (a,b)\in F^n\times F^n$ with 
\[
a_i=\text{ coefficient of $t^i$ in }\det(tI-x),\qquad\text{and  }b_i =  v^\ast(x^i v). 
\]
An element $(x,v,v^\ast)$ is \emph{regular semi-simple} if and only if $$\det\left(\left(\la v^\ast, x^{i+j}v\ra\right)_{i,j}\right)\neq 0.$$ Moreover, the stabilizer of a regular semi-simple element is trivial and two regular semi-simple elements have the same invariants if and only if they are in the same $\GL_n(F)$-orbit \cite{rallis2007multiplicity}.

For $f\in C_c^\infty(\fgl_n(F)\times F^n\times F_n)$, we consider the orbital integrals
\[
\Orb^{\GL_n(F),\eta}(f,(x,v,v^\ast))=\int_{\GL_n(F)}f(\Ad(g)x,gv,v^\ast g^{-1})\eta(\det(g))dg,
\]
where $\eta=\eta_{E/F}$ is the quadratic character associated to $E/F$. This gives a $(\GL_n(F),\eta)$-invariant distribution. To compare with unitary orbital integrals, we multiply by the transfer factor $\omega$ introduced in \cite[Section 3]{ZhangFourier}. This function is defined for any regular semi-simple $(x,v,v^\ast)$ as
\begin{equation}\label{eqn: transfer factor JR 1}
\omega(x,v,v^\ast) =\eta\left(\det[v|xv|\ldots|x^{n-1}v]\right),
\end{equation}
where $[v|xv|\ldots|x^{n-1}v]$ denotes the $n\times n$ matrix with columns $x^iv$ for $i=0,1,\ldots,n-1$.
\subsubsection{Unitary side}
We similarly associate to an element $(y,w)\in\Herm(V)\times V$ the invariants $\chi_V(y,w)= (a,b)$, where
\[
a_i=\text{ coefficient of $t^i$ in }\det(tI-y),\qquad\text{and  }b_i = \la w,x^i w\ra_V. 
\]
It is clear that these values lie in $F$. For $f\in C_c^\infty(\Herm(V)\times V)$, we consider the orbital integrals
\[
\Orb^{U(V)}(f,(y,w))=\int_{U(V)}f(\Ad(g)y,gw)dg.
\]
As in the linear case, the stabilizer of a regular semi-simple element is trivial and two regular semi-simple elements have the same invariants if and only if they are in the same $U(V)$-orbit. 
\subsubsection{Transfer}\label{Section: JR transfer}
Two regular semi-simple elements $(x,v,v^\ast)\in \fgl_n(F)\times F^n\times F_n $ and $(y,w)\in\Herm(V)\times V$ are said to \emph{match} if their invariants agree. It is helpful to view this matching invariant theoretically.

Suppose that $\cala$ is the categorical quotient of $\fgl_n\times \mathbb{G}_a^n\times (\mathbb{G}_a^n)^\ast$ by $\GL_n$. There is a natural isomorphism of affine varieties (see \cite[Proposition 2.2.2.1]{chaudouard2019relative})
\[
\cala=\fgl_n\times \mathbb{G}_a^n\times (\mathbb{G}_a^n)^\ast//\GL_n\cong\Herm(\mathrm{V})\times \mathrm{V}//\U(V)\cong \A^{2n},
\] where here $\A$ denotes the affine line over $F$. 

The image of the regular locus is an open subvariety $\cala^{rss}\subset\cala$, and for any $a\in \cala^{rss}(F)$, the inverse image of $a$ in $\fgl_n(F)\times F^n\times F_n$ consists of a single $\GL_n(F)$-orbit. On the other hand, the preimage of $a$ in $\Herm(V)\times V$ is either empty or a single $U(V)$-orbit. On $F$-points, this gives a bijection of regular semi-simple orbits \cite{rallis2007multiplicity}:

\begin{equation}\label{Thm: JR orbit transfer}
    \left[\GL_n(F)\backslash \fgl_n(F)\times F^n\times F_n\right]^{rss}\iso \bigsqcup_{V\in\calv_n}\left[U(V)\backslash \Herm(V)\times V\right]^{rss}.
\end{equation}
Here $V\in\calv_n$ runs through our representatives of the isomorphism classes of non-degenerate Hermitian spaces of dimension $n.$

We say that functions $f\in C_c^\infty(\fgl_n(F)\times F^n\times F_n)$ and $\{f_V\}_{V}$ with $f_V\in C_c^\infty(\Herm(V)\times V)$ are said to be Jacquet--Rallis transfers if for any matching regular semi-simple elements $(x,v,v^\ast)$ and $(y,w)$, the following identify holds
\begin{equation}\label{eqn: JR matching}
\omega(x,v,v^\ast)\Orb^{\GL_n(F),\eta}(f,(x,v,v^\ast))=\Orb^{U(V)}(f_V,(y,w)).
\end{equation}
 The existence of smooth transfer follows from Theorem \ref{Thm: JR transfer} below.

\subsubsection{A variant}
In Section \ref{Section: Weil rep} below, we will need to also consider a slight variant of the preceding set-up, which is the version of the Jacquet--Rallis transfer for the Lie algebra considered by \cite{ZhangFourier}.

To this end, note that there is a natural embedding of $\GL_n(F)$-modules
\begin{align*}
\fgl_n(F)\times F^n\times F_n&\hra \fgl_{n+1}(F)\\
(x,v,v^\ast)&\longmapsto \left(\begin{array}{cc}x&v\\v^\ast&0\end{array}\right),
\end{align*}
where $\GL_n(F)$ acts on $\fgl_{n+1}(F)$ via the adjoint action as a subgroup of $\GL_{n+1}(F)$. In particular, we have an isomorphism of $\GL_{n}(F)$-representations
\begin{equation}\label{eqn: function decomp linear}
C_c^\infty(\fgl_{n+1}(F))\iso C_c^\infty(\fgl_{n}(F)\times F^{n}\times F_{n})\otimes C_c^\infty(F).
\end{equation}
Similarly, for an $n$-dimensional Hermitian space $V$ there is a natural embedding of $U(V)$-modules
\begin{align*}
\Herm(V)\times V&\hra\Herm(V\oplus Ee_0)\\
(y,w)&\longmapsto\left(\begin{array}{cc}x&w\\\la w,-\ra_V&0\end{array}\right),
\end{align*}
where we impose that $\la e_0,e_0\ra=1$ and that the sum is direct. As in the linear case, this induces an isomorphism of $U(V)$-representations
\begin{equation}\label{eqn: function decomp unitary}
C_c^\infty(\Herm(V\oplus Ee_0))\iso C_c^\infty(\Herm(V)\times V)\otimes C_c^\infty(F).
\end{equation}
Noting that the spaces on the right-hand sides of (\ref{eqn: function decomp linear}) and (\ref{eqn: function decomp unitary}) are related by the matching of orbital integrals (\ref{eqn: JR matching}), we extend the notion of matching functions to one between $C_c^\infty(\fgl_n(F))$ and $C_c^\infty(\Herm(V_n))$ compatible with these isomorphisms. 

More specifically, we say that 
\[
X=\left(\begin{array}{cc}x&v\\v^\ast&d\end{array}\right)\in\fgl_{n+1}(F)
\]and 
\[
Y=\left(\begin{array}{cc}y&w\\\la w,-\ra_V&\lam\end{array}\right)\in\Herm(V\oplus Ee_0)
\]
match (resp. are regular semi-simple) if $(x,v,v^\ast)$ matches $(y,w)$ and $d=\lam$ (resp. if $(x,v,v^\ast)$ and $(y,w)$ are regular semi-simple in the sense of Section \ref{Section: JR theory}). 

For $f\in C_c^\infty(\fgl_{n+1}(F))$, we consider the orbital integrals
\[
\Orb^{\GL_{n}(F),\eta}(f,X)=\int_{\GL_{n}(F)}f(\Ad(g)X)\eta(\det(g))dg, \qquad\text{ for  }X\in \fgl_{n+1}(F)^{rss}.
\]
 For any $X\in \fgl_{n+1}(F)^{rss}$, we define the transfer factor $\omega:\fgl_{n+1}(F)^{rss}\to \cc$ to be
\begin{equation}\label{eqn: JR transfer factor 2}
\omega(X) = \eta\left(\det[e_{n+1}|Xe_{n+1}|\ldots|X^{n}e_{n+1}]\right),
\end{equation}
where 
\[
e_{n+1}= {}^t[0,\ldots,0,1]\in F^{n+1}.
\]
For $f_{V}\in C_c^\infty(\Herm(V\oplus Ee_0))$, we consider the orbital integrals
\[
\Orb^{U(V)}(f_{V},Y)=\int_{U(V)}f_{V}(\Ad(h)Y)dh, \qquad\text{ for  }Y\in \Herm(V\oplus Ee_0)^{rss}.
\]
The functions $f$ and $\{f_{V}\}_{V\in \calv_n}$ are said to be Jacquet--Rallis transfers if for any regular semi-simple $X\in\fgl_{n+1}(F)$ and $Y\in \Herm(V\oplus Ee_0)$ that match, we have
\begin{equation}\label{eqn: regular matching final}
\omega(X)\Orb^{\GL_{n}(F),\eta}(f,X)=\Orb^{U(V)}(f_{V},Y).
\end{equation}

\begin{Rem}
There are now two notions of ``Jacquet--Rallis transfer.'' These are on different spaces, so it will always be clear in context which comparison is meant. Nevertheless, to ensure that this does not cause confusion, we will refer to Jacquet--Rallis transfer in the sense of (\ref{eqn: JR matching}) or (\ref{eqn: regular matching final}) to specify which is intended.
\end{Rem}

We now state the two main results in this theory: the existence of Jacquet--Rallis transfers and the fundamental lemma for the Lie algebra. We note that the results of Part \ref{Part 1} \emph{do not} rely on either of these results, though both are crucial to Part \ref{Part 2}.

\begin{Thm}\cite{ZhangFourier}\label{Thm: JR transfer}
For any $f\in C_c^\infty(\fgl_{n+1}(F))$, there exists a transfer $\{f_V\}_{V\in \calv_n}$. Conversely, for any $\{f_V\}_V$, there exists a transfer $f$.
\end{Thm}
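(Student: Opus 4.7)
The plan is to follow Zhang's Fourier-theoretic strategy from \cite{ZhangFourier}. The central insight is that both of the ambient spaces $\fgl_{n+1}(F)$ and $\Herm(V\oplus Ee_0)$ are $F$-vector spaces carrying non-degenerate $\GL_n(F)$- and $U(V)$-invariant pairings, respectively (the trace pairing $\langle X,Y\rangle = \Tr(XY)$ in both cases). Together with a fixed additive character $\psi$ of $F$, these pairings produce Fourier transforms
\[
\calf:C_c^\infty(\fgl_{n+1}(F))\iso C_c^\infty(\fgl_{n+1}(F)),\qquad \calf_V:C_c^\infty(\Herm(V\oplus Ee_0))\iso C_c^\infty(\Herm(V\oplus Ee_0)),
\]
each intertwining the action of the relevant group. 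I would first carry out the bookkeeping necessary to verify that the transfer factors $\omega$ in (\ref{eqn: JR transfer factor 2}) are compatible with $\calf$ and $\calf_V$: concretely, that if $f$ and $\{f_V\}$ are transfers, then $\calf(f)$ and $\{\gamma\,\calf_V(f_V)\}$ are also transfers for an explicit constant $\gamma=\gamma(\psi)$ independent of the function. This identity is the Jacquet--Rallis analogue of Waldspurger's character-theoretic identity and reduces, after unwinding the transfer factor and a partial Fourier transform in the auxiliary variable $e_{n+1}$, to a computation of a Gauss sum arising from the Weil representation on the spaces (\ref{eqn: function decomp linear}) and (\ref{eqn: function decomp unitary}).

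Next, I would run an induction on $n$. The base case $n=0$ is trivial. For the inductive step, pick $X\in\fgl_{n+1}(F)$ that is \emph{not} regular semisimple, and use parabolic (Harish-Chandra) descent to express $\Orb^{\GL_n(F),\eta}(f,X)$ as a sum of orbital integrals of descended functions on the Lie algebras of Levi factors of proper parabolics of $\GL_n$; the matching condition on invariants ensures that descent is compatible on the unitary side through analogous Levi factors of $U(V\oplus Ee_0)$. Each Levi factor is a product of groups of strictly smaller rank, and after extracting the semisimple part one is reduced to a Jacquet--Rallis transfer in lower dimension, to which the induction hypothesis applies. In this way, one obtains transfers $\{f_V\}$ matching $f$ on every geometric orbit except possibly the regular semisimple ones, and the difference is a function whose orbital integrals vanish outside the regular semisimple locus.

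To close the argument, I would combine the descent with the Fourier compatibility from the first paragraph. If $f$ is transferred modulo regular-semisimple defect to some $\{f'_V\}$, then the function $f-\calf^{-1}\calf(\text{descent term})$ has Fourier transform satisfying a similar descent-type control, and iterating the pair (descent, Fourier) yields a genuine transfer via a convergent correction process: this is where the spaces $C_c^\infty$ are essential, since the support under repeated Fourier and descent remains inside a compact set by Howe's finiteness. A density/spanning argument then shows that every $f\in C_c^\infty(\fgl_{n+1}(F))$ admits a transfer, and symmetrically for every $\{f_V\}$. The main obstacle, and where care is needed, is the Fourier compatibility step: tracking the precise scalar $\gamma(\psi)$ and the sign arising from the quadratic character $\eta$ and the discriminant of the pairing, and verifying that no orbits are missed in the matching (\ref{Thm: JR orbit transfer}) so that the descent identities on the two sides truly correspond term by term.
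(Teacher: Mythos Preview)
The paper does not give its own proof of this statement; it is simply quoted from \cite{ZhangFourier} as an input. So there is nothing to compare on the paper's side beyond noting that your sketch is addressed to the original source.

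As a summary of Zhang's argument your outline has the right two pillars --- Fourier compatibility up to an explicit Weil constant, and descent plus induction on rank --- but two details are off. First, the descent used is not parabolic descent to a Levi subgroup; it is Harish-Chandra (Luna-slice) descent to the centralizer of the semisimple part of an element, which is what actually produces a lower-rank Jacquet--Rallis situation. Second, the endgame is not an iterative correction with Howe finiteness. Zhang instead shows that $C_c^\infty(\fgl_{n+1}(F))$ is spanned by functions supported away from the nilpotent cone (handled by descent and induction) together with functions whose Fourier transform is supported away from the nilpotent cone (handled by the Fourier compatibility, then descent and induction); a single partition-of-unity step writes any test function as such a sum, closing the induction without any limiting process. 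Your ``convergent correction'' picture would not obviously terminate, whereas the partition-of-unity version does in one stroke.
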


Assume now that $E/F$ is an unramified extension of $p$-adic fields and assume $V=V_n$ is our fixed split Hermitian form. 
\begin{Thm}\cite{YunJR}\label{Thm: JR fundamental lemma}
The functions $\bfun_{\fgl_{n+1}(\calo_F)}$ and $\{\bfun_{\Herm(V_n\oplus Ee_0)(\calo_F)},0\}$ are Jacquet--Rallis transfers.
\end{Thm}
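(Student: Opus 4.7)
The plan is to prove the Jacquet--Rallis fundamental lemma by combining an invariant-theoretic reduction with the Weil representation structure carried by both function spaces, in the spirit of the recent proof of Beuzart-Plessis referenced in the introduction. First I would use the decomposition isomorphisms (\ref{eqn: function decomp linear}) and (\ref{eqn: function decomp unitary}) together with the orbit matching (\ref{Thm: JR orbit transfer}) to translate the statement into the equivalent matching for the spaces $\fgl_n(F) \times F^n \times F_n$ and $\Herm(V_n) \times V_n$, since under those isomorphisms $\bfun_{\fgl_{n+1}(\calo_F)}$ corresponds to $\bfun_{\fgl_n(\calo_F) \times \calo_F^n \times (\calo_F)_n} \otimes \bfun_{\calo_F}$ and $\bfun_{\Herm(V_n \oplus Ee_0)(\calo_F)}$ corresponds to $\bfun_{\Herm(V_n)(\calo_F)} \otimes \bfun_{\Lam_n} \otimes \bfun_{\calo_F}$. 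This smaller set-up admits a natural action of $\SL_2(F)$ by a Weil representation attached to $\psi$, in which Zhang's partial Fourier transform used to prove existence of smooth transfer \cite{ZhangFourier} appears as the action of the nontrivial Weyl element, and the unipotent of $\SL_2$ acts by multiplication by Gaussians built from the quadratic invariants.

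Next I would verify that the characteristic functions above are each fixed by $\SL_2(\calo_F)$ under these Weil representations, which is the case because $\psi$ has conductor $\calo_F$ and, under our unramified hypothesis on $E/F$, the lattices involved are self-dual with respect to the symplectic pairings underlying the Weil actions. Since the transfer identity (\ref{eqn: JR matching}) is $\SL_2(F)$-equivariant up to a fixed central character, it suffices to exhibit one $\SL_2(\calo_F)$-invariant matching pair; the machinery then propagates this matching across the entire $\SL_2(F)$-orbit, which is dense enough to control the full family of smooth test functions. Simultaneously, on a non-split $V \neq V_n$ the $\calo_E$-lattice fails to produce a nonzero $\SL_2(\calo_F)$-invariant in $C_c^\infty(\Herm(V \oplus Ee_0))$, because the norm form $\la w, w \ra_V$ on integral vectors does not surject onto $\Nm_{E/F}(\calo_E)$; this forces $0$ to be the correct transfer on that component, exactly as the theorem asserts.

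The main obstacle I anticipate lies in tracking the transfer factor $\omega$ of \eqref{eqn: transfer factor JR 1} precisely through the Weil action: one must check that $\omega$ is compatible with the $\SL_2(F)$-equivariance, and verify that the central character of the metaplectic cover restricts trivially to $\SL_2(\calo_F)$ exactly when $E/F$ is unramified, so that no stray Gauss-sum factor appears. Once this compatibility is in hand, the fundamental lemma reduces to a normalization check at a single generic unramified regular semi-simple orbit, which can be carried out directly using the Iwasawa decomposition and Hironaka's orbit parametrization. An alternative route, which is the original strategy of Yun \cite{YunJR}, interprets both orbital integrals as point-counts on a Hitchin-type moduli space over a global curve and deduces matching from Ng\^o's support theorem applied to the direct image of the constant sheaf on the base; the comparable obstacle there is controlling singular Hitchin fibers and the weight filtration on the relevant perverse sheaves.
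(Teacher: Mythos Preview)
The paper does not prove this theorem; it is stated as a cited result from \cite{YunJR} (Yun--Gordon) with the remark that Beuzart-Plessis \cite{beuzart2019new} later gave a characteristic-zero proof valid in any residual characteristic. So there is no ``paper's own proof'' to compare against beyond the references.

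Your sketch aims at the Beuzart-Plessis strategy, and the first paragraph is broadly correct. But the second paragraph misdescribes the mechanism, and a key idea is missing. The actual argument is \emph{inductive in $n$}: one forms the difference of orbital integrals $\Phi_\varphi(a)$ as in the proof of Proposition~\ref{Prop: second reduction}, checks $\Phi_\varphi(a)=0$ when $|q(a)|_F>1$ by a support argument, and when $|q(a)|_F=1$ reduces (after conjugating $w$ and $(v,v^\ast)$ to standard integral vectors) to the fundamental lemma for $n-1$. Only then does the $\SL_2(F)$-invariance kick in: since the characteristic functions are fixed by the Weyl element and by unipotents with $t\in\fp_F^{-1}$, $\Phi_\varphi$ is $\SL_2(F)$-invariant, and for any $a$ with $q(a)\neq 0$ some unipotent acts by a nontrivial scalar, forcing $\Phi_\varphi(a)=0$. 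Your formulation---``exhibit one $\SL_2(\calo_F)$-invariant matching pair; the machinery then propagates this matching across the entire $\SL_2(F)$-orbit''---is not how this works: there is no propagation to ``the full family of smooth test functions''; one works with a single fixed pair and shows the difference vanishes identically. Without the inductive step at $|q(a)|_F=1$ the argument has no base, and your proposal never mentions induction.

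Separately, your justification for the non-split $V$ (``the norm form $\la w,w\ra_V$ on integral vectors does not surject onto $\Nm_{E/F}(\calo_E)$'') is not the operative reason; the vanishing on that component falls out of the same difference argument once one notes there is no self-dual lattice, so the relevant characteristic function is zero and the orbital-integral identity becomes the statement that the $\eta$-weighted linear orbital integral vanishes for orbits matching the non-split form---which is again handled by the $\SL_2$-invariance step. Your final paragraph's Yun alternative is accurately summarized.
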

\begin{Rem}This theorem was first proved by Yun for characteristic $p$ local fields when $p>n+1$ and transferred to characteristic zero by Gordan in \cite{YunJR}, provided the residual characteristic is sufficiently high.  Beuzart-Plessis gave a remarkable proof of this statement in characteristic zero \emph{with arbitrary residual characteristic} \cite{beuzart2019new} via local methods. Additionally, global proofs have appeared by W. Zhang (for $p\geq n$) \cite{ZhangAFL} and Z. Zhang (for $p>2$) \cite{zhang2021maximal}.
\end{Rem}

\subsection{Nilpotent orbital integrals}\label{Section: Xiao} We recall some results from \cite{Xiao} that we use in the sequel. Roughly speaking, one may recover $\kappa$-orbital integrals on the twisted Lie algebra $\Herm(V)$ as \emph{limits of the orbital integrals} discussed in the previous sections by studying certain singular orbits in the context of Jacquet--Rallis transfer. 

 Recall from Section \ref{measures} that for a regular semi-simple element $\de\in \Herm(V)$, there is a decomposition $$F[\de]:=F[X]/(char_{\de}(X))=\prod_{i=1}^mF_i,$$ where $F_i/F$ is a field extension and $char_{\de}(X)$ denotes the characteristic polynomial of $\de$. Setting $S_1=\{i: F_i\nsupseteq E\}$, we have $H^1(F,T_\de) = \prod_{S_1}\zz/2\zz$. 
 Recall that the subgroup 
$\cald(T_\de/F)\subset H^1(F,T_\de)$ parameterizes rational conjugacy classes in the stable conjugacy class $\calo_{st}(\de)$. Let $S$ be the union of these conjugacy classes and the set of conjugacy classes $\calo\subset\Herm(V')$ that are Jacquet--Langlands transfers of $\de$, where $V'$ represents the other isomorphism class of $n$ dimensional Hermitian space over $F$.

\begin{Prop}\cite[Proposition 3.8]{Xiao}\label{Prop: extended torsor}
There is a natural $H^1(F,T_\de)$-torsor structure on $S$ extending the above classical $\cald(T_\de/F)$-torsor structure.
In particular, there is a natural bijection between $S$ and $\prod_{S_1}\zz/2\zz$.
\end{Prop}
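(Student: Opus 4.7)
The plan is to realize $S$ as the set of isomorphism classes of objects in a gerbe banded by $T_\de$, from which the torsor structure over $H^1(F,T_\de)$ is automatic.

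First, I would reinterpret $S$ invariantly. Since a regular semisimple element of $\Herm(V'')$ is determined up to $U(V'')$-conjugacy by its characteristic polynomial, and since a Jacquet--Langlands transfer of $\de$ is nothing more than a regular semisimple element of $\Herm(V')$ with the same characteristic polynomial as $\de$, the set $S$ is in natural bijection with the isomorphism classes of pairs $(V'',\de'')$ in which $V''$ is an $n$-dimensional Hermitian $E$-space and $\de''\in\Herm(V'')^{rss}$ satisfies $\mathrm{char}_{\de''}=\mathrm{char}_\de$. Let $\calg$ denote the $F$-groupoid whose $R$-points are such pairs over $E\otimes_F R$, with morphisms the Hermitian isomorphisms intertwining the chosen endomorphisms. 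By construction, $S=\pi_0\calg(F)$, and the pair $(V,\de)$ provides a distinguished basepoint.

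Next, I would verify that $\calg$ is a gerbe banded by $T_\de$. Over $\Fbar$, one has $E\otimes_F\Fbar\cong\Fbar\times\Fbar$, so Hermitian $E\otimes_F\Fbar$-modules of a fixed rank are all isomorphic; moreover, regular semisimple elements of $\fgl_n(\Fbar\times\Fbar)$ with a prescribed characteristic polynomial form a single $\GL_n(\Fbar\times\Fbar)$-conjugacy class. Thus all objects of $\calg$ are locally isomorphic. The automorphism sheaf of $(V,\de)$ is the intersection of the stabilizer of the Hermitian form with the $\GL_n$-centralizer of $\de$, namely $T_\de$ itself by the very definition of the twisted centralizer. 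A standard gerbe argument then produces a bijection
\[
\mathrm{inv}:S\iso H^1(F,T_\de),
\]
where, given a pair $(V'',\de'')$, one selects $g\in\GL_n(\Fbar)$ transporting the basepoint $(V,\de)$ to $(V'',\de'')$ over $\Fbar$ and forms the cocycle $\tau\mapsto g^{-1}\tau(g)\in T_\de(\Fbar)$. The $H^1(F,T_\de)$-torsor structure on $S$ is transported through $\mathrm{inv}$.

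To show this extends the classical structure, I would factor the forgetful morphism $(V'',\de'')\mapsto V''$ through the gerbe of Hermitian forms, inducing the natural map $H^1(F,T_\de)\to H^1(F,U(V))$ under $\mathrm{inv}$. The fiber over the trivial class on $H^1(F,U(V))$ is precisely the subset $\calo_{st}(\de)\subseteq S$ of orbits realized on $V$ itself, and the torsor structure it inherits from $S$ is the one recorded by $\inv(\de,-)$ in \eqref{invariant cohom}; this identifies $\calo_{st}(\de)$ with the subgroup $\cald(T_\de/F)=\ker\bigl(H^1(F,T_\de)\to H^1(F,U(V))\bigr)$, matching the conventions of the preceding discussion. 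The identification with $\prod_{S_1}\zz/2\zz$ is then just the cohomology calculation of Lemma \ref{Lem: centralizers}.

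The main obstacle is making the gerbe/transitivity argument genuinely canonical: one must check that $\GL_n(\Fbar)$ acts transitively on pairs over $\Fbar$ in a way compatible with descent (not merely on the $\Fbar$-points of the objects themselves but at the level of isomorphisms between them), and that the stabilizer of $(V,\de)$ in this larger groupoid of pairs is literally the $F$-torus $T_\de$ rather than a $\sigma$-conjugate twist of it. Equivalently, one must show that when one enlarges the natural $U(V)$-action on $\calo_{st}(\de)$ by the $\GL_n(E)$-action $g\ast H=gH\bar g^t$ on Hermitian forms, the resulting groupoid is honestly banded by $T_\de$; once this bookkeeping is confirmed, the rest of the argument is formal.
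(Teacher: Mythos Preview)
The paper does not prove this proposition; it simply cites \cite[Proposition~3.8]{Xiao}.  Your gerbe argument is a correct and self-contained way to obtain the result.

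Judging from the surrounding material (in particular the subsequent Lemma, cited as \cite[Lemma~4.6]{Xiao}), Xiao's approach is more hands-on: one first identifies $H^1(F,T_\de)$ with $F[x]^\times/\Nm_{E[x]/F[x]}(E[x]^\times)$ via the isomorphism $T_\de\cong E[\de]^\times/F[\de]^\times$ up to center, and then exhibits the torsor structure explicitly by the map $\zeta\mapsto\de_\zeta$, where $\de_\zeta$ is the Hermitian element matching $(x,v,v^\ast\zeta)$ under Jacquet--Rallis orbit matching.  Your route is cleaner conceptually---once one recognizes that pairs $(V'',\de'')$ with fixed characteristic polynomial form a gerbe banded by $T_\de$, the torsor structure and the compatibility with $\cald(T_\de/F)$ fall out formally---whereas Xiao's construction is tailored to the applications that follow, since it ties the torsor structure directly to the parametrization of nilpotent orbital integrals by $\zeta$.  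The bookkeeping issue you flag (that the automorphism sheaf is literally $T_\de$ and not a twist) is genuine but routine: over $\Fbar$ the Hermitian structure trivializes and $U(V)_{\Fbar}\cong\GL_{n,\Fbar}$, so the automorphism group of any object is the centralizer of a regular semisimple element, identified with $T_\de$ via any choice of isomorphism to the basepoint, and the banding is canonical because $T_\de$ is abelian.
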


Now fix a regular semi-simple element $(x,v,v^\ast)\in \fgl_n(F)\times F^n\times F_n$, and let $F[x]=\prod_{i=1}^mF_i$, $\{1,\ldots, m\}=S_1\sqcup S_2$ be as above. Then $T_x=F[x]^\times$ is the centralizer of $x$ in $\GL_n(F)$, and we define $T_1=\prod_{i\in S_1}F_i^\times$.

The action of $x$ on $F^n$ induces the decomposition
\[
F^n=\bigoplus_{i=1}^mM_i,
\]
where $M_i = F_i\cdot F^n$. We similarly have $F_n = \bigoplus_{i=1}^mM_i^\ast$. With these decompositions, write $v=(v_1,\ldots,v_m)$ and $v^\ast=(v_1^\ast,\ldots,v_m^\ast)$. The assumption that $(x,v,v^\ast)$ is regular semi-simple implies that $v_i\neq0$ and $v_i^\ast\neq0$ for all $i$.

For any subset $\Sigma\subset S_1$, let $v_\Sigma$ denote the vector in $F^n = \bigoplus_{i=1}^mM_i$ where
\[
v_{\Sigma,i}=\begin{cases}v_i&:i\in \Sigma\\ 0&: i\in (S_1\setminus{\Sigma})\sqcup S_2\end{cases},
\]
and likewise for $v^\ast_\Sigma$.

 For any $f\in C_c^\infty(\fgl_n(F)\times F^n\times F_n)$ and $\Sigma\subset S_1$, define the \emph{generalized nilpotent orbital integral}
\begin{align*}
&\Orb^{\GL_n(F),\eta}(f,(x,v_\Sigma,v^\ast_{S_1\setminus{\Sigma}}))=\\&\int_{\GL_n(F)/T_x}\left(\int_{T_1}f(\Ad(g)x,gtv_\Sigma,v^\ast_{S_1\setminus{\Sigma}}t^{-1}g^{-1})\prod_{i\in\Sigma}|t_i|^s\prod_{i\in S_1\setminus{\Sigma}}|t_i|^{-s}\eta(\det t)\eta(\det g)dt\bigg|_{s=0}\right)dg,
\end{align*}
where $t_i\in F^\times_i$ and the integral is understood in terms of a natural meromorphic continuation. The point is that the subsets of $S_1$ are in canonical bijection with certain non-regular orbits in $\fgl_n(F)\times F^n\times F_n$, and these nilpotent orbital integrals are natural $(\GL_n(F),\eta)$-invariant distributions supported on these orbits.

On the other hand, for any $\zeta\in F[x]^\times$, there exists a unique space $\Herm(V_\zeta)\times V_\zeta$ containing a regular semi-simple orbit matching $(x,v,v^\ast \zeta)$; denote by $(\de_\zeta,w_\zeta)$ a representative of this orbit. Set $(\de,w):= (\de_1,w_1)$.
\begin{Lem}\cite[Lemma 4.6]{Xiao}
The map $\zeta\mapsto \de_\zeta$ defines a bijection of $H^1(F,T_{\de})$-torsors
\[
F[x]^\times/\Nm_{E[x]/F[x]}(E[x]^\times)\iso S\cong H^1(F,T_{\de}).
\]
\end{Lem}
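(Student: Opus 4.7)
\medskip

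The plan is to verify the bijection by first computing both sides explicitly in terms of the decomposition $F[x] = \prod_{i=1}^m F_i$, and then checking compatibility with the torsor structures. First I would unpack the source: since $E[x] = \prod_i E_i$ with $E_i = E \otimes_F F_i$, the norm map decomposes as a product, and
\[
F[x]^\times/\Nm_{E[x]/F[x]}(E[x]^\times) \;\cong\; \prod_{i=1}^m F_i^\times/\Nm_{E_i/F_i}(E_i^\times).
\]
For $i \in S_2$ we have $E_i \cong F_i \oplus F_i$ with norm given by multiplication, which is surjective, so that factor is trivial. For $i \in S_1$, the extension $E_i/F_i$ is a quadratic field extension, so local class field theory identifies $F_i^\times/\Nm_{E_i/F_i}(E_i^\times) \cong \zz/2\zz$. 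Thus the left-hand side is canonically isomorphic to $\prod_{S_1} \zz/2\zz$, which is exactly $H^1(F,T_\de)$ by Lemma \ref{Lem: centralizers}.

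Next I would establish well-definedness: if $\zeta' = \Nm_{E[x]/F[x]}(\xi)\zeta$ for some $\xi \in E[x]^\times = \prod_i E_i^\times$, one must show $(\de_{\zeta'}, w_{\zeta'})$ and $(\de_\zeta, w_\zeta)$ determine the same orbit. The Gram matrix of invariants attached to $(x, v, v^*\zeta)$ has entries $v^*(\zeta x^{i+j} v)$, and twisting $\zeta \mapsto \Nm(\xi)\zeta$ has the effect of rescaling this Gram matrix by $\Nm(\xi)$. On the unitary side, the element $\xi$, viewed through $E[x] \hookrightarrow \End_E(V_\zeta)$, commutes with $\de_\zeta$ and sends the Hermitian form $\la\cdot,\cdot\ra_{V_\zeta}$ to its rescaling by $\Nm(\xi)$; hence $\xi$ realizes an isomorphism of Hermitian data $(V_{\zeta'}, \de_{\zeta'}, w_{\zeta'}) \cong (V_\zeta, \de_\zeta, w_\zeta)$ preserving the orbit. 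So $\zeta \mapsto \de_\zeta$ descends to a well-defined map from the quotient to $S$.

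It remains to show this map is $H^1(F,T_\de)$-equivariant, where the torsor structure on $S$ is the extended one from Proposition \ref{Prop: extended torsor} and the torsor structure on the source is via the local class field theory identification above. Here I would fix $\zeta = 1$ as basepoint (so $\de_1 = \de$), and for a general class $[\zeta] \in \prod_{S_1} \zz/2\zz$, compute $\inv(\de, \de_\zeta)$ explicitly. Concretely, choose $g \in \GL_n(\bar{F})$ conjugating the stable avatar of $(x, v, v^*)$ to that of $(x, v, v^*\zeta)$; tracing through how Galois acts on this $g$ in the presence of the twist by $\zeta$ yields a cocycle in $T_\de$ whose $i$-th component records the class of $\zeta_i \in F_i^\times/\Nm_{E_i/F_i}(E_i^\times)$. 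Since both source and target are torsors under the finite group $\prod_{S_1}\zz/2\zz$, an equivariant map between them is automatically bijective.

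The main obstacle is the verification of equivariance in the last step: $S$ contains conjugacy classes on both Hermitian spaces $V$ and $V'$, and the torsor structure on $S$ produced in Proposition \ref{Prop: extended torsor} is not the classical Galois-cohomological one but an extension thereof, so tracking the invariant $\inv(\de, \de_\zeta)$ requires carefully unwinding the construction of that extended torsor and matching it component-by-component in $S_1$ with the class-field-theoretic description of $F_i^\times/\Nm_{E_i/F_i}(E_i^\times)$.
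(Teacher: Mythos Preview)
The paper does not provide its own proof of this lemma: it is simply cited from \cite[Lemma 4.6]{Xiao} and marked with a \qed. Your proposal is a reasonable reconstruction of how such a proof would go, and the overall strategy---compute both sides as $\prod_{S_1}\zz/2\zz$ via local class field theory, then verify equivariance---is correct.

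Two remarks on the details. In your well-definedness step, the claim that multiplication by $\xi\in E[x]^\times$ realizes an isomorphism of Hermitian data is correct but deserves to be made precise: since $(y_\zeta,w_\zeta)$ is regular semi-simple, $w_\zeta$ is a cyclic vector and one may identify $V_\zeta\cong E[x]$ as $E[x]$-modules with $w_\zeta\mapsto 1$; the Hermitian form is then determined by $\la 1, x^i\ra_\zeta = v^\ast(\zeta x^i v)$, and multiplication by $\xi$ carries this to the form associated to $\Nm(\xi)\zeta$ while commuting with $x$. This pins down both the Hermitian space and the conjugacy class of $\de_\zeta$ simultaneously. For the equivariance step, the obstacle you flag is real: the extended torsor structure of Proposition~\ref{Prop: extended torsor} is built precisely so that crossing between $\Herm(V)$ and $\Herm(V')$ corresponds to the non-trivial class in the ``extra'' $\zz/2\zz$ factor, and one checks that $\zeta_i\notin\Nm_{E_i/F_i}(E_i^\times)$ forces exactly such a crossing in the $i$-th component. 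Once this is verified, equivariance and hence bijectivity follow as you say.
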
\qed

For any $(\de,w)\in \Herm(V)\times V$ matching $(x,v,v^\ast)$, we obtain a character 
\begin{align*}
\ka_\Sigma:H^1(F,T_\de)&\lra\cc^\times\\\de_\zeta\quad&\longmapsto (-1)^{\Sigma(\zeta)},
\end{align*}
where $\Sigma(\zeta)=\#\{i\notin\Sigma: \zeta_i\notin \Nm_{E_i/F_i}(E_i)\}$. It is evident that all characters $\ka\in H^1(F,T_\de)^\ast$ arise in this fashion for some $\Sigma\subset S_1$. This motivates the following germ expansion, relating these generalized nilpotent orbital integrals and $\ka$-orbital integrals on the twisted Lie algebra $\Herm(V)$.
\begin{Thm}\cite[Theorem 4.7]{Xiao}\label{Thm: Xiao's result}
Suppose that $f$ and $\{f_V\}_V$ are smooth transfers with respect to the Jacquet--Rallis transfer of (\ref{eqn: JR matching}) . Then for any regular semi-simple $(x,v,v^\ast)\in \fgl_n(F)\times F^n\times F_n$, we have the equality
\begin{equation*}
\omega(x,v,v^\ast)\Orb^{\GL_n(F),\eta}(f,(x,v_\Sigma,v^\ast_{S_1\setminus{\Sigma}})) = \sum_{\zeta\in S}\ka_\Sigma(\zeta)\int_{U(V_\zeta)/T_{\de_\zeta}}f_{V_\zeta}(\Ad(g)\de_{\zeta},0)dg,
\end{equation*}
where $\omega$ is the transfer factor in (\ref{eqn: transfer factor JR 1}) and $(\de_\zeta,w_\zeta)\in \Herm(V_\zeta)\times V_\zeta.$
\end{Thm}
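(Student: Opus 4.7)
The plan is to interpolate between the regular semi-simple Jacquet--Rallis identity and the desired nilpotent formula via a meromorphic family of twisted orbital integrals. Concretely, for $t = (t_i)_{i\in S_1}\in T_1$ and a complex parameter $s$, the triple $(x,tv,v^\ast t^{-1})$ remains regular semi-simple and matches, on the unitary side, the element in $\Herm(V_{\Nm(t)})\times V_{\Nm(t)}$ obtained from $(\de,w)$ by the action of the class of $\Nm(t) \in F[x]^\times/\Nm_{E[x]/F[x]}(E[x]^\times)$. Averaging the Jacquet--Rallis identity over $t\in T_1$ against the character $\prod_{i\in\Sigma}|t_i|_{F_i}^s\prod_{i\in S_1\setminus\Sigma}|t_i|_{F_i}^{-s}\eta(t)$ (which makes the integral absolutely convergent for $\Re(s)\gg 0$), one obtains an identity of meromorphic functions of $s$, and the strategy is to continue it to $s=0$.

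On the linear side, the change of variables $v\mapsto tv_\Sigma$, $v^\ast\mapsto v^\ast_{S_1\setminus\Sigma}t^{-1}$ is precisely what enters the definition of the generalized nilpotent orbital integral; in fact, after inserting $v_\Sigma$ and $v^\ast_{S_1\setminus\Sigma}$ in place of $v,v^\ast$, the integrand depends on $t$ only through the factors listed in the definition of the meromorphic continuation, so the evaluation at $s=0$ literally produces $\omega(x,v,v^\ast)\,\Orb^{\GL_n(F),\eta}(f,(x,v_\Sigma,v^\ast_{S_1\setminus\Sigma}))$. The transfer factor $\omega(x,tv_\Sigma,v^\ast_{S_1\setminus\Sigma}t^{-1})$ differs from $\omega(x,v,v^\ast)$ only by a character of $t$ which will be absorbed on the unitary side.

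On the unitary side, one uses Proposition~\ref{Prop: extended torsor} together with the bijection $F[x]^\times/\Nm_{E[x]/F[x]}(E[x]^\times)\iso S$ to rewrite the $T_1$-integral as a sum over $\zeta\in S$, each term being a zeta integral in the direction of the vector $\tau w_\zeta$ inside $V_\zeta$. The key local computation is that for a Schwartz function $\Phi$ on $V_\zeta$, the integral $\int_{T_{\de_\zeta}}\Phi(\tau w_\zeta)\chi_s(\tau)d\tau$ admits a meromorphic continuation whose value at $s=0$ — after taking into account the combinatorics of the $T_1$-character — evaluates $\Phi$ at the origin, thereby replacing $w_\zeta$ by $0$; simultaneously, the factors $\eta(t)$ and $|t|^{\pm s}$, when translated through the matching, descend to the character $\zeta\mapsto(-1)^{\Sigma(\zeta)}$, which is exactly $\ka_\Sigma$. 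Assembling these pieces yields the right-hand side of the formula.

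The main obstacle is making the last paragraph precise: one must verify that the family in $s$ is indeed meromorphic with no pole at $s=0$, and carefully track which character of $H^1(F,T_\de)$ survives after the specializations on both sides. This boils down to: (i) an explicit computation of how the Jacquet--Rallis transfer factor $\omega$ varies under the $T_1$-action, which accounts for the $\eta$ twist on the generalized orbital integral; and (ii) a careful comparison of Tate-type local zeta integrals along each factor $F_i^\times$ with $i\in S_1$, distinguishing the contributions from $i\in\Sigma$ (vector component) and $i\in S_1\setminus\Sigma$ (dual component), so that their product reproduces exactly $(-1)^{\Sigma(\zeta)}$. Once these two bookkeeping points are settled, passage to the limit $s\to 0$ is routine and yields the claimed germ expansion.
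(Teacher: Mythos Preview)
The paper does not prove this statement; it is quoted verbatim from \cite[Theorem 4.7]{Xiao} and used as a black box in Section~\ref{Section: first reduction}. So there is no proof in the paper to compare against.

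That said, your sketch is in the right spirit for how such germ expansions are typically established: one deforms the degenerate orbital integral into a family of regular ones by acting on the vector components by $t\in T_1$, applies the regular Jacquet--Rallis identity pointwise, and then analytically continues in $s$. The linear side is essentially the definition of the generalized nilpotent integral, as you say. The genuine work is on the unitary side, and here your outline is too optimistic in one respect. You write that the Tate-type integral in the $w_\zeta$-direction ``evaluates $\Phi$ at the origin'' at $s=0$; but a single Tate integral $\int_{F_i^\times}\Phi(tw)|t|^s\,dt$ has a \emph{pole} at $s=0$ whose residue is $\Phi(0)$, not a finite value. What actually happens is that the poles from the $i\in\Sigma$ factors and the $i\in S_1\setminus\Sigma$ factors must cancel against each other (or against corresponding poles on the linear side), and it is the regular value of the resulting combination that picks out $f_{V_\zeta}(\cdot,0)$. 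Making this cancellation precise --- and showing that the sign emerging from the functional equations of the local Tate factors is exactly $(-1)^{\Sigma(\zeta)}$ --- is the substantive content of Xiao's argument, and your proposal does not yet address it. Without that, the passage to $s=0$ is not ``routine''.
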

Note that the right-hand side is essentially a $\kappa$-orbital integral of the function $f_V(-,0)\in C_c^\infty(\Herm(V))$. One caveat is that this sum is over $S$, rather than conjugacy classes in a single stable orbit of $\Herm(V)$.  In particular, to recover a $\ka$-orbital integral on $V$, we must apply the Jacquet--Langlands transfer to the function $f_{V'}$.

\subsection{The second reduction}\label{Section: first reduction}
Returning to the context of Theorem \ref{Thm: endoscopic base change}, we assume that $E/F$ is unramified and recall the diagram 
\begin{equation*}
\begin{tikzcd}
&\calh_{K_{n,E}}(\GL_n(E))\ar[dl,"-\ast \bfun_0"]\ar[dr,"BC"]&\\
\calh_{K_{n,E}}(X_n)\ar[rr,"H"]&&\calh_{K_n}(\GL_n(F)),
\end{tikzcd}
\end{equation*}
where $H$ denotes the isomorphism given by Hironaka. Fix the self-dual lattice $\Lam_n=\calo_E^n\subset V_n$ and the lattice $\call_n=\calo_F^n\times {\calo_F}_n\subset F^n\times F_n$. Let $\bfun_{\Lam_n}$ and $\bfun_{\call_n}$ be the indicator functions. Extension-by-zero  gives an embedding 
 \[
 \calh_{K_{n,E}}(X_n)\hra C_c^\infty(\Herm(V_n));
 \]
 composing this with tensor multiplication by $\bfun_{\Lam_n}$
 gives
\begin{align*}
 \calh_{K_{n,E}}(X_n)&\hra  C_c^\infty(\Herm(V_n)\times V_n)\\f&\mapsto f\otimes\bfun_{\Lam_n}.
\end{align*}
We similarly embed $\calh_{K_n}(\GL_n(F))$ in $C_c^\infty(\fgl_n(F)\times F^n\times F_n)$ using $\bfun_{\call_n}$. 
These latter two spaces are related by the Jacquet--Rallis transfer in the sense of (\ref{eqn: JR matching}).
\begin{Prop}\label{Prop: first reduction}
Suppose that for any $\varphi\in \calh_{K_{n,E}}(\GL_n(E))$, the functions 
\[\{(\varphi\ast \bfun_0)\otimes\bfun_{\Lam_n},0\}\text{ and }BC(\varphi)\otimes\bfun_{\call_n}\] are Jacquet--Rallis transfers in the sense of (\ref{eqn: JR matching}). Then Theorem \ref{Thm: endoscopic base change} follows.
\end{Prop}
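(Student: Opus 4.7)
The plan is to apply Theorem \ref{Thm: Xiao's result} twice---once at rank $n$ and once on the Levi $\GL_a \times \GL_b$---with Lie-algebra parabolic descent of orbital integrals bridging the two applications. Fix an elliptic endoscopic datum $(U(V_a) \times U(V_b), s, \eta)$, a matching pair of regular semi-simple elements $\de \in \Herm(V_n)^{rss}$ and $(\de_a, \de_b) \in [\Herm(V_a) \oplus \Herm(V_b)]^{rss}$, and a regular semi-simple triple $(x, v, v^*) \in \fgl_n(F) \times F^n \times F_n$ whose invariants match $\de$. The block decomposition of $x$ corresponding to $(x_a, x_b)$ induces a partition $I = I_a \sqcup I_b$ of the index set for $F[x] = \prod_{i \in I} F_i$, hence a decomposition $S_1 = S_1(a) \sqcup S_1(b)$. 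Via (\ref{eqn: cohom decomp}), the endoscopic character $\ka$ of Section \ref{Section:endoscopy character} coincides with Xiao's character $\ka_\Sigma$ for the choice $\Sigma = S_1(a) \subset S_1$.

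First, apply Theorem \ref{Thm: Xiao's result} to the Jacquet--Rallis pair supplied by the hypothesis: $f = BC(\varphi)\bfun_{\call_n}$, $f_{V_n} = (\varphi \ast \bfun_0)\bfun_{\Lam_n}$, and $f_{V'} = 0$. Using $\bfun_{\Lam_n}(0) = 1$ together with the vanishing of $f_{V'}$, the formula of Theorem \ref{Thm: Xiao's result} at $\Sigma = S_1(a)$ specializes to
\[
\omega(x, v, v^*)\,\Orb^{\GL_n(F),\eta}\bigl(BC(\varphi)\bfun_{\call_n},\,(x, v_{S_1(a)}, v^*_{S_1(b)})\bigr) = \Orb^{\ka}(\varphi \ast \bfun_0, \de),
\]
since only orbits on the split form $V_n$ contribute on the right-hand side and the resulting weighted sum is by design the $\ka$-orbital integral.

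Next, parabolically descend the left-hand side to the standard Levi $M_{(a,b)}$. The block form of $x$ together with the support conditions ($v_{S_1(a)}$ sits in the $S_1(a)$-components of $M_a$ and $v^*_{S_1(b)}$ in the $S_1(b)$-components of $M_b^*$) factor the generalized nilpotent orbital integral as the product of two generalized nilpotent orbital integrals on $\fgl_a(F) \times F^a \times F_a$ and $\fgl_b(F) \times F^b \times F_b$, each taken with $\Sigma$ equal to the full $S_1$-set of the respective factor, applied to the parabolic descent $(BC(\varphi)\bfun_{\call_n})^{P_{(a,b)}}$. The commutation of base change and parabolic induction at the level of the Satake isomorphism (cf. diagram (\ref{eqn: satake diagram})), together with Lemma \ref{Lem: parabolic descent}, identifies this descent with $BC(\xi_{(a,b)}(\varphi)) \cdot (\bfun_{\call_a} \otimes \bfun_{\call_b})$, modulo the unramified twist $\mu_b(\det) \otimes \mu_a(\det)$ and the half-modular character $\delta_{P_{(a,b)}}^{1/2}$ already tracked in Lemma \ref{Lem: parabolic descent}.

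Finally, apply Theorem \ref{Thm: Xiao's result} in the reverse direction on each factor, invoking the proposition's hypothesis at ranks $a$ and $b$ (an induction on $n$, with trivial base case $n=1$). With $\Sigma$ the full $S_1$-set on each factor, $\ka_\Sigma$ is trivial and the output is the stable orbital integral $\SO(\xi_{(a,b)}(\varphi) \ast \bfun_0, (\de_a, \de_b))$. The main obstacle is a careful bookkeeping of constants: the ratio
\[
\omega(x,v,v^*)\,\omega(x_a,v_a,v_a^*)^{-1}\omega(x_b,v_b,v_b^*)^{-1},
\]
the factor $\delta_{P_{(a,b)}}^{1/2}$, the unramified twist $\mu_b(\det) \otimes \mu_a(\det)$, and the measure normalizations from Section \ref{measures} must combine to reproduce precisely the Langlands--Shelstad--Kottwitz transfer factor $\De((\de_a,\de_b),\de)$ of Section \ref{Section: transfer factor}. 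This identity---at heart a comparison of discriminants in $F[x]$ versus $F[x_a] \times F[x_b]$ together with the $\eta_{E/F}$-normalization built into $\omega$---is the delicate step of the argument. Vanishing for $(\de_a,\de_b)$ admitting no matching $\de$ does not occur in our split unramified setting but would in any case be forced by the support constraint imposed by $\bfun_{\Lam_n}$.
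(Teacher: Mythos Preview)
Your approach is essentially the same as the paper's: Xiao's germ expansion at rank $n$, Lie-algebraic parabolic descent to the Levi $\GL_a\times\GL_b$, and Xiao's result again at the lower ranks using the hypothesis for $a$ and $b$. The paper carries out the one genuinely new computation you sketch---namely that the Lie-algebraic parabolic descent satisfies $(BC(\varphi)\bfun_{\call_n})^{\fp}=BC(\xi_{(a,b)}(\varphi))\bfun_{\call_a\times\call_b}$---by a direct change-of-variables in the unipotent integral, which absorbs exactly the modular character and recovers $\xi'_{(a,b)}$.

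Where you diverge is in the treatment of the transfer factor. You flag the combination of $\omega$-ratios, $\de_{P_{(a,b)}}^{1/2}$, and the twist $\mu_b\otimes\mu_a$ as the ``delicate step'' and leave it open. The paper avoids this entirely by invoking \cite[Theorem~6.1]{Xiao}, which already asserts that Xiao's construction (JR transfer, parabolic descent, JR transfer back, Jacquet--Langlands, signed sum) produces an endoscopic transfer with the correct Langlands--Shelstad--Kottwitz factor built in. Once that is granted, the only thing left to check is the parabolic descent identity above; no separate bookkeeping of discriminants or $\eta$-normalizations is needed. Your proposal is thus correct in outline but makes the argument look harder than it is: cite Xiao's packaging theorem rather than reproving it, and the ``delicate step'' disappears.
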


\begin{proof}
 Fix an elliptic endoscopic datum $(U(V_a)\times U(V_b),s,\xi)$ for $X_n$, matching regular semi-simple elements $y\in X_n$ and $(y_a,y_b)\in X_a\times X_b$, and let $\ka: \cald(T_y/F)\to\cc^\times$ be the associated character.  We recall the construction of endoscopic transfer for the twisted Lie algebra $\Herm(V_n)$ from \cite{Xiao}. Consider the diagram
\[
\begin{tikzcd}
\Herm(V_n)\ar[d,dotted]&\Herm(V_n)\times V_n\ar[r,"JR"]\ar[l,"ev_0"]&\fgl_{n}(F)\times F^n\times F_n\ar[d,"PD"]\ar[l]\\
\Herm(V_a)\oplus\Herm(V_b)&\prod_{i=a,b}\Herm(V_i)\times V_i\ar[l,"ev_0"]\ar[r,"JR"]&\prod_{i=a,b}\fgl_{i}(F)\times F^i\times F_i\ar[l].
\end{tikzcd}
\]
Here, the arrows indicate relations between certain orbital integrals as follows:
\begin{itemize}
\item \underline{$ev_0$}: this arrow indicates the map $ev_0(F)(-) =F(-,0)$;
\item \underline{JR}: this arrow indicates the Jacquet--Rallis transfer;
\item\underline{PD}: this arrow indicates parabolic descent of relative orbital integrals.
\end{itemize}
Fixing $f\in C_c^\infty(\Herm(V_n))$, we will construct an endoscopic transfer $f_{a,b}$ of $f$. Choose $F\in C_c^\infty(\Herm(V_n)\times V_n)$ such that $ev_0(F)= f$ and let $\phi\in C_c^\infty(\fgl_n(F)\times F^n\times F_n)$ be a Jacquet--Rallis transfer of $\{F,0\}$. 

We now describe the parabolic descent that arises in the above diagram.
\begin{Def}
 Let $P_{(a,b)}= M_{(a,b)}N_{(a,b)}$ be the standard maximal parabolic subgroup of $\GL_n(F)$ with Levi factor $M_{(a,b)}\cong\GL_a\times \GL_b$, unipotent radical $N_{(a,b)}$, and set $\mathfrak{p}_{(a,b)}=\Lie(P_{(a,b)}(F))$. For a function $\phi\in C_c^\infty(\fgl_{n}(F)\times F^n\times F_n)$, denote by $\phi^\fp:=\phi^{\fp_{(a,b)}}$ the following \textbf{Lie-algebraic parabolic descent} of $\phi$ to $\prod_{i=a,b}\fgl_{i}(F)\times F^i\times F_i$:
\[
\phi^\fp((m_1,m_2),v, v^\ast) = \int_{\mathfrak{n}_{(a,b)}}\int_{K_n}\phi\left(k\left(\begin{array}{cc}m_1&n\\&m_2\end{array}\right)k^{-1}\right), kv, v^\ast k^{-1})dkdn,
\]
where $\mathfrak{n}_{(a,b)}=\Lie(N_{(a,b)})$ and $K_n = \GL_n(\calo_F)$.
\end{Def}

Returning to the argument, denote by $\phi^\fp$ the parabolic descent of $\phi$ to $\prod_{i=a,b}\fgl_{i}(F)\times F^i\times F_i$. We now use the Jacquet--Rallis transfer on both lower rank spaces to obtain four functions
\[
F_{\al,\be}^{a,b}\in C_c^\infty(\Herm(V_\al)\times V_\al\times \Herm(V_\be)\times V_\be),
\]
where $\al\in \calv_a$ and $\be\in\calv_b$. Set $$f^{a,b}_{\al,\be}=ev_{0}(F_{\al,\be}^{a,b})\in C_c^\infty(\Herm(V_\al)\times  \Herm(V_\be)).$$ Finally, if $\widetilde{f^{a,b}_{\al,\be}}$ denotes the Jacquet--Langlands transfer to $\Herm(V_a)\oplus \Herm(V_b)$ then define
\[
f_{a,b}=\sum_{\al,\be}(-1)^{k(\al,\be)}\widetilde{f^{a,b}_{\al,\be}},
\]
where $k(\al,\be)$ is the number of the forms $\{\al,\be\}$ that are split. Theorem 6.1 of \cite{Xiao} asserts that $f_{a,b}$ is an endoscopic transfer of $f$.

Now let $\varphi\in \calh_{K_{n,E}}(\GL_n(E))$. To prove the Proposition, we apply this approach to the  matching functions  $\{(\varphi\ast \bfun_0)\otimes\bfun_{\Lam_n},0\}$ and $BC(\varphi)\otimes\bfun_{\call_n}$. Set $F=\varphi\otimes\bfun_{\Lam_n}$ so that $ev_0(F)=\varphi$.

Fix now an auxiliary element $w\in V_n$ so that $(y,w)$ is regular semi-simple in $\Herm(V_n)\times V_n$ and let $(x,v,v^\ast)$ be a matching element in $\fgl_n(F)\times F^n\times F_n$. The assumption that $\{(\varphi\ast \bfun_0)\otimes\bfun_{\Lam_n},0\}$ and $BC(\varphi)\otimes\bfun_{\call_n}$ match and Theorem \ref{Thm: Xiao's result} implies that there exists a subset $\Sigma\subset S_1$ such that
\begin{equation*}%\label{eqn: implication of xiao}
\omega(x,v,v^\ast)\Orb^{\GL_n(F),\eta}(BC(\varphi)\otimes\bfun_{\call_n},(x,v_\Sigma,v^\ast_{S_1\setminus{\Sigma}})) = \Orb^\ka(\varphi\ast \bfun_0,y).
\end{equation*}

 We similarly fix auxiliary vectors $w_a\in V_a$ and $w_b\in v_b$ such that $(y_a,w_a)$ is regular semi-simple in $\Herm(V_a)\times V_a$ and similarly for $(y_b,w_b)$. Using our assumption again, we know that the functions
\[
\{(\xi_{(a,b)}(\varphi)\ast \bfun_0)\otimes\bfun_{\Lam_a\times\Lam_b},0,0,0\} \text{    and     }BC(\xi_{(a,b)}(\varphi))\otimes\bfun_{\call_a\times\call_b}
\]
match with respect to Jacquet--Rallis transfer (\ref{eqn: JR matching}). In particular, we have no need to appeal to Jacquet--Langlands transfer in this case. 

We may assume that $\xi_{(a,b)}(\varphi) = \varphi_a\otimes \varphi_b$, so that $BC(\xi_{(a,b)}(\varphi)) = BC(\varphi_a)\otimes BC(\varphi_b)$. Applying Theorem  \ref{Thm: Xiao's result} for $V_a$, for any regular semi-simple $(x_a,v_a,v_a^\ast)$ matching $(y_a,w_a)\in X_a\times V_a$, we have
\begin{equation*}%\label{eqn: implication of xiao bullet}
\omega(x_a,v_a,v_a^\ast)\Orb^{\GL_a(F),\eta}(BC(\varphi_a)\otimes\bfun_{\call_a},(x_a,v_{a,\Sigma_a},v^\ast_{a,S_1(a)\setminus{\Sigma_a}})) = \SO(\varphi_a\ast \bfun_0,y_a),
\end{equation*}
 where the subset $\Sigma_a\subset S_1(a)$ arises from 
\[
\Sigma = (\Sigma_a,\Sigma_b)\subset S_1(a)\times S_1(b)=S,
\] with $S_1(a)$ and $S_1(b)$ as in (\ref{eqn: cohom decomp}). A similar identity holds for $V_b$.

Applying the argument outlined above, it remains to verify that
\begin{equation*}%\label{eqn: almost there 1}
BC(\xi_{(a,b)}(\varphi))\otimes\bfun_{\call_a\times\call_b} = (BC(\varphi)\otimes\bfun_{\call_n})^{\fp}.
\end{equation*}
Recalling the formula for $\xi_{(a,b)}$ in Lemma \ref{Lem: parabolic descent}, we check that
\[
BC(\xi_{(a,b)}(\varphi)) = \xi'_{(a,b)}(BC(\varphi)),
\]
where for any $f'\in C_c^\infty(\GL_n(F))$ and $(m_1,m_2)\in M_{a,b}(F)$
\[
\xi'_{(a,b)}(f')(m_1,m_2) = \mu_b'(\det(m_1))\mu_a'(\det(m_2))(f')^{P_{(a,b)}}(m_1,m_2),
\]
where $\mu'_s(t) = |t|_F^{s/2}$ for any $s\in \cc$ and the parabolic descent $(f')^{P_{(a,b)}}$ is defined as in Lemma \ref{Lem: parabolic descent} using the modular character of $P_{(a,b)}(F)$. Indeed, the left-hand side vanishes away from the $K_a\times K_b$-double cosets represented by elements of the form $(\varpi^{2\lam_a},\varpi^{2\lambda_b})$, where $\lam_a\in \mathrm{P}_a$ and $\lam_b\in \mathrm{P}_b$. A similar support constraint holds in the integral defining $BC(\varphi)^{P_{(a,b)}}$, and the difference in the characters $\mu_s$ and $\mu_s'$ ensures the equality.

Therefore, it suffices to show that for any $\phi\in \calh_{K_n}(\GL_n(F))$ 
\[
\xi'_{(a,b)}(\phi)\otimes\bfun_{\call_a\times\call_b}=(\phi\otimes\bfun_{\call_n})^{\fp}
\]
as functions on $M_{(a,b)}\times(F^a\oplus F^b)\times (F_a\oplus F_b)$. 

For any such $((m_1,m_2), v, v^\ast)$, it is clear that $(\phi\otimes\bfun_{\call_n})^{\fp}((m_1,m_2),v,v^\ast) =0$ unless $(v,v^\ast)\in \call_n\times \call_n^\ast =(\call_a\times\call_b)\oplus (\call_a^\ast\times \call_b^\ast).$ In particular, $(\phi\otimes\bfun_{\call_n})^{\fp}((m_1,m_2),v,v^\ast)$ equals $\bfun_{\call_a\times\call_b}(v,v^\ast)$ times
\begin{align*}
\int_{\mathfrak{n}_{(a,b)}}\int_{K_n}&\phi\left(k\left(\begin{array}{cc}m_1&n\\&m_2\end{array}\right)k^{-1}\right)dkdn\\
											&=\int_{\mathfrak{n}_{(a,b)}}\int_{K_n}\phi\left(k\left(\begin{array}{cc}m_1&\\&m_2\end{array}\right)\left(\begin{array}{cc}1&m_1^{-1}n\\&1\end{array}\right)k^{-1}\right)dkdn\\
                                                                     &=|\det(m_1)|_F^b\int_{\mathfrak{n}_{(a,b)}}\int_{K_n}\phi\left(k\left(\begin{array}{cc}m_1&\\&m_2\end{array}\right)\left(\begin{array}{cc}1&n\\&1\end{array}\right)k^{-1}\right)dkdn\\
											&=\mu'_b(\det(m_1))\mu'_a(\det(m_2))\phi^{P_{(a,b)}}(m_1,m_2),
\end{align*}
where we have used the formula
\[
\de_{P_{(a,b)}}(m_1,m_2) = |\det(m_1)|_F^b|\det(m_2)|_F^{-a}.
\]
By Lemma \ref{Lem: parabolic descent} and the explanation above, this last expression is precisely $\xi'_{(a,b)}(\phi)\otimes\bfun_{\call_a\times\call_b}$, completing the proof.\qedhere
\end{proof}

\section{The Weil representation and the third reduction}\label{Section: Weil rep}
We now wish to ``peel off'' the indicator functions $\bfun_{\Lam_n}$ and $\bfun_{\call_n}$ from the conjectured transfer for the Hecke algebra. This requires the full power of the Weil representation on the spaces $C_c^\infty(\Herm(W)\times W)$ and $C_c^\infty(\fgl_n(F)\times F^n\times F_n)$ studied in \cite{beuzart2019new}. We recall this representation now.
\subsection{The Weil representation}

Now fix an additive character $\psi:F\to \cc^\times$ of conductor $\calo_F$. Let $V$ be an $n$-dimensional Hermitian space. For an element $(x,v,v^\ast)\in \fgl_n(F)\times F^n\times F_n$, we set
\[
q(x,v,v^\ast) = v^\ast(v)\in F.
\] 
Similarly, for $(y,w)\in \Herm(V)\times V$ we set $q(y,w) = \la w,w\ra_V$.

Recall the partial Fourier transforms $\mathcal{F}$ on $C_c^\infty(\fgl_n(F)\times F^n\times F_n)$ and  $C_c^\infty(\Herm(V)\times V)$:\ for $f\in C_c^\infty(\fgl_n(F)\times F^n\times F_n)$, we set
\[
\mathcal{F}(f)(x,v,v^\ast) = \int_{F^n\times F_n}f(x,w,w^\ast) \psi( w^\ast(v)+v^\ast(w))dwdw^{\ast}.
\]
Similarly, for $f\in C_c^\infty(\Herm(V)\times V)$ we set
\[
\mathcal{F}(f)(y,w) = \int_{V}f(x,u) \psi(\Nm_{E/F}(\la u,w\ra))du.
\]

These transforms induce a Weil representation of $\SL_2(F)$ on these function spaces in the standard way. Indeed, since $\SL_2(F)$ is generated by the elements $\left(\begin{array}{cc}&1\\-1&\end{array}\right)$ and $\left(\begin{array}{cc}1&x\\&1\end{array}\right)$, we need only describe the action of these elements. For $\phi\in C_c^\infty(\fgl_n(F)\times F^n\times F_n)$, this action is given by
\[
W\left(\begin{array}{cc}1&t\\&1\end{array}\right)\phi(x,v,v^\ast) = \psi(tq(x,v,v^\ast))\phi(x,v,v^\ast),
\]
for any $t\in F$, and
\[
W\left(\begin{array}{cc}&1\\-1&\end{array}\right)\phi(x,v,v^\ast) = \mathcal{F}\phi(x,v,v^\ast).
\]
 The formulas are similar for the unitary case. 

An important property of this representation is that it descends to orbital integrals. More precisely, recall from Section \ref{Section: JR transfer} that $\cala$ denotes the categorical quotient $\fgl_n\times \mathbb{G}_a^n\times (\mathbb{G}_a^n)^\ast//\GL_n$. The image of the regular locus is an open sub-variety $\cala^{rss}\subset\cala$. 

We denote the canonical quotient maps by $$p_{\GL}:\fgl_n(F)\times F^n\times F_n\to \cala(F)$$ and $$p_V:\Herm(V)\times V\to \cala(F).$$ For any $a\in\cala^{rss}(F)$ and functions $f'\in C_c^\infty(\fgl_n(F)\times F^n\times F_n)$ and $f\in C_c^\infty(\Herm(V)\times V)$, set 
\[
O(a,f) = \begin{cases}
\Orb^{U(V)}(f,Y_a)&:p_V^{-1}(a)\neq \emptyset\text{ and }Y_a=(y,w)\in p_V^{-1}(a),\\\quad0&:\text{otherwise},\end{cases}
\]
and
\[
O(a,f')=\omega(X_a)\Orb^{\GL_n(F),\eta}(f',X_a)\text{ for any }X_a=(x,v,v^\ast)\in p_{\GL}^{-1}(a).
\]
With this notation, $f'$ and $f$ are transfers in the sense of (\ref{eqn: JR matching}) if and only if 
\[
O(a,f')=O(a,f)
\]
as functions on $\cala^{rss}(F)$. With this in mind, let
\[
\Orb(\fgl_n(F)\times F^n\times F_n) = \{a\mapsto O(a,f'): f'\in C_c^\infty(\fgl_n(F)\times F^n\times F_n)\}
\]
and let 
\[
\Orb(\Herm(V)\times V) = \{a\mapsto O(a,f): f\in C_c^\infty(\Herm(V)\times V)\}.
\]

There are natural Weil representations on $\Orb(\fgl_n(F)\times F^n\times F_n)$ and $\Orb(\Herm(V)\times V)$: as before, we need only describe the action of a unipotent element and the Weyl element. For any $t\in F$ and any $\Phi\in \Orb(\fgl_n(F)\times F^n\times F_n)$, set
\[
W\left(\begin{array}{cc}1&t\\&1\end{array}\right)\Phi(a) = \psi(tq(a))\Phi(a),
\]
where $q(a)=q(x,v,v^\ast)$ for any $(x,v,v^\ast)\in p_{\GL}^{-1}(a).$ Realizing $\Phi = O(-,f)$ for some $f\in C_c^\infty(\fgl_n(F)\times F^n\times F_n)$, then set
\[
W\left(\begin{array}{cc}&1\\-1&\end{array}\right)O(a,f) =O(a,\calf (f)). 
\]
The formulas for the unitary case are identical. 

The compatibility of Jacquet--Rallis transfer and Fourier transforms \cite[Theorem 4.17]{ZhangFourier} allows us to conclude the following result.
\begin{Prop}\cite[Proposition 1]{beuzart2019new}\label{Prop: Weil compatibility}
The Weil representations on 
\[
\text{$C_c^\infty(\fgl_n(F)\times F^n\times F_n)$ and $C_c^\infty(\Herm(V)\times V)$}
\]
 descend to the Weil representations on  
\[
\text{$\Orb(\fgl_n(F)\times F^n\times F_n)$ and $\Orb(\Herm(V)\times V)$.}
\] Moreover, these latter representations coincide on the intersection.
\end{Prop}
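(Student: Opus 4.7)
The plan is to treat the two generators $u_t$ and $w_0$ of $\SL_2(F)$ separately: first verify that each induces a well-defined operator on the orbital-integral spaces, then verify that the two descended representations agree on the overlap. The defining $\SL_2$-relations will persist automatically in the quotient since they already hold on the level of functions.

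For the unipotent $u_t$, the operator $f \mapsto \psi(tq(\cdot))f$ is multiplication by a $\GL_n(F)$- (respectively $U(V)$-) invariant function, so it commutes with the group-conjugation action. Consequently, the orbital integral of $W(u_t)f$ at any regular semi-simple $a$ equals $\psi(tq(a))$ times the orbital integral of $f$ at $a$, and the formula
\[
W(u_t)\Phi(a) = \psi(tq(a))\Phi(a)
\]
makes intrinsic sense on each orbital-integral space. Crucially, the two descended operators on $\Orb(\fgl_n(F)\times F^n \times F_n)$ and $\Orb(\Herm(V)\times V)$ are given by the same formula depending only on the common invariant $q$ and on $\Phi$, so they tautologically coincide on the intersection.

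The serious point is the Weyl element $w_0$. To descend the action $f \mapsto \calf f$, one must check that $\calf$ preserves the kernel of the orbital-integral map $f \mapsto O(-,f)$; on the linear side this is the familiar Harish-Chandra-type statement that $(\GL_n(F),\eta)$-invariant distributions supported on the regular semi-simple locus are stable under the partial Fourier transform with respect to $q$, and the unitary analogue is established in the same spirit. This produces a well-defined operator $W(w_0)O(-,f) := O(-, \calf f)$ on each orbital-integral space. Agreement on the intersection is then precisely the assertion that Jacquet--Rallis transfer commutes with the partial Fourier transform: if $(f', f_V)$ is a matching pair in the sense of (\ref{eqn: JR matching}), then so is $(\calf f', \calf f_V)$. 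I would invoke \cite[Theorem 4.17]{ZhangFourier} as a black box for this last point; it is the key technical input and the main obstacle were one establishing the result from scratch. Granted this compatibility, the identification of the two representations on the overlap, as well as the verification that the $\SL_2$-relations are preserved, is automatic from the presentation of the Weil representation by these two generators.
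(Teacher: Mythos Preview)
Your proposal is correct and follows essentially the same route as the paper, which simply cites \cite[Proposition 1]{beuzart2019new} after observing that the key input is the compatibility of Jacquet--Rallis transfer with the partial Fourier transform \cite[Theorem 4.17]{ZhangFourier}. One small point: you frame the well-definedness of $W(w_0)$ on each orbital-integral space as a separate ``Harish-Chandra-type'' fact, but in practice it too falls out of Zhang's compatibility theorem together with the existence of transfer in both directions---if $O(-,f')\equiv 0$ then $f'$ matches the zero tuple, hence $\calf f'$ matches the zero tuple as well, so $O(-,\calf f')\equiv 0$, and similarly on the unitary side.
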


\subsection{The third reduction}\label{Section: second reduction statement}
We utilize these Weil representations to affect our final reduction. For this, we need to consider both forms of the Jacquet--Rallis transfer discussed in Section \ref{Section: JR theory}.

\begin{Prop}\label{Prop: second reduction}
Suppose that for $\varphi\in \calh_{K_{n,E}}(\GL_n(E))$, the functions
\[
\{\varphi\ast \bfun_0,0\}\text{  and  }BC(\varphi)
\]
are Jacquet--Rallis transfers in the sense of (\ref{eqn: regular matching final}) (applied to $\fgl_n$ rather than $\fgl_{n+1}$). Then the functions $\{(\varphi\ast \bfun_0)\otimes\bfun_{\Lam_n},0\}$ and $BC(\varphi)\otimes\bfun_{\call_n}$ are transfers in the sense of \eqref{eqn: JR matching}. 
\end{Prop}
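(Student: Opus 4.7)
The plan is to use the Weil representation of $\SL_2(F)$ on orbital integrals (Proposition \ref{Prop: Weil compatibility}) to transfer the ``bare'' matching (\ref{eqn: regular matching final}) supplied by the hypothesis into the ``vectorized'' matching (\ref{eqn: JR matching}) required by the conclusion. The crucial input is that, under our measure normalizations and the conductor-$\calo_F$ assumption on $\psi$, the characteristic functions $\bfun_{\Lam_n}$ and $\bfun_{\call_n}$ are the spherical vectors of the Weil representations on $C_c^\infty(V_n)$ and $C_c^\infty(F^n\times F_n)$ respectively: self-duality of the lattices $\Lam_n$ and $\calo_F^n$ makes both simultaneously Fourier-self-dual, and the same self-duality forces $\psi(tq(a))=1$ for $t\in\calo_F$ on the support of each function, hence $\SL_2(\calo_F)$-fixed.

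As a first step I would reduce the conclusion to an assertion in the sense of (\ref{eqn: regular matching final}). Fix a common auxiliary function $\phi\in C_c^\infty(F)$, for concreteness $\phi=\bfun_{\calo_F}$, and define
\[
F_{\mathrm{lin}}(X)=BC(\varphi)(x)\bfun_{\call_n}(v,v^\ast)\phi(d),\quad F_{\mathrm{un}}(Y)=(\varphi\ast\bfun_0)(y)\bfun_{\Lam_n}(w)\phi(\lambda),
\]
viewed as elements of $C_c^\infty(\fgl_{n+1}(F))$ and $C_c^\infty(\Herm(V_n\oplus Ee_0))$ via the isomorphisms (\ref{eqn: function decomp linear}) and (\ref{eqn: function decomp unitary}). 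A direct calculation expanding the determinant of $[e_{n+1}|Xe_{n+1}|\ldots|X^ne_{n+1}]$ along its first column shows that the transfer factor $\omega(X)$ of (\ref{eqn: JR transfer factor 2}) coincides with $\omega(x,v,v^\ast)$ of (\ref{eqn: transfer factor JR 1}). Combined with the matching condition $d=\lambda$ built into (\ref{eqn: regular matching final}) and the fact that the $\phi$-factor is $\GL_n(F)$- and $U(V_n)$-invariant so factors out of the orbital integrals identically on both sides, this implies that the matching of $F_{\mathrm{lin}}$ with $\{F_{\mathrm{un}},0\}$ in the sense of (\ref{eqn: regular matching final}) is equivalent to the asserted matching of $BC(\varphi)\bfun_{\call_n}$ with $\{(\varphi\ast\bfun_0)\bfun_{\Lam_n},0\}$ in the sense of (\ref{eqn: JR matching}).

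It remains to verify the matching of $F_{\mathrm{lin}}$ and $\{F_{\mathrm{un}},0\}$ using the hypothesis. The strategy is to realize both $O(-,F_{\mathrm{lin}})$ and $O(-,F_{\mathrm{un}})$ as obtained from the hypothesis matching of orbital integrals of $BC(\varphi)$ and $\varphi\ast\bfun_0$ by a specific sequence of Weil-representation operators that introduces the spherical vectors on each side simultaneously, namely a partial Fourier transform in the vector variables followed by the unipotent action, both of which preserve Jacquet--Rallis transfer on the common regular semi-simple locus by Proposition \ref{Prop: Weil compatibility}. The main obstacle will be to perform this attachment uniformly across all matching orbits (not only those where $v,v^\ast$ lie in $\calo_F^n$) and to verify that no extra characters or normalization constants appear along the way. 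The joint $\SL_2(\calo_F)$-invariance of $\bfun_{\Lam_n}$ and $\bfun_{\call_n}$ is precisely what rules out such corrections, following the pattern of Beuzart-Plessis's Weil-representation computation in \cite{beuzart2019new}.
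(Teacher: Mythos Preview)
Your proposal has the right ingredients—the Weil representation and the $\SL_2(\calo_F)$-invariance of the lattice characteristic functions—but the assembly is wrong, and there is a genuine gap where the hypothesis should enter.

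The hypothesis gives an equality of $\GL_{n-1}(F)$- and $U(V_{n-1})$-orbital integrals of $BC(\varphi)$ and $\varphi\ast\bfun_0$ on $\fgl_n(F)$ and $\Herm(V_n)$; the conclusion asks for an equality of $\GL_n(F)$- and $U(V_n)$-orbital integrals of $BC(\varphi)\bfun_{\call_n}$ and $(\varphi\ast\bfun_0)\bfun_{\Lam_n}$ on the vectorized spaces. These involve \emph{different} acting groups. The Weil representation of $\SL_2(F)$ on $C_c^\infty(\fgl_n(F)\times F^n\times F_n)$ commutes with the $\GL_n(F)$-action and acts only through the vector variables; it cannot ``build up'' the function $\bfun_{\call_n}$ from nothing, nor does it relate $\GL_n$-orbital integrals to $\GL_{n-1}$-orbital integrals. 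So your plan to ``realize $O(-,F_{\mathrm{lin}})$ from the hypothesis matching via a sequence of Weil operators'' does not make sense as stated: no such sequence exists, and the reduction you perform in your second paragraph (tensoring with $\phi=\bfun_{\calo_F}$) only reformulates the conclusion, never invoking the hypothesis.

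What the paper actually does is: form the difference $\Phi_\varphi(a)=O(a,(\varphi\ast\bfun_0)\bfun_{\Lam_n})-O(a,BC(\varphi)\bfun_{\call_n})$ and show it vanishes. For $|q(a)|_F>1$ this is automatic from support. The key step—where the hypothesis is genuinely used—is $|q(a)|_F=1$: one normalizes the vector to $w=\nu e_n$ (resp.\ $(v,v^\ast)=(\Nm(\nu)e_n,{}^te_n)$) with $\nu\in\calo_E^\times$, observes that the stabilizer of $e_n$ in $U(V_n)$ is $U(V_{n-1})$, and that the support of $\bfun_{\Lam_n}(h^{-1}\nu e_n)$ forces $h\in U(\Lam_n)U(V_{n-1})$; then the $U(V_n)$-integral collapses to the $U(V_{n-1})$-integral of $\varphi\ast\bfun_0$, and similarly on the linear side. \emph{This} is the bridge from (\ref{eqn: regular matching final}) to (\ref{eqn: JR matching}), and it is a direct coset computation, not a Weil-representation identity. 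Only after $\Phi_\varphi$ is known to vanish on $|q(a)|_F\geq 1$ does the Weil representation enter: the self-duality of the lattices gives $\mathcal F\Phi_\varphi=\Phi_\varphi$, and the vanishing for $|q(a)|_F\geq 1$ gives invariance under unipotents with $t\in\fp_F^{-1}$; together these generate all of $\SL_2(F)$, whence $\Phi_\varphi$ is $\SL_2(F)$-fixed and hence vanishes wherever $q(a)\neq 0$. Your proposal is missing the direct reduction step entirely, and misidentifies the role of the Weil representation as a bridge rather than a propagation tool.
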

\begin{proof}
The argument is similar to the proof of the Jacquet--Rallis fundamental lemma in \cite{beuzart2019new}. Fix $\varphi\in\calh_{K_{n,E}}(\GL_n(E))$ and consider
\begin{equation*}
    \Phi_\varphi(a):=O(a,(\varphi\ast \bfun_0)\otimes\bfun_{\Lam_n})-O(a,BC(\varphi)\otimes\bfun_{\call_n})
\end{equation*}
as a function on $\cala^{rss}(F)$. We claim that the assumption that $\{\varphi\ast \bfun_0,0\}\text{  and  }BC(\varphi)$ are transfers forces $\Phi_\varphi\equiv 0$; it is clear that this implies the proposition. Since $\Phi_\varphi$ is locally constant, it suffices to show $\Phi_\varphi(a)=0$ for $a$ in the open dense set where $q(a)\neq0,$ where $q(a)= q(x,v,v^\ast)$ for any $(x,v,v^\ast)\in p_{\GL}^{-1}(a).$ 

Note that it is immediate that $\Phi_\varphi(a)= 0$ if $|q(a)|_F>1$ as the indicator functions are supported away from such orbits. We now assume that $|q(a)|_F=1$. Supposing that $(x,v,v^\ast)\in p_{\GL}^{-1}(a)$ and $(y,w)\in p_{V_n}^{-1}(a)$, we see
\[
q(a)=q(x,v,v^\ast) = q(y,w)\in \calo_F^\times.
\]
Since $q(y,w)\in \Nm_{E/F}(E)$, there is an $\nu\in \calo_E^\times$ such that $q(a) = \Nm_{E/F}(\nu).$ Setting $e_n= {}^t[0,\ldots,0,1]\in \calo_F^n,$ we are free to conjugate $(x,v,v^\ast)\in p_{\GL}^{-1}(a)$ and $(y,w)\in p_{V_n}^{-1}(a)$ and assume that
\[
w= \nu e_n\text{ and } (v,v^\ast) = (\Nm_{E/F}(\nu)e_n,{}^te_n). 
\]
By the definition of $O(a,-)$, we have
 \[
 O(a,(\varphi\ast \bfun_0)\otimes\bfun_{\Lam_n}) = \int_{U(V_n)}(\varphi\ast \bfun_0)(\Ad(h^{-1})y)\bfun_{\Lam_n}(h^{-1}\nu e_n)dh.
 \]
 For $\bfun_{\Lam_n}(h^{-1}\nu e_n)\neq0,$, we must have $h^{-1}\nu e_n\in \calo_E^n$. Since the stabilizer of $e_n$ in $U(V_n)$ is $U(V_{n-1})$, it follows that this integral is supported on $U(\Lam_n)U(V_{n-1})$. Since the function $\varphi\ast \bfun_0\in \calh_{K_{n,E}}(X_n)$ is invariant under the action of $U(\Lam_n)$, our choice of Haar measure implies that
 \[
  O(a,(\varphi\ast \bfun_0)\otimes\bfun_{\Lam_n}) = \Orb^{U(V_{n-1})}((\varphi\ast \bfun_0),y).
 \]
 A similar argument shows that $|q(a)|_F=1$ implies that
 \[
 O(a,BC(\varphi)\otimes\bfun_{\call_n})=\omega(x)\Orb^{\GL_{n-1}(F),\eta}(BC(\varphi),x).
 \]
 Thus, our assumption implies that 
 \begin{equation}\label{Weil triv}
 \Phi_\varphi(a)=0\text{  whenever  }|q(a)|_F\geq 1.
 \end{equation}
 
 To complete the proof, we make use of the Weil representation. We first note since we assumed that $\psi$ is unramified, we have
 \[
 \calf(\phi\otimes\bfun_{\call_n}) = \phi\otimes\bfun_{\call_n},
 \]
 for any $\phi\in C_c^\infty(\fgl_n(F))$; a similar statement holds for any $\phi'\in C_c^\infty(\Herm(V_n))$. Considering the Weil representation on 
 \[
\Orb(\fgl_n(F)\times F^n\times F_n)\cap\Orb(\Herm(V_n)\times V_n),
 \] Proposition \ref{Prop: Weil compatibility}  now implies that 
 \[
 W\left(\begin{array}{cc}&1\\-1&\end{array}\right)\Phi_\varphi=\calf(\Phi_\varphi) = \Phi_\varphi.
 \]
 Moreover, (\ref{Weil triv}) implies that for any $t\in \mathfrak{p}_F^{-1}$, 
 \[
  W\left(\begin{array}{cc}1&t\\&1\end{array}\right)\Phi_\varphi=\psi(tq(a))\Phi_\varphi = \Phi_\varphi.
 \]
 Since $\SL_2(F)$ is generated by $ \left(\begin{array}{cc}&1\\-1&\end{array}\right)$, and$\left (\begin{array}{cc}1&t\\&1\end{array}\right)$ for $ t\in \mathfrak{p}_F^{-1}$, it follows that $$W(g)\Phi_\varphi=\Phi_\varphi$$ for all $g\in \SL_2(F)$.
 
 Now for any $a\in \cala^{rss}(F)$ with $q(a)\neq0,$ there exists a $t\in F$ such that $\psi(tq(a))\neq1$. But then
 \[
\psi(tq(a))\Phi_\varphi(a)=  W\left(\begin{array}{cc}1&t\\&1\end{array}\right)\Phi_\varphi(a) = \Phi_\varphi(a),
 \] showing that $\Phi_\varphi(a)=0.$ This proves the proposition.
\end{proof}

We now arrive at the final reduction of Theorem \ref{Thm: endoscopic base change}.
\begin{Thm}\label{Thm: JR fundamental lemma for algebra}
For any $\varphi\in \calh_{K_{n,E}}(\GL_{n}(E))$, and for any $X\in \GL_{n}(F)^{rss}$, we have
\begin{equation*}
\omega(X)\Orb^{\GL_{n-1}(F),\eta}(BC(\varphi),X)=\begin{cases}\Orb^{U(V_{n-1})}(\varphi\ast \bfun_0,Y)&:X\leftrightarrow Y\in X^{rss}_{n},\\\qquad0&:\text{otherwise}.\end{cases}
\end{equation*}
\end{Thm}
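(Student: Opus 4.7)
The plan is to promote this local identity to a global problem and attack it via a comparison of relative trace formulas, as signaled by the introduction to Part \ref{Part 2}. Since the statement concerns test functions that involve no compact support cutoff on the $V_n$-component, its local/spectral content is invisible to the usual Jacquet--Rallis comparison; the idea is instead to pair $\varphi\ast\bfun_0\in\calh_{K_{n,E}}(X_n)$ with the base change $BC(\varphi)$ by ``swapping the roles'' of $\U(V_n)$ and $\GL_n(F)$, and to reinterpret both sides as geometric contributions to global relative trace formulas attached to the Galois pair $(\GL_n(E),\U(V_n))$ (rather than the inclusion pair $\U(V_n)\times\U(V_n)\subset\U(V_n\oplus V_n)$). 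The desired identity should then fall out of the geometric comparison of these two trace formulas, provided the spectral sides agree.

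First I would globalize: choose a quadratic extension of number fields $E/F$ and a place $v_0$ realizing the given local data, together with global spherical test functions that restrict to $\varphi\ast\bfun_0$ and $BC(\varphi)$ at $v_0$ and are spherical almost everywhere. Second, set up the two twisted Jacquet--Rallis relative trace formulas: one for $\GL_n(\A_E)$ relative to $\U(V_n)(\A_F)\times\GL_{n-1}(\A_F)$ (producing the unitary orbital integrals $\Orb^{\U(V_{n-1})}(\varphi\ast\bfun_0,Y)$ in its elliptic geometric expansion) and one on the linear side producing $\omega(X)\Orb^{\GL_{n-1}(F),\eta}(BC(\varphi),X)$. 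Third, establish the fundamental lemma for the entire Hecke algebra of this comparison, namely Theorem \ref{Thm: base change Twisted Jacquet--Rallis}, which asserts that at an unramified place the base change homomorphism $BC\colon\calh_{K_{n,E}}(\GL_n(E))\to\calh_{K_n}(\GL_n(F))$ takes $\varphi\ast\bfun_0$ and $BC(\varphi)$ to matching functions for this new comparison. This pins down the test functions on the two sides at almost every place.

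With the Hecke algebra fundamental lemma in hand, the comparison of spectral sides is the classical input: work of Feigon--Lapid--Offen and Jacquet on $\U(V_n)(\A_F)$-periods of cuspidal automorphic representations of $\GL_n(\A_E)$ identifies exactly the automorphic representations contributing to the $(\GL_n(E),\U(V_n))$ relative trace formula with those that are base change from the unitary group, and expresses the period in terms of data appearing on the linear side. Matching spectral sides therefore forces equality of the two geometric sides as functionals of the local test function at $v_0$. A standard density/linear independence of characters argument for orbital integrals at the bad places, together with a suitable choice of auxiliary test function so that all other local orbital integrals are nonzero at the chosen invariant, then isolates the identity of the theorem at the single invariant $X\leftrightarrow Y$, and makes the ``otherwise $= 0$'' case transparent (it corresponds to invariants with no $V_n$-rational representative).

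The main obstacle will be proving the Hecke algebra fundamental lemma \ref{Thm: base change Twisted Jacquet--Rallis}, since our test functions are not of the simplest spherical form and the relevant Satake--base change computation must be tailored to the twisted comparison. A secondary difficulty is controlling the geometric sides of these new trace formulas: ensuring absolute convergence (or else a suitable truncation) so that the comparison is rigorous, separating regular elliptic from non-elliptic contributions, and handling the archimedean and ramified non-archimedean places so that the local density argument at $v_0$ actually isolates a single orbital integral. These technicalities, rather than any new conceptual input, should constitute the bulk of Part \ref{Part 2}.
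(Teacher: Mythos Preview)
Your proposal identifies the right ingredients---globalize, set up the twisted Jacquet--Rallis relative trace formulas, invoke the FLO factorization of unitary periods, and isolate the local identity at the chosen place---and this is precisely what Part~\ref{Part 2} does. However, the logical flow you sketch is inverted in a way that would make the argument circular.

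You treat Theorem~\ref{Thm: base change Twisted Jacquet--Rallis} as something to be proved first (by a ``Satake--base change computation'') and then fed into the trace formula comparison to pin down test functions at almost every place. In the paper the direction is the opposite: Theorem~\ref{Thm: base change Twisted Jacquet--Rallis} is the \emph{output} of the trace formula comparison, and Theorem~\ref{Thm: JR fundamental lemma for algebra} is then deduced from it by the short reduction via the contraction maps in (\ref{eqn: twisted OI reduction}) and (\ref{eqn: linear OI reduction}). The actual inputs that make the comparison run are (i) the fundamental lemma for the \emph{unit element} (Theorem~\ref{Thm: JRL fundamental lemma}, reduced to the Jacquet--Rallis fundamental lemma of Yun/Beuzart-Plessis), (ii) smooth transfer at the remaining bad places (Theorem~\ref{Thm: smooth transfer}, reduced to Zhang's Jacquet--Rallis transfer), and (iii) the FLO factorization of the global spherical characters. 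From (i) and (ii) one obtains, for any pair of genuinely matching nice functions, the spectral identity $J_\Pi^{x,y}(f')=\sum_{\pi\in\calb(\Pi)}I_\pi(f)$ (Theorem~\ref{Prop: main global tool}). One then factors both sides via (iii), and at the distinguished place $v_0$ invokes the \emph{spectral} identity $J^{x,y,\natural}_{\Pi_{v_0}}(\varphi)=I^\natural_{\pi_{v_0}}(BC(\varphi))$ of Lemma~\ref{Lem: unramified spectral transfer}---which really is a one-line Satake computation, but on the spectral side, not the geometric one---to conclude that swapping the $v_0$-components for $\varphi$ and $BC(\varphi)$ preserves the global spectral identity. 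Reversing back to the geometric side (after arranging at an auxiliary place that only a single orbit contributes) yields the desired orbital-integral identity.

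So the ``tailored Satake computation'' you flag as the main obstacle is in fact trivial; the real work is in setting up the comparison so that the unit fundamental lemma and smooth transfer (both borrowed from Jacquet--Rallis) suffice as geometric input, and in using FLO to strip off all places but $v_0$. As written, your step three assumes exactly what the argument is meant to prove.
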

We prove this in Section \ref{Section: final touches} by spectral techniques. Note that combining Propositions \ref{Prop: first reduction} and \ref{Prop: second reduction} with this theorem completes the proof of Theorem \ref{Thm: endoscopic base change}. By Proposition \ref{Prop: initial reduction}, we conclude Theorem \ref{Thm: full fundamental lemma}.

\part{Spectral transfer and a comparison of relative trace formulas}\label{Part 2}

In this part, we prove Theorem \ref{Thm: JR fundamental lemma for algebra}. Our approach is a comparison of relative trace formulas we refer to as the twisted Jacquet--Rallis trace formula. This name indicates both a strong analogy with the Jacquet--Rallis case, as well as our dependence on the Jacquet--Rallis transfer and fundamental lemma for the Lie algebra in Theorems \ref{Thm: JR transfer} and \ref{Thm: JR fundamental lemma} to obtain the desired geometric comparison.

Let $E/F$ denote a quadratic extension of number fields. Heuristically, the comparison of Jacquet--Rallis may be stated in terms of the matching of orbits
\[
\GL_n(E)\backslash \GL_n(E)\times \GL_{n+1}(E)/ \GL_n(F)\times \GL_{n+1}(F)
\]
with
\[
\bigsqcup_{V\in \calv_n}U(V)\backslash U(V)\times U(V\oplus Ee_0)/U(V),
\]
where $\calv_n$ runs over a set of representatives of the isomorphism classes of $n$
dimensional Hermitian spaces, and the Hermitian form on $V\oplus Ee_0$ is determined by that of $V$.

The first observation is that the matching of orbital integrals in Theorem \ref{Thm: JR fundamental lemma for algebra} may be studied globally by \emph{switching the roles} of the rational linear group and the unitary group in the Jacquet--Rallis case. This leads to a matching of orbits
\[
\bigsqcup_{V\in \calv_n}\bigsqcup_{W\in \calv_{n+1}}\GL_n(E)\backslash \GL_n(E)\times \GL_{n+1}(E)/  U(V)\times U(W),
\]
where we impose no assumptions on $V$ and $W$, with the orbits
\[
\GL_n(F)\backslash \GL_n(F)\times \GL_{n+1}(F)/\GL_n(F).
\]
This matching of orbits suggests a comparison of relative trace formulas, the geometric side of which may be calibrated to study the comparison in Theorem \ref{Thm: JR fundamental lemma for algebra}; see Section \ref{Section: final section}. This allows us to translate the problem into one of \emph{spectral transfer of relative characters}, a classical method for proving fundamental lemmas for Hecke algebras in the context of the Arthur-Selberg trace formula; see, for example, \cite{Clozelbasechange}, \cite{halesfundamental}, and \cite{Luometaplectic}. In our context, the main result is Theorem \ref{Thm: base change Twisted Jacquet--Rallis} which gives a matching of orbital integrals corresponding to the identity of relative characters given in Lemma \ref{Lem: unramified spectral transfer}.

To access the local relative characters, we observe that the spectral results of Feigon, Lapid, and Offen \cite{FLO} and Jacquet \cite{JacquetQuasisplit} on unitary periods of cuspidal automorphic forms are precisely what we need to make the spectral comparison manageable. We review the necessary results in Section \ref{Section: unitary periods}. In particular, we have the factorization (\ref{eqn: period product}). This crucial input enables access to local relative characters by the relative trace formulas. In fact, the spectral results of \cite{FLO} are so complete that our comparison does not appear to reveal any new information about unitary periods. On the other hand, the global theory of our comparison does not rely on any previous work on these periods. On the other hand, sufficiently refined results about non-vanishing of central values of Rankin-Selberg $L$-functions of the form of \cite{li2009central} would enable our comparison to give a new proof of several of the main results of these works.

In the next section, we establish our notational conventions for this part, highlighting important changes from the notation in Part \ref{Part 1}. Section \ref{Section: part 2 orbits} covers the local geometric comparison of orbital integrals, proving existence of smooth transfer and the fundamental lemma for the unit element by reducing our comparison to the Lie algebra version of Jacquet--Rallis transfer as in Section \ref{Section: JR theory}. Special care is needed when incorporating the action of the center, reflecting the fact that the norm map
\[
\Nm:\A_E^\times \to \A_F^\times
\]
is not surjective. We discuss the necessary details in Section \ref{Section: dealing with center}

After this, Section \ref{Section: factorizations} reviews the global and local theory of the invariant distributions we use to build the relative characters. The comparison of 
 relative trace formulas occurs in Section \ref{Section: comparison}. The main result of these sections is the transfer of global relative characters in Theorem \ref{Prop: main global tool}. We remark that the concerns with the central character mentioned above manifests here in the statement of Proposition \ref{Prop: simple character id}. We then prove a weak transfer of local relative characters in Section \ref{Section: weak comparison}.

Finally, we use these results to prove the fundamental lemma for this comparison in Section \ref{Section: final section}; this is Theorem \ref{Thm: base change Twisted Jacquet--Rallis}. The point is to reduce the local equality of orbital integrals to a statement about transfer of global relative characters by first globalizing the orbital integrals and then using the comparison of relative trace formulas. The results of Sections \ref{Section: factorizations} and \ref{Section: weak comparison} then reduce this problem to a local spectral identity at a single finite place, which we verify directly. Theorem \ref{Thm: JR fundamental lemma for algebra} is then readily deduced from Theorem \ref{Thm: base change Twisted Jacquet--Rallis} in Section \ref{Section: final touches}, completing the proof of Theorem \ref{Thm: full fundamental lemma}.

\begin{Rem}
For the reader who is inclined to believe that most of the analytic properties of the Jacquet--Rallis relative trace formula comparison are enjoyed by our set up, we recommend skipping Section \ref{Section: comparison} except for the statement of Theorem \ref{Prop: main global tool} as it mirrors \cite[Section 2]{ZhangFourier} closely. Some additional care is needed to isolate the comparison for a single pair of Hermitian spaces, but this is not difficult.
\end{Rem}
\begin{Rem}
In the final application, we work with globally quasi-split unitary group to prove Theorem \ref{Thm: JR fundamental lemma for algebra}. Despite this, we develop the comparison in general as restricting to the quasi-split case does not simplify the arguments, and in some instances would overly complicate the notation. The general comparison may also be of independent interest.
\end{Rem}

\section{Preliminaries}In this section, we fix our conventions regarding groups, Hermitian spaces, and measures. In order for this part to be self-contained, we only continue to hold to those conventions established in Section \ref{Section: Prelim} and do not refer to Part \ref{Part 1} for notation. This allows for additional flexibility, despite a good amount of notation being consistent across both parts. For example, we scrub our notations for orbital integrals, transfer factors, etc. unless making explicit reference to a formula.

\subsection{Involutions}

For a field $F$, recall the element
\[
w_n:=\left(\begin{array}{cccc}&&&1\\&&-1&\\&\Ddots&&\\(-1)^{n-1}&&&\end{array}\right)\in\GL_n(F).
\]
For any $F$-algebra $R$ and $g\in \GL_n(R)$, we define 
\[
g^\theta = w_n{}^tg^{-1}w_n.
\]
Now suppose that $E/F$ is a quadratic \'{e}tale algebra and consider the restriction of scalars $\Res_{E/F}(\GL_n)$. Then for any $F$-algebra $R$ and $g\in \Res_{E/F}(\GL_n)(R)$, we set \[
g^\sig=\overline{g}
\]
to be the Galois involution associated to the extension $E/F$. Note that $w_n\in X_n$ when $n$ is odd and if $\xi\in E$ is a trace-zero element, then $\xi w_n\in X_n$ when $n$ is even.\\

\noindent\underline{\textbf{Important notational difference:}} In this part, we set 
\[
V_n=\begin{cases}V_{w_n}&: n \text{ odd},\\
                 V_{\xi w_n}&: n \text{ even},\end{cases}
\]and work with the  quasi-split unitary group $\Res_{E/F}\GL_n(E)^{\theta\circ\sig}= \U(V_n)$. We make this choice as it will be convenient to have a form that is split both globally and locally.\\

\subsection{Groups and Hermitian spaces}
Let $F$ be a field and fix $E/F$ a quadratic \'{e}tale algebra. Let $F^n$ be a fixed $n$ dimensional vector space, $F^{n+1}=F^n\oplus Fe_0$ with a fixed vector $e_0$. This gives rise to an embedding of  $\GL_n$ as the subgroup of $\GL_{n+1}$ preserving this decomposition:
\[
g\mapsto \left(\begin{array}{cc}g&\\&1\end{array}\right).
\]
With this, set $G=\GL_n\times \GL_{n+1}$ and $H\cong \GL_n\subset G$, where $H$ is embedded diagonally:
\[
g\mapsto \left(g,\left(\begin{array}{cc}g&\\&1\end{array}\right)\right).
\]

Now consider the product
\[
X_n\times X_{n+1}
\] parameterizing pairs of Hermitian vector spaces of dimension $n$ and $n+1$. A point $(x,y)\in X_n\times X_{n+1}$ determines the unitary groups
\[
\U(V_x)\hra\Res_{E/F}(\GL_n)
\]
and
 \[
 \U(V_y)\hra\Res_{E/F}(\GL_{n+1}).
 \]
 We set $G'=\Res_{E/F}(\GL_n)\times \Res_{E/F}(\GL_{n+1})$ and $H'\cong \Res_{E/F}(\GL_n)$ embedded diagonally as above. For any $(x,y)\in X_n\times X_{n+1}$, set $H'_{x,y}=\U(V_x)\times \U(V_y).$ Note that 
 \[
 H'_{w_n,w_{n+1}}=\U(V_n)\times \U(V_{n+1})
 \]
 is a product of quasi-split unitary groups.

\subsection{Representations and Whittaker models}
Suppose that $F$ is a local field and let $\calg_n(F)$ be the set of equivalence classes of generic representations of $\GL_n(F)$. For any non-trivial additive character $\psi: F\to \cc^\times,$
we denote by $\psi_0$ the generic character of $N_n(F)$
\[
\psi_0(u)=\psi\left(\sum_iu_{i,i+1}\right).
\]

For a quadratic \'{e}tale algebra $E/F$, we let $\psi'=\psi\circ\Tr_{E/F}$ denote the induced additive character and $\psi_0'$ the corresponding generic character of $N_n(E)$. Let $\calg_n^\sig(E)$ denote the set of equivalence classes of admissible generic representations of $\GL_n(E)$ that are isomorphic to their Galois twists. Such representations arise as the base change of a representation $\pi\in \calg_n(F)$ on $\GL_n(F)$ by \cite{ArthurClozel}; we write $\Pi=BC(\pi)$ to denote this relationship. It follows from \cite[Theorem 0.2.1]{FLO} that $\Pi$  has non-trivial invariant $U(V_x)$-invariant functionals for any $x\in X_n$.

For any $\pi\in \calg_n(F)$ we denote by $\pi^\vee$ the abstract contragredient representation. 
Set $\calw(\pi):=\calw^\psi(\pi)$ to be the Whittaker model of $\pi$ with respect to the generic character $\psi_0$. The action is given by
\[
\calw(g,\pi)W(h) = W(hg),\quad g,h\in \GL_n(F),\: W\in \calw(\pi).
\]
Then we obtain an isomorphism
\[
\hat{(\cdot)}:\calw(\pi)^\theta\lra \calw^{\psi^{-1}}(\pi^\vee),
\]
given by $\hat{W}(g) = W(g^\theta).$

\section{Orbital integrals and transfer}\label{Section: part 2 orbits}
We begin by describing the regular semi-simple orbits and the matching of orbits between our two models. We then describe the local orbital integrals and describe the necessary transfer of test functions and fundamental lemma needed for our global applications. Care is needed when taking the action of the center into account; see Section \ref{Section: dealing with center}.

\subsection{Matching and transfer}
Let $F$ be a field and let $E/F$ be a quadratic \'{e}tale algebra over $F$. 
\subsubsection{Linear side} Recall $G=\GL_n\times \GL_{n+1}$ and $H=\GL_n$ regarded as a subgroup of $G$ via the diagonal embedding. We define the regular semi-simple locus $
 G(F)^{rss}
$ to be the set of points $\ga$ such that the double coset $H(F)\ga H(F)\subset G(F)$ is closed and of maximal possible dimension. 
\begin{Lem}\label{Lem: orbit linear reduction}
Let $\GL_{n+1}(F)^{rss}$ denote the locus of elements $g$ such that, under the adjoint action of $\GL_{n}(F)$, the orbit of $g$ is closed and of maximal dimension. Then there is a natural bijection
\[
H(F)\backslash G(F)^{rss}/H(F)\iso \GL_{n+1}(F)^{rss}/\GL_n(F).
\]
\end{Lem}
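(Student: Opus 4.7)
The natural map is the ``quotient'' map
\[
\phi \colon G(F) \longrightarrow \GL_{n+1}(F), \qquad (g_1,g_2) \longmapsto \iota(g_1)^{-1} g_2,
\]
where $\iota\colon\GL_n \hookrightarrow \GL_{n+1}$ is the block embedding. The first step is the purely formal computation that under the two-sided $H(F) \times H(F)$ action $(h_1,h_2)\cdot(g_1,g_2) = (h_1 g_1 h_2^{-1}, \iota(h_1) g_2 \iota(h_2)^{-1})$ on the source, one has
\[
\phi\bigl((h_1,h_2)\cdot(g_1,g_2)\bigr) = \iota(h_2) \cdot \phi(g_1,g_2) \cdot \iota(h_2)^{-1},
\]
so that $\phi$ descends to a map of orbit spaces
\[
\bar\phi \colon H(F)\backslash G(F)/H(F) \longrightarrow \GL_{n+1}(F)/\operatorname{Ad}\GL_n(F).
\]

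Next I would establish set-theoretic bijectivity of $\bar\phi$. Surjectivity is immediate since $\phi(1,g) = g$ for every $g \in \GL_{n+1}(F)$. For injectivity, suppose $\iota(h_0) \phi(g_1',g_2') \iota(h_0)^{-1} = \phi(g_1,g_2)$; setting $h_2 := h_0$ and $h_1 := g_1 h_0 (g_1')^{-1}$ then gives $(h_1,h_2)\cdot (g_1',g_2') = (g_1,g_2)$ by direct verification, which is the content of the calculation above read backwards.

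The third step is to match the regular semi-simple loci. Writing down the stabilizer of $(g_1,g_2)$ in $H \times H$, the equation $h_1 g_1 h_2^{-1} = g_1$ forces $h_1 = g_1 h_2 g_1^{-1}$, and substituting into $\iota(h_1) g_2 \iota(h_2)^{-1} = g_2$ shows the stabilizer is isomorphic, via projection onto $h_2$, to the centralizer $Z_{\GL_n(F)}(\phi(g_1,g_2))$ of $\phi(g_1,g_2)$ acting on $\GL_{n+1}$ by conjugation through $\iota$. Consequently
\[
\dim\bigl(H(F) \cdot (g_1,g_2) \cdot H(F)\bigr) \;-\; \dim\bigl(\operatorname{Ad}\GL_n(F)\cdot \phi(g_1,g_2)\bigr) = n^2,
\]
so maximality of dimension is preserved, and the stabilizer identification also shows that closedness is preserved (indeed, $\bar\phi$ is a bijection of categorical quotients in the GIT sense by the invariant theory of the pair). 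This gives $\bar\phi(H(F)\backslash G(F)^{rss}/H(F)) = \GL_{n+1}(F)^{rss}/\GL_n(F)$.

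Finally, to promote this bijection to a homeomorphism in the Hausdorff topology (which is where the statement lives, $F$ being a local field), I would observe that $\phi$ is continuous, open, and has a continuous section $g \mapsto (1,g)$; since both quotients are taken by proper actions on the regular semi-simple locus, the induced map on orbit spaces is a homeomorphism. The only step requiring genuine care is the equality of regular semi-simple loci under $\bar\phi$, but it reduces cleanly to the stabilizer comparison above, so no real obstacle is expected.
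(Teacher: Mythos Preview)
Your proposal is correct and follows exactly the same approach as the paper: the paper's proof simply writes down the map $\GL_n(F)\backslash(\GL_n(F)\times\GL_{n+1}(F))\to\GL_{n+1}(F)$, $\GL_n(F)(h,g)\mapsto h^{-1}g$, and declares the result to follow. Your write-up is a more careful expansion of this one-liner, supplying the stabilizer comparison and the section $g\mapsto(1,g)$ that the paper leaves implicit; nothing is missing and nothing diverges from the intended argument.
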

\begin{proof}
This follows from considering the natural map
\begin{align*}
  \GL_n(F)\backslash \GL_n(F)\times \GL_{n+1}(F)&\lra \GL_{n+1}(F)\\ \GL_n(F)(h,g)\qquad\quad&\longmapsto\quad h^{-1}g.\qedhere
\end{align*}
\end{proof}
\subsubsection{Twisted side} Recall that $\calv_n$ denotes our set of $\GL_n(E)$-orbit representatives for $X_n$. Thus, 
\[
\{V_x:x\in \calv_n\}
\] is a fixed set of representatives for the isomorphism classes of $n$-dimensional Hermitian spaces over $E$. In this part, we always require that $w_n\in \calv_n$ in keeping with our choice of split Hermitian space. Denoting the $GL_{n}(E)$-orbit of $x\in \calv_n$ by $X_n^x,$ there is a decomposition
\begin{equation}\label{eqn: GL orbits}
X_n = \bigsqcup_{x\in\calv_n}X_n^x.
\end{equation}

For any $x\in X_n$, set
\[
y(x)=\left(\begin{array}{cc}x&\\&1\end{array}\right)\in X_{n+1}.
\]
Then $V_{y(x)}=V_x\oplus Ee_0$ where the sum is orthogonal and $\la e_0,e_0\ra_{y(x)}=1$. With this construction, there is a natural embedding of unitary groups
\[
U(V_x)\hra U\left(V_{y(x)}\right).
\]
Note that if when $F$ is $p$-adic and when $E/F$ is unramified, then $V_{y(x)}$ is split if $V_x$ is split, albeit with a Hermitian form conjugate to $w_{n+1}$.

For any $y\in \calv_{n+1}$, denote by
\begin{equation}\label{eqn: translate}
    X_y\subset \Herm(V_y)
\end{equation}
 the set of invertible elements in the twisted Lie algebra $\Herm(V_y).$ Note that any $x\in X_y$ is a product $x=x'\cdot y$ where $x'\in X_{n+1}$.

For any pair $(x,y)\in X_n\times X_{n+1}$, consider the subgroups $H'_{x,y}=\U(V_x)\times \U(V_y)\subset G'=\Res_{E/F}(\GL_n)\times \Res_{E/F}(\GL_{n+1})$ and $H'=\Res_{E/F}\GL_n$ embedded diagonally. Set $G'(F)^{rss}$ to be the set of points $\de$ such that the double coset $H'(F)\de H'_{x,y}(F)\subset G'(F)$ is closed and of maximal possible dimension.

We have a similar reduction of the regular orbits in this case. 
\begin{Lem}\label{Lem: orbit twisted reduction}
For any $x\in X_n$, define $X_{y(x)}^{rss}$ to be the set of elements $z$ such that, under the adjoint action of $U(V_x)$, the orbit of $z$ is closed and of maximal dimension. Then there is a natural bijection
\[
\bigsqcup_{y\in\calv_{n+1}}H'(F)\backslash G'(F)^{rss}/H'_{x,y}(F)\iso X_{{y(x)}}^{rss}/U(V_x).
\]
\end{Lem}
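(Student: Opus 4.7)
The plan is to reduce the double coset space to $U(V_x)\backslash X_{n+1}$ by successively collapsing the two copies of $\GL_n(E)$ embedded in $H'$ and $H'_{x,y}$, and then to identify $X_{n+1}$ with $X_{y(x)}$ in a $U(V_x)$-equivariant way.

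First, I would collapse the diagonal left $H'(F) = \GL_n(E)$-action via the map $(g_1, g_2) \mapsto \mathrm{diag}(g_1^{-1},1)g_2$, which gives $H'(F) \backslash G'(F) \cong \GL_{n+1}(E)$ and turns the residual right action of $H'_{x,y}(F) = U(V_x) \times U(V_y)$ into $g \mapsto \mathrm{diag}(u_1,1)\, g\, u_2$. Since $U(V_y)$ is the stabilizer of $y$ for the twisted action $g \ast y = gy\,{}^t\overline{g}$, the map $g \mapsto g \ast y$ induces an isomorphism $\GL_{n+1}(E)/U(V_y) \iso X_{n+1}^y$, under which the remaining left $U(V_x)$-action corresponds to the twisted action through $\mathrm{diag}(U(V_x),1) \hookrightarrow \GL_{n+1}(E)$. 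Taking the disjoint union over $y \in \calv_{n+1}$ and applying the orbit decomposition (\ref{eqn: GL orbits}) yields
\[
\bigsqcup_{y \in \calv_{n+1}} H'(F)\backslash G'(F)/H'_{x,y}(F) \cong U(V_x) \backslash X_{n+1}.
\]

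Second, I would establish a $U(V_x)$-equivariant bijection between $X_{n+1}$ (with the twisted action through $\mathrm{diag}(U(V_x),1)$) and $X_{y(x)} \subset \Herm(V_{y(x)})$ (with the adjoint action through the embedding $U(V_x) \hookrightarrow U(V_{y(x)})$, $u \mapsto \mathrm{diag}(u,1)$). Concretely, the base point $y(x) = \mathrm{diag}(x,1) \in X_{n+1}$ should be sent to the identity endomorphism of $\Herm(V_{y(x)})$, with the map given by an appropriate twisted multiplication by $y(x)^{-1}$; this mirrors the classical correspondence between nondegenerate Hermitian forms on a Hermitian vector space and invertible self-adjoint endomorphisms. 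The key equivariance reduces, by a direct matrix calculation, to the defining relation $u x\,{}^t\overline{u} = x$ for $u \in U(V_x)$, which supplies exactly the commutation identity converting the twisted action on $X_{n+1}$ into the adjoint action on $\Herm(V_{y(x)})$. The regular semi-simple loci correspond throughout since each step preserves closures and dimensions of orbits.

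The main obstacle is pinning down the precise form of the $y(x)^{-1}$-multiplication so that it simultaneously lands in $\Herm(V_{y(x)})$ and intertwines the two actions: the matrix conventions for elements of $X_{n+1}$ versus $y(x)$-self-adjoint endomorphisms of $V_{y(x)}$ must be reconciled carefully, and the correct intertwiner is likely to involve a combination of complex conjugation, transpose, and multiplication rather than a single left or right multiplication. Once this identification is secured, the remainder of the proof is straightforward bookkeeping parallel to Lemma \ref{Lem: orbit linear reduction}.
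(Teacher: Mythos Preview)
Your proposal is correct and follows essentially the same route as the paper. The paper simply writes down the composite map in one line,
\[
H'(F)(g_1,g_2)\longmapsto (g_1^{-1}g_2)\,y\,{}^t\overline{(g_1^{-1}g_2)}\,y(x),
\]
which is precisely the concatenation of your steps. Your ``main obstacle'' is thus resolved: the correct intertwiner is \emph{right} multiplication by $y(x)$ (not $y(x)^{-1}$), since by definition $X_{y(x)} = X_{n+1}\cdot y(x)\subset\Herm(V_{y(x)})$; the equivariance check you outline---using $u_1 x\,{}^t\overline{u_1}=x$ to convert $\mathrm{diag}({}^t\overline{u_1^{-1}},1)\,y(x)$ into $y(x)\,\mathrm{diag}(u_1,1)$---is exactly what makes this work.
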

\begin{proof}
In view of the decomposition (\ref{eqn: GL orbits}), this follows by considering the map
\begin{align*}
\bigsqcup_{y\in\calv_{n+1}}H'(F)\backslash G'(F)/\{1\}\times U(V_y)&\lra X_{{y(x)}}\\ H'(F)(g_1,g_2)&\longmapsto (g_1^{-1}g_2)y{}^t(\overline{g_1^{-1}g_2})y(x).\qedhere
\end{align*}
\end{proof}

\begin{Prop}\label{Prop: matching of RSS loci}
There is a natural matching of regular semi-simple orbits, giving a bijection:
\[
H(F)\backslash G(F)^{rss}/H(F)\iso \bigsqcup_{x\in\calv_n}\bigsqcup_{y\in \calv_{n+1}}H'(F)\backslash G'(F)^{rss}/H'_{x,y}(F).
\]
\end{Prop}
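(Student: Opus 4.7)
The plan is to reduce the assertion to the group version of the Jacquet--Rallis matching of orbits by applying Lemmas \ref{Lem: orbit linear reduction} and \ref{Lem: orbit twisted reduction} and then analysing both sides via block decomposition with respect to the splitting $F^{n+1}=F^n\oplus Fe_0$ (respectively $V_{y(x)}=V_x\oplus Ee_0$). After these reductions, it will suffice to exhibit a natural bijection
\[
\GL_{n+1}(F)^{rss}/\GL_n(F)\;\sim\;\bigsqcup_{x\in\calv_n}X_{y(x)}^{rss}/U(V_x),
\]
and the heart of the matter is that this is precisely the ``group version'' of the Jacquet--Rallis orbit correspondence recalled in (\ref{Thm: JR orbit transfer}).

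First I would describe both sides in coordinates. An element $g\in\GL_{n+1}(F)$ decomposes as $g=\begin{pmatrix}A&v\\v^{\ast}&d\end{pmatrix}$ with $A\in\fgl_n(F)$, $v\in F^n$, $v^{\ast}\in F_n$, $d\in F$, and the $\GL_n(F)$-action by conjugation is the simultaneous action $(A,v,v^{\ast},d)\mapsto(\Ad(h)A,hv,v^{\ast}h^{-1},d)$. Dually, an element $\delta\in\Herm(V_x\oplus Ee_0)$ decomposes as $\delta=\begin{pmatrix}y&w\\\langle w,-\rangle_{V_x}&\lambda\end{pmatrix}$ with $y\in\Herm(V_x)$, $w\in V_x$, $\lambda\in F$, and $U(V_x)$ acts by $(y,w,\lambda)\mapsto(\Ad(h)y,hw,\lambda)$. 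I would then argue that the regular semi-simple locus on each side is cut out by the maximality/closedness conditions on the orbit of $(A,v,v^{\ast})$ in $\fgl_n(F)\times F^n\times F_n$, respectively of $(y,w)$ in $\Herm(V_x)\times V_x$, in the sense of Section \ref{Section: JR theory}; this is the key compatibility between the two notions of regular semi-simplicity, and it is analogous to the way Lemma \ref{Lem: orbit linear reduction} was stated. The bottom-right entries $d$ and $\lambda$ play no role in the orbit equivalence but contribute to the complete invariant.

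At this point the matching is essentially given: by (\ref{Thm: JR orbit transfer}), the regular semi-simple $\GL_n(F)$-orbits on $\fgl_n(F)\times F^n\times F_n$ are in natural bijection with the disjoint union over $x\in\calv_n$ of the regular semi-simple $U(V_x)$-orbits on $\Herm(V_x)\times V_x$, and this bijection is characterized by equality of invariants (characteristic polynomial of $A$/$y$ together with the moments $v^{\ast}A^{i}v=\langle w,y^{i}w\rangle_{V_x}$). Attaching the scalar $d=\lambda$ to both sides and observing that the invertibility conditions $g\in\GL_{n+1}(F)$ and $\delta\in X_{y(x)}$ translate into the same non-vanishing condition on the invariants (namely that the resulting $(n+1)\times(n+1)$ block matrix is invertible), one obtains the bijection of orbit spaces displayed above.

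The main obstacle I anticipate is verifying carefully the first step: checking that the double-coset regular semi-simple locus $G(F)^{rss}$ from Lemma \ref{Lem: orbit linear reduction} (and its unitary analogue) is sent under the maps of those lemmas to precisely the locus on $\GL_{n+1}(F)$, respectively $X_{y(x)}$, where the residual Jacquet--Rallis triple is regular semi-simple in the invariant-theoretic sense of Section \ref{Section: JR theory}. This is a standard but slightly delicate invariant-theoretic check, and once it is established, the proposition follows by combining the two reduction lemmas with the Jacquet--Rallis orbit correspondence \cite{rallis2007multiplicity}.
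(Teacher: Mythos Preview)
Your proposal is correct and follows essentially the same route as the paper: reduce via Lemmas~\ref{Lem: orbit linear reduction} and~\ref{Lem: orbit twisted reduction} to a matching $\GL_{n+1}(F)^{rss}/\GL_n(F)\sim\bigsqcup_{x}X_{y(x)}^{rss}/U(V_x)$, then invoke the Jacquet--Rallis orbit correspondence and verify that invertibility is preserved on both sides. The only minor difference is that for the invertibility check the paper notes the slick observation that matching regular semi-simple elements are $\GL_n(E)$-conjugate when both are viewed inside $\fgl_{n+1}(E)$, which makes preservation of invertibility immediate; your argument via the explicit invariants is the alternative the paper also mentions, and the ``obstacle'' you flag about compatibility of the regular semi-simple loci is exactly the content of Lemmas~\ref{Lem: orbit linear reduction} and~\ref{Lem: orbit twisted reduction} as stated, so no additional work is needed there.
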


\begin{proof}
By Lemmas \ref{Lem: orbit linear reduction} and \ref{Lem: orbit twisted reduction}, the claim reduces to the claim that there is a natural matching
\[
\GL_{n+1}(F)^{rss}/\GL_n(F)\iso\bigsqcup_{x}X_{{y(x)}}^{rss}/U(V_x).
\]
This is precisely the setting of the Lie algebra version of Jacquet--Rallis matching of orbits described in Section \ref{Section: JR theory}. We need only check that this matching respects restriction to the invertible elements of both sides. This may be checked directly via the explicit invariant polynomials reviewed below, but is more readily seen from noting that regular semi-simple elements $g\in \fgl_{n+1}(F)^{rss}$ and $x\in \Herm(V_{y(x)})^{rss}$ match if and only if, viewed naturally as elements of $\fgl_{n+1}(E)$, they are conjugate by $\GL_n(E)$. 
\end{proof}
We say that two regular semi-simple elements $\ga$ and $\de$ \emph{match} with respect to $(x,y)$ and write $\ga\xleftrightarrow{x,y} \de$ if the orbits 
\[
[\ga]\in H(F)\backslash G(F)^{rss}/H(F) \text{ and }[\de]\in H'(F)\backslash G'(F)^{rss}/H'_{x,y}(F)
\]
match in the sense of the preceding proposition.

\subsubsection{Invariant polynomials}\label{Section: invariant polys for quot} We recall the invariant polynomials used in \cite{ZhangFourier} as this will aid certain arguments in Section \ref{Section: final section}. Let 
\[
X=\left(\begin{array}{cc}A&b\\c&d\end{array}\right)\in \fgl_{n+1}(F), \text{ where }\:A\in \fgl_n(F),\:b\in F^n,\:c\in F_n,\text{ and }d\in F.
\]
Then we define the invariant map $\pi:\fgl_{n+1}(F)\to \A(F)^{2n+1}$ by 
\begin{align}\label{eqn: invariant map JR}
c(X) = (a_1(X),\ldots, a_n(X), b_0(X),\ldots, b_{n-1}(X), d)=(c_i(X))_{i=1}^{2n+1}
\end{align}
where 
\begin{equation}\label{invariant polynomials}
a_i(X) = \mathrm{Tr}(\wedge^i A),\text{   and    }b_j(X) = c\cdot A^j\cdot b.
\end{equation}
These polynomials are similarly defined for $Y\in  \Herm(V_{y(x)})$ and two regular semi-simple elements $X\in\fgl_{n+1}(F)^{rss}$ and $Y\in \Herm(V_{y(x)})^{rss}$ match if and only if they have the same invariants \cite{rallis2007multiplicity}. 

By a slight abuse of notation, we define the invariant polynomials $c_i:G(F)\to F$ for $i=1,\ldots,{2n+1}$ by setting 
\[
c_i(\ga) := c_i(\ga_1^{-1}\ga_2),
\]
where $\ga=(\ga_1,\ga_2)\in G(F).$ Similarly, for any pair $(x,y)\in X_n\times X_{n+1}$, we define the invariant polynomials $c^{x,y}_i:G'(F)\to F$ for $i=1,\ldots,{2n+1}$ by setting 
\[
c^{x,y}_i(\de) := c_i(\pi_{x,y}(\de))
\]
where $\de=(\de_1,\de_2)\in G'(F)$ and 
\[
\pi_{x,y}(\de) :=({\de_1}^{-1}\de_2)y{}^t\overline{({\de_1}^{-1}\de_2)}y(x)\in X_{y(x)}^y.
\]
Here, $X_{y(x)}^y$ denotes the $y(x)$-translate of the $\GL_n(E)$-orbit of $y$, combining the notation \eqref{eqn: GL orbits} with \eqref{eqn: translate}.
%%%%%%%%%%%%%%%%%%%%%%%%%%%%%%%%%%%%%%%%%%%%%%%%%%%%%%%%%%%%%%%%%%%%%%%%%%%%%%%%%%%%%%%%%%%%%%%%%%%%%%%%%%%%%%%%%%%%%%%%%%%%%%%%%%%%%%%%%%%%%%%%%%%%%%%%%%%%%%%%%%%%%%%%%%%%%%%%%%%%%%%%%%%%%%%%%%%%%%%%%%%%%%%

\subsection{Orbital integrals}
Assume now that $F$ is a local field, and let $E/F$ be a quadratic \'{e}tale algebra.
\subsubsection{Linear Side} Let $f\in C_c^\infty(G(F))$. We define the relative orbital integrals of interest 
\begin{equation*}
   \Orb^\eta(f,\ga):=\int_{H(F)}\int_{H(F)}f(h_1^{-1}(\ga_1,\ga_2) h_2)\eta(\det(h_2))dh_1dh_2, 
\end{equation*}
where $\ga=(\ga_1,\ga_2)\in G(F)^{rss}$ is a regular semi-simple element. This assumption implies that the centralizer of $\ga$ is trivial and that the orbit of $\ga$ is closed, so the integral is well defined. Consider the function $\tilde{f}\in C_c^\infty(\GL_{n+1}(F))$ defined as
\begin{equation}\label{eqn: tilde contraction}
 \tilde{f}(g) := \int_{H(F)}f(h^{-1}(1,g))dh.   
\end{equation}
Then the map
\begin{align*}
C_c^\infty(G(F))&\lra C_c^\infty(\GL_{n+1}(F))\\ f&\longmapsto \tilde{f}
\end{align*}
is surjective. Since the integrals are absolutely convergent, a simple rearrangement gives 
\begin{equation}\label{eqn: linear OI reduction}
\Orb^\eta(f,\ga) = \Orb^{\GL_n(F),\eta}(\tilde{f},\ga_1^{-1}\ga_2):=\int_{\GL_n(F)}\tilde{f}(h^{-1}\ga_1^{-1}\ga_2h)\eta(\det(h))dh.
\end{equation}
This orbital integral is of the type arising on the linear side of the Jacquet--Rallis transfer in the sense of (\ref{eqn: regular matching final}).

The transfer factor in this case is built out of the transfer factor (\ref{eqn: JR transfer factor 2}) for the Lie algebra version of Jacquet--Rallis transfer. For an element $X\in \fgl_{n+1}(F)^{rss}$, set 
\[
\omega(X) = \eta\left(\det([e_{n+1}|Xe_{n+1}|\ldots|X^{n}e_{n+1}])\right),
\]
where 
\[
e_{n+1}= {}^t[0,\ldots,0,1]\in F^{n+1}.
\]
\begin{Def}\label{Def: newish transfer factor}
 We define the{ transfer factor} $\Omega: G(F)^{rss}\to \cc$ by
\begin{equation*}
\Omega(\ga_1,\ga_2) := \omega(\ga_1^{-1}\ga_2).
\end{equation*}
\end{Def}

\subsubsection{Twisted side} 
For any pair $(x,y)\in X_n\times X_{n+1}$, we define the orbital integral
\begin{equation*}
\Orb(f',\de) := \int_{H'(F)}\int_{H'_{x,y}(F)}f'(h_1^{-1}(\de_1,\de_2) h_2)dh_1dh_2,
\end{equation*}
where $f'\in C_c^\infty(G'(F))$ and $\de=(\de_1,\de_2)\in G'(F)^{rss}$. Similarly to the previous case, we first define $\tilde{f'}:X^y_{y(x)}:\to \cc$ by
\[
\tilde{f'}(gy{}^t\overline{g}y(x))=\int_{H'(F)}\int_{U(V_y)}f'(h^{-1}(1,gu))dhdu.
\]

We see that
\begin{equation}\label{eqn: twisted OI reduction}
\Orb(f',\de) =\Orb^{U(V_x)}(\tilde{f'}, \pi_{x,y}(\de)):= \int_{U(V_x)}\tilde{f'}(h^{-1}\pi_{x,y}(\de)h)dh.
\end{equation}
This orbital integral is of the type arising  on the unitary side of the Jacquet--Rallis transfer in the sense of (\ref{eqn: regular matching final}).
\subsubsection{Taking care of the center}\label{Section: dealing with center}
Let $Z_{G}\subset G$ (respectively, $Z_{G'}\subset G'$) denote the center of $G$ (resp. $G'$). For reasons of convergence, we need to take the actions of the centers into account. This is more subtle than in \cite{ZhangFourier}, as the natural norm map
\[
\Nm:=\Nm_{E/F}: Z_{G'}\lra Z_G
\]
is not surjective on points globally and locally. 

We now assume that $E/F$ is a quadratic extension of either global or local fields of characteristic zero. Fixing $(x,y)\in X_n\times X_{n+1}$, consider the $Z_{G'}(F)H'(F)\times H'_{x,y}(F)$-action on $G'(F)$. 

Following the reductions above, it suffices to consider the $Z_{G'}(F)\times U(V_x)$- action on $X_{y(x)}.$ Here $U(V_x)$ acts via conjugation, while the center acts by
\begin{equation}\label{eqn: unitary center twist}
 (z_1,z_2)\circ s = (z_1^{-1}z_2)s\overline{(z_1^{-1}z_2)}.   
\end{equation}
Set
\begin{align}\label{uniform stab}
\mathcal{Z}_0&=\{[(z_1,z_2),z_1]\in Z_{G'}(F)\times U(V_x): z_1\in Z_{U(V_x)}(F),\:z_2\in Z_{U(V_y)}(F)\}\nonumber\\&\cong Z_{U(V_x)}(F)\times Z_{U(V_y)}(F).
\end{align}
It is simple to check that $\mathcal{Z}_0$ acts trivially under the above action. For any $s\in X_{y(x)}$, we may write
\[
s=\left(\begin{array}{cc}A&b\\\la b,-\ra_x&d\end{array}\right),
\]
where $A\in \Herm(V_x),$ $b\in E^n$, and $d\in F.$ 
A simple calculation shows that the invariants 
\[
\Tr(A) \text{  and  }d
\]
are scaled by a non-zero norm class under the $Z_{G'}(F)\times U(V_x)$-action, so that their norm classes are invariant. In analogy to \cite{ZhangFourier}, we call $s\in X_{y(x)}$ $Z$-regular semi-simple if it is regular semi-simple in $X_{y(x)}$ and if
\[
\Tr(A),d\in F^\times.
\]
This gives a Zariski-open, dense subset of $X_{y(x)}.$
\begin{Lem}\label{Lem: Z-reg on unitary side}
If $s$ is $Z$-regular semi-simple, then its centralizer under the $Z_{G'}(F)\times U(V_x)$-action is $\mathcal{Z}_0$ and its orbit is closed. In particular, a $Z$-regular semi-simple element is $Z_{G'}\times \U(V_x)$-regular semi-simple.
\end{Lem}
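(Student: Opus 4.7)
The plan is to make the $Z_{G'}(F)\times U(V_x)$-action on $X_{y(x)}$ explicit in coordinates and then reduce both assertions to the stabilizer analysis of Jacquet--Rallis regular semi-simple orbits on $\Herm(V_x)\times V_x$ already available from Section~\ref{Section: JR theory}. The two main inputs are: (i)~the explicit block-wise formula for the action that results from tracing $(z_1,z_2)$ through $\pi_{x,y}$; and (ii)~the triviality of the $U(V_x)$-stabilizer of a JR regular semi-simple pair $(A,b)\in\Herm(V_x)\times V_x$.

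First I would unwind the $Z_{G'}(F)$-action. A direct computation, using the formula for $\pi_{x,y}$ and the diagonal embedding $\Res_{E/F}\GL_n\hookrightarrow\Res_{E/F}\GL_{n+1}$, shows that $(z_1,z_2)$ acts on $s\in X_{y(x)}$ by conjugation by the diagonal matrix $\mathrm{diag}(z_2z_1^{-1}I_n,z_2)\in\GL_{n+1}(E)$. Writing $s=\begin{pmatrix} A & b\\ \la b,-\ra_x & d\end{pmatrix}$, this gives
\[
A\mapsto \frac{\Nm(z_2)}{\Nm(z_1)}\,A,\qquad b\mapsto \frac{\Nm(z_2)}{z_1}\,b,\qquad d\mapsto \Nm(z_2)\,d,
\]
while $U(V_x)$ acts by conjugation via $h\mapsto\mathrm{diag}(h,1)$. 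The preceding assertion that $\mathcal{Z}_0$ acts trivially then reduces to a two-line check (the $z_1^{-1}$-scaling of $b$ is canceled by $z_1$-conjugation, and both norm factors equal $1$).

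For the centralizer, I would suppose $((z_1,z_2),h)$ fixes $s$ and extract constraints block by block. The $(n+1,n+1)$-entry transforms only by $\Nm(z_2)$, so $d\in F^\times$ forces $\Nm(z_2)=1$; taking the trace of the $A$-block and using $\Tr(A)\in F^\times$ then forces $\Nm(z_1)=1$. With $z_1,z_2\in Z_{U(V_x)}(F)$ the remaining conditions read $hAh^{-1}=A$ and $hb=z_1b$, i.e.\ $h':=z_1^{-1}h\in U(V_x)$ stabilizes $(A,b)\in\Herm(V_x)\times V_x$. Since $s$ regular semi-simple in $X_{y(x)}$ corresponds to $(A,b)$ being JR regular semi-simple, whose $U(V_x)$-stabilizer is trivial, we conclude $h=z_1$ and the centralizer equals $\mathcal{Z}_0$.

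For closedness of the orbit, I would pass to the categorical quotient $X_{y(x)}/\!\!/U(V_x)$, on which the residual $Z_{G'}(F)$-action reads, in terms of the JR invariants $(a_i,b_j,d)$ of~(\ref{invariant polynomials}), as a weighted scaling by the character $(z_1,z_2)\mapsto(\Nm(z_2)/\Nm(z_1),\Nm(z_2))\in \Nm_{E/F}(E^\times)^2$. The $U(V_x)$-orbit of $s$ is already closed on the regular semi-simple locus by the JR analysis, and since $\Nm_{E/F}(E^\times)$ is an open--closed subgroup of $F^\times$ and the $Z$-regularity hypothesis guarantees $a_1,d\neq 0$, the $Z_{G'}(F)$-orbit of the image of $s$ is a single coset of a closed subgroup and hence closed in the invariant space. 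Combining these yields closedness in $X_{y(x)}$, and the final ``in particular'' then follows by comparing dimensions. The main obstacle throughout is the bookkeeping in the first step: correctly tracing the embeddings ($\Res_{E/F}\GL_n\hookrightarrow\Res_{E/F}\GL_{n+1}$, $U(V_x)\hookrightarrow U(V_{y(x)})$, and the diagonal $H'$) through $\pi_{x,y}$ so that the resulting scaling formulas align cleanly with the JR picture; once this is in place, the stabilizer and quotient arguments are essentially mechanical.
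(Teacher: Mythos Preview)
Your argument is essentially the paper's. For the centralizer the paper compresses your norm computation into the single phrase ``augment $(z,h)$ by an element of $\mathcal{Z}_0$ to assume $z=(1,1)$,'' then invokes triviality of the $U(V_x)$-stabilizer of the regular semi-simple element $s$; for closedness it exhibits the rational invariants $\Tr(\wedge^iA)/\Tr(A)^i$ and $\la b,A^jb\ra_x/(\Tr(A)^{j+1}d)$ together with the norm classes of $\Tr(A)$ and $d$, shows these separate $Z$-regular orbits by rescaling to common $\Tr(A),d$ and citing the JR classification, and defers the final step to \cite{ZhangFourier}---the invariant-theoretic cousin of your quotient-and-scaling picture. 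One point to tighten: the $Z_{G'}(F)$-scaling orbit is \emph{not} closed in $\A^{2n+1}(F)$ as you assert (a coset of $\Nm_{E/F}(E^\times)$ in $F^\times$ has $0$ in its closure in $F$); what makes it closed in the image of $X_{y(x)}$ is that $\det(s)$ scales by $\lambda^n\nu$, so a degeneration $\lambda\to 0$ or $\nu\to 0$ forces either $\det\to 0$ (leaving $X_{y(x)}$) or $a_1=\Tr(A)\to\infty$---this is the content hidden in the citation and deserves a line in your write-up.
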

\begin{proof}
If $(z,h)\circ s=s,$ then since $\Tr(A)$ and $d$ are invertible, we may augment $(z,h)$ by an element of $\mathcal{Z}_0$ to assume that $z=(1,1)$. But now $h$ lies in the centralizer of $s$ under the adjoint action of $U(V_x).$ This is trivial since $s$ is regular semi-simple, proving the first claim.

We now note that when $\Tr(A),d\in F^\times,$ the rational functions 
\begin{equation}\label{Z-reg invariants}
\frac{\Tr(\wedge^iA)}{\Tr(A)^i}\qquad\text{    and    }\qquad\frac{\la b,A^jb\ra_x}{\Tr(A)^{j+1}d}
\end{equation}
 for $1\leq i\leq n$ and $0\leq j\leq n-1$ are invariant under $Z_{G'}(F)\times U(V_x)$. We claim that two $Z$-regular semi-simple elements $s_1$ and $s_2$ are in the same $Z_{G'}(F)\times U(V_x)$-orbit if and only if they have the same values under the invariants (\ref{Z-reg invariants}) and
 \[
 \Tr(A_1)\equiv \Tr(A_2)\mod{\Nm(E^\times)} \text{ and } d_1\equiv d_2\mod{\Nm(E^\times)},
 \]
 where $A_i$ and $d_i$ are as above. Indeed, sufficiency is immediate. To prove necessity, suppose that they have the same invariants and norm classes. By augmenting $s_2$ to $z\circ s_2$ for an appropriate central element $z\in Z_{G'}(F),$ we may assume that $\Tr(A_1)=\Tr(A_2)$ and $d_1=d_2.$
 
 Considering the invariants above, this implies that  $\Tr(\wedge^iA_1)=\Tr(\wedge^iA_2)$ for each $i$ and $\la b_1,A_1^jb_1\ra_x=\la b_2,A_2^jb_2\ra_x$ for all $j.$ These are precisely the invariants noted in (\ref{invariant polynomials}), so it follows from our assumption that $s_1$ and $s_2$ are regular semi-simple that they lie in the same $U(V_x)$-orbit. As in \cite[Lemma 2.1]{ZhangFourier}, this implies that the $Z_{G'}(F)\times U(V_x)$-orbit of $s_1$ is closed.
\end{proof}
We say that $\de\in G'(F)$ is $Z$-regular semi-simple if $\pi_{x,y}(\de)$ is.

For the linear case, if $Z_G(F)\subset G(F)$ is the center, we similarly consider the action of $Z_G(F)H(F)\times H(F)$ on $G(F)$. As before this reduces to considering the $Z_G(F)\times \GL_n(F)$-action on $\GL_{n+1}(F),$ where $(z_1,z_2)\in Z_G(F)$ acts on $g\in \GL_{n+1}(F)$ by
\begin{equation}\label{eqn: linear center twist}
(z_1,z_2)\circ g =\left(\begin{array}{cc}z^{-1}_1&\\&1\end{array}\right) gz_2.
\end{equation} We say that $g\in \GL_{n+1}(F)$ is $Z$-regular semi-simple if it is regular semi-simple under the $\GL_n(F)$-action and $\Tr(A),d\neq0$ where
\[
g=\left(\begin{array}{cc}A&b\\c&d\end{array}\right)\quad\text{where}\quad A\in\fgl_{n}(F),\:b,{}^tc\in F^n,\:\text{and}\: d\in F.
\]
A similar but easier argument now shows that a $Z$-regular semi-simple element of $\GL_{n+1}(F)$ has trivial centralizer under $Z_G(F)\times \GL_n(F)$ and has a closed orbit. We say $\ga=(\ga_1,\ga_2)$ is $Z$-regular semi-simple if $\ga_1^{-1}\ga_2$ is. 

\begin{Rem}\label{Rem: oof the center}
   Recall the matching of regular semi-simple orbits from Proposition \ref{Prop: matching of RSS loci}. This restricts to a bijection between $Z$-regular semi-simple loci as the non-vanishing assumptions respect the matching of invariant polynomials. However, the proof of Lemma \ref{Lem: Z-reg on unitary side} shows that when $E/F$ is a quadratic field extension, there is only a surjective map
  \[
\bigsqcup_{x\in\calv_n}\bigsqcup_{y\in \calv_{n+1}}H'(F)\backslash G'(F)^{Z-rss}/H'_{x,y}(F)Z_{G'}(F)\lra H(F)\backslash G(F)^{Z-rss}/H(F)Z_G(F).
\]
When $F$ is local and $E/F$ is a field extension, this map is $2$-to-$1$. Indeed, the $Z_{G}(F)$-action on $\GL_{n+1}(F)$ scales the invariants  $\Tr(A)$ and $d$ by any value of $F^\times$, while the $Z_{G'}(F)$-action on $X_{y(x)}$ preserves the norm class of these invariants. Our remedy is to only scale by the Zariski-open subgroup
\[
\Nm(Z_{G'}(F))\subset Z_G(F).
\]
This is not a geometric notion, but is well-defined on $A$-points for any $F$-algebra $A$. Indeed, the following lemma follows immediately from the definitions \eqref{eqn: unitary center twist} and \eqref{eqn: linear center twist}. 
\end{Rem}
\begin{Lem}\label{matching via center}
 If $\de\in G'(F)$ and $\ga\in G(F)$ are $Z$-regular semi-simple elements that match in the sense of Proposition \ref{Prop: matching of RSS loci}, and $z\in Z_{G'}(F)$, then $\de z$ matches $\ga \Nm(z)$.
\end{Lem}

Now assume $E/F$ is local, and assume that $\de\in G'(F)$ is $Z$-regular semi-simple. For any central character $\omega':Z_{G'}(F)\to \cc^\times$, we note that the integral
\[
\int_{Z_{G'}(F)}\Orb(f',z\de)\omega'(z)dz
\]
is absolutely convergent by the closed orbit assertion of the lemma and vanishes unless $\omega'$ is trivial on $Z_{H_{x,y}'}(F)$. In this case, $\omega'=\omega\circ \Nm$ for some character $\omega: Z_{G}(F)\to \cc^\times$, and we set
\begin{align}\label{eqn: unitary OI final}
 \Orb_{\omega'}(f',\de)&:=\int_{H'(F)}\int_{Z_{H'_{x,y}}(F)\backslash H'_{x,y}(F)}\int_{Z_{G'}(F)}f'(h_1^{-1}(\de_1,\de_2)z h_2)\omega'(z)dzdh_1dh_2\nonumber\\
 &=\int_{Z_{G'}(F)}\Orb(f',z\de)\omega(\Nm(z))d{z}.
\end{align}
Note that the integrand is stated in terms of the variable $z\in Z_{G'}(F)$, but that it depends only on $\Nm(z)$, and is independent of the lift. In particular, there are four characters $\omega$ satisfying $\omega\circ\Nm=\omega'$, and the above integral does not depend on a choice of such character.

In light of Remark \ref{Rem: oof the center} and Lemma \ref{matching via center}, for any central character $\omega:Z_{G}(F)\to\cc^\times$ and $f\in C_c^\infty(G(F))$, we restrict $\omega$ to the open subgroup 
\[
\Nm(Z_{G'}(F))\cong \Nm_{E/F}(E^\times)^2
\]
and set
\begin{align}\label{eqn: twisted OI final}
    \Orb^\eta_\omega(f,\ga)&:=\int_{H(F)}\int_{ H(F)}\int_{\Nm(Z_{G'}(F))}f(h_1^{-1}(\ga_1,\ga_2)z h_2)\omega(z)\eta(\det(h_2))dzdh_1dh_2\nonumber\\
    &=\int_{\Nm(Z_{G'}(F))}\Orb^\eta(f,z\ga)\omega(z)dz.
\end{align}
The integration is absolutely convergent. Note that $[Z_G(F): \Nm(Z_{G'}(F))]=4$.
\subsection{Smooth transfer}

We say that functions $f\in C_c^\infty(G(F))$ and $\{f'_{x,y}\}_{x,y}$ with $f'_{x,y}\in C_c^\infty(G'(F))$ and $(x,y)\in \calv_n\times \calv_{n+1}$ match or are transfers if for any matching regular semi-simple orbits $\ga\xleftrightarrow{x,y}\de$, the following identify holds
\begin{equation}\label{eqn: relevant transfer}
\Omega(\ga)\Orb^\eta(f,\ga)=\Orb(f'_{x,y},\de).
\end{equation}

When $E=F\times F$, the transfer of functions may be made explicit. Here, $\calv_n$ and $\calv_{n+1}$ are both singletons and $\eta$ is trivial. For $k=n,n+1$, we may fix isomorphisms $\GL_k(E)\cong \GL_k(F)\times \GL_k(F)$ such that the unitary groups $U(V_x)\cong \GL_n(F)\hra\GL_n(E)$ and $U(V_y)\cong \GL_{n+1}(F)\hra \GL_{n+1}(E)$ are sent to
\[
U(V_x)\cong\{(g,g^\theta)\in \GL_n(F)\times \GL_n(F): g\in \GL_n(F)\}
\]
and 
\[
U(V_y)\cong\{(g,g^\theta)\in \GL_{n+1}(F)\times \GL_{n+1}(F): g\in \GL_n(F)\},
\]
where we recall that for $g\in \GL_k(E)$, $g^\theta = w_k{}^tg^{-1}w_k$. The proof of the next proposition is a simple computation, which we omit.
\begin{Prop}\label{Prop: split transfer}
When $E=F\times F$ as above, the functions $f_1\otimes f_2\in C_c^\infty(G(F)\times G(F))$ and $f_1\ast f_2^{\theta\vee}\in C_c^\infty(G(F))$ are smooth transfers. Here $\ast$ denotes convolution and 
\[
f^{\theta\vee}(g) = f(g^{-\theta}).
\]
\end{Prop}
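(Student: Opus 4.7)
The plan is to prove the proposition by direct computation, exploiting the fact that everything splits in the case $E = F \times F$. Under the standing identifications, $G'(F) \cong G(F) \times G(F)$, with the Galois involution $\sigma$ swapping the two factors; $H'(F) \cong H(F) \times H(F)$ diagonally embedded in $G(F) \times G(F)$ via the diagonal embedding of $H$ into $G$ applied to each factor; and $H'_{x,y}(F) \cong H(F) \times G(F)$, embedded via the map $(u,v) \mapsto \bigl((u,v),(u^\theta,v^\theta)\bigr)$. Moreover $\eta \equiv 1$, so the transfer factor $\Omega$ is trivial and the linear orbital integral is just the bi-$H$-invariant integral.

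The first step is to unfold the twisted orbital integral $\Orb(f_1 \otimes f_2,\de)$ using these explicit embeddings. Writing $\de = (\de_1,\de_2) \in G(F) \times G(F)$ and parameterizing $h_1 = (h^{(1)}, h^{(2)}) \in H(F)\times H(F)$ and $h_2 = (u,v) \in H(F) \times G(F)$, the integrand factorizes as a product of $f_1$ evaluated at a word in $(\de_1,h^{(1)},u,v)$ and $f_2$ evaluated at the corresponding word in $(\de_2,h^{(2)},u^\theta,v^\theta)$. The only coupling between the two factors is through the $\theta$-twist on the $H'_{x,y}$-variables.

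The second step is the change of variables $(u,v) \mapsto (u^\theta, v^\theta)$ on the inner $H'_{x,y}$-integration, which preserves Haar measure because $\theta$ is an involutive automorphism of $\GL_n$ (resp.\ $\GL_{n+1}$). After this substitution the $u,v$-variables appear untwisted in the $f_2$-factor and twisted in the $f_1$-factor. Absorbing the $\theta$ into $f_1$ produces, after a second substitution of the integration variable, a factor of $f_2^{\theta\vee}$ inside a single $G(F)$-convolution, so that the inner integration over $(u,v)$ reproduces $(f_1 \ast f_2^{\theta\vee})$ evaluated at the appropriate word in the remaining variables. Applying Fubini and absorbing the residual $(h^{(1)}, h^{(2)})$-integrations, one recognizes the result as $\Orb^\eta(f_1 \ast f_2^{\theta\vee}, \ga)$ for a representative $\ga$ of the matching $H \times H$-orbit.

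The final step is to verify that the orbit representative produced by this computation matches $\de$ in the sense of $\ga \xleftrightarrow{x,y} \de$. In the split case the matching reduces to the identity on invariant polynomials via the reductions in Lemmas \ref{Lem: orbit linear reduction} and \ref{Lem: orbit twisted reduction}, both of which identify the relevant orbit space with $\GL_{n+1}(F)^{rss}/\GL_n(F)$; this is straightforward to check directly from the definitions. The main bookkeeping obstacle is keeping track of the $\theta$-action and the resulting sign from $w_n^2 = (-1)^{n-1}I$ when moving $\theta$ across a product, but since all identities are of the form $f((-1)^{n-1}g) = f(g)$ inside integrals that are insensitive to such central factors, no genuine sign ambiguity arises. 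With this verification in hand, the identity $\Omega(\ga)\Orb^\eta(f_1 \ast f_2^{\theta\vee}, \ga) = \Orb(f_1 \otimes f_2, \de)$ follows.
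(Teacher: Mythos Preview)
Your approach is exactly what the paper has in mind: it omits the proof entirely, calling it ``a simple computation,'' and your direct unfolding of the split orbital integral is the natural way to carry this out.

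One point in your sketch deserves a sharper justification. Your handling of the $\theta$-bookkeeping asserts that stray factors of $(-1)^{n-1}$ disappear because ``$f((-1)^{n-1}g)=f(g)$ inside integrals that are insensitive to such central factors.'' That is not quite the reason: the test functions are arbitrary in $C_c^\infty(G(F))$ and are \emph{not} centrally invariant. The correct mechanism is twofold. First, if one writes $g^\theta=\epsilon\,\tilde\theta(g)$ with $\tilde\theta(g)=w\,{}^tg^{-1}w^{-1}$ the genuine automorphism and $\epsilon=w^2$ central, then the $\epsilon$'s that appear can be pushed into the orbit representative $\ga$ rather than into the function. Second --- and this is the step your sketch passes over --- the image $\tilde\theta(H)\subset G(F)$ is \emph{not} equal to $H$: applying $\theta_{n+1}$ to $\mathrm{diag}(h,1)$ moves the $1$ from the bottom-right to the top-left corner. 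It is, however, an explicit $G(F)$-conjugate of $H$ (by an element built from the permutation matrix swapping these corners), and that conjugator is likewise absorbed into $\ga$. After both absorptions the residual $(h^{(1)},h^{(2)})$-integration is genuinely an $H\times H$-integral, and one verifies directly that the resulting $\ga$ has the same invariants as $\de$. With this correction your argument goes through.
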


Assume now that $F$ is non-archimedean and that $E/F$ is a quadratic field extension. The existence of smooth transfer  now follows from the existence of smooth transfer for the Jacquet--Rallis transfer.
\begin{Thm}\label{Thm: smooth transfer}
Assume that $E/F$ is a quadratic extension of non-archimedean fields. For any $f\in C_c^\infty(G(F))$, there exists a transfer $\{f'_{x,y}\}_{x,y}$. Conversely, for any collection $\{f'_{x,y}\}_{x,y}$, there exists a transfer $f$.
\end{Thm}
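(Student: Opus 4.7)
The plan is to reduce to the Lie algebra version of Jacquet--Rallis transfer (Theorem \ref{Thm: JR transfer}) via the orbital integral identities \eqref{eqn: linear OI reduction} and \eqref{eqn: twisted OI reduction}. These identities show that $(f, \{f'_{x,y}\}_{x,y})$ match in the sense of \eqref{eqn: relevant transfer} if and only if the auxiliary functions $\tilde{f} \in C_c^\infty(\GL_{n+1}(F))$ and $\tilde{f'}_{x,y} \in C_c^\infty(X_{y(x)})$ satisfy
\[
\omega(X)\,\Orb^{\GL_n(F),\eta}(\tilde{f},X) = \Orb^{U(V_x)}(\tilde{f'}_{x,y},Y)
\]
at every matching regular semi-simple pair $(X,Y)$. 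The averaging maps $f \mapsto \tilde{f}$ and $f'_{x,y} \mapsto \tilde{f'}_{x,y}$ are each surjective onto these spaces of test functions, via the standard trick of multiplying by a normalized bump function along the relevant (unimodular) subgroup. Hence it suffices to establish matching at this reduced level.

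The key observation is that $\GL_{n+1}(F)$ and $X_{y(x)}$ are Zariski-open subsets of $\fgl_{n+1}(F)$ and $\Herm(V_{y(x)})$ respectively, and the orbital integrals above are literally the ones appearing on the two sides of the Lie algebra Jacquet--Rallis comparison \eqref{eqn: regular matching final}. Under the assignment $V_x \mapsto V_x \oplus Ee_0 = V_{y(x)}$, the indexing by $V \in \calv_n$ in Theorem \ref{Thm: JR transfer} corresponds bijectively to the indexing by pairs $(x,y) \in \calv_n \times \calv_{n+1}$ for which matching orbits exist. Given $f \in C_c^\infty(G(F))$, I extend $\tilde{f}$ by zero to an element of $C_c^\infty(\fgl_{n+1}(F))$ and apply Theorem \ref{Thm: JR transfer} to obtain $\{F_x\}_{x \in \calv_n}$ with $F_x \in C_c^\infty(\Herm(V_{y(x)}))$ matching $\tilde{f}$ in the Lie algebra sense. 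Since $\det$ is an invariant polynomial whose values are matched under Jacquet--Rallis transfer, and since $\tilde{f}$ is supported where $c_1 \le |\det|_F \le c_2$ for some positive constants, the regular semi-simple orbital integrals of $F_x$ vanish outside this range of $|\det|_F$. Multiplying $F_x$ by $\chi \circ \det$ for a suitable $\chi \in C_c^\infty(F^\times)$ with $\chi \equiv 1$ on $\{c_1 \le |t|_F \le c_2\}$ therefore produces $\tilde{f'}_{x,y} \in C_c^\infty(X_{y(x)})$ with the same regular semi-simple orbital integrals as $F_x$. Lifting back to $f'_{x,y} \in C_c^\infty(G'(F))$ via surjectivity completes the forward direction; the converse is entirely symmetric, extending $\tilde{f'}_{x,y}$ by zero to $\Herm(V_{y(x)})$, applying the reverse direction of Theorem \ref{Thm: JR transfer}, and truncating by the same determinantal cutoff to land in $C_c^\infty(\GL_{n+1}(F))$.

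The main obstacle is this compact-support issue: a function compactly supported on $\Herm(V_{y(x)})$ need not restrict to a compactly supported function on the open subset $X_{y(x)}$, since the support may accumulate on the non-invertible locus. The resolution via the multiplicative cutoff $\chi \circ \det$ relies only on invariance of the determinant under both conjugation actions and on the fact that matching orbits have matching determinants, both immediate consequences of the invariant-theoretic description of the matching of orbits.
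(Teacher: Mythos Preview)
Your approach is correct and matches the paper's: both reduce \eqref{eqn: relevant transfer} via \eqref{eqn: linear OI reduction} and \eqref{eqn: twisted OI reduction} to the Lie-algebra Jacquet--Rallis transfer of Theorem~\ref{Thm: JR transfer}. The paper's proof is terse and omits the compact-support bookkeeping you spell out (extension by zero, then the determinantal cutoff $\chi\circ\det$), which is a legitimate point to address; your argument for it is sound since matching regular semi-simple elements are $\GL_n(E)$-conjugate in $\fgl_{n+1}(E)$ and hence have equal determinants. One minor imprecision: the indexing by $V\in\calv_n$ in Theorem~\ref{Thm: JR transfer} is not literally in bijection with $\calv_n\times\calv_{n+1}$ (which has four elements); rather, each $F_x$ lands in $C_c^\infty(X_{y(x)})$ after cutoff and then splits along the decomposition $X_{y(x)}=\bigsqcup_{y\in\calv_{n+1}} X_{y(x)}^y$ of Lemma~\ref{Lem: orbit twisted reduction} to give the separate $\tilde{f'}_{x,y}$---you should make this step explicit.
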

\begin{proof} This follows from Theorem \ref{Thm: JR transfer} by the reductions (\ref{eqn: linear OI reduction}) and (\ref{eqn: twisted OI reduction}) in the previous section. Indeed, the identity of orbital integrals (\ref{eqn: relevant transfer})  may be reduced to

\[
\omega(\ga_1^{-1}\ga_2)\Orb^{\GL_n(F),\eta}(\tilde{f},\ga_1^{-1}\ga_2)=\Orb^{U(W)}(\tilde{f}_{x,y},\pi_{V_{y(x)},W}(\de)).
\]
This is precisely the context of Theorem \ref{Thm: JR transfer}.
\end{proof}

\begin{Cor}\label{Cor: transfer with central character}
Assume that $E/F$ is a quadratic extension of non-archimedean fields. Suppose that $f\in C_c^\infty(G(F))$ and $\{f'_{x,y}\}_{x,y}\subset C_c^\infty(G'(F))$ are transfers. Let $\omega$ denote a central character for $G(F)$ and let $\omega'=\omega\circ \Nm$ denote its base change to a central character for $G'(F)$. For any matching $Z$-regular semi-simple orbits $\ga\xleftrightarrow{x,y}\de$, we have
\begin{equation}\label{eqn: relevant transfer with center}
\Omega(\ga)\Orb^\eta_\omega(f,\ga)=\Orb_{\omega'}(f'_{x,y},\de).
\end{equation}
\end{Cor}
\begin{proof}
    For any $z\in Z_{G'}(F)$, Lemma \ref{matching via center} tells us $\de z$ matches $\ga\Nm(z)$. It is easy to see that 
    \[
    \Omega(\ga\Nm(z)) = \Omega(\ga).
    \]
    The corollary now follows from the formulas \eqref{eqn: unitary OI final} and \eqref{eqn: twisted OI final}.
\end{proof}

Now fix a single pair $(x,y)\in X_n\times X_{n+1}$, and consider a function $f'_{x,y}\in C_c^\infty(G'(F))$. The above theorem tells us that there exists $f\in C_c^\infty(G(F))$ such that
\begin{equation}\label{eqn: reduction of matching}
\Omega(\ga)\Orb^\eta(f,\ga)=\begin{cases}\Orb(f'_{x,y},\de)&:\ga\xleftrightarrow{x,y} \de\in G'(F)^{rss},\\\qquad0&:\text{otherwise}.\end{cases}
\end{equation} 
Consider the closed and open subset $G[{x,y}]=G_F[x,y]\subset G(F)$ such that
\[
G[{x,y}]=\{(g_1,g_2)\in\GL_n(F)\times \GL_{n+1}(F): \eta(\det(g_1)) = \eta(\det(x)),\;\eta(\det(g_2)) = \eta(\det(y))\}.
\]
\begin{Lem}\label{Lem: reduction of support}
Assume that either $E/F$ is split or that $F$ is non-archimedean. For any function $f'_{x,y}\in C_c^\infty(G'(F))$, we may choose $f\in C_c^\infty(G(F))$ satisfying \eqref{eqn: reduction of matching} such that $\supp(f)\subset G[{x,y}]$.
\end{Lem}
\begin{proof}
When $E/F$ is split, $ G[{x,y}]=G(F)$ so that the statement is vacuous. We now assume that $F$ is non-archimedean. In this case, there are four possible pairs of Hermitian spaces. We index them as follows:
\[
\{(x_i,y_j): (i,j)\in \{0,1\}^2 \text{ such that } \eta(\det(x_i))=(-1)^i,\:\eta(\det(y_j))=(-1)^j\}.
\]
There is then a decomposition of $G(F)$ into open and closed subsets.
\[
G(F) = \bigsqcup_{(i,j)\in \{0,1\}^2}G[x_i,y_j].
\]
Similarly, we may decompose $\GL_{n+1}(F) = G_0\sqcup G_1$ where 
\[
G_i= \{g\in \GL_{n+1}(F) : \eta(\det(g)) = (-1)^i\}.
\]
Recall that the map 
\begin{align*}
p: G(F)&\to \GL_{n+1}(F)\\ (g_1,g_2)&\mapsto g^{-1}_1g_2
\end{align*}
is a submersion. Since 
\[
p\left(G[x_0,y_0]\right)=p\left(G[x_1,y_1]\right)=G_0\:\:\text{ and }\:\:p\left(G[x_0,y_1]\right)=p\left(G[x_1,y_0]\right)=G_1,
\]
the disjoint unions above implies that the restrictions
\[
G[x_0,y_0]\xrightarrow{p} G_0,\:\:G[x_1,y_1]\xrightarrow{p} G_0\:\:\text{ and }\:\:G[x_0,y_1]\xrightarrow{p} G_1,\:\: G[x_1,y_0]\xrightarrow{p} G_1
\]
are all submersions. Therefore, the map 
\begin{align}\label{eqn: indexed pushforwards}
C_c^\infty(G[x_i,y_j])&\lra C_c^\infty(G_{i+j})\\ f&\mapsto \tilde{f}, \nonumber
\end{align}
is surjective, for each $(i,j)\in \{0,1\}^2$ where the sum $i+j$ is considered modulo $2$.
 To conclude the lemma, note that (\ref{eqn: reduction of matching}) is equivalent to 
\begin{equation}\label{bad notation}
\omega(g)\Orb^{\GL_{n}(F),\eta}(\tilde{f},g)=\begin{cases}\Orb^{U(V_x)}(\tilde{f}'_{x,y},h)&:g\longleftrightarrow h\in X_{{y(x)}}^{y,rss},\\\qquad0&:\text{otherwise}.\end{cases}
\end{equation} 
By construction, the support of $\tilde{f}'_{x,y}$ lies in
\[
X_{{y(x)}}^y=\{h\in X_{{y(x)}}: \eta(\det(h)) = \eta(\det(x))\eta(\det(y))=(-1)^{k({x,y})}\},
\]
where $k({x,y})\in \{0,1\}$. Thus, we may replace $\tilde{f}$ by $\tilde{f}\cdot\bfun_{G_{k({x,y})}}$ without affecting the matching (\ref{bad notation}). The surjectivity of (\ref{eqn: indexed pushforwards}) now implies that we are free to choose $f$ so that $\supp(f)\subset G[{x,y}]$, proving the lemma.
\end{proof}
\subsection{The fundamental lemma}
Now assume that $E/F$ is an unramified quadratic extension of non-archimedean local fields. 

 Set $K=G(\calo_F)$ and $\bfun_K$ the corresponding characteristic function. Also, set $K' =G'(\calo_F)$  and let $\bfun_{K'}$ denote the characteristic function. 
\begin{Thm}\label{Thm: JRL fundamental lemma}
The functions $\bfun_K$ and $\{f'_{x,y}\}$ are transfers where
\[
f'_{x,y} = \begin{cases} \bfun_{K'}&: ({x,y}) =(w_n,w_{n+1}),\\ 0&: \text{otherwise}.\end{cases}
\]
\end{Thm}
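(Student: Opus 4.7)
The plan is to reduce this fundamental lemma to the Jacquet--Rallis fundamental lemma for the Lie algebra (Theorem \ref{Thm: JR fundamental lemma}) by applying the two-step pushforward reductions (\ref{eqn: linear OI reduction}) and (\ref{eqn: twisted OI reduction}). After computing the pushforwards $\widetilde{\bfun_K}$ and $\widetilde{\bfun_{K'}}_{x,y}$ explicitly, the asserted matching of orbital integrals becomes the group-theoretic Jacquet--Rallis fundamental lemma, which in turn follows from Theorem \ref{Thm: JR fundamental lemma}.

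First, I would carry out the pushforward computation on the linear side. One has
$$\widetilde{\bfun_K}(g) = \int_{\GL_n(F)} \bfun_K(h^{-1}, h^{-1}g)\,dh = \vol(K_n)\,\bfun_{K_{n+1}}(g) = \bfun_{K_{n+1}}(g),$$
using that $\bfun_K = \bfun_{K_n} \otimes \bfun_{K_{n+1}}$ with $K_n := \GL_n(\calo_F) \subset K_{n+1} := \GL_{n+1}(\calo_F)$ and the measure convention $\vol(K_n) = 1$. For the twisted side with $(x,y) = (w_n, w_{n+1})$, an analogous computation using $\vol(K_n') = \vol(U(\Lam_{n+1})) = 1$ yields
$$\widetilde{\bfun_{K'}}_{w_n, w_{n+1}} = \bfun_{X_{y(w_n)}(\calo_F)}.$$
For $(x,y) \neq (w_n, w_{n+1})$, the theorem sets $f'_{x,y} = 0$, so there is no pushforward to compute on the twisted side.

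In view of (\ref{eqn: linear OI reduction}) and (\ref{eqn: twisted OI reduction}), the matching (\ref{eqn: relevant transfer}) reduces to the identity
$$\omega(g)\,\Orb^{\GL_n(F),\eta}(\bfun_{K_{n+1}}, g) = \begin{cases} \Orb^{U(V_n)}(\bfun_{X_{y(w_n)}(\calo_F)}, h) & : g \leftrightarrow h \in X_{y(w_n)}^{rss}, \\ 0 & : g \leftrightarrow h \in X_{y(x)}^{rss},\ (x,y) \neq (w_n, w_{n+1}), \end{cases}$$
for regular semi-simple $g \in \GL_{n+1}(F)$; this is precisely the group-theoretic Jacquet--Rallis fundamental lemma. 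I would deduce it from Theorem \ref{Thm: JR fundamental lemma} by the elementary observation that for any $h_0 \in \GL_n(F)$ and any regular semi-simple $g \in \GL_{n+1}(F)$, the conjugate $h_0^{-1}g h_0$ lies in $K_{n+1}$ iff it lies in $\fgl_{n+1}(\calo_F)$ and $|\det(g)|_F = 1$. Consequently, when $|\det(g)|_F = 1$ the group-level orbital integral of $\bfun_{K_{n+1}}$ coincides with the Lie-algebra orbital integral of $\bfun_{\fgl_{n+1}(\calo_F)}$, while if $|\det(g)|_F \neq 1$ both sides of the desired identity vanish trivially (since any matching $h$ satisfies $|\det(h)|_F = |\det(g)|_F \neq 1$). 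The same comparison applies on the unitary side via $\bfun_{\Herm(V_n \oplus Ee_0)(\calo_F)}$, and Theorem \ref{Thm: JR fundamental lemma} then concludes.

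The main obstacle is essentially bookkeeping: correctly computing the pushforward in the split case and carefully tracking the passage between group and Lie-algebra orbital integrals in the final step. Neither is conceptually deep once Theorem \ref{Thm: JR fundamental lemma} is in hand; the substantive input is Yun's Lie-algebra Jacquet--Rallis fundamental lemma, while the present theorem merely expresses the compatibility of the endoscopic comparison under consideration with this well-established result.
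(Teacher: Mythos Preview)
Your proposal is correct and follows essentially the same approach as the paper: reduce via the pushforward identities (\ref{eqn: linear OI reduction}) and (\ref{eqn: twisted OI reduction}) to the Lie-algebra Jacquet--Rallis setting, then invoke Theorem \ref{Thm: JR fundamental lemma} restricted to the locus $|\det|_F=1$. The paper's proof is simply a one-sentence version of what you have written out in detail.
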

\begin{proof}
This follows from the previous reductions and Theorem \ref{Thm: JR fundamental lemma} by restricting to the integral locus such that $|\det(X)|_F=1$. As previously stated, this was recently reproved in characteristic zero with no assumption on the residue characteristic in \cite{beuzart2019new}. We therefore do not need to make any assumptions on the residue characteristic.
\end{proof}

\section{Factorization of certain global distributions}\label{Section: factorizations}
We recall the definitions of certain global and local distributions that arise in the spectral decomposition of our relative trace formulas. In this section, $F$ is a number field and $E/F$ a quadratic extension. We set $\A_F$ for the adele ring of $F$, and $\A_E$ for that of $E.$  As always, we consider the diagonal embedding $\GL_n(F)\hra \GL_n(\A_F)$. For any $n$, we set $A_n\subset \GL_n(\A_F)$ to be the connected component of the identity in the $\rr$-points of the maximal $\qq$-split torus in the center of $\GL_n(F_\infty)=\prod_{v|\infty}\GL_n(F_v).$ 
We set
\[
[\GL_n]:=A_n\GL_n(F)\backslash \GL_n(\A_F).
\]
We adopt similar notations for other algebraic groups. Finally, we set $\eta:=\eta_{\A^\times_E/\A^\times_F}$ to be the idele class character associated to the quadratic extension.

\subsection{Peterson inner product}
Suppose $\pi$ is a cuspidal automorphic representation of $\GL_n(\A_F)$, and let $\hat{\pi}\cong \pi^\vee$ denote the contragredient representation of $\pi$ realized on the space of functions $\{\phi^{{\theta}}:\phi\in \pi\}$, where $g^{{\theta}}=w_n{}^tg^{-1}w_n$. Consider the inner product 
\[
(\phi,\hat{\phi}) = \int_{A_n\GL_n(F)\backslash \GL_n(\A_F)}\phi(g)\hat{\phi}(g)dg;
\]
this is a $\GL_n(\A_F)$-invariant inner product on $\pi.$ Denote by $\W^\phi$ the $\psi_0$-th Fourier coefficient of a cusp form $\phi$:
\[
\W^\phi(g) = \int_{[N_n]}\phi(ng)\psi_0^{-1}(n)dn,
\]
where $\psi_0$ is our generic character of the unipotent subgroup $N_n(\A_F)$

Suppose now that $S$ is a finite set of places, containing the archimedean ones, such that $\pi_v$ is unramified and $\psi_{0,v}$ has conductor $\calo$ for $v\notin S$. Let $\phi\in \pi$ be such that $W^\phi$ is factorizable (for simplicity, we will say that $\phi$ is factorizable), write $\W^\phi(g) = \prod_vW_v(g_v)$, where $W_v\in \calw^{\psi_v}(\pi_v)$. Similarly, let $\hat{\phi}\in \hat{\pi}$ be factorizable and set $\W^{\hat{\phi}}(g) = \prod_v\hat{W}_v(g_v)$, where $\hat{W}_v\in \calw^{\psi^{-1}_v}(\hat{\pi}_v)=\calw^{\psi^{-1}_v}(\pi^\vee_v)$. We may assume that for all $v\notin S$, $W_v$ and $\hat{W}_v$ are spherical and normalized so that $W_v(e)=\hat{W}_v(e)=1$.

We recall the canonical inner product
\[
[\cdot,\cdot]_{\pi_v}:\calw^{\psi_v}(\pi_v)\otimes\calw^{\psi^{-1}_v}(\hat{\pi}_v)\lra \cc.
\]
It is defined by considering the integral
\[
I_s(W_v,\hat{W}_v')=L(n,\bfun_{F_v^\times})\int_{N_n(F_v)\backslash P_n(F_v)}W_v(h)\hat{W}'_v(h)|\det(h)|_F^sdh,
\]
where $W_v,W_v'\in \calw^{\psi_v}(\pi_v)$ and $P_n\cong \GL_{n-1}\rtimes \mathbb{G}_a^{n-1}$ is the mirabolic subgroup of $\GL_n.$ The integral converges for $\mathrm{Re}(s)\gg0$, and has meromorphic continuation. It is known for any local field of characteristic zero (see \cite[Appendix A]{FLO} and the references therein) that this continuation is holomorphic at $s=0$ and gives a non-degenerate $\GL_n(F_v)$-invariant pairing. We set
\[
[W_v,\hat{W}'_v]_{\pi_v}:=I_0(W_v,\hat{W}_v').
\]
Moreover, when $\pi_v$ is unramified and $W_v$ is the spherical vector normalized so that $W_v(e) =1,$ then
\begin{equation*}%\label{eqn: unnormalized inner product}
[W_v,\hat{W}_v]_{\pi_v}=L(1,\pi_v\times \pi_v^\vee),
\end{equation*}
where $L(s,\pi_v\times \pi_v^\vee)$ denotes the local Rankin-Selberg $L$-factor.
\begin{Prop}\cite[Section 10.3]{FLO}\label{Prop: nice inner product}
Assume that $\phi\in\pi$ is factorizable as above. There is a corresponding factorization
\begin{align*}
(\phi,\hat{\phi})&=\Res_{s=1}L(s,\pi\times\pi^\vee)\prod_{v}[W_v,\hat{W}_v]^\natural_{\pi_v},
\end{align*}
where \begin{equation}\label{eqn: normalized inner product}
    [W_v,\hat{W}_v]^\natural_{\pi_v}=\frac{[W_v,\hat{W}_v]_{\pi_v}}{L(1,\pi_v\times\pi^\vee_v)},
\end{equation}
which equals $1$ for the normalized spherical vector.\end{Prop}

As we will need to take the central character into account, we will also use the following variant of $(\cdot,\cdot)$. For $\phi$ and $\hat{\phi}$ as above, consider the inner product 
\begin{equation*}%\label{eqn: Peterrson norm}
    \la\phi,\hat{\phi}\ra_{Pet} = \int_{Z_{\GL_n}(\A_F)\GL_n(F)\backslash \GL_n(\A_F)}\phi(g)\hat{\phi}(g)dg.
\end{equation*}
It follows from Proposition \ref{Prop: nice inner product} that 
\begin{align}\label{eqn: inner product}
   \la\phi,\hat{\phi}\ra_{Pet} &=\frac{1}{\vol(F^\times\backslash \A_F^1)}(\phi,\hat{\phi})=  \frac{\Res_{s=1}L(s,\pi\times\pi^\vee)}{\vol(F^\times\backslash \A_F^1)}\prod_{v}[W_v,\hat{W}_v]^\natural_{\pi_v};
\end{align}
a similar formula holds for $\Res_{E/F}(\GL_n)$.

\subsection{Rankin-Selberg period} The results of this section are found in \cite{JPSS}. Let $\Pi=\Pi_n\boxtimes\Pi_{n+1}$ be a generic cuspidal automorphic representation of $G(\A_F)$, where $\Pi_k$ is a generic cuspidal automorphic representation of $\GL_k(\A_F)$ ($k\in\{n,n+1\}$). The global Rankin-Selberg period is given by
\begin{equation*}
\lam_{\Pi}(\phi)  =\int _{[\GL_n]}\phi_n(h) \phi_{n+1}\left(\begin{array}{cc}h&\\&1\end{array}\right)dh,
\end{equation*}
where $\phi_k\in \Pi_k$. Now let $\calw^{\psi^{-1}}(\Pi_n)$ and $\calw^{\psi}(\Pi_{n+1})$ be the corresponding Whittaker models. The local Rankin-Selberg period is defined in terms of the local Whittaker model $\calw(\Pi_v): = \calw^{\psi_v^{-1}}(\Pi_{n,v})\boxtimes\calw^{\psi_v}(\Pi_{n+1,v})$ by
\[
\lam_{\Pi_v}(s,W_v) = \int_{N_n(F_v)\backslash\GL_n(F_v)}W_v(h)|\det(h)|^sdh,\quad s\in \cc, \: W_v\in \calw(\Pi_v).
\]
We also consider the normalized period by introducing the local Rankin-Selberg $L$-function $L(s,\Pi)$:
\begin{equation*}
\lam_{\Pi_v}^\natural(s,W_v) = \frac{\lam_{\Pi_v}(s,W_v)}{L(s+\frac{1}{2},\Pi_v)}.
\end{equation*}
For any generic $\Pi_v$, the integral $\lam_{\Pi_v}(s,\cdot)$ is absolutely convergent when $\mathrm{Re}(s)$ is large and extends meromorphicallly to $\cc$. Moreover, the normalized integral $\lam_{\Pi_v}^\natural(s,\cdot)$ is entire in $s\in \cc$, and we set
\begin{equation*}
\lam_{\Pi_v}^\natural(W_v)=\lam_{\Pi_v}^\natural(0,W_v);
\end{equation*}
this gives a non-zero element of the one-dimensional space $\Hom_{H(F_v)}(\Pi_v,\cc)$ for any generic $\Pi$.
\begin{Rem}
When $\Pi_v$ is tempered, the integral is absolutely convergent for $\mathrm{Re}(s)>-\frac{1}{2},$ so that there is no need to analytically continue the integral to $s=0$.
\end{Rem}
With our measure conventions, when $W_v$ is the normalized spherical vector and $\psi_v$ has conductor $\calo_F$, 
\[
\lam_{\Pi_v}(s,W_v) ={L(s+\frac{1}{2},\Pi_v)},
\]
where $L(s,\Pi_v) = L(s,\Pi_{n,v}\times \Pi_{n+1,v})$ is the local Rankin-Selberg $L$-factor. This implies that
\begin{equation}\label{spherical Rankin}
\lam_{\Pi_v}^\natural(W_v) =1.
\end{equation}
\begin{Prop}
We have the following decomposition when $\Pi$ is unitary, and $\phi=\phi_n\boxtimes \phi_{n+1}\in \Pi$ is factorizable:
\begin{equation}\label{eqn: Rankin product}
\lam_\Pi(\phi)  = L\left(\frac{1}{2},\Pi_n\times\Pi_{n+1}\right)\prod_v\lam_{\Pi_v}^\natural(W_v),
\end{equation}
where $\W^{{\Pi}}(g: \phi) = \prod_v{W}_v(g_v)$.
\end{Prop}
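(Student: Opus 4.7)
The plan is to prove this identity by classical Rankin--Selberg unfolding \`a la Jacquet--Piatetski-Shapiro--Shalika (JPSS). I would regularize by a complex parameter, compute the resulting zeta integral both globally (via unfolding) and locally (as a product of normalized local integrals), and then specialize at the central point.

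First, I would consider the family
\[
\Psi(s,\phi) := \int_{[\GL_n]} \phi_n(h)\,\phi_{n+1}\left(\begin{array}{cc} h & \\ & 1 \end{array}\right) |\det h|^{s}\, dh,
\]
so that $\Psi(0,\phi) = \lam_\Pi(\phi)$. For $\mathrm{Re}(s)\gg 0$, absolute convergence permits substitution of the mirabolic Whittaker expansion of $\phi_{n+1}$,
\[
\phi_{n+1}\left(\begin{array}{cc} h & \\ & 1 \end{array}\right) = \sum_{\gamma \in N_n(F)\backslash \GL_n(F)} \W^{\phi_{n+1}}\left(\begin{array}{cc} \gamma h & \\ & 1 \end{array}\right),
\]
which collapses with the outer $\GL_n(F)$-quotient. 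Decomposing the remaining domain $N_n(F)\backslash \GL_n(\A_F)$ as $[N_n] \times (N_n(\A_F)\backslash \GL_n(\A_F))$, and noting that $\W^{\phi_{n+1}}$ transforms by the additive character $\psi_0$ under $N_n(\A_F) \subset N_{n+1}(\A_F)$, the inner integration over $[N_n]$ extracts the Whittaker--Fourier coefficient $\W^{\phi_n}(h)$. This yields
\[
\Psi(s,\phi) = \int_{N_n(\A_F)\backslash \GL_n(\A_F)} \W^{\phi_n}(h)\,\W^{\phi_{n+1}}\left(\begin{array}{cc} h & \\ & 1 \end{array}\right) |\det h|^{s}\, dh.
\]

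By factorizability of both global Whittaker functions together with our product Haar measures, the integral becomes an Euler product $\prod_v \lam_{\Pi_v}(s,W_v)$ for $\mathrm{Re}(s)\gg 0$. Normalizing each local factor by $L(s + \tfrac{1}{2}, \Pi_{n,v} \times \Pi_{n+1,v})$ and invoking the local JPSS theory---each $\lam_{\Pi_v}^\natural(s, W_v)$ is entire in $s$, and for the finitely many unramified $v \notin S$ equals $1$ by the computation extending (\ref{spherical Rankin}) to arbitrary $s$---gives
\[
\Psi(s,\phi) = L\!\left(s + \tfrac{1}{2}, \Pi_n\times \Pi_{n+1}\right) \prod_v \lam_{\Pi_v}^\natural(s,W_v).
\]
Both sides admit meromorphic continuation to $s \in \cc$: the left because cusp forms are of rapid decay, so $\Psi(s,\phi)$ is actually entire; the right from meromorphy of the global Rankin--Selberg $L$-function together with the finiteness of the product of normalized local factors. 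Since $\Pi$ is unitary cuspidal, the central $L$-value is finite, and specializing at $s = 0$ yields the claimed factorization.

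The main technical content lies in the folding step: one must justify that integrating $\phi_n(nh)$ over $[N_n]$ against the character produced by the Whittaker transformation of $\W^{\phi_{n+1}}$ correctly reproduces $\W^{\phi_n}(h)$. This relies on cuspidality of $\phi_n$ annihilating all Fourier coefficients along intermediate unipotent subgroups except the generic one, along with Fubini in the region of absolute convergence. The remainder is bookkeeping via the local JPSS theory and the standard unramified calculation.
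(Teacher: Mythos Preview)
Your proposal is correct and is precisely the classical Jacquet--Piatetski-Shapiro--Shalika unfolding; the paper does not give its own proof but simply cites \cite{JPSS}, so you have reproduced the argument underlying that citation.

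One small misattribution in your final paragraph: the step of integrating $\phi_n(nh)$ over $[N_n]$ against the character coming from $\W^{\phi_{n+1}}$ is nothing more than the definition of the Whittaker coefficient $\W^{\phi_n}(h)$ (with respect to $\psi_0^{-1}$, matching the paper's convention that $\Pi_n$ sits in $\calw^{\psi^{-1}}$); no cuspidality of $\phi_n$ is needed there. The genuine use of cuspidality is earlier, for $\phi_{n+1}$, to obtain the mirabolic Fourier expansion $\phi_{n+1}(g) = \sum_{\gamma \in N_n(F)\backslash \GL_n(F)} \W^{\phi_{n+1}}\!\left(\left(\begin{smallmatrix}\gamma & \\ & 1\end{smallmatrix}\right)g\right)$, which collapses the degenerate terms. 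With that correction the argument is the standard one.
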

We will also need the twisted version $\lam^\eta$ of this period, where $\eta:F^\times\backslash \A_F^\times\to \cc^\times$ is a quadratic character. This distribution is given, both globally and locally, by setting
\[
\lam_{\Pi_n\boxtimes\Pi_{n+1}}^\eta = \lam_{\Pi_n\cdot\eta\boxtimes\Pi_{n+1}},
\]
and similarly for the normalized distribution.
\begin{Cor}
We have the following decomposition when $\Pi$ is unitary, and $\phi=\phi_n\boxtimes \phi_{n+1}\in \Pi$ is factorizable:
\begin{equation}\label{eqn: Rankin product twist}
\lam^\eta_{\Pi}(\phi)  = L\left(\frac{1}{2},\Pi_n\times\Pi_{n+1}\cdot\eta\right)\prod_v\lam_{\Pi_v}^{\eta_v,\natural}(W_v).
\end{equation}
\end{Cor}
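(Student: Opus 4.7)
The plan is to deduce the corollary directly from the preceding Proposition by applying it to the twisted representation $\Pi' := (\Pi_n \cdot \eta) \boxtimes \Pi_{n+1}$, which by the very definition of $\lam^\eta$ (both globally and locally, together with the normalized versions) satisfies $\lam_{\Pi'} = \lam^\eta_\Pi$ and $\lam^\natural_{\Pi'_v} = \lam^{\eta_v,\natural}_{\Pi_v}$.

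First I would verify that $\Pi'$ satisfies the hypotheses of the Proposition. Since $\eta$ is a unitary idele class character of $F^\times\backslash \A_F^\times$ and $\Pi_n$ is a unitary generic cuspidal automorphic representation of $\GL_n(\A_F)$, the twist $\Pi_n \cdot \eta$ retains all three properties: unitary, generic, and cuspidal. Hence $\Pi'$ is a unitary generic cuspidal automorphic representation of $G(\A_F)$ to which the previous Proposition applies. Given a factorization $\phi = \phi_n \boxtimes \phi_{n+1} \in \Pi$ with factorizable Whittaker function $W^\Pi_v$, the same underlying vector (interpreted in the space of $\Pi'$) is factorizable, and its local Whittaker function differs from $W^\Pi_v$ only by the character twist built into the definition of $\lam^{\eta_v,\natural}$.

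Applying the Proposition to $\Pi'$ thus gives
\[
\lam_{\Pi'}(\phi) = L\!\left(\tfrac{1}{2},(\Pi_n\cdot\eta)\times \Pi_{n+1}\right)\prod_v \lam^\natural_{\Pi'_v}(W_v).
\]
The final step is to invoke the elementary symmetry of Rankin--Selberg $L$-functions under character twists, namely $L(s,(\Pi_n\cdot\eta)\times \Pi_{n+1}) = L(s,\Pi_n\times (\Pi_{n+1}\cdot\eta))$, which holds identically at each local place and hence globally: both sides encode the $L$-function of the same tensor product with the $\eta$-twist absorbed into either factor. Combined with the definitional identities $\lam_{\Pi'} = \lam^\eta_\Pi$ and $\lam^\natural_{\Pi'_v} = \lam^{\eta_v,\natural}_{\Pi_v}$, this yields the stated factorization.

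There is no genuine obstacle here; the corollary is a formal consequence of the Proposition. The only point warranting a brief check is the symmetry of the Rankin--Selberg local and global $L$-factors under twisting by a Hecke character, which is classical (see, e.g., \cite{JPSS}), and the compatibility of the normalized local periods $\lam^\natural_{\Pi_v}$ with such twists, which follows from the way $L(s+\tfrac{1}{2},\Pi_v)$ appears in the denominator.
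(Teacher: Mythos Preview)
Your proposal is correct and matches the paper's approach exactly: the paper states the corollary without proof, treating it as an immediate consequence of applying the preceding Proposition to the twisted representation $\Pi_n\cdot\eta\boxtimes\Pi_{n+1}$, which is precisely how the twisted period $\lam^\eta$ is defined just before the statement. Your additional remark on the symmetry $L(s,(\Pi_n\cdot\eta)\times\Pi_{n+1}) = L(s,\Pi_n\times(\Pi_{n+1}\cdot\eta))$ makes explicit the one identification the paper leaves tacit.
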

\subsection{Unitary periods}\label{Section: unitary periods}
 Let $\Pi$ be a cuspidal automorphic representation of $\GL_n(\A_E)$. For any Hermitian form $x\in X_n$, we have the associated unitary group $\U(V_x)\subset \Res_{E/F}\GL_n$. For $\phi\in \Pi$, we define the \emph{unitary period} $\calp_x(\phi)$ by the (convergent) integral
\begin{equation*}
\calp_x(\phi)= \int_{[U(V_x)]}\phi(h)dh.
\end{equation*}
Then $\Pi$ is said to be \emph{distinguished by $U(V_x)$} if there exists a vector $\phi$ such that $\calp_x(\phi)\neq0.$ If $V_x=V_n$ is split, then a theorem of Jacquet \cite{JacquetQuasisplit} states that if there is a cuspidal automorphic representation $\pi$ of $\GL_n(\A_F)$ such that $\Pi=BC(\pi)$, then $\Pi$ is {distinguished by $U(V_n)$}. In general, Corollary 10.3 of \cite{FLO} gives a vast generalization to other forms.

To define the local unitary periods, we need a bit more terminology. Fix a place $v$ of $F$ and consider the quadratic \'{e}tale extension $E_v/F_v$. Let $\Pi_v\in \mathrm{Temp}(\GL_n(E_v))$ and denote by $\cale(X_n,\calw^{\psi_v'}(\Pi_v)^\ast)$ the set of all maps 
\[
\al:\Xv\times \calw^{\psi_v'}(\Pi_v)\to \cc,
\] 
which are continuous and $\GL_n(E_v)$-invariant with respect to the diagonal action. Note that  we have an isomorphism
\begin{align*}
\cale(X_n,\calw^{\psi_v'}(\Pi_v)^\ast)&\iso \bigoplus_{x\in\calv_n}\Hom_{U(V_x)}(\calw^{\psi_v'}(\Pi_v),\cc)\\
\al&\mapsto (\al(x,\cdot))_{x\in\calv_n}.
\end{align*}
Now for any such $\al$, we consider the \emph{twisted Bessel character} $J^\al_{\Pi_v}:C_c^\infty(\Xv)\to \cc$ given by
\[
J_{\Pi_v}^\al(f') = \la f'\cdot \al,\lam_1^\vee\ra,
\]
where $f'\cdot \al$ is the smooth functional
\[
W\mapsto \int_{\Xv}f'(x)\al(x,W)dx,
\]
which we identify with an element of $\calw^{{\psi_v'}^{-1}}(\Pi^\vee_v)$ via the pairing $[\cdot,\cdot]_{\Pi_v}$, and $\lam_1^\vee$ denotes the functional $\hat{W}\mapsto \hat{W}(1).$ Similarly, for $\pi_v\in\mathrm{Temp}(\GL_n(F_v))$, we define the \emph{Bessel character} $I_{\pi_v}: C_c^\infty(\GL_n(F_v))\to \cc$ by
\[
I_{\pi_v}(f) = \la f\cdot \lam_{w_n},\lam_1^\vee\ra,
\]
where $f\cdot \lam_{w_n}$ denotes the smooth functional
\[
W\mapsto \int_{\GL_n(F_v)}f(g)W(w_ng)dg,
\]
which we again identify with an element of $\calw^{\psi_v^{-1}}({\pi^\vee_v})$ via the pairing $[\cdot,\cdot]_{\pi_v}$, and $\lam_1^\vee$ denotes the functional $\hat{W}\mapsto \hat{W}(1).$ 

One of the main results of \cite{FLO} is the following theorem.
\begin{Thm}
For every $\pi_v\in \mathrm{Temp}(\GL_n(F_v))$, there exists a unique \[
\al^{\pi_v}\in \cale(X_n,\calw^{\psi'_v}(BC(\pi_v))^\ast)
\]such that the identity
\begin{equation*}
J_{BC(\pi_v)}^{\al^{\pi_v}}(f') =I_{\pi_v}(f)
\end{equation*}
holds for all pairs of test functions $(f,f')$ which are transfers in the sense defined in \cite[Section 3]{FLO}.
\end{Thm}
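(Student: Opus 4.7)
My plan is to deduce this theorem from the Jacquet--Ye relative trace formula comparison by a local-global argument, with uniqueness handled separately via a density argument.

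\textbf{Globalization and trace formula comparison.} First, I would globalize $\pi_v$ to a cuspidal automorphic representation $\pi$ of $\GL_n(\A_F)$ whose local component at $v$ is $\pi_v$ and which is supercuspidal at some auxiliary finite place $w$ split in $E/F$. Applying base change \cite{ArthurClozel}, we obtain $\Pi = BC(\pi)$, a cuspidal conjugate self-dual representation of $\GL_n(\A_E)$. The Jacquet--Ye relative trace formula compares two global distributions: on one side, a Kuznetsov-type sum of Bessel distributions $I_\pi$ attached to cuspidal $\pi$ on $\GL_n(\A_F)$; on the other, the relative trace formula for unitary periods $\calp_x$ on $\GL_n(\A_E)$ summed over Hermitian forms $x$. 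The geometric comparison has been established by Jacquet and refined by later work; this yields an identity of spectral sides when test functions $(f, f')$ are (global) Jacquet--Ye transfers.

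\textbf{Factorization and isolation at $v$.} The global Bessel distribution $I_\pi$ factors as an Euler product of local Bessel characters $I_{\pi_u}$, after appropriate normalization via the Rankin-Selberg $L$-value and the Petersson inner product factorization (\ref{eqn: inner product}). Dually, by Theorem 0.2.1 of \cite{FLO} and the global factorization of unitary periods proved there, the unitary contribution factors as an Euler product of local functionals $J^{\al_u}_{\Pi_u}$ for suitable local elements $\al_u \in \cale(X_n(F_u),\calw^{\psi'_u}(\Pi_u)^*)$ at every place $u$. By choosing globalizations with $\pi_u$ unramified for $u$ outside a finite set containing $v$, and using the fundamental lemma at those places (where the matching of unit elements provides normalized equalities at unramified places), the global identity reduces to a local identity at $v$, yielding an element $\al^{\pi_v}$ satisfying $J^{\al^{\pi_v}}_{BC(\pi_v)}(f') = I_{\pi_v}(f)$ for any matching pair arising from a globalization. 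A standard argument shows this local identity then extends to arbitrary matching pairs $(f,f')$ by varying the globalization and using the surjectivity of Jacquet--Ye transfer.

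\textbf{Uniqueness.} To show $\al^{\pi_v}$ is unique, it suffices to prove that the map $f' \mapsto J^\al_{\Pi_v}(f')$ determines $\al$. Using the geometry of the orbit decomposition of $X_n(F_v)$ into $\GL_n(E_v)$-orbits $\sqcup_{x\in\calv_n} X_n^x(F_v)$, the integral in the definition of $J^\al_{\Pi_v}(f')$ decomposes accordingly, so it suffices to show that for each $x$, varying $f'$ supported on $X_n^x(F_v)$ gives a family of functionals on $\calw^{\psi'_v}(\Pi_v)$ whose span is dense in $\Hom_{U(V_x)}(\Pi_v,\cc)^*$; this follows from the multiplicity-one statement of Theorem 0.2.1 of \cite{FLO} together with smoothness properties of the matrix coefficients.

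\textbf{Main obstacle.} The hardest ingredient is the spectral factorization of the unitary-periods side of the relative trace formula, particularly the fact that one obtains a clean Euler product matching the Bessel characters on the nose (rather than up to a constant of proportionality depending on the globalization). This requires the explicit local-global identity of \cite{FLO} for unitary periods, whose proof is itself nontrivial and relies on a careful analysis of the local spherical characters combined with non-vanishing of the relevant central $L$-values via the Rankin--Selberg factorization (\ref{eqn: Rankin product}). Without this input, one would at best obtain existence of $\al^{\pi_v}$ up to a scalar depending on the globalization, rather than a canonical local object.
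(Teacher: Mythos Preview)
The paper does not prove this statement: it is quoted verbatim as ``one of the main results of \cite{FLO}'' and then used as a black box to obtain the factorization (\ref{eqn: period product}) of global unitary periods. There is therefore no in-paper proof to compare your proposal against.

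That said, your proposal has a genuine circularity. You plan to globalize and invoke the Euler-product factorization of the unitary-periods side of the Jacquet--Ye comparison, citing ``the global factorization of unitary periods proved there'' in \cite{FLO}. But that global factorization is precisely equation (\ref{eqn: period product}) of the present paper (Theorem 10.1 of \cite{FLO}), and in \cite{FLO} it is deduced \emph{from} the local theorem you are trying to prove: the local functionals $\al^{\pi_v}$ are constructed first, and only then is the global period $\calp_x$ shown to factor through them. So you are assuming the existence of the $\al^{\pi_v}$ in order to construct the $\al^{\pi_v}$. Your ``Main obstacle'' paragraph essentially concedes this when it says the factorization ``requires the explicit local-global identity of \cite{FLO} for unitary periods.''

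The actual argument in \cite{FLO} is purely local: one studies the local Bessel and twisted Bessel distributions directly, proves that Jacquet--Ye transfer intertwines them (via an analysis of local orbital integrals and a local functional equation), and extracts $\al^{\pi_v}$ from that local identity. The global period factorization (\ref{eqn: period product}) is a downstream consequence, not an input. Your uniqueness sketch, by contrast, is on the right track and does legitimately use the multiplicity-one input from \cite{FLO}.
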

We refer to the functionals $\al^{\pi_v}$ as \textbf{FLO functionals}. When $E_v\cong F_v\times F_v$ is split, so that $BC(\pi_v) \cong \pi_v\otimes \pi_v$, these functionals are very simple \cite[Corollary 7.2]{FLO}:
\begin{equation}\label{eqn: split FLO identity}
\al_{(h,{}^th)}^{\pi_v}(W'\otimes W'') = \left[\calw(h,\pi_v)W',\calw(w_n,\hat{\pi}_v)\hat{W}''\right]_{\pi_v}
\end{equation}
for any $h\in \GL_n(F_v)$ and $W',W''\in \calw^{\psi_v}(\pi_v)$. 
\begin{Lem}\cite[Lemma 3.9]{FLO}
Assume that $F_v$ is non-archimedean of odd residue characteristic. Further assume that $E_v/F_v$ is an unramified extension and that $\psi_v'$ has conductor $\calo_{F_v}$. Let $\Pi_v=BC(\pi_v)$ be unramified and let $W_0\in \calw^{\psi_v'}(\Pi_v)$ denote the normalized spherical vector. Then for any $x\in X_n(\calo_{F_v})= \GL_n(\calo_{E_v})\ast w_n$
\[
\al_{x}^{\pi_v}(W_0) = L(1,\pi_v\times {\pi}_v^\vee\cdot \eta_v).
\]
\end{Lem}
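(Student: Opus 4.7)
The plan is to invoke the characterizing identity of the FLO functional, $J^{\al^{\pi_v}}_{BC(\pi_v)}(f') = I_{\pi_v}(f)$, applied to the unit elements $f=\bfun_{K_n}$ and $f'=\bfun_{X_n(\calo_{F_v})}$. These are smooth transfers of one another by the Jacquet--Ye fundamental lemma in the unramified setting (this is the only place the residue-characteristic hypothesis enters). The measure conventions of Section~\ref{measures} give both $K_n$ and $X_n(\calo_{F_v})$ volume one, so no extraneous volume factors appear.

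For the twisted side, $\GL_n(\calo_{E_v})$ acts transitively on $X_n(\calo_{F_v})$ under the twisted action, with stabilizer $U(\Lam_n)$ at $w_n$. Combining the $\GL_n(E_v)$-equivariance of $\al^{\pi_v}$ with the sphericity of $W_0$ gives $\al^{\pi_v}(x,W_0) = \al^{\pi_v}(w_n,W_0)$ for every $x\in X_n(\calo_{F_v})$, establishing along the way the independence of $x$ asserted in the statement. It follows that the smooth functional $W\mapsto \int_{X_n(\calo_{F_v})}\al^{\pi_v}(x,W)\,dx$ is right $\GL_n(\calo_{E_v})$-invariant, so under the identification via $[\cdot,\cdot]_{\Pi_v}$ it corresponds to a scalar multiple $c\cdot \hat{W}_0$ of the normalized spherical vector in $\calw^{\psi_v'^{-1}}(\Pi_v^\vee)$. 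Evaluating at $W_0$ and using (\ref{eqn: normalized inner product}) pins down $c = \al^{\pi_v}(w_n,W_0)/L(1,\Pi_v\times\Pi_v^\vee)$, so that
\[
J^{\al^{\pi_v}}_{\Pi_v}(\bfun_{X_n(\calo_{F_v})}) \;=\; c\hat{W}_0(1) \;=\; \frac{\al^{\pi_v}(w_n,W_0)}{L(1,\Pi_v\times\Pi_v^\vee)}.
\]

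An entirely parallel spherical analysis on the linear side shows $I_{\pi_v}(\bfun_{K_n}) = W_0(w_n)/L(1,\pi_v\times\pi_v^\vee)$, and since $w_n\in K_n$ the right $K_n$-invariance of the normalized spherical Whittaker forces $W_0(w_n) = W_0(e) = 1$, giving $I_{\pi_v}(\bfun_{K_n}) = 1/L(1,\pi_v\times\pi_v^\vee)$. Equating the two sides and invoking the standard base-change factorization
\[
L(s, BC(\pi_v)\times BC(\pi_v)^\vee) \;=\; L(s,\pi_v\times\pi_v^\vee)\,L(s,\pi_v\times\pi_v^\vee\cdot\eta_v)
\]
at $s=1$ produces the desired identity. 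The main obstacle is bookkeeping rather than any deep input: one must align the measure normalizations, the pairing formula (\ref{eqn: normalized inner product}), and the pole-matching across base change so that no stray local factor survives; all substantive content is carried by the Jacquet--Ye fundamental lemma together with the $\GL_n(\calo_{E_v})$-transitivity on $X_n(\calo_{F_v})$.
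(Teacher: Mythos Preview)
The paper does not supply its own proof of this lemma; it simply cites \cite[Lemma 3.9]{FLO}. Your argument is essentially the one given there: feed the unit test functions $\bfun_{K_n}$ and $\bfun_{X_n(\calo_{F_v})}$ into the defining Bessel identity, justify their matching via the Jacquet--Ye fundamental lemma, and extract the value from the spherical pairing formula (\ref{eqn: normalized inner product}) together with the base-change factorization of $L(s,\Pi_v\times\Pi_v^\vee)$. The bookkeeping is handled correctly.
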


We define for $W\in \calw^{\psi_v'}(\Pi_v)$
\begin{equation}\label{spherical FLO}
\al_x^{\pi_v,\natural}(W) = \frac{\al_x^{\pi_v}(W)}{ L(1,\pi_v\times {\pi}_v^\vee\cdot \eta_v)}.
\end{equation}
Returning to our extension of number fields $E/F$, we have the following factorization of unitary periods of cusp forms.
\begin{Prop}\cite[Theorem 10.2]{FLO}
Let $\pi$ be an irreducible cuspidal automorphic representation of $\GL_n(\A_F)$, and let $\Pi = BC(\pi)$. Then for any $x\in X_n$, we have
\begin{equation}\label{eqn: period product}
\calp_x(\phi) = 2L(1,\pi\times {\pi}^\vee\cdot \eta) \prod_v\al_x^{\pi_v,\natural}(W_v),
\end{equation}
where $\W^{{\Pi}}(g: \phi) = \prod_v{W}_v(g_v)$.
\end{Prop}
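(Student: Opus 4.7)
The plan is to recognize both sides of the claimed identity as $U(V_x)(\A_F)$-invariant linear functionals on $\Pi=BC(\pi)$ and then pin down the proportionality constant by a global trace-formula comparison. Since $\Pi$ is a base change from $\GL_n(\A_F)$, the distinction theorem \cite[Theorem 0.2.1]{FLO} guarantees that $\Pi$ is distinguished by $U(V_x)(\A_F)$, and local multiplicity one ensures that each $\Hom_{U(V_{x,v})}(\Pi_v,\cc)$ is one-dimensional. Because each normalized FLO functional $\al_x^{\pi_v,\natural}$ is nonzero by construction, the factorizable functional $\phi\mapsto\prod_v\al_x^{\pi_v,\natural}(W^\phi_v)$ and the period $\calp_x$ must be proportional, so there exists a constant $c(\pi,x)\in\cc$ with
\[
\calp_x(\phi) = c(\pi,x)\prod_v\al_x^{\pi_v,\natural}(W^\phi_v)
\]
for every factorizable $\phi$. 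The proposition reduces to showing $c(\pi,x)=2L(1,\pi\times\pi^\vee\cdot\eta)$.

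To compute $c(\pi,x)$, I would pair both sides against a suitable Whittaker functional and apply the Jacquet-Ye relative trace formula, which compares the Kuznetsov-type distribution for $(\GL_n(\A_F),\psi)$ with the twisted Bessel distribution for the pair $(\Res_{E/F}\GL_n,U(V_x))$. For matching test functions, spectral isolation (using linear independence of local characters at auxiliary places) produces an identity of the shape $I_\pi(f)=J_\Pi(f')$, where $I_\pi$ is assembled from Bessel characters of $\pi$ and $J_\Pi$ from pairings of $\calp_x$ against Whittaker coefficients. The Petersson decomposition (\ref{eqn: inner product}) factors $I_\pi(f)$ globally as $\Res_{s=1}L(s,\pi\times\pi^\vee)$ times a product of local Bessel characters $I_{\pi_v}(f_v)$, while inserting the ansatz for $\calp_x$ into $J_\Pi(f')$ and invoking the Rankin-Selberg identity (\ref{eqn: Rankin product twist}) factors it as $L(1,\pi\times\pi^\vee\cdot\eta)$ times a product of local twisted Bessel characters $J_{\Pi_v}^{\al^{\pi_v}}(f'_v)$.

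Matching the two factorizations then reduces to the defining local identity $J_{BC(\pi_v)}^{\al^{\pi_v}}(f'_v)=I_{\pi_v}(f_v)$ of the FLO functional, together with the unramified normalizations (\ref{spherical Rankin}) and (\ref{spherical FLO}). This forces
\[
c(\pi,x)\cdot L(1,\pi\times\pi^\vee\cdot\eta) \;=\; \Res_{s=1}L(s,\pi\times\pi^\vee),
\]
and the numerical factor $2$ emerges as the ratio of these global $L$-values once the Tamagawa measure of $U(1)(\A_F)$ and the residue of $L(s,\eta)$ at $s=1$ are tracked under the measure conventions of Section \ref{measures}.

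The main obstacle is the global comparison itself. The geometric sides of the two trace formulas are matched by the Jacquet-Ye transfer and its fundamental lemma, already available, but the spectral isolation of a single base-change packet demands careful treatment of non-cuspidal contributions, which do not vanish automatically and must be regularized; equally delicate is the bookkeeping that produces the precise factor of $2$ (rather than merely the claim that $c(\pi,x)$ is a nonzero multiple of the central $L$-value). Once these are handled, uniqueness of invariant functionals gives the factorization for every factorizable $\phi$, and hence for all $\phi\in\Pi$ by density, completing the argument.
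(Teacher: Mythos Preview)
The paper does not give a proof of this proposition: it is quoted verbatim from \cite[Theorem 10.1]{FLO}, so there is no ``paper's own proof'' to compare against. Your sketch does follow the broad architecture of the argument in \cite{FLO}---local multiplicity one for unitary periods gives proportionality, and the Jacquet--Ye trace-formula comparison together with the defining local identity $J_{BC(\pi_v)}^{\al^{\pi_v}}=I_{\pi_v}$ pins down the constant---so at that level of resolution you have the right shape.

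That said, the bookkeeping in your final paragraphs is garbled. The displayed relation
\[
c(\pi,x)\cdot L(1,\pi\times\pi^\vee\cdot\eta)=\Res_{s=1}L(s,\pi\times\pi^\vee)
\]
would yield $c(\pi,x)=\Res_{s=1}L(s,\pi\times\pi^\vee)/L(1,\pi\times\pi^\vee\cdot\eta)$, not $2L(1,\pi\times\pi^\vee\cdot\eta)$; these quantities are not equal. The reference to the twisted Rankin--Selberg period (\ref{eqn: Rankin product twist}) is also misplaced---that identity concerns the $H$-period $\lam^\eta$ on $\GL_n\times\GL_{n+1}$, not the unitary period $\calp_x$, and plays no role in the factorization of $\calp_x$. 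In the actual FLO argument the constant emerges from comparing the global Bessel distribution of $\pi$ (whose Petersson normalization contributes $\Res_{s=1}L(s,\pi\times\pi^\vee)$) with the global distribution built from $\calp_x$ and a Whittaker coefficient on $\Pi$ (whose normalization contributes $\Res_{s=1}L(s,\Pi\times\Pi^\vee)$); the ratio of these residues, via $L(s,\Pi\times\Pi^\vee)=L(s,\pi\times\pi^\vee)L(s,\pi\times\pi^\vee\cdot\eta)$, is what produces $L(1,\pi\times\pi^\vee\cdot\eta)$, and the factor $2$ comes from summing over the two lifts $\pi,\pi\cdot\eta$ in the base-change fiber rather than from any Tamagawa computation. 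Your outline is salvageable, but the arithmetic needs to be redone.
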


\subsection{Global relative characters}
Assume that $\pi=\pi_n\boxtimes \pi_{n+1}$ is a cuspidal automorphic representation of $G(\A_F)$.
\begin{Def}
We define the global \emph{relative character} $I_{\pi}$ as the following distribution: for $f\in C_c^\infty(G(\A_F))$, we set
\begin{equation}\label{eqn: global bessel}
I_{\pi}(f) =\sum_\phi\frac{\lam_\pi(\pi(f)\phi)\lam^\eta_{\pi^\vee}(\hat{\phi})}{\la\phi,\hat{\phi}\ra_{Pet}},
\end{equation}
where the sum runs over an orthogonal basis for $\pi$ and where $\hat{\phi}(g) = \phi(g^{\theta})$ is a vector in the contragredient representation $\pi^\vee$.
\end{Def}

Now let $\Pi=\Pi_n\boxtimes \Pi_{n+1}$ be an irreducible cuspidal automorphic representation of $G'(\A_F)$. For any $(x,y)\in \calv_n\times \calv_{n+1}$, we set ${\calp}_{x,y}= \calp_x\otimes \calp_y$ to be the product of unitary periods.
\begin{Def}
We define the global \emph{twisted relative character} $J_{\Pi}^{x,y}$ as the following distribution: for $f'\in C_c^\infty(G'(\A_F))$, we set
\begin{equation}\label{eqn: twisted global bessel}
J_{\Pi}^{x,y}(f') =\sum_\phi\frac{\lam_\Pi(\Pi(f')\phi)\calp_{x,y}(\hat{\phi})}{\la\phi,\hat{\phi}\ra_{Pet}},
\end{equation}
where the sum runs over an orthogonal basis for $\Pi$ and where $\hat{\phi}(g) = \phi(g^{\theta})$.
\end{Def}

Note that $J_{\Pi}^{x,y}\equiv 0$ unless $\Pi=\Pi_n\boxtimes \Pi_{n+1}$ is $H'_{x,y}$-distinguished. When this is the case, a theorem of Jacquet \cite{JacquetKloosterman2} implies that there must exist a cuspidal automorphic representation $\pi=\pi_n\boxtimes \pi_{n+1}$ of $G(\A_F) = \GL_n(\A_F)\times \GL_{n+1}(\A_F)$ such that
\[
\Pi_i=BC(\pi_i), \qquad i=n,n+1.
\]
Since $\Pi_i$ is cuspidal, we know $\pi_i\not\cong\pi_i\cdot \eta$. Therefore, the Rankin-Selberg $L$-function $L(s,\pi\times \pi^\vee\cdot \eta)$ is holomorphic at $s=1$. For $i=n,n+1$, we may use the relation
\begin{equation}\label{L value relation}
L(s,\Pi_i\times \Pi_i^\vee) = L(s,\pi_i\times \pi_i^\vee)L(s,\pi_i\times \pi_i^\vee\cdot \eta),
\end{equation}
we see that
\begin{equation}\label{eqn: special value}
\frac{\Res_{s=1}(L(s,\Pi_i\times \Pi_i^\vee))}{L(1,\pi_i\times \pi_i^\vee\cdot \eta)}= \Res_{s=1}(L(s,\pi_i\times \pi_i^\vee))\neq 0.
\end{equation}

\subsection{Local relative characters and factorization}
Denote by $\calw(\Pi_v)$ the Whittaker model $\calw^{{\psi'}_v^{-1}}(\Pi_{n,v})\otimes \calw^{\psi'_v}(\Pi_{n+1,v})$. Also denote by ${\al}^{\pi_v}_{x,y}= \al_x^{\pi_{n,v},\natural}\otimes \al_y^{\pi_{n+1,v},\natural}$ the product of FLO functionals.
\begin{Def}\label{Def: local functional}
\begin{enumerate}
\item We define the \emph{normalized local twisted relative character} $J^{x,y,\natural}_{\Pi_v}$ associated to a unitary generic representation $\Pi_v$ of $G'(F_v)$ and $(x,y)\in \calv_n\times \calv_{n+1}$ as follows: for $f'_v\in C_c^\infty(G'(F_v))$
\begin{equation*}
J^{x,y,\natural}_{\Pi_v}(f'_v) = \sum_{W_v}\frac{\lam_{\Pi_v}^\natural(\Pi_v(f'_v)W_v){\al}^{\hat{\pi}_v}_{x,y}(\hat{W}_v)}{[W,\hat W]^\natural_{\Pi_v}},
\end{equation*}
where the sum ranges over an orthogonal basis for $\calw(\Pi_v)$. We denote by $J_{\Pi_v}^{x,y}$ the distribution defined using the \emph{unnormalized} local periods.
\item We similarly define the \emph{normalized local relative character} $I^\natural_\pi$ for any unitary generic representation $\pi_v$ of $G(F_v)$:  for $f_v\in C_c^\infty(G(F_v))$
\begin{equation*}
I^\natural_{\pi_v}(f_v) = \sum_{W_v}\frac{\lam_{\pi_v}^\natural(\pi_v(f_v)W_v)\lam_{\hat{\pi}_v}^{\eta_v,\natural}(\hat{W}_v)}{[W,\hat W]^\natural_{\pi_v}},
\end{equation*}
where the sum ranges over an orthogonal basis for $\calw(\pi_v)$. We denote by $I_{\pi_v}$ the distribution defined using the \emph{unnormalized} local periods.
\end{enumerate}
\end{Def}

We now state the factorization results.

\begin{Prop}\label{Prop: twisted factorization}
Consider an irreducible cuspidal automorphic representation $\Pi=\otimes_v'\Pi_v$ and fix a pair $(x,y)$. If $J_{\Pi}^{x,y}$ is not identically zero, let $\pi=\pi_n\boxtimes \pi_{n+1}$ be an irreducible cuspidal automorphic representation of $G(\A_F)$ such that $\Pi=BC(\pi)$.

We have the product decomposition: for any factorizable $f'=\prod_vf_v'\in C_c^\infty(G'(\A_F))$,
\begin{equation}\label{eqn: twisted Bessel factors}
J_{\Pi}^{x,y}(f') = 4\vol(E^\times\backslash \A_E^1)^2\frac{L\left(\frac{1}{2},\Pi_n\times \Pi_{n+1}\right)}{\Res_{s=1}(L(s,\pi\times \pi^\vee))}\prod_vJ^{x,y,\natural}_{\Pi_v}(f'_v). 
\end{equation}
\end{Prop}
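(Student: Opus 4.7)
The plan is to derive the factorization directly from the definition of $J^{x,y}_\Pi$ by substituting the three global product formulas already recalled in this section, namely the Rankin--Selberg period factorization (\ref{eqn: Rankin product}), the unitary period factorization (\ref{eqn: period product}), and the Petersson inner product factorization (\ref{eqn: inner product}). Since the sum in (\ref{eqn: twisted global bessel}) is independent of the chosen orthonormal basis, I would pick a factorizable orthogonal basis $\{\phi\}$ of $\Pi = \otimes'_v \Pi_v$ so that every Whittaker function $W_\phi = \prod_v W_v$ is a pure tensor, and similarly for $\hat{\phi}$. The key point is that every numerator and denominator appearing in the spherical character splits as a product of a global $L$-value times a product of normalized local factors, so the sum telescopes into a product.

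Concretely, I would write
\[
J^{x,y}_\Pi(f') \;=\; \sum_\phi \frac{\lambda_\Pi(\Pi(f')\phi)\,\calp_x(\hat{\phi}_n)\,\calp_y(\hat{\phi}_{n+1})}{(\phi_n,\hat{\phi}_n)(\phi_{n+1},\hat{\phi}_{n+1})}
\]
and substitute: the numerator contributes $L(\tfrac{1}{2},\Pi_n\times\Pi_{n+1})\cdot 2L(1,\pi_n\times\pi_n^\vee\cdot\eta)\cdot 2L(1,\pi_{n+1}\times\pi_{n+1}^\vee\cdot\eta)$ times the product over $v$ of $\lambda^\natural_{\Pi_v}(\Pi_v(f'_v)W_v)\cdot\al^{\pi_{n,v},\natural}_x(\hat W_{n,v})\cdot\al^{\pi_{n+1,v},\natural}_y(\hat W_{n+1,v})$, while the denominator contributes $\prod_i \mathrm{Res}_{s=1}L(s,\Pi_i\times\Pi_i^\vee)$ times the product of local $[W_v,\hat W_v]^\natural_{\Pi_v}$. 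Summing over the basis then converts the product of local factors into $\prod_v J^{x,y,\natural}_{\Pi_v}(f'_v)$, which is precisely the local spherical character.

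The global scalar out front is then cleaned up using (\ref{L value relation}): for each $i=n,n+1$ one has $\mathrm{Res}_{s=1}L(s,\Pi_i\times\Pi_i^\vee) = \mathrm{Res}_{s=1}L(s,\pi_i\times\pi_i^\vee)\cdot L(1,\pi_i\times\pi_i^\vee\cdot\eta)$, so the two $L(1,\pi_i\times\pi_i^\vee\cdot\eta)$ factors cancel between numerator and denominator, and the product of the two $\mathrm{Res}_{s=1}L(s,\pi_i\times\pi_i^\vee)$ is exactly what the notation $\mathrm{Res}_{s=1}L(s,\pi\times\pi^\vee)$ stands for (with $\pi=\pi_n\boxtimes\pi_{n+1}$). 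The two factors of $2$ from the unitary periods combine to yield the overall constant $4$, matching (\ref{eqn: twisted Bessel factors}).

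The main thing to be careful about is the distinguished-hypothesis bookkeeping: the factorization (\ref{eqn: period product}) requires $\Pi = BC(\pi)$ for a cuspidal $\pi$, and the cancellation of $L(1,\pi_i\times\pi_i^\vee\cdot\eta)$ against the residue relies on its nonvanishing, which is precisely (\ref{eqn: special value}). If $J^{x,y}_\Pi\not\equiv 0$, Jacquet's theorem cited in the text above guarantees that such a base change $\pi$ exists and that each $\pi_i$ is cuspidal (so $\pi_i\not\cong\pi_i\cdot\eta$), giving the needed non-vanishing. The only genuine convergence/regularity point is that some of these normalized local factors are only defined via meromorphic continuation; one must note that at the unramified places (\ref{spherical Rankin}), (\ref{eqn: normalized inner product}), (\ref{spherical FLO}) ensure that all but finitely many local factors equal $1$, so the infinite product is actually finite.
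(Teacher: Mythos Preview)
Your proposal is correct and follows exactly the same approach as the paper: the paper's proof is a single sentence stating that the decomposition follows immediately from (\ref{eqn: inner product}), (\ref{eqn: Rankin product}), (\ref{eqn: period product}), and (\ref{eqn: special value}), which is precisely the substitution and cancellation you spell out in detail.
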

\begin{proof}
The product decomposition follows immediately from (\ref{eqn: inner product}), (\ref{eqn: Rankin product}), (\ref{eqn: period product}), and (\ref{eqn: special value}).
\end{proof}
This implies that the global twisted relative character $J_{\Pi}^{x,y}$ is non-vanishing if and only if
\begin{enumerate}
\item the global $L$-factor $L\left(\frac{1}{2},\Pi_n\times \Pi_{n+1}\right)$ is non-vanishing, and
\item the local FLO functionals ${\al}^{\pi_v}_{x,y}= \al_x^{\pi_{n,v},\natural}\otimes \al_y^{\pi_{n+1,v},\natural}$ are non-vanishing for every place $v$ of $F$.
\end{enumerate}
This is due to the non-vanishing of the local Rankin--Selberg periods for any generic representation $\Pi_v$ \cite{JPSS}.
\begin{Rem}
    In defining $J^{x,y,\natural}_{\Pi_v}$ in Definition \ref{Def: local functional}, we need to fix an auxiliary choice of a representation $\pi_v = \pi_{n,v}\boxtimes\pi_{n+1,v}$. Since we only consider the global cuspidal setting, the global factorizable distributions in which these local factors occur do not depend on this choice (see the proof of Theorem 10.2 in \cite{FLO}).
\end{Rem}
In the linear case, we have a similar factorization.

\begin{Prop}\label{Prop: linear factorization}
Consider an irreducible cuspidal automorphic representation $\pi=\otimes_v'\pi_v$. For any factorizable $f=\prod_vf_v\in C_c^\infty(G(\A_F))$, the relative character $I_{\pi}$ associated to $\pi$ factorizes as
\begin{equation}\label{eqn: Bessel factors}
I_{\pi}(f) = \vol(F^\times\backslash \A_F^1)^2\frac{L\left(\frac{1}{2},\Pi_n\times \Pi_{n+1}\right)}{\Res_{s=1}(L(s,\pi\times \pi^\vee))}\prod_vI^\natural_{\pi_v}(f_v),
\end{equation}
where $\Pi_n\boxtimes \Pi_{n+1}=BC(\pi_n)\boxtimes BC(\pi_{n+1})$.
\end{Prop}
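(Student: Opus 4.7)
The plan is to reduce the statement to an assembly of the three factorizations already established, together with the base change identity for Rankin–Selberg $L$-functions. First I would fix an isomorphism $\pi \cong \otimes'_v \pi_v$ and, for each place $v$, a $[\cdot,\cdot]_{\pi_v}$-orthogonal basis $\calb_v$ of $\calw(\pi_v)$, normalized so that $W_v$ is the spherical vector with $W_v(e)=1$ at almost every place. By multiplicity one for Whittaker models on $\GL_n$, the pure tensors $\phi = \otimes_v W_v$ with $W_v \in \calb_v$ (and $W_v$ spherical for almost all $v$) form an orthogonal basis $\calb$ of $\pi$, so the sum defining $I_\pi(f)$ in (\ref{eqn: global bessel}) may be carried out over $\phi \in \calb$. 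One should read the global definition with an implicit $\eta$ on the second factor, i.e.\ with $\lam^\eta_{\pi^\vee}$ in place of $\lam_{\pi^\vee}$; this is the distribution dual to the $(H \times H, 1 \otimes \eta)$-action defining the linear relative trace formula, and it is what matches the local formula for $I^\natural_{\pi_v}$ containing $\lam^{\eta_v,\natural}_{\hat\pi_v}$.

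Next I would apply the three factorizations term by term. Writing $\W^{\pi(f)\phi} = \prod_v \pi_v(f_v) W_v$, Proposition (\ref{eqn: Rankin product}) applied to $\pi$ on $G$ gives
\[
\lam_\pi(\pi(f)\phi) = L\!\left(\tfrac12,\pi_n\times\pi_{n+1}\right)\prod_v \lam^\natural_{\pi_v}(\pi_v(f_v) W_v),
\]
while (\ref{eqn: Rankin product twist}) applied to $\pi^\vee$ gives
\[
\lam^\eta_{\pi^\vee}(\hat\phi) = L\!\left(\tfrac12,\pi^\vee_n\times\pi^\vee_{n+1}\cdot\eta\right)\prod_v \lam^{\eta_v,\natural}_{\hat\pi_v}(\hat W_v).
\]
Since $\pi_n,\pi_{n+1}$ are unitary and $\eta$ is a quadratic idele class character, $L(\tfrac12, \pi^\vee_n \times \pi^\vee_{n+1}\cdot \eta) = L(\tfrac12, \pi_n \times \pi_{n+1} \cdot \eta)$, and the standard base change identity
\[
L(s,\Pi_n\times\Pi_{n+1}) = L(s,\pi_n\times\pi_{n+1})\,L(s,\pi_n\times\pi_{n+1}\cdot\eta)
\]
then combines the two leading constants into $L(\tfrac12,\Pi_n\times \Pi_{n+1})$. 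Simultaneously (\ref{eqn: inner product}) expresses
\[
(\phi,\hat\phi) = \Res_{s=1}L(s,\pi\times\pi^\vee) \prod_v [W_v,\hat W_v]^\natural_{\pi_v}.
\]

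Substituting these into the summand in (\ref{eqn: global bessel}) shows that each summand equals
\[
\frac{L(\tfrac12,\Pi_n\times \Pi_{n+1})}{\Res_{s=1} L(s,\pi\times\pi^\vee)}\prod_v \frac{\lam^\natural_{\pi_v}(\pi_v(f_v)W_v)\,\lam^{\eta_v,\natural}_{\hat\pi_v}(\hat W_v)}{[W_v,\hat W_v]^\natural_{\pi_v}}.
\]
The global prefactor is independent of $\phi$, while the local ratios depend on $\phi$ only through the choice of $W_v \in \calb_v$ at each place; summing over $\calb = \prod_v \calb_v$ therefore splits as a product of local sums, each of which is exactly $I^\natural_{\pi_v}(f_v)$ by definition. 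Convergence of the Euler product and the finiteness of the sum are guaranteed by (\ref{spherical Rankin}) and (\ref{eqn: normalized inner product}) together with the unramified computation (\ref{spherical FLO}) for the factors of $\lam^{\eta_v,\natural}_{\hat\pi_v}$: at almost every $v$ all three normalized local factors equal $1$. There is no real obstacle here; the only subtlety worth emphasizing is the consistent reading of the global distribution with an $\eta$-twist on one of the periods so that the two Rankin–Selberg factorizations combine into the base change $L$-value as stated.
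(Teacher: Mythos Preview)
Your proposal is correct and follows essentially the same route as the paper: the paper's proof is a one-line citation of (\ref{eqn: inner product}), (\ref{eqn: Rankin product}), and the base-change $L$-function relation (\ref{L value relation}), which is exactly the assembly you carry out in detail. Your observation that the global definition (\ref{eqn: global bessel}) should be read with an $\eta$-twist on the second period is the correct reading (consistent with the $\eta(h_2)$ in the linear relative trace formula and with the local definition of $I^\natural_{\pi_v}$), and your use of the twisted factorization (\ref{eqn: Rankin product twist}) together with the identity $L(s,\Pi_n\times\Pi_{n+1})=L(s,\pi_n\times\pi_{n+1})L(s,\pi_n\times\pi_{n+1}\cdot\eta)$ is precisely what the paper's citation of (\ref{L value relation}) is gesturing at.
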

\begin{proof}
This similarly follows from (\ref{eqn: inner product}), (\ref{eqn: Rankin product}), and (\ref{L value relation}).
\end{proof}

In particular, for any cuspidal automorphic representation $\pi_n\boxtimes \pi_{n+1}$, the global relative character is non-vanishing if and only if the central $L$-value $L\left(\frac{1}{2},\Pi_n\times \Pi_{n+1}\right)$ is non-vanishing.

\section{Comparison of relative trace formulas}\label{Section: comparison}
The main result of this section is Theorem \ref{Prop: main global tool}, establishing the following spectral transfer of global relative characters. Assume that $E/F$ is a quadratic extension of number fields that is split at all archimedean places of $F$. Let $\Pi=\Pi_n\boxtimes\Pi_{n+1}$ be a cuspidal automorphic representation of $G'(\A_F)$ satisfying certain local assumptions and such that $\Pi\cong \Pi^\sig$. Then for any pair of Hermitian forms $(x,y)\in X_n\times X_{n+1}$,  Theorem \ref{Prop: main global tool} implies that there exist ``nice'' matching test functions $f'\in C_c^\infty(G'(\A_F))$ and $f\in C_c^\infty(G(\A_F))$ such that
\begin{equation*}\label{eqn: global character identity}
   \frac{1}{L(1,\eta)^2} J_{\Pi}^{x,y}(f') = \sum_{\pi\in \calb(\Pi)}I_{\pi}(f),
\end{equation*}
where $\calb(\Pi)$ is the (finite) set of cuspidal automorphic representations $\pi$ of $G(\A_F)$ such that $\Pi=BC(\pi)$.

We prove this statement as well as one involving the factorizations \eqref{eqn: twisted Bessel factors} and \eqref{eqn: Bessel factors} via a comparison of (simple forms of) two relative trace formulas, which we now introduce. Much of this section mirrors \cite[Section 2]{ZhangFourier}, as the trace formula comparisons are similar.

\subsection{The linear side} Suppose that $f\in C_c^\infty(G(\A_F))$ and consider the automorphic kernel
\[
K_f(x,y)=\sum_{\ga\in G(F)}f(x^{-1}\ga y).
\]
We consider the distribution on $C_c^\infty(G(\A_F))$
\[
I(f) = \displaystyle \int_{[H]}\int_{[H]}K_f(h_1,h_2)\eta(\det(h_2))dh_2dh_1.
\]
 We also consider versions of this distribution $I_\omega$, where $\omega$ is a central character for $G(\A_F)$ by replacing $K_f$
\[
K_{f,\omega}(x,y)=\int_{[Z_G]}\sum_{\ga\in G(F)}f(x^{-1}\ga zy)\omega(z)dz
\] Note that the intersection of $Z_G$ and $H$ is trivial.

These integrals do not converge in general. Following \cite{ZhangFourier}, we introduce the space of \emph{nice} test functions.
\begin{Def}\label{nice test functions} We say $f=\prod_vf_v\in C_c^\infty(G(\A_F))$ is a \emph{nice test function} with respect to the central character $\omega=\prod_v\omega_v$ (or just $\omega$-nice) if 
\begin{enumerate}
\item For at least one finite place $v_1$ that splits in $E$, the function $f_{v_1}$ is \emph{essentially a matrix coefficient} of a supercuspidal representation with respect to $\omega_{v_1}$: this means that
\[
f_{v_1,\omega_{v_1}}(g) =  \int_{Z_G(F_{v_1})}f_{v_1}(gz)\omega_{v_1}(z)dz
\]
is a matrix coefficient of a supercuspidal representation of $G(F_{v_1})$.
\item For at least one split place $v_2\neq v_1$, the test function $f_{v_2}$ is supported on the $Z$-regular locus of $G(F_{v_2})$. This place is not required to be non-archimedean.
\end{enumerate}
\end{Def}
\begin{Rem}
We have opted for this definition to maintain the similarity between our trace formula comparison and that of \cite{ZhangFourier}. A more flexible definition of nice function is given by \cite{BeuzartIIC} in terms of cuspidal Bernstein components at the place $v_1$. For comparison, we are restricting to test functions at a single non-archimedean place that lie in the summand of $C_c^\infty(G(F))$ corresponding to a finite union of supercuspidal Bernstein components. On the other hand, we restrict to compactly supported test functions at archimedean places while Beuzart-Plessis allows Schwartz functions.
\end{Rem}
\begin{Lem}\label{Lem: support nice linear}
Let $\omega$ be a unitary character of $Z_G(F)\backslash Z_G(\A_F)$. Suppose that $f=\prod_vf_v$ is nice with respect to $\omega$. Then
\begin{enumerate}
\item As a function on $H(\A_F)\times H(\A_F)$, $K_f(x,y)$ is compactly supported modulo $H(F)\times H(F)$. In particular, $I(f)$ converges absolutely.
\item As a function on $H(\A_F)\times H(\A_F)$, $K_{f,\omega}(x,y)$ is compactly supported modulo $H(F)\times H(F)Z_G(\A_F)$. In particular, $I_\omega(f)$ converges absolutely.
\end{enumerate}
\end{Lem}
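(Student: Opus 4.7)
The proof should follow the standard template for simple relative trace formulas, adapted from the analogous argument in the Jacquet--Rallis setting (cf.\ \cite[Theorem 2.2]{ZhangFourier}). The two niceness conditions play complementary roles: the support condition at the split place $v_2$ controls the geometric decomposition, while the supercuspidal matrix coefficient condition at $v_1$ eliminates non-elliptic orbital contributions.

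The plan is to decompose the kernel by orbits:
\[
K_f(x,y) = \sum_{[\ga]\in H(F)\backslash G(F)/H(F)} \sum_{\de\in [\ga]} f(x^{-1}\de y).
\]
First, I would observe that because $f_{v_2}$ is supported on the $Z$-regular locus, only orbits $[\ga]$ with $Z$-regular representatives can contribute; since $Z$-regular is an open invariant condition, this is a well-defined restriction. By an argument entirely analogous to the one proved for the twisted side in Section \ref{Section: dealing with center} (but simpler, since the center is trivial in case (1) and acts by diagonal scaling in case (2)), a $Z$-regular element of $G(F)$ has trivial stabilizer under $H(F)\times H(F)$ in case (1) and trivial stabilizer modulo the subgroup $\mathcal{Z}_0 \subset Z_G(F)\times H(F)$ in case (2), with closed orbit in both cases. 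For each such orbit $[\ga_0]$, the trivial stabilizer yields a bijection, allowing one to rewrite the inner sum as
\[
\sum_{(h_1,h_2)\in H(F)\times H(F)} f(x^{-1} h_1 \ga_0 h_2 y),
\]
which, by closedness of the orbit together with the compact support of $f$, is compactly supported modulo $H(F)\times H(F)$ as a function of $(x,y)\in H(\A_F)\times H(\A_F)$.

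Next, I would use the supercuspidal condition at $v_1$ to conclude that only finitely many $Z$-regular orbits contribute. The standard fact is that if $f_{v_1,\omega_{v_1}}$ is a matrix coefficient of a supercuspidal representation, then the local orbital integral $\Orb^\eta(f_{v_1,\omega_{v_1}},\ga_0)$ vanishes unless the stabilizer of $\ga_0$ is compact modulo the center — i.e., unless $\ga_0$ is elliptic. Combining this vanishing with the compact support of $f$ at all other places forces the global geometric sum to run over only a finite set of elliptic $Z$-regular orbits, giving a finite sum of compactly supported (modulo $H(F)\times H(F)$) functions. This proves (1). For (2), the same argument applies after inserting the central character integral: the role of $H(F)\times H(F)$ is played by $H(F)\times H(F)Z_G(\A_F)$, the stabilizer computation in Section \ref{Section: dealing with center} delivers the correct closed-orbit statement modulo $\mathcal{Z}_0$, and absolute convergence of $I_\omega(f)$ follows as before.

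The main technical obstacle is making the supercuspidal vanishing statement precise in this relative context, since the orbital integrals of interest are $(H(F_{v_1})\times H(F_{v_1}),\eta)$-twisted rather than group-theoretic conjugation orbital integrals. This is handled by noting that for $Z$-regular orbits the stabilizer is trivial and the relative orbital integral reduces, after a change of variables on $H\times H$, to a pairing against a translate of $f_{v_1,\omega_{v_1}}$ which must vanish on non-elliptic elements for support-theoretic reasons intrinsic to supercuspidal matrix coefficients. Once this local input is secured, the rest of the argument is essentially formal.
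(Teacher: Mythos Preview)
Your overall structure---decompose by $H(F)\times H(F)$-orbits, use the support condition at $v_2$ to restrict to $Z$-regular semi-simple orbits, then exploit trivial stabilizer and closed orbit to get compact support for each orbital term---is the right skeleton, and matches what the paper defers to in \cite[Lemma 2.2]{ZhangFourier}.

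The gap is in your finiteness step. You invoke the supercuspidal condition at $v_1$ to eliminate ``non-elliptic'' orbits, but this is vacuous here: a $Z$-regular semi-simple element already has trivial stabilizer under $H\times H$ (resp.\ under $Z_G\times H\times H$ in case (2)), so every such orbit is automatically elliptic in your sense and your criterion rules out nothing. The supercuspidal condition plays no role in this lemma; it is reserved for the spectral side, where it forces the kernel to be purely cuspidal. What actually gives finiteness of contributing orbits is the invariant-theoretic argument: the invariant map $c=(c_i):G(\A_F)\to\A_F^{2n+1}$ sends the compact set $\supp(f)$ (which, by $K$-finiteness at almost all places, is genuinely compact) to a compact subset of $\A_F^{2n+1}$; any contributing $\ga\in G(F)$ has $c(\ga)\in F^{2n+1}$, which is discrete in $\A_F^{2n+1}$; hence only finitely many invariant values occur, and since regular semi-simple orbits are separated by invariants, only finitely many orbits contribute. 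For part (2) one runs the same argument with the $Z$-invariants of Section \ref{Section: dealing with center}.

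So your proof would go through once you replace the misapplied supercuspidal step with this discreteness-plus-compactness argument on the categorical quotient.
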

\begin{proof}
The argument is verbatim as in the case of the Jacquet--Rallis relative trace formula \cite[Lemma 2.2]{ZhangFourier}.
\end{proof}
This implies that when $f$ is nice, we have the decomposition into a finite sum of integrals
\[
I(f) = \sum_{\ga}\Orb^\eta(f,\ga),
\]
where the sum is over regular semi-simple $\ga\in H(F)\backslash G(F)/H(F)$ and 
\[
\Orb^\eta(f,\ga):=\int_{H(\A_F)}\int_{H(\A_F)}f(h_1^{-1}\ga h_2)\eta(\det(h_2))dh_2dh_1= \prod_v\Orb^{\eta_v}(f_v,\ga_v).
\]
\begin{Lem}
   For $(i,j)\in \{0,1\}^2$, let $\eta_{i,j}:[Z_G]\to \cc^\times$ denote the characters 
\[
\eta_{i,j}(z_1,z_2)=\eta(z)^i\eta(z)^j.
\]
Fix a unitary central character $\omega: [Z_G]\to \cc^\times$, and let $\omega_{i,j}:=\omega\eta_{i,j}$. If $f=\bigotimes_vf_v$ is $\omega$-nice, we have the decomposition
    \[
    \frac{1}{4}\sum_{(i,j)}I_{\omega_{i,j}}(f) = \sum_{\ga}\Orb_\omega^\eta(f,\ga)
    \]where the sum is over $Z$-regular semi-simple $\ga\in H(F)\backslash G(F)/\Nm(Z_{G'}(F))H(F)$ and where
\begin{align*}
\Orb_\omega^\eta(f,\ga)&=\int_{H(\A_F)}\int_{H(\A_F)}\int_{\Nm(Z_G(\A_E))}f(h_1^{-1}\ga zh_2)\eta(\det(h_2))\omega(z)dzdh_2dh_1\\&=\prod_v\Orb^{\eta_v}_{\omega_v}(f_v,\ga_v),
\end{align*}
where the local orbital integral $\Orb^{\eta_v}_{\omega_v}(f_v,\ga_v)$ is defined in \eqref{eqn: twisted OI final}.
\end{Lem}
\begin{proof}
    When $f$ is $\omega$-nice, Lemma \ref{Lem: support nice linear} implies the following decomposition for $I_\omega$:
\[
I_\omega(f) = \sum_{\ga}\int_{Z_G(F)\backslash Z_G(\A_F)}\Orb^\eta(f,z\ga)\omega(z)dz,
\]
where the sum is over $Z$-regular semi-simple $\ga\in H(F)\backslash G(F)/H(F)$. Using absolute convergence, we compute
\begin{align*}
  \frac{1}{4}\sum_{(i,j)}I_{\omega_{i,j}}(f)&= \frac{1}{4}\sum_{(i,j)}\sum_{\ga}\int_{Z_G(F)\backslash Z_G(\A_F)}\Orb^\eta(f,z\ga)\omega_{i,j}(z)dz\\  
  &= \frac{1}{4}\sum_{\ga}\sum_{(i,j)}\int_{Z_G(F)\backslash Z_G(\A_F)}\Orb^\eta(f,z\ga)\omega_{i,j}(z)dz\\ 
    &=\sum_{\ga}\sum_{(i,j)}\int_{Z_G(F)\backslash Z_G(F)\Nm(Z_{G'}(\A_F))}\Orb^\eta(f,z\ga)\omega(z)dz,
\end{align*}
where we note that the characters $\omega_{i,j}$ all agree on the open subgroup $ Z_G(F)\Nm(Z_{G'}(\A_F))$. We have also used that 
\[
 [Z_G(\A_F): Z_G(F)\Nm_{E/F}(Z_{G'}(\A_F))]=4.
\]
The Hasse norm theorem applied to the quadratic extension $E/F$ now implies that
\[
Z_G(F)\backslash Z_G(F)\Nm(Z_{G'}(\A_F)) = \Nm(Z_{G'}(F))\backslash\Nm(Z_{G'}(\A_F)),
\]
with an identification of measures. Unfolding the integral over $z$, we obtain the desired formula.
\end{proof}

If $\pi=\pi_n\boxtimes \pi_{n+1}$ is a cuspidal automorphic representation of $G(\A_F)$, recall the definition (\ref{eqn: global bessel}) of the relative character $I_\pi$.
\begin{Prop}\label{Prop: twisted RTF}
Let $\omega$ be a unitary central character for $\G(\A_F)$. If $f$ is $\omega$-nice, then we have the equality
\begin{equation}\label{eqn: linear RTF}
 4\sum_{\ga}\Orb_\omega^\eta(f,\ga)=\sum_{\pi}I_{\pi}(f),
\end{equation}
where the first sum is over $Z$-regular semi-simple orbits $$\ga\in H(F)\backslash G(F)^{Z-rss}/\Nm(Z_{G'}(F))H(F),$$ and 
where the second sum runs over irreducible cuspidal automorphic representations of $G(\A_F)$ with central character $\omega_{i,j} = \omega\eta_{i,j}$ for some $(i,j)\in \{0,1\}^2$.
\end{Prop}
\begin{proof}
The proof of this is standard (relying on the foundational analysis of $K_{f,\omega}$ in \cite{rogawskiGL}) so we omit the details. See \cite[Theorem 2.3]{ZhangFourier} for an analogous argument and \cite[Theorem 18.2.2]{GetzHahnbook} for a general treatment including absolute convergence of both sides. We remark it is this step which implicitly uses the normalization $\la\cdot,\cdot\ra_{Pet}$ in \eqref{eqn: inner product} for the inner product used in $I_{\pi}$ (cf. \cite[proof of Theorem 4.3]{ZhangRankin}).
\end{proof}

\subsection{The twisted side}
This unitary side is easier than the previous case. Fix a pair of Hermitian forms $(x,y)\in X_n\times X_{n+1}$. Recall that $G' = \Res_{E/F}(\GL_n\times \GL_{n+1})$ and consider the two subgroups $H'= \Res_{E/F}(\GL_n)$ embedded diagonally and $H'_{x,y}= \U(V_x)\times \U(V_y)$. For $f'\in C_c^\infty(G'(\A_F))$, we form the analogous kernel $K_{f'}$ and consider the distribution
\[
J^{x,y}(f') = \displaystyle\int_{[H']}\int_{[H'_{x,y}]}K_{f'}(h_1,h_2)dh_2dh_1.
\]
We also consider versions of this distribution $J^{x,y}_{\omega'}$, where $\omega'$ is a central character for $Z_{G'}(\A_F)$ that is trivial on $(Z_{G'}\cap H'_{x,y})(\A_F)$, by replacing $K_{f'}$ with 
\[
K_{f',\omega'}(x,y)=\int_{[Z_{G'}]}\sum_{\ga\in G'(F)}f'(x^{-1}\ga zy)\omega'(z)dz
\] Note that such a central character is the base change of a central character $\omega$ of $G(\A_F)$. That is, it is of the form $\omega'=\omega\circ \Nm$, where $\Nm: Z_{G'}(\A_F)\to Z_G(\A_F)$ is the norm map.

As in the linear case, we introduce the space of \emph{nice} test functions. We say $f'=\prod_vf'_v\in C_c^\infty(G'(\A_F))$ is \emph{nice} with respect to the central character $\omega'$ (or just $\omega'$-nice) if 
\begin{enumerate}
\item For at least one finite place $v_1$ that splits in $E$, the function $f'_{v_1}$ is \emph{essentially a matrix coefficient} of a supercuspidal representation with respect to $\omega'_{v_1}$: this means that
\[
f'_{v_1,\omega'_{v_1}}(g) =  \int_{Z_{G'}(F_{v_1})}f'_{v_1}(gz)\omega'_{v_1}(z)dz
\]
is a matrix coefficient of a supercuspidal representation of $G'(F_{v_1})$.
\item For at least one split place $v_2\neq v_1$, the test function $f'_{v_2}$ is supported on the $Z$-regular locus of $G'(F_{v_2})$. This place is not required to be non-archimedean.
\end{enumerate}

If $\Pi=\Pi_n\boxtimes \Pi_{n+1}$ is a cuspidal automorphic representation of $G'(\A_F)$, recall the definition (\ref{eqn: twisted global bessel}) of the twisted relative character $J_{\Pi}^{x,y}$.
\begin{Prop}\label{Prop: unitary RTF}
Let $\omega'$ be a unitary central character as above (so that $\omega'=\omega\circ \Nm$ for some character $\omega$). If $f'$ is $\omega'$-nice, then we have the equality
\begin{equation}\label{eqn: twisted RTF}
 (2L(1,\eta))^2\sum_{\de}\Orb_{\omega'}(f',\de)=\sum_{\Pi}J_{\Pi}^{x,y}(f'),
\end{equation}
where the first sum is over $Z$-regular semi-simple orbits$$\de\in H'(F)\backslash G'(F)^{Z-rss}/Z_{G'}(F)H'_{x,y}(F),$$
we have
\[
\Orb_{\omega'}(f',\de)=\int_{H'(\A_F)}\int_{Z_{H'_{x,y}}(\A_F)\backslash H'_{x,y}(\A_F)}\int_{ Z_{G'}(\A_F)}f'(h_1^{-1}\de h_2z)\omega'(z)dzdh_2dh_1,
\]and 
where the sum on the right-hand side runs over irreducible cuspidal automorphic representations of $G'(\A_F)$ with central character $\omega'$.
\end{Prop}
\begin{proof}
    The absolute convergence and spectral expansion follow as in the previous setting by the niceness assumptions. The only thing to remark on is the unfolding of the geometric side. With the assumption that $f'$ is $\omega'$-nice, we may unfold $J_{\omega'}(f)$ and obtain
    \begin{align*}
        \sum_{\de}\vol([Z_{H'_{x,y}}])&\int_{H'(\A_F)}\int_{Z_{H'_{x,y}}(\A_F)\backslash H'_{x,y}(\A_F)}\int_{ Z_{G'}(\A_F)}f'(h_1^{-1}\de h_2z)\omega'(z)dzdh_2dh_1\\
        &= \sum_{\de}(2L(1,\eta))^2\Orb_{\omega'}(f',\de),
    \end{align*}
    where the sum ranges over $Z$-regular semi-simple orbits in $H'(F)\backslash G'(F)^{rss}/Z_{G'}(F)H'_{x,y}(F),$ and we have used the observation \eqref{uniform stab} that the stabilizer of such an element is isomorphic to $Z_{H'_{x,y}}$. Our measure normalizations from Section \ref{measures} gives the formula
    \[
    \vol([Z_{H'_{x,y}}])=(2L(1,\eta))^2,
    \]
   where $L(s,\eta)$ is the completed $L$-function associated to the quadratic character $\eta=\eta_{E/F}$.
\end{proof}

\subsection{Global matching of test functions}
Suppose now that $f=\bigotimes_vf_v\in C_c^\infty(G(\A_F))$ and $\{f'_{x,y}\}_{(x,y)\in \calv_n\times\calv_{n+1}}$ with $f'_{x,y}=\bigotimes_vf_{x,y,v}'\in C_c^\infty(G'(\A_F))$ where $f_{x,y}'=0$ for all but finitely many $(x,y)$. Suppose further that for each $v$, the local functions $f_v$ and $\{f_{x,y,v}'\}$ are smooth transfers of each other.

If we consider global $Z$-regular semi-simple classes $\ga\in G(F)$ and $\de\in G'(F)$ that match with respect to the pair $(x,y)$, it follows that for each place $v$
\[
\Omega_v(\ga)\Orb^{\eta_v}(f_v,\ga)=\Orb(f_{x,y,v}',\de).
\]
  Noting that the transfer factor satisfies the global product formula
\[
\prod_v\Omega_v(\ga) = 1\text{  whenever  }\ga\in G(F),
\]
 this implies the comparison of global orbital integrals
 \[
 \Orb^\eta(f,\ga) = \prod_v\Omega_v(\ga)\Orb^{\eta_v}(f_v,\ga)=\prod_v\Orb(f_{x,y,v}',\de) = \Orb(f_{x,y}',\de).
 \]
Using the $Z$-regular semi-simple assumption, Corollary \ref{Cor: transfer with central character} gives the matching of orbital integrals with central character
\begin{equation}\label{global orbital match}
 \Orb^\eta_\omega(f,\ga)=\Orb_{\omega'}(f_{x,y}',\de),
\end{equation}
where $\omega=\prod_v\omega_v:Z_G(F)\backslash Z_G(\A_F)\to \cc^\times$ is a unitary central character of $G(\A_F)$ and $\omega'=\omega\circ\Nm.$

 To simplify the statements of a few results, we introduce the following terminology.
\begin{Def}\label{Def: efficient}
    Fix a character $\omega$ of $Z_G(\A_F)$ and let $\omega'$ denote its base change. Fix a split place $v_0$ and a supercuspidal representation $\pi_{v_0}$ of $G(F_{v_0})$. We say that the functions
\[
\text{$f\in C_c^\infty(G(\A_F))$ and $\{f'_{x,y}\}_{(x,y)\in\calv_n\times \calv_{n+1}}$}\subset C_c^\infty(G'(\A_F))
\]
are \textbf{efficient transfers} for $(\omega,\pi_{v_0})$ if 
\begin{enumerate}
    \item each function is a  nice test functions with respect to the characters $\omega$ and $\omega'$ respectively,
    \item they are smooth transfers of each other,
    \item  $f_{v_0}$ is essentially a matrix coefficient of $\pi_{v_0}$ and that $f'_{x,y,v_0}$ is related to $f_{v_0}$ as in Proposition \ref{Prop: split transfer}; note that this implies that $f'_{x,y,v_0}$  is also essentially a matrix coefficient of the base change $\Pi_{v_0}\simeq \pi_{v_0}\otimes \pi_{v_0}$ of $\pi_{v_0}$.
\end{enumerate}
\end{Def}
\subsection{Comparison}To obtain the necessary refined comparison, we make use of the automorphic-Cebotarev-density theorem of Ramakrishnan.
\begin{Thm}\cite{ramakrishnan2018theorem}\label{ramakrishnan2018theorem}
Let $E/F$ be a quadratic extension of global fields. Two cuspidal automorphic representations $\Pi_1$ and $\Pi_2$ of $\Res_{E/F}(\GL_n)(\A_F)$ are isomorphic if and only if $\Pi_{1,v}\cong \Pi_{2,v}$ for almost all places $v$ of $F$ that are split in $E/F$.
\end{Thm}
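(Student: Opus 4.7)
\medskip

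\noindent\emph{Proof proposal.}  The forward direction is trivial, so the content lies in the converse. My first move would be a purely local unfolding at split places. If $v$ is a place of $F$ split in $E$, say $v = w_1 w_2$, then $(\Res_{E/F}\GL_n)(F_v) \cong \GL_n(E_{w_1}) \times \GL_n(E_{w_2})$, and as a representation of this product the local component $\Pi_{i,v}$ factors as the external tensor product $\Pi_{i,w_1} \boxtimes \Pi_{i,w_2}$.  Since external tensor factors of an irreducible admissible representation of a product group are determined up to isomorphism, the hypothesis $\Pi_{1,v} \cong \Pi_{2,v}$ forces $\Pi_{1,w_j} \cong \Pi_{2,w_j}$ for $j=1,2$.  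Thus from the outset we may replace the hypothesis by the stronger statement that $\Pi_{1,w} \cong \Pi_{2,w}$ at all but finitely many places $w$ of $E$ lying over a split place of $F$.

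The next step is to observe that this set of places of $E$ has \emph{Dirichlet density one}.  Indeed, from the factorization $\zeta_E(s) = \zeta_F(s)\, L(s,\eta)$ and the contribution of an inert place $v$ being $N(v)^{-2s}$ (versus $2 N(v)^{-s}$ for a split place), the primes of $E$ above inert primes of $F$ contribute density zero among all primes of $E$.  Hence the set of places of $E$ where we have local agreement has density one in $E$, with complement contained in a density-zero set (plus a finite exceptional set).

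The heart of the argument—and the principal obstacle—is to promote agreement on a density-one set of places of $E$ to the global isomorphism $\Pi_1 \cong \Pi_2$.  Classical strong multiplicity one (Jacquet--Shalika) is not sufficient here, since it requires agreement outside a \emph{finite} set of places.  My approach would use the Rankin--Selberg $L$-function $L(s, \Pi_1 \times \Pi_2^\vee)$, which has a simple pole at $s=1$ if and only if $\Pi_1 \cong \Pi_2$.  One forms the partial Euler product over the density-one ``good set'' and compares it with the corresponding partial product of $L(s, \Pi_2 \times \Pi_2^\vee)$, which has a pole at $s=1$; the task is then to show that the missing local factors (a density-zero collection) cannot contribute a compensating zero.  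Using the Jacquet--Piatetski-Shapiro--Shalika description of local Rankin--Selberg $L$-factors and the fact that these factors are non-vanishing at $s=1$ for unitary generic representations, one concludes that the global $L$-function itself acquires the pole, forcing $\Pi_1 \cong \Pi_2$.

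I expect the hard technical step to be making the comparison of partial Euler products rigorous across the critical line, since absolute convergence only holds in $\Re(s)>1$.  A cleaner alternative, which I would pursue in parallel, is to work with an auxiliary functorial lift (for instance the automorphic induction $\mathrm{AI}(\Pi_i)$ to $\GL_{2n}(\A_F)$, or the Rankin--Selberg / Asai tensor lift), use the local compatibility at split places to identify the local components of the lifts at those places, and invoke strong multiplicity one for the lift on a larger group—reducing ``density-one on $E$'' to ``almost-all on $F$'' where Jacquet--Shalika applies directly.
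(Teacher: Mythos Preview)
The paper does not give a proof of this theorem at all: it is stated with the citation \cite{ramakrishnan2018theorem} and used as a black box in the comparison of relative trace formulas. So there is no ``paper's own proof'' to compare against; you are reconstructing Ramakrishnan's argument.

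That said, your outline is essentially Ramakrishnan's strategy. The density-one observation is correct and is the key reduction: places of $E$ above inert primes of $F$ have norm $N(v)^2$, so they contribute a convergent sum at $s=1$ and hence have Dirichlet density zero among places of $E$. The Rankin--Selberg criterion (pole of $L(s,\Pi_1\times\Pi_2^\vee)$ at $s=1$ iff $\Pi_1\cong\Pi_2$) is also the right tool. Where your sketch is honest but incomplete is exactly where you flag it: controlling the infinite product of local factors over the missing density-zero set near $s=1$. For this Ramakrishnan uses the known bounds toward Ramanujan for $\GL_n$ (Luo--Rudnick--Sarnak type), which ensure that the local Rankin--Selberg factors at the inert places are bounded away from $0$ and $\infty$ uniformly enough that the density-zero product cannot kill the pole. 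Your alternative via automorphic induction to $\GL_{2n}(\A_F)$ does not obviously help, since the lifted representations will still only be known to agree at the split places of $F$---a density-$\tfrac12$ set---which is worse, not better, than what you started with.
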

 Recall that for a cuspidal automorphic representation $\Pi$ of $G'(\A_F)$, we denote by $\calb(\Pi)$ the (finite) set of cuspidal automorphic representations $\pi$ of $G(\A_F)$ such that $\Pi=BC(\pi)$. We have the following comparison of trace formulas.
\begin{Prop}\label{Prop: simple character id}
 For almost all split places $v$, we fix an irreducible unramified representation $\pi_v^0$. Additionally, for a fixed split place $v_0$, we fix a supercuspidal representation $\pi_{v_0}$ of $G(F_{v_0})$. Then there exists at most one cuspidal automorphic representation $\Pi$ of $\G'(\A_F)$ such that if $f\in C_c^\infty(G(\A_F))$ and $\{f'_{x,y}\}_{(x,y)\in\calv_n\times \calv_{n+1}}$ are efficient transfers for $(\omega, \pi_{v_0})$, then
\begin{equation*}
\frac{1}{L(1,\eta)^2}\sum_{x,y}J_{\Pi}^{x,y}(f'_{x,y})=\sum_{\pi\in \calb(\Pi)}I_\pi(f),
\end{equation*}
where the sum on the left runs over all $(x,y)\in \calv_n\times \calv_{n+1}$ while the sum on the right runs over all (cuspidal) automorphic representations $\pi$ of $G(\A_F)$ such that
\begin{enumerate}
\item\label{property1}$\pi_v\cong \pi_v^0$ for almost all split $v$, 
\item\label{property2} $\pi_{v_0}$ is our fixed supercuspidal representation,
\end{enumerate}
and where $\Pi=BC(\pi)$ is the base change of any $\pi$ appearing in the sum.
\end{Prop}
\begin{Rem}
    Note that the existence of $\Pi$ depends only on whether the set of cuspidal automorphic representations $\pi$ of $G(\A_F)$ satisfying \eqref{property1} and \eqref{property2} is non-empty.
\end{Rem}
\begin{proof}
Let $f$ and $\{f'_{x,y}\}_{(x,y)\in\calv_n\times \calv_{n+1}}$ be efficient transfers as in the statement of the proposition. We may assume that all test functions are factorizable. Let $S\supset S_\infty$ be a finite set of places containing all infinite places such that all Hermitian spaces $V_x$ and $V_y$ with $f'_{x,y}\neq 0$ are unramified outside $S$. Enlarging $S$ if necessary, we may assume that
\begin{enumerate}
\item\label{inert} for any inert place $v\notin S$, $f_v$ and $f'_{x,y,v}$ are units of the spherical Hecke algebras. These match by the fundamental lemma (Theorem \ref{Thm: JRL fundamental lemma});
\item\label{split} for any split place $v\notin S$, $f'_{x,y,v}\in \calh_{{K}_v'}(G'(F_v))$ matches $f_v\in \calh_{{K}_v}(G(F_v))$ in the sense of Proposition \ref{Prop: split transfer} . More precisely, if we write $f'_{x,y,v}=f_1\otimes f_2\in \calh_{K_v'}(G'(F_v))$ with respect to the identification $G'(F_v)\simeq \G(F_v)\times G(F_v)$, then the function $f_1\ast f_2^\theta\in \calh_{K_v}(\G(F_v))$ matches $f'_{x,y,v}$ as a special case of Proposition \ref{Prop: split transfer}. 
\end{enumerate}
 Write $f=f_S\otimes f^S$, where $f^S\in \calh_{{K}^S}(G(\A_F^S))$, where $\A_F^S=\prod_{v\notin S}F_v$ and ${K}^S=\prod_{v\notin S}K_v$; similarly, we write $f_{x,y}=f'_{x,y,S}\otimes f_{x,y}^S$ with $ f_{x,y}^S\in \calh_{{K'}^S}(G'(\A_F^S))$, where ${K'}^S=\prod_{v\notin S}K_v'$.

With these notations, the matching of global orbital integrals (\ref{global orbital match}) combine with the geometric expansions in Propositions \ref{Prop: twisted RTF} and \ref{Prop: unitary RTF} to give the identity
\[
\frac{1}{L(1,\eta)^2}\sum_{x,y}J_{\omega'}^{x,y}(f'_{x,y,S}\otimes f_{x,y}^S)=\sum_{(i,j)}I_{\omega_{i,j}}(f_S\otimes f^S).
\]
Applying the spectral sides of Propositions \ref{Prop: twisted RTF} and \ref{Prop: unitary RTF}, we obtain the identity
\begin{equation*}
\frac{1}{L(1,\eta)^2}\sum_{x,y}\sum_{\Pi}J_{\Pi}^{x,y}(f'_{x,y,S}\otimes f_{x,y}^S)=\sum_{\pi}I_\pi(f_S\otimes f^S),
\end{equation*}
where $\Pi$ and $\pi$ run over cuspidal automorphic representations with the prescribed central characters and supercuspidal component at $v_0$. For the unramified representations $\Pi^S$ (resp. $\pi^S$), let $\lam_{\Pi^S}$ (resp. $\lam_{\pi^S}$) be the Hecke-trace functionals of $\calh_{{K'}^S}(G'(\A_F^S))$ (resp. $\calh_{{K}^S}(G(\A_F^S))$). Then we observe (cf. \cite[proof of Proposition 2.10]{ZhangFourier}) that
\[
I_\pi(f_S\otimes f^S)=\lam_{\pi^S}(f^S)I_\pi(f_S\otimes \bfun_{K^S}),
\]
and 
\[
J_{\Pi}^{x,y}(f'_{x,y,S}\otimes f_{x,y}^S)=\lam_{\Pi^S}(f_{x,y}^S)J_{\Pi}^{x,y}(f'_{x,y,S}\otimes\bfun_{{K'}^S}).
\]
Since we only allow non-identity elements of the local Hecke algebras at S or places of $F$ that split in $E$, we may view the above two equations as identities of linear functionals on the Hecke algebra $\calh_{{K'}^{S,split}}(G'(\A_F^{S,split}))$, where the superscript $split$ indicates that we only take the product over the split places outside of $S$. To see this, note that the match $f'_{x,y,v}=f_1\otimes f_2\in \calh_{K_v'}(G'(F_v))$ with the function $f_1\ast f_2^\theta\in \calh_{K_v}(\G(F_v))$ as a special case of Proposition \ref{Prop: split transfer} induces a linear map
\[
\calh_{{K'}^{S,split}}(G'(\A_F^{S,split}))\lra \calh_{{K}^{S,split}}(G(\A_F^{S,split}))
\]
which we compose with $I_\pi(f_S\otimes -)$ to obtain a linear functional on $\calh_{{K'}^{S,split}}(G'(\A_F^{S,split}))$.

By the infinite linear independence of Hecke characters (see \cite[Appendix]{BadulescuJRnotes} for a short proof), for any fixed $\otimes_v \pi^0_v$ we obtain the sum
\[
\frac{1}{L(1,\eta)^2}\sum_{x,y}\sum_\Pi J_{\Pi}^{x,y}(f'_{x,y})=\sum_{\pi\in \calb}I_\pi(f),
\]
where $\calb$ is the set of cuspidal automorphic representations satisfying \eqref{property1} and \eqref{property2}, and where $$\Pi\in \{\Pi:  \text{for almost all split primes, }\Pi_v= BC(\pi_v)\text{ for some }\pi\in \calb\}.$$ Applying Theorem \ref{ramakrishnan2018theorem}, we see that there is at most one representation appearing on the left-hand side. Furthermore, this implies that $\calb=\calb(\Pi)$.
\end{proof}

We now fix a pair $(x,y)\in X_n\times X_{n+1}$ and obtain a comparison of relative characters which is compatible with the factorizations in Propositions \ref{Prop: twisted factorization} and \ref{Prop: linear factorization}.

\begin{Thm}\label{Prop: main global tool}
Suppose that $E/F$ is a quadratic extension of number fields such that every archimedean place $v|\infty$ of $F$ splits in $E$. Fix $(x,y)\in X_n\times X_{n+1}$. Let $\Pi$ be a cuspidal automorphic representation of $G'$ with central character $\omega'$ such that
\begin{enumerate}
    \item $\Pi\cong\Pi^\sig$, and
    \item there is a split place $v_0$ and a supercuspidal representation $\pi_0$ of $G(F_{v_0})$ such that $\Pi_{v_0}\simeq \pi_0\otimes\pi_0$ is the (supercuspidal) base change of $\pi_0$.
\end{enumerate} Consider a $\omega'$-nice factorizable function $f'\in C_c^\infty(G'(\A_F))$ satisfying that $f_{v_0}$ is essentially a matrix coefficient of $\Pi_{v_0}$. There exists an $\omega$-nice factorizable function $f\in C_c^\infty(G(\A_F))$ matching $\{f'_{x',y'}\}_{(x',y')}$, where
\[
f'_{x',y'} =\begin{cases}
    f'&: (x',y') = (x,y),\\
    0&: \text{otherwise,}
\end{cases}
\] and $(f,\{f'_{x',y'}\})$  are efficient transfers for $(\omega,\pi_{v_0})$. We have the identity
\begin{equation}\label{eqn: global spectral transfer}
\frac{1}{L(1,\eta)^2}J_{\Pi}^{x,y}(f')=\sum_{\pi\in \calb(\Pi)}I_\pi(f).
\end{equation}
 If $f' = \bigotimes_vf'_{v}$ and $f=\bigotimes_vf_v$ where the pairs $(f'_{v}, f_v)$ match for each place $v$, then $f$ may be chosen so that
\begin{equation}\label{eqn: super important}
\prod_vJ^{x,y,\natural}_{\Pi_v}(f'_{v}) =\prod_vI^\natural_{\pi_v}(f_v),
\end{equation}
where $\pi=\otimes_v'\pi_v\in \calb(\Pi)$.
\end{Thm}
\begin{proof}
That such a transfer $f\in C_c^\infty(G(\A_F))$ exists follows from Proposition \ref{Prop: split transfer}, Theorem \ref{Thm: smooth transfer}, and the properties of the $Z$-regular semi-simple loci. We now apply the previous proposition to the unramified representation $\otimes_v\pi_v^0$ where $v$ runs over those split places over which $\Pi$ is unramified and $\pi_v^0$ is determined by
\[
\Pi_v=BC(\pi_v^0) \cong \pi_v^0\otimes \pi_v^0.
\]
This gives (\ref{eqn: global spectral transfer}).

Now fix $\pi\in \calb(\Pi)$. Let $\eta_{i,j}:G(\A_F)\to \cc^\times$ be the characters 
\[
\eta_{i,j}(g_1,g_2)=\eta(\det(g_1))^i\eta(\det(g_2))^j.
\]
Note that 
\[
\calb(\Pi)=\{\pi\cdot\eta_{i,j}:(i,j)\in \{0,1\}^2\},
\]
where if $\pi = \pi_n\boxtimes \pi_{n+1},$ then 
\[
\pi\cdot\eta_{i,j} := \pi_n\cdot(\eta^i\circ\det)\boxtimes\pi_{n+1}\cdot(\eta^j\circ\det).
\]
By Lemma \ref{Lem: reduction of support} and our assumptions on the global extension $E/F$, we may assume that for each place $v$ of $F$,
\[
\supp(f_v) \subset G_v[{x,y}]:=G_{F_v}[x,y].
\]
Since the two Hermitian forms $x$ and $y$ are global, this implies that $f=\eta_{i,j}\cdot f$ for any $(i,j)\in \{0,1\}^2$.

Considering the local distribution $I_{\pi_v}$, we have
\[
I_{\pi_v\eta_{v,i,j}}(f_v) =I_{\pi_v}(f_v\cdot\eta_{v,i,j}).
\]
Combining this with the product formula (\ref{eqn: Bessel factors}) implies that
\[
\sum_{(i,j)}I_{\pi\eta_{i,j}}(f) =\sum_{(i,j)}I_{\pi}(f\cdot\eta_{i,j})=4I_\pi(f).
\]
Thus, the global matching of relative characters becomes
\[
\frac{1}{L(1,\eta)^2}J_{\Pi}^{x,y}(f')=4I_\pi(f)
\]
whenever $f'$ and $f$ are matching functions as in the proposition. Combining this global identity with the product formulas \eqref{eqn: twisted Bessel factors} and \eqref{eqn: Bessel factors} gives \eqref{eqn: super important}, where we use the volume calculation
\[
\frac{\vol(E^\times \backslash\A_E^1)}{L(1,\eta)}=\vol(F^\times \backslash\A_F^1).\qedhere
\]
\end{proof}

\section{Weak transfer of local relative characters}\label{Section: weak comparison}

In this section, we show that Theorem \ref{Prop: main global tool} implies a weak form of the local transfer of relative characters for matching test functions. Here ``weak form'' means that our results only apply to certain representations $\pi$. This is sufficient for our final (geometric) goal. 
%%%%%%%%%%%%%%%%%%%%%%%%%%%%%%%%%%%%%%%%%%%%%%%%%%%%%%%%%%%%%%%%%%%%%%%%%%%%%%%%%%%%%%%%%%%%%%%%%%%%%%%%%%%%%%%%%%%%%%%%%%%%%%%%%%%%%%%%%%%%%%%%%%%%%%%%%%%%%%%%%%%%%%%%%%%%%%%%%%%%%%%%%%%%%%%%%%%%%%%%%%%%%%%%%%%%%%%%%%%%%%%%%%%%%%%%%%%%%%
 
\subsection{Split places and non-vanishing under regular support conditions} 
In the global comparison, we imposed certain support conditions at a single place $v$ of our number field in order to affect a simple trace formula. As we are only making the regular semi-simple support assumption at split places, the local distributions are precisely the ones discussed in \cite[Appendix A]{ZhangFourier}. 
This allows for the following non-vanishing result.

\begin{Lem}\label{Lem: supercuspidal non-vanishing}
Assume that $F$ is a non-archimedean local field. Suppose that $\pi$ is a supercuspidal representation of $G(F)$ with central character $\omega$. Then  there exists a matrix coefficient $\Phi$ of $\pi$, and a test function $f\in C_c^\infty(G(F))$ such that
\begin{itemize}
\item $f_\omega(g) = \displaystyle\int_{Z_G(F)}f(gz)\omega(z)dz =\Phi(g)$ for all $g\in G(F)$, and 
\item there exists a $Z$-regular semi-simple element $\ga$ such that $\Orb_\omega(f,\ga)\neq0.$
\end{itemize}
\end{Lem}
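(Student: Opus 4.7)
The plan splits into the two bullet points, with the first being a standard truncation and the second (non-vanishing of some orbital integral) being the substantive content, which I would handle via a spectral-uniqueness argument following \cite[Appendix A]{ZhangFourier}.

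For the first assertion, matrix coefficients of the supercuspidal $\pi$ are automatically compactly supported modulo $Z_G(F)$, so given any such $\Phi$ I would choose $\phi \in C_c^\infty(G(F))$ with $\int_{Z_G(F)} \phi(gz)\,dz \equiv 1$ on a compact set whose image in $G(F)/Z_G(F)$ covers the support of $\Phi$ modulo $Z_G(F)$; setting $f(g) := \phi(g)\Phi(g)$ then yields $f \in C_c^\infty(G(F))$ with $f_\omega = \Phi$. This truncation step is routine.

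For the non-vanishing, the key point is that, for each $Z$-regular semi-simple $\ga$, the map $\Phi \mapsto \Orb_\omega(f,\ga)$ is an $(H(F)\times H(F))$-equivariant linear functional on $\pi \otimes \pi^\vee$ (identifying $\pi\otimes\pi^\vee$ with the space of matrix coefficients in the usual way) for the character $\bfun \otimes \eta$. Hence it lies in
\[
\Hom_{H(F)\times H(F)}\!\bigl(\pi \otimes \pi^\vee,\ \bfun\otimes \eta\bigr) \;\cong\; \Hom_{H(F)}(\pi,\eta)\otimes \Hom_{H(F)}(\pi^\vee,\bfun),
\]
which is one-dimensional by local multiplicity-one for Rankin--Selberg periods on $\GL_n\times \GL_{n+1}$ (Jacquet--Piatetski-Shapiro--Shalika together with Flicker's twist). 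On the other hand, for $f_\omega = \Phi_{v,v^\vee}$, a Schur-orthogonality computation using the compact-mod-center support of $\Phi$ identifies the finite sum $\sum_\ga \Orb_\omega(f,\ga)$ with a non-zero scalar multiple of the product of Rankin--Selberg periods $\lambda_\pi(v)\,\lambda^\eta_{\pi^\vee}(v^\vee)$. Choosing $v$ and $v^\vee$ so that both periods are non-zero (possible since each is a non-trivial functional on the generic $\pi$ resp.\ $\pi^\vee$) makes the sum non-zero, forcing some individual term $\Orb_\omega(f,\ga)\neq 0$.

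The main obstacle is justifying the identification of $\sum_\ga\Orb_\omega(f,\ga)$ with a non-zero multiple of $\lambda_\pi(v)\,\lambda^\eta_{\pi^\vee}(v^\vee)$. Although the one-dimensionality of the Hom-space above forces proportionality with the functional $\Phi_{v,v^\vee}\mapsto \lambda_\pi(v)\lambda^\eta_{\pi^\vee}(v^\vee)$, verifying that the constant of proportionality is non-zero requires a careful interchange of integrations (over $Z_G(F)$, $H(F)\times H(F)$, and the rank-one operator $\pi(f)$) that converges precisely because matrix coefficients of supercuspidal representations are compactly supported modulo center. This is the same technical step as in the Jacquet--Rallis case of \cite[Appendix A]{ZhangFourier}, which I would adapt with mostly notational modifications.
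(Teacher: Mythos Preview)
Your truncation argument for the first bullet is fine and is essentially the surjectivity of $f\mapsto f_\omega$ that the paper invokes.

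The gap is in your second step.  There is no ``finite sum $\sum_\ga \Orb_\omega(f,\ga)$'' in the local setting: the $Z$-regular semi-simple orbits form a continuous family, so you cannot deduce non-vanishing of one term from the non-vanishing of a sum.  This looks like a conflation with the geometric side of a global relative trace formula.  Your multiplicity-one reduction is correct---for each $\ga$ the functional $\Phi_{v,v^\vee}\mapsto \Orb_\omega(f,\ga)$ does lie in the one-dimensional space $\Hom_H(\pi,\bfun)\otimes\Hom_H(\pi^\vee,\eta)$, so $\Orb_\omega(\Phi_{v,v^\vee},\ga)=c(\ga)\lam_\pi(v)\lam^\eta_{\pi^\vee}(v^\vee)$ for some function $c(\ga)$---but you have not shown $c$ is not identically zero on the $Z$-regular locus, and the Schur-orthogonality manoeuvre you describe does not accomplish this.

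The paper's route is different in exactly this place.  It fixes the specific matrix coefficient $\Phi_0(g)=[\calw(\pi,g)W_0,\hat W_0]_\pi$ coming from the Sakellaridis--Venkatesh expression for $\lam_\pi$, and then invokes two facts from \cite[Appendix~A]{ZhangFourier}: first, the integral identity $I_\pi(f)=\int_{G(F)}f(g)\,\Orb(\Phi_0,g)\,dg$ with $g\mapsto\Orb(\Phi_0,g)$ locally $L^1$; second, the existence of some $f'\in C_c^\infty(G(F)^{Z\text{-}rss})$ with $I_\pi(f')\neq 0$.  Together these force $\Orb(\Phi_0,\cdot)$ to be non-zero somewhere on the $Z$-regular semi-simple locus, and the passage from $\Orb(\Phi_0,\ga)$ to $\Orb_\omega(f,\ga)$ is then a direct rearrangement of absolutely convergent integrals.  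Your final paragraph gestures at this appendix, but the mechanism you describe (Schur orthogonality producing a finite sum) is not what is actually there; the substance is the $L^1$ integral representation of $I_\pi$ together with the $Z$-rss non-vanishing of $I_\pi$.
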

\begin{Rem}
    The orbital integral above is a special case of the integrals $\Orb^\eta_\omega$ in \eqref{eqn: twisted OI final}. We are here considering a place of $F$ which splits in $E$, so $\eta=1$.
\end{Rem}
\begin{proof}
The first requirement follows from the surjectivity of the map
\begin{align*}
C_c^\infty(G(F))&\lra C_c^\infty(Z_G(F)\backslash G(F),\omega^{-1})\\
f&\longmapsto f_\omega.
\end{align*}
Since $\pi$ is supercuspidal, any matrix coefficient $\Phi$ lies in $C_c^\infty(Z_G(F)\backslash G(F),\omega)$, so there exists an $f$ satisfying $f_\omega=\Phi$.

Recall now that for any generic representation $\pi$, the relative character $I_{\pi}^\natural$ is a non-zero distribution \cite{JPSS}. For simplicity, we work instead with the unnormalized distribution $I_\pi$. Since the pair $(G,H)$ is a strongly-tempered spherical pair, a theorem of Sakellaridis and Venkatesh \cite[Section 6]{SakVenkperiods} tells us that there exists a vector $W_0\in \calw(\pi)$ such that the local Rankin-Selberg period $\lam_{\pi}$ may be expressed as 
\[
\lam_\pi(W) = \int_{H(F)}[\calw(\pi,h)W,\hat{W}_0]_\pi dh.
\] 
With this, define the matrix coefficient 
\[
\Phi_0(g) =[\calw(\pi,g)W_0,\hat{W}_0]_\pi.
\]
Ichino and Zhang show in \cite[Appendix A]{ZhangFourier} that $\Phi_0$ satisfies the properties that the integral
\[
\Orb(\Phi_0,\ga) = \int_{H(F)}\int_{H(F)}\Phi_0(h^{-1}_1\ga h_2)dh_1dh_2
\]
is convergent on a subset $G(F)_{con}\subset G(F)$ the compliment of which has measure zero. Moreover, this orbital integral is non-zero on a subset of positive measure. In particular, since $G(F)^{Z-rss}$ is Zariski open and dense, there exists an element $\ga\in G(F)^{Z-rss}$ such that
\begin{equation}\label{sufficient}
\Orb(\Phi_0,\ga) \neq 0.
\end{equation}
Lemma \ref{Lem: supercuspidal non-vanishing} now follows from the following lemma and Theorem A.2 of \cite{ZhangFourier}, which states that there is a function $f'\in C^\infty_c(G(F)^{Z-rss})$ such that $I_\pi(f')\neq0$.
\begin{Lem}\cite[Lemma A.5]{ZhangFourier}
The function $g\mapsto \Orb(\Phi_0,g)$ on $G(F)$ is locally $L^1$ and for any $f\in C_c^\infty(G(F))$, we have
\[
I_\pi(f) = \int_{G(F)}f(g)\Orb(\Phi_0,g)dg.
\]
\end{Lem}
In particular, if $f'\in C^\infty_c(G(F)^{Z-rss})$ such that $I_\pi(f')\neq0$, the lemma implies that we cannot have $\Orb(\Phi_0,-)|_{G(F)^{Z-rss}}\equiv 0$. %The proof, however, works for any open dense subset of $G(F).$

Now take $f\in C_c^\infty(G(F))$ such that $f_\omega=\Phi_0$ and let $\ga\in G(F)^{Z-rss}$ be an element satisfying (\ref{sufficient}). Since $\ga$ is semi-simple and $f$ has compact support, the orbital integral is absolutely convergent and we may rearrange the integration to find
\begin{align*}
\Orb_\omega(f,\ga)&=\int_{Z_G(F)}\Orb(f,\ga z)\omega(z)dz\\ &=\int_{H(F)}\int_{H(F)}\int_{Z_G(F)}f(h^{-1}_1\ga zh_2)\omega(z)dzdh_1dh_2\\&= \Orb(\Phi_0,\ga) \neq0. \qedhere
\end{align*}
\end{proof}

In particular, this ensures that when we work globally, there always exists global test functions $f$ with  $I_{\pi}^\natural(f)\neq0$ that also satisfies the assumptions of Lemma \ref{Lem: supercuspidal non-vanishing} at at least one finite place. To ensure that we have a similar non-vanishing statement for $J_{\Pi}^{x,y,\natural}$ under such a $Z$-regular support assumption, we give a direct local transfer of relative characters in the split case. For simplicity, we work with the unnormalized distributions $J_{\Pi}^{x,y}$ and $I_{\pi}$.

We continue to assume that $F$ is local and now set $E=F\times F$. As before, we fix isomorphisms $\GL_k(E)\cong \GL_k(F)\times \GL_k(F)$ such that our unitary groups are given by
\[
U(V_n)\cong\{(g,g^\theta)\in \GL_n(F)\times \GL_n(F): g\in \GL_n(F)\}
\]
and 
\[
U(V_{n+1})\cong\{(g,g^\theta)\in \GL_{n+1}(F)\times \GL_{n+1}(F): g\in \GL_{n+1}(F)\}.
\]
Set $J_{\Pi}:=J^{w_n,w_{n+1}}_{\Pi}$.

\begin{Prop}
Consider matching smooth functions $f_1\otimes f_2\in C_c^\infty(G(F)\times G(F))$ and $f=f_1\ast f_2^{\theta\vee}\in C_c^\infty(G(F))$. Then for any irreducible representation $\pi$ of $G(F)$,
\[
J_{BC(\pi)}(f_1\otimes f_2) = I_\pi(f).
\]
\end{Prop}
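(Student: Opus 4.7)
The plan is a direct local computation that unfolds both sides using the explicit structure available when $E = F \times F$. In this case $BC(\pi) = \pi \boxtimes \pi$ as a representation of $G'(F) = G(F) \times G(F)$, the quadratic character $\eta$ is trivial, and the unitary subgroups become the $\theta$-twisted diagonals $\{(g, g^\theta)\}$; in particular $\calv_n \times \calv_{n+1}$ is a singleton $\{(w_n, w_{n+1})\}$ and $J_{BC(\pi)}$ denotes the associated local twisted spherical character. Because $f_1, f_2 \in C_c^\infty(G(F))$, all integrals and sums that appear will be absolutely convergent, so the computation is genuinely formal manipulation with no analytic subtleties beyond those handled in the cited FLO appendix.

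The key input is the explicit formula (\ref{eqn: split FLO identity}), which identifies the local FLO functional as a Whittaker pairing involving a matrix coefficient of $\pi$. Substituting it into the definition of $J_{BC(\pi)}$ and using the fact that both the Rankin--Selberg period $\lambda_{BC(\pi)}$ and the Whittaker inner product $[\cdot,\cdot]_{BC(\pi)}$ factorize over the two tensor components, the twisted spherical character becomes
\[
J_{BC(\pi)}(f_1 \otimes f_2) = \sum_{W_1, W_2} \frac{\lambda_\pi(\pi(f_1) W_1)\,\lambda_\pi(\pi(f_2) W_2)\,[\hat W_1,\ \calw(w_0,\hat\pi) W_2]_\pi}{[W_1,\hat W_1]_\pi\, [W_2,\hat W_2]_\pi},
\]
where each sum ranges over an orthogonal basis of $\calw(\pi)$.

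Next I would collapse the sum over $W_2$ using the Schur-orthogonality identity that expresses the identity operator on $\calw(\pi)$ in terms of dual bases,
\[
\sum_{W_2} \frac{[X,\ \calw(w_0,\hat\pi) W_2]_\pi\, \calw(g,\pi)W_2}{[W_2,\hat W_2]_\pi} \;=\; \calw(w_0 g,\pi) (\text{vector represented by } X),
\]
together with the isomorphism $\widehat{(\cdot)}: \calw(\pi)^\theta \iso \calw^{\psi^{-1}}(\pi^\vee)$ given by $\hat W(g) = W(g^\theta)$. Unfolding $\lambda_\pi(\pi(f_2) W_2)$ as an integral against $f_2$ and performing the substitution $g \mapsto w_0 g^\theta$ in the resulting Haar integral converts the action of $f_2$ on $W_2$ into convolution by $f_2^{\theta\vee}$ acting on the remaining Whittaker vector $W_1$, so that the double sum becomes the single sum
\[
\sum_{W_1} \frac{\lambda_\pi\bigl(\pi(f_1 \ast f_2^{\theta\vee}) W_1\bigr)\, \lambda_{\pi^\vee}^{\eta}(\hat W_1)}{[W_1,\hat W_1]_\pi} \;=\; I_\pi(f_1 \ast f_2^{\theta\vee}),
\]
where in the split case $\eta = 1$ so the twisted Rankin--Selberg period on $\hat\pi$ coincides with the ordinary one.

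The main technical obstacle is therefore purely bookkeeping: matching the $w_0$-twist appearing in (\ref{eqn: split FLO identity}) against the $\theta$-involution built into $f_2^{\theta\vee}$, and verifying that the Whittaker pairing $[\cdot,\cdot]_\pi$ is compatible with these operations up to the correct normalizing factors. Once these identifications are made carefully the identity falls out by direct substitution, which is why the proposition admits a clean proof independent of any global input.
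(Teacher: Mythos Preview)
Your approach is essentially the same as the paper's: substitute the split FLO identity (\ref{eqn: split FLO identity}) into the definition of $J_{BC(\pi)}$, collapse the resulting double sum to a single sum via orthogonality of the basis, and then use the $\theta$-isomorphism $\hat W(g)=W(g^\theta)$ to convert $\lambda_\pi(\pi(f_2)W)$ into $\lambda_{\hat\pi}(\hat\pi(f_2^\theta)\hat W)$, which after the adjoint/convolution identity yields $I_\pi(f_1\ast f_2^{\theta\vee})$. The paper streamlines the collapse step by first simplifying $\al_{(w_0,w_0)}^{\hat\pi}(\hat W'\otimes\hat W'')$ all the way to $[W'',\hat W']_\pi$, so the orthogonal-basis reduction is immediate rather than phrased as a Schur-type identity; your intermediate expression $[\hat W_1,\calw(w_0,\hat\pi)W_2]_\pi$ is notationally off (the second slot should land in $\calw^{\psi^{-1}}(\pi^\vee)$), but once corrected it is exactly the same computation.
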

\begin{proof} let $w_0=(w_n,w_{n+1})\in G(F)$. Identifying $\Pi=BC(\pi) = \pi\boxtimes \pi$, (\ref{eqn: split FLO identity}) implies that for any ${W}',{W}''\in \calw(\pi)$
\[
\al_{(h,{}^th)}^{\pi}(W'\otimes W'') = \prod_{k\in\{n,n+1\}}\left[\calw(h,\pi_k)W_k',\calw(w_k,\hat{\pi}_k)\hat{W}_k''\right]_{\pi_k},
\]
where $\pi=\pi_n\boxtimes\pi_{n+1}$. For the purposes of computing $J_{\Pi}$, we note that for $\hat{W}',\hat{W}''\in \calw(\hat{\pi})$
\begin{equation}\label{helpful}
\al_{(w_0,w_0)}^{\hat{\pi}}(\hat{W}'\otimes \hat{W}'') =  \prod_{k\in\{n,n+1\}}\left[\calw(w_k,\pi_k)W_k',\calw(w_k,\hat{\pi}_k)\hat{W}_k''\right]_{\pi_k}=\left[W'',\hat{W}'\right]_{\pi}.
\end{equation}
Thus, we have
\begin{align*}
J_{BC(\pi)}(f_1\otimes f_2)&=\sum_{W'\boxtimes W''}\frac{\lam_\pi(\pi(f_1)W')\lam_\pi(\pi(f_2)W'')\al_{(w_0,w_0)}^{\hat{\pi}}(\hat{W}'\otimes \hat{W}'') }{[W',\hat{W}']_\pi[W'',\hat{W}'']_\pi}\\
&=\sum_{W'}\frac{\lam_\pi(\pi(f_1)W')\lam_\pi(\pi(f_2)W') }{[W',\hat{W}']_\pi},
\end{align*}
where we use (\ref{helpful}) and the fact that we are summing over an orthogonal basis to reduce to the sum over a single basis element $W'\in \calw^{\psi}(\pi)$. We now claim that
\[
\lam_\pi(\pi(f_2)W')=\lam_{\hat{\pi}}(\hat{\pi}(f^\theta_2)\hat{W}').
\]
This follows from the fact that
\[
\hat{\pi}(f^\theta)\hat{W}(h) = \int_{G(F)}f^\theta(g)\hat{W}(hg)dg = \int_{G(F)}f(g^\theta){W}(h^\theta g^\theta)dg = \pi(f)W(h^\theta),
\]
and that the change of variables $h\mapsto h^\theta$ is unimodular. Applying this, we obtain
\begin{align*}
\sum_{W'}\frac{\lam_\pi(\pi(f_1)W')\lam_{\hat{\pi}}(\hat{\pi}(f^\theta_2)\hat{W}') }{[W',\hat{W}']_\pi}
&=\sum_{W'}\frac{\lam_\pi(\pi(f_1\ast f_2^{\theta\vee})W')\lam_{\hat{\pi}}(\hat{W}') }{[W',\hat{W}']_\pi}= I_\pi(f).
\end{align*}
\end{proof}

%%%%%%%%%%%%%%%%%%%%%%%%%%%%%%%%%%%%%%%%%%%%%%%%%%%%%%%%%%%%%%%%%%%%%%%%%%%%%%%%%%%%%%%%%%%%%%%%%%%%%%%%%%%%%%%%%%%%%%%%%%%%%%%%%%%%%%%%%%%%%%%%%%%%%%%%%%
\subsection{Unramified case}
We now consider the case that $E/F$ is an unramified extension of non-archimedean local fields. Let $\calh_{K'}(G'(F))$ denote the spherical Hecke algebra for $G'(F)$ and let $\calh_K(G(F))$ the corresponding algebra for $G(F)$. We have the morphism
\[
BC:\calh_{K'}(G'(F))\to\calh_K(G(F)),
\]
defined by $Sat(BC(\varphi))(\pi) = Sat(\varphi)(BC(\pi))$ for any spherical representation $\pi$ of $G(F)$. This morphism is injective. 
\begin{Rem}
    Our notation for the Satake transform $Sat(\varphi)$ here is inconsistent with the notation from Section \ref{Section: morphism Satake}. To relate them, let $Sat_k$ denote the transform from Section \ref{Section: morphism Satake} on $\GL_k(E)$, with $k\in \{n,n+1\}$.  If $\Pi = \Pi_n\boxtimes\Pi_{n+1}$ is an unramified representation of $\G'(F)$, the two notions are related by evaluating $\Pi$ at its Satake parameters. More precisely, if $\varphi = \varphi_n\otimes \varphi_{n+1}$ with $\varphi_k\in \calh_{K_{k,E}}(\GL_k(E))$, then 
    \[
    Sat(\varphi)(\Pi) = Sat_n(\varphi_n)(s_1,\ldots,s_n)Sat_{n+1}(\varphi_{n+1})(s_1',\ldots, s_{n+1}'),
    \]
    where $\pi_n$ is the spherical representation associated to the unramified principle series induced from 
    \[
   \left(\begin{array}{ccc}
a_1 &    &  \\
&\ddots     & \\
&&a_n
\end{array}\right)\in T_n(E)\longmapsto \prod_{i=1}^n|a_i|_E^{s_i-\frac{1}{2}(n+1-2i)},
    \]
    and similarly for $\pi_{n+1}$ with the variables $(s_1',\ldots, s_{n+1}')$.
\end{Rem}

\begin{Lem}\label{Lem: unramified spectral transfer}
Let $f'\in \calh_{K'}(G'(F))$ and let $(x,y)\in G'(F)\ast(w_n,w_{n+1})$, where $\ast$ denotes the action on $X_n\times X_{n+1}$. For any representation $\pi$ of $G(F)$, we have
\[
J^{x,y,\natural}_{BC(\pi)}(f') = I^\natural_\pi(BC(f')).
\]
\end{Lem}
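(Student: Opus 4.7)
The plan is to prove the identity by a direct local spectral computation. First, I would reduce to the case where $\pi$ is unramified: if $\pi^K = 0$ then by the Satake classification $\Pi = BC(\pi)$ also has $\Pi^{K'} = 0$, so the $K'$-biinvariant $f'$ (resp. the $K$-biinvariant $BC(f')$) acts as zero through $\Pi$ (resp. $\pi$), and both sides of the claimed identity vanish. Assume $\pi$ is unramified and write $W_0^\pi \in \calw(\pi)$ and $W_0^\Pi \in \calw(\Pi)$ for the normalized spherical Whittaker vectors.

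Since $f'$ is $K'$-biinvariant, $\Pi(f')$ factors through the projection onto the one-dimensional spherical line $\Pi^{K'} = \cc W_0^\Pi$ and acts there by the eigenvalue $Sat(f')(\Pi)$. Choosing an orthogonal basis $\{W_i\}$ of $\calw(\Pi)$ for the bilinear pairing $[\cdot,\cdot]^\natural_\Pi$ with $W_0$ spherical, one checks via the $K'$-invariance of $\hat W_0^\Pi$ that the $K'$-averaging operator $e_{K'}$ satisfies $e_{K'}(W_i) = \delta_{i,0} W_0$ on this basis. Hence only the $i=0$ term survives in the sum defining $J^{x,y,\natural}_\Pi(f')$, yielding
\[
J^{x,y,\natural}_\Pi(f') = Sat(f')(\Pi) \cdot \frac{\lam^\natural_\Pi(W_0^\Pi)\cdot \al^{\pi,\natural}_{x,y}(\hat W_0^\Pi)}{[W_0^\Pi,\hat W_0^\Pi]^\natural_\Pi}.
\]
The same orthogonality argument applied to $\calw(\pi)$ gives
\[
I^\natural_\pi(BC(f')) = Sat(BC(f'))(\pi) \cdot \frac{\lam^\natural_\pi(W_0^\pi)\cdot \lam^{\eta,\natural}_{\hat\pi}(\hat W_0^\pi)}{[W_0^\pi,\hat W_0^\pi]^\natural_\pi}.
\]

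The identity then follows by evaluating each spherical factor. The defining property of base change gives $Sat(BC(f'))(\pi) = Sat(f')(BC(\pi)) = Sat(f')(\Pi)$. The Rankin--Selberg factors $\lam^\natural_\Pi(W_0^\Pi)$, $\lam^\natural_\pi(W_0^\pi)$, and $\lam^{\eta,\natural}_{\hat\pi}(\hat W_0^\pi)$ all equal $1$ by (\ref{spherical Rankin}) (using that $\eta_v$ is unramified since $E_v/F_v$ is), and the normalized inner products equal $1$ by (\ref{eqn: normalized inner product}). For the FLO factor $\al^{\pi,\natural}_{x,y}(\hat W_0^\Pi)$, I take $(x,y) = (w_n, w_{n+1})$ as the orbit representative, an integral point in $X_n(\calo_F) \times X_{n+1}(\calo_F)$, whereupon (\ref{spherical FLO}) directly yields value $1$.

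The main obstacle in this plan is the careful $K'$-orthogonality step showing that only the spherical term survives in the trace-form sum; this requires precisely tracking the conventions for the bilinear pairing $[\cdot,\cdot]^\natural$ (in particular the use of a dual basis in place of a complex conjugate). A secondary subtlety is the independence of the final equality from the choice of representative $(x,y)$ within the $G'(F)$-orbit of $(w_n,w_{n+1})$: both sides depend only on the orbit, the right-hand side manifestly and the left-hand side because the $\GL_n(E)\times \GL_{n+1}(E)$-equivariance of the FLO functional allows us to translate to the integral representative without changing the collapsed sum. With this bookkeeping in hand, both sides equal $Sat(f')(\Pi)$ and the lemma is proved.
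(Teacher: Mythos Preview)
Your proof is correct and follows essentially the same approach as the paper: reduce to the unramified case, collapse the sum defining each spherical character to the single spherical term via the $K'$- (resp.\ $K$-) biinvariance of $f'$ (resp.\ $BC(f')$), evaluate the normalized local factors using (\ref{eqn: normalized inner product}), (\ref{spherical Rankin}), and (\ref{spherical FLO}) to get $Sat(f')(BC(\pi))$ and $Sat(BC(f'))(\pi)$ respectively, and conclude by the defining property of $BC$. Your write-up is in fact more careful than the paper's about the orthogonality step and the dependence on the orbit representative.
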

\begin{proof}
When $\pi$ is not unramified, both sides are zero so we can assume $\pi$ is unramified. Choosing a basis containing the normalized spherical vector, we see that $f'\in\calh_{K'}(G'(F))$ acts by projecting onto the unramified line, on which it acts by the spherical eigenvalue $Sat(f')(BC(\pi))$. 

Recalling the normalizations of the various functionals on the normalized spherical Whittaker vector (\ref{eqn: normalized inner product}),(\ref{spherical Rankin}), and (\ref{spherical FLO}), the left hand side reduces to a single term given by
\[
J^{x,y,\natural}_{BC(\pi)}(f')=Sat(f')(BC(\pi))\frac{\lam^\natural_{BC(\pi)}(W_0^{BC(\pi)})\al_{x,y}^{\pi,\natural}(W_0^{BC(\pi)})}{[W_0^{BC(\pi)},\hat{W}_0^{BC(\pi)}]_{BC(\pi)}^\natural} = Sat(f')(BC(\pi)),
\]
where $W_0^{BC(\pi)}$ is the normalized spherical Whittaker function for $BC(\pi)$ on which
\[
\lam^\natural_{BC(\pi)}(W_0^{BC(\pi)})=\al_{x,y}^{\pi,\natural}(W_0^{BC(\pi)})=[W_0^{BC(\pi)},\hat{W}_0^{BC(\pi)}]_{BC(\pi)}^\natural=1.
\] A similar argument shows
\[
I^\natural_\pi(BC(f'))=Sat(BC(f'))(\pi)\frac{\lam_\pi^\natural(W_0^\pi)\lam_\pi^{\eta,\natural}(W_0^\pi)}{[W_0^{\pi},\hat{W}_0^{\pi}]_{\pi}^\natural}=Sat(BC(f'))(\pi),
\]
where $W_0^{\pi}$ is the normalized spherical Whittaker function for $\pi$.  The result follows from the definition of the base change homomorphism $BC$.
\end{proof}

\subsection{Weak transfer of relative characters}

For non-split places more generally, the global Theorem \ref{Prop: main global tool} implies the following weak local spectral transfer of relative characters. %
\begin{Prop}\label{Prop: weak transfer}
Assume that $E/F$ is a quadratic extension of number fields such that every archimedean place $v|\infty$ of $F$ splits in $E$. Let $\Pi=BC(\pi)$ be an irreducible cuspidal automorphic representation of $G'(\A_F)$ and $(x,y)\in X_n\times X_{n+1}$ such that there exists a \emph{nice} test function $f'$ such that $$J_{\Pi}^{x,y}(f')\neq 0.$$

Then for any non-split place $v_0$ of $F$, there exists a non-zero constant $C(\Pi_{v_0},x,y)\in \cc^\times$ depending only on $(x,y)$ and $\Pi_{v_0}$ such that for any matching function $f_{v_0}\in C_c^\infty(G(F_{v_0}))$ and $\{f'_{x',y'}\}_{(x',y')}$, where
\[
f'_{x',y'} =\begin{cases}
    f'_{v_0}&: (x',y') = (x,y),\\
    0&: \text{otherwise,}
\end{cases}
\]we have
\begin{equation*}%\label{eqn: weak spectral transfer}
J_{\Pi_{v_0}}^{x,y,\natural}(f'_{v_0})=C(\Pi_{v_0},x,y)I^\natural_{\pi_{v_0}}(f_{v_0}).
\end{equation*}
\end{Prop}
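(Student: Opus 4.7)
The plan is to specialize the global spectral identity of Theorem \ref{Prop: main global tool} to varying local test functions at $v_0$ while holding everything else fixed, thereby extracting a local proportionality.

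First I would set up the auxiliary global data. By hypothesis there exists a nice $f'_* = \prod_v f'_{*,v}$ with $J_\Pi^{x,y}(f'_*) \neq 0$. Applying the factorization formula \eqref{eqn: twisted Bessel factors}, this gives that $J^{x,y,\natural}_{\Pi_v}(f'_{*,v}) \neq 0$ for every place $v$ and that $L(\tfrac{1}{2}, \Pi_n \times \Pi_{n+1}) \neq 0$. Moreover, the niceness of $f'_*$ forces $\Pi_{v_1}$ to be supercuspidal at some split place $v_1 \neq v_0$ (since $v_0$ is non-split by assumption), so the hypotheses of Theorem \ref{Prop: main global tool} are met. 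Applying that theorem to $f'_*$, we obtain a matching $f_* = \prod_v f_{*,v}$ with
\[
\prod_v J^{x,y,\natural}_{\Pi_v}(f'_{*,v}) \;=\; \prod_v I^\natural_{\pi_v}(f_{*,v}).
\]
Since the left-hand side is non-zero, each $I^\natural_{\pi_v}(f_{*,v})$ is non-zero as well.

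Next I would run the isolation argument at $v_0$. Given any pair of matching test functions $(f'_{v_0}, f_{v_0})$, I form the global functions
\[
\tilde f' := f'_{v_0} \otimes \bigotimes_{v \neq v_0} f'_{*,v}, \qquad \tilde f := f_{v_0} \otimes \bigotimes_{v \neq v_0} f_{*,v}.
\]
These match at every place, and $\tilde f'$ remains nice because the supercuspidal and regular-support requirements are satisfied at split places $v_1, v_2 \neq v_0$. Applying Theorem \ref{Prop: main global tool} (after invoking Lemma \ref{Lem: reduction of support} at $v_0$ if necessary to place $\tilde f_{v_0}$ within a matching function supported on $G[x,y]$) then yields
\[
J^{x,y,\natural}_{\Pi_{v_0}}(f'_{v_0}) \prod_{v \neq v_0} J^{x,y,\natural}_{\Pi_v}(f'_{*,v}) \;=\; I^\natural_{\pi_{v_0}}(f_{v_0}) \prod_{v \neq v_0} I^\natural_{\pi_v}(f_{*,v}).
\]
Defining
\[
C(\Pi_{v_0}, x, y) := \frac{\prod_{v \neq v_0} I^\natural_{\pi_v}(f_{*,v})}{\prod_{v \neq v_0} J^{x,y,\natural}_{\Pi_v}(f'_{*,v})} \in \cc^\times
\]
gives the desired relation. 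This constant does not depend on the choice of matching pair $(f'_{v_0}, f_{v_0})$, because the ratio $J^{x,y,\natural}_{\Pi_{v_0}}(f'_{v_0}) / I^\natural_{\pi_{v_0}}(f_{v_0})$ must equal the fixed quantity on the right for every matching pair where the denominator is non-zero.

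The main technical obstacle is ensuring that the identity obtained from Theorem \ref{Prop: main global tool} transfers back to the \emph{original} pair $(f'_{v_0}, f_{v_0})$ rather than just a modification with support in $G[x,y]$. This amounts to checking that the local distribution $I^\natural_{\pi_{v_0}}$ is insensitive to the modification of $\tilde f_{v_0}$ afforded by Lemma \ref{Lem: reduction of support}, which in turn is a statement about $I^\natural_{\pi_{v_0}}$ factoring through orbital integrals on matching orbits; this compatibility can be read off from the orbital-integral representation of the Bessel distribution \cite[Lemma A.5]{ZhangFourier} as used in Lemma \ref{Lem: supercuspidal non-vanishing}. Once this compatibility is in hand, the proportionality is deduced uniformly across all matching pairs.
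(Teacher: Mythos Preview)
Your proposal is correct and follows essentially the same route as the paper: fix good auxiliary data away from $v_0$ using the nonvanishing hypothesis, then apply Theorem~\ref{Prop: main global tool} together with the factorizations~\eqref{eqn: twisted Bessel factors} and~\eqref{eqn: Bessel factors} to peel off a local proportionality at $v_0$. The paper organizes it with a single invocation of Theorem~\ref{Prop: main global tool} (fixing ${f'}^{v_0}$ first and then varying $f'_{v_0}$) rather than two, but the substance is identical.

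One comment on the obstacle you flag at the end. Your appeal to \cite[Lemma~A.5]{ZhangFourier} only applies when $\pi_{v_0}$ is supercuspidal, which is not assumed here. The paper's proof also does not belabor this step, writing simply ``for any function $f_{v_0}$ matching $f'_{v_0}$'' as if it came directly from Theorem~\ref{Prop: main global tool}. What makes it work is that matching $f_{v_0}$ with the collection $\{f'_{v_0},0,0,0\}$ determines \emph{all} of the orbital integrals $\Orb^{\eta_{v_0}}(f_{v_0},\gamma)$ on the regular semi-simple locus (not just those for orbits coming from $(x,y)$), and the spherical character $I^\natural_{\pi_{v_0}}$, being an $H\times(H,\eta)$-equivariant distribution, depends on $f_{v_0}$ only through these; hence the support modification of Lemma~\ref{Lem: reduction of support} cannot change its value.
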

\begin{proof}
Let $\A_F^{v_0}$ denote the adeles away from the place $v_0$ and let ${f'}^{v_0}= \prod_{v\neq v_0}f'_{v}\in C_c^\infty(G'(\A_F^{v_0})$ be a factorizable test function. Using the factorization (\ref{eqn: twisted Bessel factors})  we have the equality
\[
J_{\Pi}^{x,y}(f'_{v_0}\otimes {f'}^{v_0}) = CJ_{\Pi_{v_0}}^{x,y,\natural}(f'_{v_0}).
\]
Since $J_{\Pi}^{x,y}\not\equiv 0$, we may choose ${f'}^{v_0}$ so that $C\neq0$. Moreover, since the distribution is non-vanishing for nice test functions, we know that there is a finite split place $v_1$ (necessarily distinct from $v_0$) such that $\Pi_{v_1}$ is supercuspidal. We may assume that $f'_{v_1}\in C_c^\infty(G'(F_{v_1}))$ is essentially a matrix coefficient of $\Pi_{v_1}$. Additionally, we may impose that there exists a second split place $v_2$ such that the local test function $f'_{v_2}$ is supported in the $Z$-regular semi-simple locus. In particular, we know that 
\begin{itemize}
\item the test function $f'_{v_0}\otimes {f'}^{v_0}$ is nice, and 
\item $C=4\frac{L\left(\frac{1}{2},\Pi_n\times \Pi_{n+1}\right)}{\Res_{s=1}(L(s,\pi\times \pi^\vee))}\prod_{v\neq v_0}J^{x,y,\natural}_{\Pi_v}(f'_{v})\neq0$.
\end{itemize}
Now by Theorem \ref{Prop: main global tool}, there exists a factorizable test function $f^{v_0}=\prod_{v\neq v_0}f_v\in C_c^\infty(G(\A_F^{v_0}))$ such that for any function $f_{v_0}$ as in the statement of the proposition matching $f'_{v_0}$, the test function $f=f_{v_0}\otimes f^{v_0}$ is nice and 
\[
\frac{1}{L(1,\eta)^2}J_{\Pi}^{x,y}(f')=I_\pi(f).
\]
Since we chose ${f'}^{v_0}$ such that $C\neq0$, the factorization (\ref{eqn: Bessel factors}) implies that there is a non-zero constant $C'$ such that 
\begin{equation*}
CJ^{x,y,\natural}_{\Pi_{v_0}}(f'_{v_0})=J_{\Pi}^{x,y}(f')=4I_\pi(f)=C'I^\natural_{\pi_{v_0}}(f_{v_0}).
\end{equation*}
Since the initial test function $f'_{v_0}$ was arbitrary, the constant $$C(\Pi_{v_0},x,y) = C^{-1}C'\neq0$$ is independent of functions $f'_{v_0}$ and $f_{v_0}$, finishing the proof.
\end{proof}

Combining this with our unramified computation, we have the following corollary for unramified representations.
\begin{Cor}\label{Cor: unramified spectral transfer}
Let all notations be as in the previous proposition. If $\Pi_{v_0}$ is unramified and $(x,y)\in G'(F)\ast(w_n,w_{n+1})$, then $C(\Pi_{v_0},x,y)=1$.
\end{Cor}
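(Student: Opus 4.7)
The plan is to pin down $C(\Pi_{v_0},x,y)$ by evaluating both sides of the identity in Proposition \ref{Prop: weak transfer} at the \emph{fundamental-lemma matching pair} of spherical test functions at $v_0$, and comparing with the direct computation provided by Lemma \ref{Lem: unramified spectral transfer}. Since $\Pi_{v_0}$ is unramified, we may write $\Pi_{v_0}=BC(\pi_{v_0})$ with $\pi_{v_0}$ an unramified representation of $G(F_{v_0})$. The assumption $(x,y)\in G'(F)\ast(w_n,w_{n+1})$ guarantees that the local pair $(x_{v_0},y_{v_0})$ lies in the local $G'(F_{v_0})$-orbit of the standard split forms $(w_n,w_{n+1})$; by the $G'(F_{v_0})$-equivariance built into the definitions of the FLO functionals $\alpha^{\pi_{v_0}}_{x,y}$ and of the transfer of orbital integrals, we may conjugate to reduce to the case $(x_{v_0},y_{v_0})=(w_n,w_{n+1})$.

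Next, I would invoke Theorem \ref{Thm: JRL fundamental lemma}, which asserts that $f_{v_0}=\bfun_{K_{v_0}}$ and $f'_{v_0}=\bfun_{K'_{v_0}}$ form a matching pair at the pair of standard Hermitian forms. Proposition \ref{Prop: weak transfer} then yields
\[
J_{\Pi_{v_0}}^{x,y,\natural}\bigl(\bfun_{K'_{v_0}}\bigr) = C(\Pi_{v_0},x,y)\cdot I_{\pi_{v_0}}^\natural\bigl(\bfun_{K_{v_0}}\bigr).
\]
On the other hand, Lemma \ref{Lem: unramified spectral transfer} applies directly to the spherical function $\bfun_{K'_{v_0}}\in \calh_{K'}(G'(F_{v_0}))$ to give
\[
J_{\Pi_{v_0}}^{x,y,\natural}\bigl(\bfun_{K'_{v_0}}\bigr) = I_{\pi_{v_0}}^\natural\bigl(BC(\bfun_{K'_{v_0}})\bigr) = I_{\pi_{v_0}}^\natural\bigl(\bfun_{K_{v_0}}\bigr),
\]
since the base-change homomorphism takes the unit of $\calh_{K'}(G'(F_{v_0}))$ to the unit of $\calh_K(G(F_{v_0}))$.

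To finish, one checks that $I_{\pi_{v_0}}^\natural(\bfun_{K_{v_0}})\neq 0$: the computation in the proof of Lemma \ref{Lem: unramified spectral transfer} shows that $I_{\pi_{v_0}}^\natural(\bfun_{K_{v_0}})=\mathrm{Sat}(\bfun_{K_{v_0}})(\pi_{v_0})=1$. Dividing the two displayed identities then forces $C(\Pi_{v_0},x,y)=1$.

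The only subtle point—what I would regard as the main obstacle—is the reduction-by-conjugation step at $v_0$. One must verify that both the normalized twisted spherical character $J^{x,y,\natural}_{\Pi_{v_0}}$ and the constant $C(\Pi_{v_0},x,y)$ produced by Proposition \ref{Prop: weak transfer} depend on $(x_{v_0},y_{v_0})$ only through its $G'(F_{v_0})$-orbit. For the former this is immediate from the $GL_n(E_{v_0})\times GL_{n+1}(E_{v_0})$-invariance of the FLO functionals together with the Whittaker-model formulation of $J^{x,y,\natural}_{\Pi_{v_0}}$; for the latter it follows from the fact that smooth transfer and orbital integrals are transported compatibly under conjugation of the Hermitian pair. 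Once this invariance is in hand, the argument above pins the constant down to $1$ on the entire orbit $G'(F)\ast (w_n,w_{n+1})$.
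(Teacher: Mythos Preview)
Your proof is correct and follows exactly the same approach as the paper: apply Proposition \ref{Prop: weak transfer} to the fundamental-lemma matching pair $(\bfun_{K'_{v_0}},\bfun_{K_{v_0}})$, use Lemma \ref{Lem: unramified spectral transfer} together with $BC(\bfun_{K'_{v_0}})=\bfun_{K_{v_0}}$ to compute both sides as $1$, and conclude $C(\Pi_{v_0},x,y)=1$. Your extra care about the conjugation-invariance step is not misplaced but is largely unnecessary, since Lemma \ref{Lem: unramified spectral transfer} is already stated for all $(x,y)$ in the local orbit $G'(F_{v_0})\ast(w_n,w_{n+1})$, and the global hypothesis $(x,y)\in G'(F)\ast(w_n,w_{n+1})$ certainly implies the local one.
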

\begin{proof}
By the proposition, for any pair of matching functions $f'_{v_0}\in C_c^\infty(G'(F_{v_0}))$ and $f_{v_0}\in C_c^\infty(G(F_{v_0}))$ we have
\begin{equation*}
J_{\Pi_{v_0}}^{x,y,\natural}(f'_{v_0})=C(\Pi_{v_0},x,y)I^\natural_{\pi_{v_0}}(f_{v_0}),
\end{equation*}
for some $C(\Pi_{v_0},x,y)\in\cc^\times$.

Assume now that $\Pi_{v_0}$ is unramified, and recall that the fundamental lemma (Theorem \ref{Thm: JRL fundamental lemma}) states that we can take $f'_{v_0}=\bfun_{K'_{{v_0}}}$ and $f_{v_0}=\bfun_{K_{v_0}}$. As these functions lie in the spherical Hecke algebras of the two groups and $BC(f'_{v_0})= f_{v_0}$, Lemma \ref{Lem: unramified spectral transfer} implies that 
\[
J_{\Pi_{v_0}}^{x,y,\natural}(f'_{v_0})=I^\natural_{\pi_{v_0}}(f_{v_0})=1\neq0.
\] It follows that $C(\Pi_{v_0},x,y)=1$.
\end{proof}

\section{A base change fundamental lemma and the proof of Theorem \ref{Thm: full fundamental lemma}}\label{Section: final section}
The following application of our local and global spectral results will suffice to prove Theorem \ref{Thm: JR fundamental lemma for algebra}.

\begin{Thm}\label{Thm: base change Twisted Jacquet--Rallis}
Let $E/F$ be an unramified extension of $p$-adic local fields. For any $\varphi\in \calh_{K'}(G'(F))$, the function $BC(\varphi)\in \calh_K(G(F))$ matches the functions $\{\phi_{x,y}\}_{(x,y)\in \calv_n\times \calv_{n+1}}$ where
\[
\phi_{x,y}=\begin{cases} \quad\varphi&:(x,y)=(w_n,w_{n+1}),\\\quad0&:\text{otherwise}.\end{cases}
\]
More precisely, for any $\ga\in G(F)^{rss}$ if $\ga\xleftrightarrow{w_n,w_{n+1}}\de$, then
\begin{equation}\label{goal}
\Omega(\ga)\Orb^{\eta}(BC(\varphi),\ga)=\Orb(\varphi,\de),
\end{equation}
 and $\Orb^\eta(BC(\varphi),\ga)=0$ if $\ga\xleftrightarrow{x,y}\de$ for some $(x,y)$ not in the $G'(F)$-orbit of $(w_n,w_{n+1})$.
\end{Thm}

 Note that for $\varphi\in\calh_{K'}(G'(F))$, we automatically have $\supp(BC(\varphi))\subset G[w_n,w_{n+1}]$. In particular, if $\ga\xleftrightarrow{x,y}\de$ for some $(x,y)$ not in the $G'(F)$-orbit of $(w_n,w_{n+1})$, then 
\[
\Orb^\eta(BC(\varphi),\ga)=0,
\]giving the vanishing statement of the theorem. We may thus focus on matching pairs $\ga\xleftrightarrow{w_n,w_{n+1}}\de$ of regular semi-simple elements. 

The idea of the proof is to reduce this statement to the spectral transfer in Lemma \ref{Lem: unramified spectral transfer}. To make this more precise, we begin by reducing the theorem to a matching of orbital integrals that arise in the geometric expansions of relative trace formulas \eqref{eqn: linear RTF} and \eqref{eqn: twisted RTF}.
\begin{Lem}
Let $E/F$ be an unramified extension of $p$-adic local fields, and let $\varphi\in \calh_{K'}(G'(F))$. Suppose that for any unitary central character $\omega:Z_G(F)\to \cc^\times$ with base change $\omega'$ and any matching $Z$-regular semi-simple orbits $\ga\xleftrightarrow{w_n,w_{n+1}}\de$, 
\begin{equation}\label{eqn: reduce to integrals with central character}
 \Omega(\ga)\Orb_\omega^{\eta}(BC(\varphi),\ga) =\Orb_{\omega'}(\varphi,\de)   
\end{equation}
and $\Orb_\omega^{\eta}(BC(\varphi),\ga)=0$ when $\ga\xleftrightarrow{x,y}\de$ for some $(x,y)$ not in the $G'(F)$-orbit of $(w_n,w_{n+1})$. Then Theorem \ref{Thm: base change Twisted Jacquet--Rallis} holds.
\end{Lem}
\begin{proof}
    To begin, we claim that it suffices to prove (\ref{goal}) for $Z$-regular semi-simple classes. Indeed, the equality reduces via  (\ref{eqn: linear OI reduction}) and (\ref{eqn: twisted OI reduction}) to 
\begin{equation*}%\label{goal}
\Orb^{U(V_n)}(\widetilde{\varphi},\pi_{w_n,w_{n+1}}(\de)) = \omega(\ga_1^{-1}\ga_2)\Orb^{\GL_n(F),\eta}(\widetilde{BC(\varphi)},\ga_1^{-1}\ga_2),
\end{equation*}
where $\ga=(\ga_1,\ga_2).$ The orbital integrals here are precisely those arising in Jacquet--Rallis transfer (\ref{eqn: regular matching final}), implying that they are locally constant on the regular semi-simple locus \cite[Lemma 3.12]{ZhangFourier}.

Stated in terms of the categorical quotient
\begin{align*}
\cala'=\GL_{n+1}\sslash\GL_{n}\cong X_{n+1}\sslash\U(V_n)\cong \A^{2n+1},
\end{align*}
where the identification with $\A^{2n+1}$ is given by the invariant maps $c$ and $c^{w_n,w_{n+1}}$  as in \eqref{eqn: invariant map JR}, we may view
\[
\Phi_\varphi(x)=\Orb^{U(V_n)}(\widetilde{\varphi},\pi_{w_n,w_{n+1}}(\de)) - \omega(\ga_1^{-1}\ga_2)\Orb^{\GL_n(F),\eta}(\widetilde{BC(\varphi)},\ga_1^{-1}\ga_2)
\]as a smooth function on the regular semi-simple locus ${\cala'}^{rss}(F)$, where $c(\ga)=c^{w_n,w_{n+1}}(\de)=x$. Arguing as in the proof of Proposition \ref{Prop: second reduction}, it suffices to show that $\Phi_\varphi\equiv0$ on the open dense set
\[
\{(a_1,\ldots,a_{2n+1})\in {\cala'}^{rss}(F): a_1\neq0\:\text{and}\: a_{2n+1}\neq0\}.
\]
This is precisely the $Z$-regular semi-simple locus (see Section \ref{Section: dealing with center}), proving the claim. 

We may thus assume that $\ga\xleftrightarrow{w_n,w_{n+1}}\de$ are matching $Z$-regular semi-simple elements. Consider the function $\widetilde{O}_\varphi: Z_{G'}(F)\to \cc$ given by
\begin{align*}
\widetilde{O}_\varphi(z):=\widetilde{O}_\varphi^{\ga,\de}(z) &= \Orb(\varphi,\de z)-\Omega(\ga\Nm(z))\Orb^{\eta}(BC(\varphi),\ga\Nm(z))\\ &= \Orb(\varphi,\de z)-\Omega(\ga)\Orb^{\eta}(BC(\varphi),\ga\Nm(z)).
\end{align*}
Recalling that $\Orb(\varphi,z\de) = \Orb(\varphi,z'\de)$ if $\Nm(z) = \Nm(z')$ by Lemma \ref{Lem: Z-reg on unitary side}, we see that $\widetilde{O}_\varphi$ factors through the norm map
\[
\widetilde{O}_\varphi = {O}_\varphi\circ \Nm,
\]
to give a function ${O}_\varphi: \Nm(Z_{G'}(F))\to \cc$. For any unitary character $\omega: Z_{G}(F)\to S^1$, we consider the Fourier transform
\begin{align*}
    \hat{O}_{\varphi}(\omega) &= \int_{\Nm(Z_{G'}(F))}{O}_\varphi(z)\omega(z)dz\\
                            &=\Orb_{\omega'}(\varphi,\de)-\Omega(\ga)\Orb_\omega^{\eta}(BC(\varphi),\ga),
   \end{align*}
which is absolutely convergent. 

Since $\varphi$ is unramified, it follows that $\widetilde{O}_\varphi(uz)=\widetilde{O}_\varphi(z)$ for any $u\in Z_{G}(\calo_F)\cong (\calo_F^\times)^2$. In particular, $\hat{O}_{\varphi}(\omega) = 0$ unless $\omega$ is unramified, so that there exist unique real numbers 
\[
0\leq t_1,t_2< \frac{\pi}{\log(q)}
\]
such that if $(z_1,z_2)\in \Nm(Z_{G'}(F))$, then $\omega(z_1,z_2) = |z_1|_F^{it_1}|z_2|_F^{it_2}$. By Pontryagin duality on the discrete group $\Nm(Z_{G'}(F))/Z_{G}(\calo_F)$, we obtain
\[
{O}_\varphi(z)=\int_{T_q}\hat{O}_{\varphi}(\omega)\omega(z)^{-1}d\omega,
\]
where $T_q\simeq (S^1)^2$ is the compact group of unramified unitary characters, and $d\omega$ is the appropriate Haar measure. This implies the lemma since if the matching \eqref{eqn: reduce to integrals with central character} holds for all $\omega$, then $\hat{O}_{\varphi}\equiv0$.
\end{proof}

It therefore suffices to fix an arbitrary unramified unitary character $\omega$ of $Z_G(F)$ and prove \eqref{eqn: reduce to integrals with central character}. We now outline the strategy. The first observation is that for any unramified representation $\pi$ of $G(F)$ with $\Pi=BC(\pi)$, Lemma \ref{Lem: unramified spectral transfer} tells us that for any $\varphi\in \calh_{K'}(\G'(F))$
\begin{equation}\label{eqn: tool for the win2}
J^{w_n,w_{n+1},\natural}_{\Pi}(\varphi)=I^\natural_{\pi}(BC(\varphi)).    
\end{equation}
The idea is to use the comparison of relative trace formulas of Theorem \ref{Prop: main global tool} to deduce the matching of orbital integrals in Theorem \ref{Thm: base change Twisted Jacquet--Rallis} from this spectral identity.

To do this, we first globalize the problem by fixing an extension of number fields $\cale/\calf$ with a fixed finite place $v_0$ such that $\cale_{v_0}/\calf_{v_0}\cong E/F$ and comparing global orbital integrals. This relies on an approximation argument (Lemma \ref{Lem: approx glob}) to relate local matching of orbital integrals to global orbits. Using our local results from Section \ref{Section: weak comparison}, we then construct a matching pair of test functions so that the entire geometric expansions of the distributions $J_{\omega'}$ and $I_\omega$ are non-zero and supported at our \emph{single regular semi-simple orbit} (see Proposition \ref{Prop: good global test function}). The precise comparison of factorizable distributions in Theorem \ref{Prop: main global tool} then relates the resulting global transfer of relative characters to local relative characters. Augmenting our global test functions at the single place $v_0$, Corollary \ref{Cor: unramified spectral transfer} and \eqref{eqn: tool for the win2} combine to imply the desired identity of orbital integrals.

\subsection{Proof of Theorem~\ref{Thm: base change Twisted Jacquet--Rallis}}
Fix $\varphi\in\calh_{K'}(G'(F))$ and let $\de\in G'(F)^{rss}$. As noted above, we may assume there exists a matching pair $\ga\xleftrightarrow{w_n,w_{n+1}}\de$ of $Z$-regular semi-simple elements.

We now globalize our quadratic extension. That is, we let $\cale/\calf$ be a quadratic extension of number fields such that every archimedean place of $\calf$ splits in $\cale$ and there exists a place $v_0$ of $\calf$ such that $\cale_{v_0}/\calf_{v_0}\cong E/F$. 
 We also set aside two distinct split places $v_{cusp}$ and $v_{reg}$.

 Fix unramified unitary character $\omega$ of $Z_G(F)$, and by an abuse of notation we let $\omega=\prod_v\omega_v$ be a global unitary character of $[Z_G]$ such that $\omega_{v_0}=\omega$ (it is clear that such an $\omega$ exists). Let $\omega'$ denote the base change of $\omega$ to $\cale$.
 
We now construct nice global matching test functions $f'$ and $f$ such that the distributions 
\[
J^{w_n,w_{n+1}}_{\omega'}(f')\:\text{ and }\: I_{\omega_{i,j}}(f)\qquad (\text{for }(i,j)\in \{0,1\}^2)
\] have particularly simple geometric expansions. Let $\pi_{v_{cusp}}$ be a supercuspidal automorphic representation of $G(\calf_{v_{cusp}})$. By Lemma \ref{Lem: supercuspidal non-vanishing}, we may find a test function $f_{v_{cusp}}$ which is essentially a matrix coefficient of $\pi_{v_{cusp}}$ and such that there exists a $Z$-regular semi-simple element $\ga_{cusp}$ such that\footnote{For ease of notation, we drop the superscripts indicating the place in notation for the orbital integrals throughout the proof. }
\[
\Orb^{\eta}_\omega(f_{v_{cusp}},\ga_{cusp})\neq0.
\]
Let $f'_{v_{cusp}}$ be essentially a matrix coefficient of $BC(\pi_{cusp})$ matching $f_{v_{cusp}}$; it is clear that such an $f'_{v_{cusp}}$ exists. Then we know
\[
\Orb_\omega(f'_{v_{cusp}},\de_{cusp}) = \Omega_{v_{cusp}}(\ga_{cusp})\Orb^{\eta}_\omega(f_{v_{cusp}},\ga_{cusp})\neq0,
\]
where $\de_{cusp}\leftrightarrow \ga_{cusp}$.

For the place $v_{reg}$, we want to choose $f_{v_{reg}}$ possessing non-vanishing $Z$-regular semi-simple orbital integrals such that $\supp(f_{v_{reg}})\subset G(\calf_{v_{reg}})^{rss}.$ Such functions certainly exist; for example, if we let $\pi_{reg}$ be a supercuspidal representation of $G(\calf_{v_{reg}})$, we can apply Lemma \ref{Lem: supercuspidal non-vanishing} to obtain an $\widetilde{f}_{v_{reg}}$ and a $Z$-regular semi-simple element $\ga_{reg}$ such that
\[
\Orb^{\eta}_\omega(f_{v_{reg}},\ga_{reg})\neq0.
\]
Multiplying  $\tilde{f}_{v_{reg}}$ by the indicator function of a compact open subset $Z\subset  G(\calf_{v_{reg}})^{rss}$ such that $$Z_G(\calf_{v_{reg}})H(\calf_{v_{reg}})\ga_{reg}H(\calf_{v_{reg}})\subset Z$$
gives such a function.
We now similarly obtain $f'_{v_{reg}}$ and $\de_{reg}$ matching $f_{v_{reg}}$ and $\ga_{reg}$ so that
\[
\Orb_{\omega'}(f'_{v_{reg}},\de_{reg}) = \Omega_{v_{reg}}(\ga_{reg})\Orb^{\eta}_{\omega}(f_{v_{reg}},\ga_{reg})\neq0.
\]

 To study orbital integrals at $\ga$ and $\de$ by global means, we first approximate these points by global elements. 
 \begin{Lem}\label{Lem: approx glob} Let $\ga\xleftrightarrow{w_n,w_{n+1}}\de$ be our matching pair of $Z$-regular semi-simple elements. There exist matching global $Z$-regular semi-simple elements 
$a\in G(\calf)^{rss}$ and $b\in G'(\calf)^{rss}$ such that
\[
\Orb_{\omega'}(f'_{v_{cusp}},b)=\Orb_{\omega'}(f'_{v_{cusp}},\de_{cusp})=\Orb_\omega^{\eta}(f_{v_{cusp}},\ga_{cusp})=\Orb_\omega^{\eta}(f_{v_{cusp}},a)\neq0,
\]
\[
\Orb_{\omega'}(f'_{v_{reg}},b)=\Orb_{\omega'}(f'_{v_{reg}},\de_{reg})=\Orb_\omega^{\eta}(f_{v_{reg}},\ga_{reg})=\Orb_\omega^{\eta}(f_{v_{reg}},a)\neq0,
\]
\[
\Orb_{\omega'}(\varphi,b)=\Orb_{\omega'}(\varphi,\de),
\]
and
\[
\Omega(a)\Orb^{\eta}_\omega(BC(\varphi),a)= \Omega(\ga)\Orb^{\eta}_\omega(BC(\varphi),\ga).
\]
 \end{Lem}
 \begin{proof}
    Since the diagonal embeddings
\[
G(\calf)\hra G(F)\times G(\calf_{v_{cusp}})\times G(\calf_{v_{reg}})
\]
and
\[
G'(\calf)\hra G'(F)\times G'(\calf_{v_{cusp}})\times G'(\calf_{v_{reg}})
\]
are dense, local constancy of the orbital integrals implies that we may find $a$ and $b$ matching such that the claimed identities hold.
 \end{proof} 
 In particular, to prove (\ref{goal}), it suffices to prove the equality with $\ga=a$ and $\de=b$. For this we utilize the comparison of relative trace formulas of Section \ref{Section: comparison}. 
 
 \begin{Prop}\label{Prop: good global test function}
 \begin{enumerate}
     \item There exist functions $f=\bigotimes_v f_v\in C_c^\infty(G(\A_{\calf}))$ and $f'=\bigotimes_v f'_v\in C_c^\infty(G'(\A_{\calf}))$ which are an efficient matching pair of functions for $(\omega, \pi_{v_{cusp}})$ such that
\begin{equation}\label{eqn: isolation matching}
I_\omega(f) = \Orb^{\eta}_\omega(f,a)= \Orb_{\omega'}(f',b)=\frac{1}{4L(1,\eta)^2}J_{\omega'}^{w_n,w_{n+1}}(f') \neq0,
\end{equation}
where $\eta=\eta_{\cale/\calf}$.
\item\label{Final change to prop} With $f$ and $f'$ as above, set \[\text{$\hat{f}=BC(\varphi)\otimes\bigotimes_{v\neq v_0}f_v$ and $\hat{f}'=\varphi\otimes\bigotimes_{v\neq v_0}f'_{v}$.}\]
Then $\hat{f}$ and $\hat{f}'$ are  nice functions satisfying 
\[
 \Orb^{\eta}_\omega(\hat{f},a')= \Orb_{\omega'}(\hat{f}',b')=0
\]
for any global match regular semi-simple elements $a'\xleftrightarrow{w_n,w_{n+1}} b'$ lying in distinct orbits from $a\xleftrightarrow{w_n,w_{n+1}} b$. In particular, we have
\[
I_{\omega}(\hat{f}) = \Orb_\omega^{\eta}(\hat{f},a)\:\:\text{   and    }\:\: \frac{1}{4L(1,\eta)^2}J_{\omega'}^{w_n,w_{n+1}}(\hat{f}')=\Orb_{\omega'}(\hat{f}',b).
\]
 \end{enumerate}
 \end{Prop}

 \begin{proof}
Let $S$ be a finite set of places of $\calf$ containing all infinite places and  the places $v_0$, $v_{cusp}$, and $v_{reg}$ such that for each $v\notin S$, $a\in K_v$ and $b\in K'_v$. For every $v\in S\setminus{\{v_{cusp},v_{reg}\}}$, select matching $f'_v$  and $f_{v}$ such that
\[
\Orb_{\omega'}(f'_{v},b) = \Omega(a)\Orb^{\eta}_\omega(f_v,a)\neq0.
\]
For each place $v\in S$, let $C_v\subset G(\calf_v)$ be a compact set containing the support of $f_v$ and assume that $C_{v_0}$ is large enough to contain the support of $BC(\varphi)$; set $$C=\prod_{v\in S}C_v\times \prod_{v\notin S}K_v\subset G(\A_{\calf}).$$
For all places $v\notin S$, we take $f_v= \bfun_{K_v}$ and $f'_{v}=\bfun_{K'_v}$ to be the unit spherical functions. In particular, the fundamental lemma Theorem \ref{Thm: JR fundamental lemma} implies that
\[
\Orb(f'_{v},b) = \Omega(a)\Orb^{\eta}(f_v,a)\neq0
\]
for all $v\notin S$. The non-vanishing follows since $f'_{v}=\bfun_{K'_v}$ is a non-negative function with $f'_v(b)\neq0$. 
\begin{Lem}
    If $z\in Z_{G'}(\calf_v)$ then 
\[
\Orb(\bfun_{K'_v},bz)\neq 0 \text{ if and only if }z\in Z_{G'}(\calo_v).
 \] In particular, if $\omega_v'$ is a unramified unitary central character, then
 \[
 \Orb_{\omega'}(\bfun_{K'_v},b) = \int_{\Nm(Z_{G'}(\calo_v))}  \Orb(\bfun_{K'_v},bz)\omega'_v(z)dz =  \Orb(\bfun_{K'_v},b)\neq0.
 \]
\end{Lem}
\begin{proof}
    By definition, $\Orb(\bfun_{K'_v},bz)\neq 0$ implies that there exist $h_1\in H'(\calf_v)$ and $h_2\in H_{w_n,w_{n+1}}(\calf_v)$ such that $h_1^{-1}bzh_2\in K'_v$. In particular, the invariant polynomials $c^{w_n,w_{n+1}}_i$ from Section \ref{Section: invariant polys for quot} take integral values at $bz$. But if $z=(z_1I_n,z_2 I_{n+1})$, the final polynomial scales by $z_2$, so that
    \[
    c^{w_n,w_{n+1}}_{2n+1}(bz) = z_2c^{w_n,w_{n+1}}_{2n+1}(b).
    \]
    This forces $z_2\in \calo_v$. Similarly, $ c^{w_n,w_{n+1}}_1(bz) = z_1 c^{w_n,w_{n+1}}_1(b)$. Note that the assumption of $Z$-regularity is precisely the assumption that 
    \[
    c^{w_n,w_{n+1}}_1(b),c^{w_n,w_{n+1}}_{2n+1}(b)\neq0.
    \]
    The lemma follows.
\end{proof}

Now set $f= \otimes_vf_v$ and $f'=\otimes_vf'_{v}$. By linearity, we may assume without loss of generality that 
\[
\supp(f)\subset G_\A[{w_n,w_{n+1}}]:=\prod'_{v}G_v[w_n,w_{n+1}]
\]Our choices ensure that $f$ and $f'$ are an efficient matching pair of functions for $(\omega, \pi_{v_{cusp}})$ and that 
\[
\Orb_{\omega'}(f',b) = \Orb^{\eta}_{\omega}(f,a)\neq0.
\]

We claim that we may augment $f$ and $f'$ so that this is the only non-vanishing global orbital integral for $f$. Our assumption on the support of $f_{v_{reg}}$ already reduces this to $Z$-regular semi-simple classes.

To see this, recall that the matching of orbits may be characterized by the invariant polynomials (\ref{eqn: invariant map JR}), denoted by $\{c_i\}_{i=1}^{2n+1}$ for $G$ and $\{c_i^{{w_n,w_{n+1}}}\}_{i=1}^{2n+1}$ for $G'$.  For each $i$, the image of our compact set $C\subset G(\A_{\calf})$ under $c_i$ gives a compact subset $c_i(C)\subset \A_{\calf}$. Since $\calf\subset \A_{\calf}$ is discrete, the intersection $c_i(C)\cap \calf$ is finite for each $i=1,\ldots, 2n+1$. Since $\supp(f)\subset C$, this implies that $\Orb^\eta(f,a')=0$ outside of a finite set of orbits
\[
Q_f\subset H(F)\backslash G(F)^{Z-rss}/\Nm(Z_{G'}(F))H(F)
\]

Let $S_{aux}$ be a set of $|Q_f|-1$ unramified places of $\calf$ disjoint from $S$, and fix a bijection between $S_{aux}$ and $Q_f-\{a\}$.
For each $v\in S_{aux}$, set $\widetilde{f}_{v} = \bfun_{G_{v}[l_v]}\cdot\bfun_{K_{v}}$ and $\widetilde{f}'_{v} = \bfun_{G'_{v}[l_v]}\cdot\bfun_{K'_{v}}$, where
\[
G_{v}[l_v]=\{g\in G(\calf_{v}): c_i(g)\in c_i(a)+\mathfrak{p}_{v}^{l_v}, \text{ for all $i=1,\ldots,2n+1$}\}
\]
and
\[
G'_{v}[l_v]=\{g\in G'(\calf_{v}): c^{w_n,w_{n+1}}_i(g)\in c^{w_n,w_{n+1}}_i(b)+\mathfrak{p}_{v}^{l_v}, \text{ for all $i=1,\ldots,2n+1$}\}.
\]
As the polynomials $c_i$ are the invariant polynomials of $H'(\calf_{v})\times H'(\calf_{v})$ acting on $G(\calf_{v})$ and $c^{w_n,w_{n+1}}_i$ are the invariant polynomials of $H'(\calf_{v})\times H'_{x,y}(\calf_{v})$ acting on $G'(\calf_{v})$, we see that $\widetilde{f}_{v}$ and $\widetilde{f}'_{v}$ match for any choice of $l_v\in \zz_{\geq0}$ and that
\[
\Orb_{\omega'}(\widetilde{f}'_{v},b) = \Omega(a)\Orb^{\eta}_\omega(\widetilde{f}_{v},a)\neq0.
\]

Recalling out compact set $C\subset G(\A_{\calf})$, we set $C[l]$ to be the compact set with $K_v$ replaced with $K_v\cap G_{v}[l_v]$ for all $v\in S_{aux}$. For each $v\in S_{aux}$, we now choose $l_v$ large enough that if $a(v)$ represents the orbit in $Q_f-\{a\}$ associated to $v\in S_{aux}$, then $c_i(a(v))\notin c_i(C[l])\cap F$ for some $i$. 

Replacing $f_v$ and $f'_v$ by $\widetilde{f}_{v}$ and $\widetilde{f}'_{v}$ for each $v\in S_{aux}$, we conclude that $f=\bigotimes_v f_v$ and $f'=\bigotimes_v f'_v$ satisfy that
\[
\frac{1}{4}\sum_{(i,j)}I_{\omega_{i,j}}(f) = \Orb^\eta_\omega(f,a) = \Orb_{\omega'}(f',b) = \frac{1}{4L(1,\eta)^2}J_{\omega'}^{w_n,w_{n+1}}(f').
\]
Taking into account the support constraint $\supp({f})\subset G_\A[{w_n,w_{n+1}}]$, an argument as in the proof of Theorem \ref{Prop: main global tool} gives the formula in the proposition.

To prove \eqref{Final change to prop}, we note by inspection of our construction of $f=\otimes_vf_v$ and $f'=\otimes_vf_v'$ below, the support of the factors $f_{v_0}$ and $f_{v_0}'$ do not play a role in the vanishing statements. Note $\supp(\hat{f})\subset G_\A[{w_n,w_{n+1}}]$ since $\supp(BC(\varphi))\subset G_{v_0}[{w_n,w_{n+1}}]$.
\end{proof}

\begin{proof}[Finishing the proof of Theorem~\ref{Thm: base change Twisted Jacquet--Rallis}]
We keep the notations from Proposition \ref{Prop: good global test function}. To finish the proof, it suffices to show that
\[
I_{\omega}(\hat{f}) = \frac{1}{4L(1,\eta)^2}J^{w_n,w_{n+1}}_{\omega'}(\hat{f}').
\]
For this, consider the spectral expansions
\[
I_{\omega}(\hat{f})= \sum_\pi I_\pi(\hat{f})\text{ and }J_{\omega'}^{w_n,w_{n+1}}(\hat{f}')=\sum_\Pi J^{w_n,w_{n+1}}_{\Pi}(\hat{f}').
\] 
Note that if $\pi\cong \pi\cdot\eta_{i,j}$ for any non-trivial $(i,j)\in\{0,1\}^2$, then $BC(\pi)$ is not cuspidal. Then for any ${f'}^\circ$ matching $\hat{f}$,  $$J^{w_n,w_{n+1}}_{BC(\pi)}({f'}^\circ)=0.$$ By Theorem \ref{Prop: main global tool}, this forces $I_\pi(\hat{f})=0$. Thus, we may assume that $\pi\not\cong\pi\cdot\eta_{i,j}$ for any non-trivial $(i,j)\in\{0,1\}^2$. Recalling that $J_{\Pi}^{w_n,w_{n+1}}\equiv 0$ if  $\Pi\not\cong\Pi^\sig$ by \cite[Theorem 0.1]{FLO}, it suffices to show that 
\begin{equation}\label{eqn: global spectral transfer hat}
\frac{1}{L(1,\eta)^2}J_{\Pi}^{w_n,w_{n+1}}(\hat{f}')=\sum_{\pi\in \calb(\Pi)}I_\pi(\hat{f})
\end{equation}
for all cuspidal automorphic representations $\Pi$ such that $\Pi\cong\Pi^\sig$. Since $\supp(\hat{f})\subset G_\A[{w_n,w_{n+1}}]$, the argument in the proof of Theorem \ref{Prop: main global tool} implies that this reduces to showing that for any such $\pi$,
\begin{equation}\label{eqn: super important hat}
J^{w_n,w_{n+1},\natural}_{\Pi_{v_0}}(\varphi)\prod_{v\neq v_0}J^{w_n,w_{n+1},\natural}_{\Pi_v}(f'_{v}) =I^\natural_{\pi_{v_0}}(BC(\varphi))\prod_{v\neq v_0}I^\natural_{\pi_v}({f}_v).
\end{equation}
Note that if $\pi$ is not unramified at $v_0$, then both sides are zero. We thus assume that $\pi_{v_0}$ is unramified.

  Theorem \ref{Prop: main global tool} tells us that there exists a test function $f^\circ\in C_c^\infty(G(\A_{\calf}))$ matching $\hat{f}'$ such that for all such cuspidal representations 
\[
J^{w_n,w_{n+1},\natural}_{\Pi_{v_0}}(\varphi)\prod_{v\neq v_0}J^{w_n,w_{n+1},\natural}_{\Pi_v}(f'_{v}) =I^\natural_{\pi_{v_0}}(f^\circ_{v_0})\prod_{v\neq v_0}I^\natural_{\pi_v}(f^\circ_v).
\]
We may assume that $f^\circ_v ={f}_v$ for all $v\neq v_0$. Corollary \ref{Cor: unramified spectral transfer} now tells us that 
\[
J^{w_n,w_{n+1},\natural}_{\Pi_{v_0}}(\varphi)=I^\natural_{\pi_{v_0}}(f^\circ_{v_0}), 
\]
so that 
\[
J^{w_n,w_{n+1},\natural}_{\Pi_{v_0}}(\varphi)\prod_{v\neq v_0}J^{w_n,w_{n+1},\natural}_{\Pi_{v}}(f'_{v}) =J^{w_n,w_{n+1},\natural}_{\Pi_{v_0}}(\varphi)\prod_{v\neq v_0}I^\natural_{\pi_v}(f_v).
\]
 Lemma \ref{Lem: unramified spectral transfer} now states that
\[
J^{w_n,w_{n+1},\natural}_{\Pi_{v_0}}(\varphi)=I^\natural_{\pi_{v_0}}(BC(\varphi)),
\]
allowing us to conclude (\ref{eqn: super important hat}). Since this holds for all $\Pi$, we obtain \eqref{eqn: global spectral transfer hat} for all cuspidal representations $\Pi$. This implies
\[
\Orb_{\omega'}(\hat{f}',b) = \frac{1}{4L(1,\eta)^2}J_{\omega'}^{w_n,w_{n+1}}(\hat{f}')=I_{\omega}(\hat{f}) = \Orb^\eta_\omega(\hat{f},a)
\]
Factoring the orbital integrals and using the matching at all places $v\neq v_0$ implies the identity \eqref{eqn: reduce to integrals with central character}, and Theorem \ref{Thm: base change Twisted Jacquet--Rallis} follows. 
\end{proof}

\subsection{Proof of Theorem \ref{Thm: full fundamental lemma}}\label{Section: final touches}
We continue to assume that $E/F$ is an unramified extension of $p$-adic local fields. In Part \ref{Part 1}, we reduced Theorem  \ref{Thm: full fundamental lemma} to Theorem \ref{Thm: JR fundamental lemma for algebra} which states that for any $\varphi\in \calh_{K_{n,E}}(\GL_{n}(E))$, and for any $X\in \GL_{n}(F)^{rss}$, we have
\begin{equation*}
\omega(X)\Orb^{\GL_{n-1}(F),\eta}(BC(\varphi),X)=\begin{cases}\Orb^{U(V_{n-1})}(\varphi\ast \bfun_0,Y)&:X\leftrightarrow Y\in X^{rss}_{n}%\footnote{I need better notation}
,\\\qquad0&:\text{otherwise}.\end{cases}
\end{equation*}
We deduce this from Theorem \ref{Thm: base change Twisted Jacquet--Rallis}. We remark that while we proved Theorem \ref{Thm: base change Twisted Jacquet--Rallis} with respect to the split Hermitian form $w_n$, it is easy to see that the result holds with respect to the identify form $I_n$ since the two Hermitian spaces are isomorphic under the assumption that $E/F$ is unramified.

Considering the contraction map
\begin{align*}
C_c^\infty(\GL_{n-1}(E)\times \GL_n(E))&\lra C_c^\infty(\GL_{n}(E))\\ f&\longmapsto \tilde{f} %= \int_{\GL_n(F)}f(h^{-1}(1,g))dh\right].
\end{align*}
defined by \eqref{eqn: tilde contraction}, there exists a natural lift 
\[
\Phi\in \calh_{K_{n-1,E}\times K_{n,E}}(\GL_{n-1}(E)\times \GL_n(E))
\]such that $\tilde{\Phi}=\varphi$. Indeed, the function $\Phi=\bfun_{K_{n-1,E}}\otimes \varphi$ works.

Recall the commutative diagram
\begin{equation*}
\begin{tikzcd}
&\calh_{K_{n,E}}(\GL_{n}(E))\ar[dl,swap,"-\ast \bfun_0"]\ar[dr,"BC"]&\\
\calh_{K_{n,E}}(X_n)\ar[rr,"H"]&&\calh_{K_n}(\GL_{n}(F)),
\end{tikzcd}
\end{equation*}
where $\ast \bfun_0$ indicates convolution with the unit element and $H$ denotes the $\calh_{K_{n,E}}(\GL_n(E))$-module isomorphism of Hironaka. As we are multiplying both sides by the unit of the appropriate Hecke algebra, a simple computation and the commutativity of the above diagram imply that 
\[
\widetilde{BC(\Phi)} = BC(\tilde{\Phi})=H(\varphi\ast \bfun_0).
\]
 Now Theorem \ref{Thm: base change Twisted Jacquet--Rallis} implies that $\{\Phi,0\}$ and $BC(\Phi)$ are transfers of one another.
 
 To make this useful, we first lift $X\in \GL_n(F)^{rss}$ to a regular semi-simple $\ga=(\ga_1,\ga_2)\in [\GL_{n-1}(F)\times \GL_{n}(F)]^{rss}$ and lift $Y$ to $\de=(\de_1,\de_2)\in [\GL_{n-1}(E)\times\GL_n(E)]^{rss}$. The relations of orbital integrals and transfer factors in \eqref{Def: newish transfer factor}, (\ref{eqn: linear OI reduction}), and (\ref{eqn: twisted OI reduction}) thus imply
\begin{align*}
\Orb^{U(V_{n-1})}(\varphi\ast \bfun_0,Y) &=\Orb({\Phi},\de)\\
                                                         & = \Omega(\ga)\Orb^\eta(BC(\Phi),\ga)\qquad \text{(Theorem \ref{Thm: base change Twisted Jacquet--Rallis})}\\&= \omega(X)\Orb^{\GL_{n-1}(F),\eta}(BC(\varphi),X).
\end{align*}
Additionally, the vanishing component of Theorem \ref{Thm: base change Twisted Jacquet--Rallis} gives the correct vanishing of orbital integrals for $BC(\varphi)$, completing the proof of  Theorem \ref{Thm: JR fundamental lemma for algebra}.\qed

%%%%%%%%%%%%%%%%%%%%%%%%%%%%%%%%%%%%%%%%%%%%%%%%%%%%%%%%%%%%%%%%%%%%%%%%%%%%%%%%%%%%%%%%%%%%%%%%%%%%%%%%%%%%%%%%%%%%%%%%%%%%%%%%%%%%%%%%%%%%%%%%%%%%%%%%%%%%%%%%%%%%%%%%%%%%%%%%%%%%%%%%%%%%%%%%%%%%%%%%%%%%%%%%%%%%%%%%%%%%%%%%%%%%%%%%%%%%%%%%%%%%%%%%%%%%%%%%%%%%%%%%%%%%%%%%%%%%%%%%%%%%%%%%%%%%%%%%%%%%%%%%%%%%%%%%%%%%%%%%%%%%%%%%%%%%%%%%%%%%%%%%%
%%%%%%%%%%%%%%%%%%%%%%%%%%%%%%%%%%%%%%%%%%
%-------------This is the end of the paper
%%%%%%%%%%%%%%%%%%%%%%%%%%%%%%%%%%%%%%%%%%

%--------references
%\newpage%\include{b}
\bibliographystyle{alpha}

%--------------Need to modify the references in the file: mybib.bib see example above.
\bibliography{bibs_FL}
\end{document}